\documentclass[usenames,dvipsnames]{article}
\usepackage{blindtext,inputenc,graphicx,overpic}
\usepackage{amsmath,amssymb,amsthm,bm,mathrsfs,mathtools,enumitem}

\usepackage{multicol}
\usepackage{esint,braket}
\usepackage{fullpage,setspace}
\usepackage{authblk}
\usepackage{verbatim} 
\usepackage{caption}
\usepackage{subcaption}

\usepackage{color,soul}

\usepackage[
    backend=biber,
    backref=true,
    style=alphabetic,
    maxbibnames=99,
  ]{biblatex}
\addbibresource{biblio.bib}

\DefineBibliographyStrings{english}{
  backrefpage={p.},
  backrefpages={p.}
}

\usepackage{tikz}
\usetikzlibrary{arrows.meta,decorations.markings,decorations.pathmorphing}
\usepackage{pgfplots}
\usepgfplotslibrary{polar}

\usepackage{hyperref}
\hypersetup{
    colorlinks=true,
    linkcolor=BrickRed,
    filecolor=red,      
    urlcolor=Periwinkle,
    citecolor=teal,
}
\urlstyle{same}

\usepackage{framed}
\definecolor{shadecolor}{rgb}{1.0,0.97,0.0}

\theoremstyle{definition}
\newtheorem{defn}{Definition}[section]
\newtheorem{theorem}{Theorem}[section]
\newtheorem{lemma}{Lemma}[section]
\newtheorem{remark}{Remark}[section]
\newtheorem{prop}{Proposition}[section]
\newtheorem{cor}{Corollary}[section]

\newtheorem{conjecture}{Conjecture}
\newtheorem{problem}{Riemann-Hilbert Problem}[section]

\DeclareMathOperator{\tr}{tr}

\DeclareMathOperator{\CC}{\mathbb{C}}
\DeclareMathOperator{\RR}{\mathbb{R}}

\DeclareMathOperator{\ZZ}{\mathbb{Z}}

\DeclareMathOperator{\OO}{\mathcal{O}}

\DeclareMathOperator{\Ai}{Ai}

\DeclareMathOperator*{\Ress}{Res}
\newcommand{\Res}{\displaystyle\Ress}

\numberwithin{equation}{section}
\numberwithin{figure}{section}

\usepackage{todonotes}
\usepackage{lineno}

\usepackage{titlesec}
\titleformat{\section}{\centering\normalfont\scshape}{\thesection .}{1em}{}
\titleformat{\subsection}{\normalfont\scshape}{\thesubsection .}{1em}{}
\titleformat{\subsubsection}{\normalfont\scshape}{\thesubsubsection .}{1em}{}
\usepackage{abstract}


\title{\bf Asymptotic Properties of a Special Solution to the (3,4) String Equation}

\date{\today}

\author[1]{\scshape Nathan Hayford\thanks{\href{mailto:nhayford@kth.se}{nhayford@kth.se}}}
\affil[1]{\small\textit{Department of Mathematics, Royal Institute of Technology (KTH), Stockholm, Sweden}}

\begin{document}

\maketitle
\vspace{8mm}
\begin{abstract}
    We analyze the asymptotic properties a special solution of the $(3,4)$ string equation, which appears in the study of the multicritical 
    quartic $2$-matrix model. In particular, we show that in a certain parameter regime, the corresponding $\tau$-function has an asymptotic
    expansion which is `topological' in nature. Consequently, we show that this solution
    to the string equation with a specific set of Stokes data exists, at least asymptotically. We also demonstrate that, along specific curves 
    in the parameter space, this $\tau$-function degenerates to the $\tau$-function for a tritronqu\'{e}e solution of Painlev\'{e} I 
    (which appears in the critical quartic $1$-matrix model), indicating that there is a `renormalization group flow' between these 
    critical points. This confirms a conjecture from \cite{CGM}.
\end{abstract}

\tableofcontents

\section{Introduction.}
In this work, we will study the asymptotic properties of a special solution to the \textit{$(3,4)$ string equation}:
    \begin{equation} \label{string-equation}
        \begin{cases}
             0 = \frac{1}{2}V'' - \frac{3}{2}UV + \frac{5}{2}t_5 V + t_2,\\
             0 = \frac{1}{12} U^{(4)} -\frac{3}{4}U''U -\frac{3}{8}(U')^2+\frac{3}{2}V^2 + \frac{1}{2}U^3 - \frac{5}{12}t_5\left(3U^2 - U''\right) + t_1.
        \end{cases}
    \end{equation}
In the above, $U = U(t_5,t_2,t_1)$, $V = V(t_5,t_2,t_1)$ and $' = \frac{\partial}{\partial t_1}$. The above is an ordinary differential 
equation depending parametrically on the variables $t_5,t_2$. This equation first appeared in the study of the critical $2$-matrix 
model \cite{CGM,Douglas1}. Essentially, the main message of these works was that the critical partition function for the $2$-matrix model 
under study converges in a multi-scaling limit to a $\tau$-function for \eqref{string-equation}. This $\tau$-function then can be interpreted as a 
partition function for a theory of topological gravity coupled to the Ising conformal field theory \cite{Krichever,DKK}.  A rigorous proof of this 
statement was finally made in \cite{DHL3}. However, the properties of this solution (even its existence!) have yet to be studied. 

One property of the string equation \eqref{string-equation} is that it has the \textit{Painlev\'{e} property}: its meromorphic solutions have 
as possible singularities only movable poles and/or fixed branch points. This property is the defining feature of the original $6$ Painlev\'{e} equations, and
is also shared by solutions to the Painlev\'{e} hierarchies. Painlev\'{e} I -- VI all admit a 
(nonautonomous) Hamiltonian  formulation \cite{Okamoto3}, as well as a representation as the isomonodromic deformations of linear differential equations
with rational coefficients \cite{JMU2,FIKN}. The same is true for both the Painlev\'{e} I and II hierarchies (cf. \cite{Takasaki,MM,CJM} and 
references therein). As a consequence of this fact and the seminal works \cite{JMU1,JMU2,JMU3}, we can associate to each solution of these equations an \textit{isomonodromic $\tau$-function}, an object which will play an important role in the present work.
Furthermore, Painlev\'{e} transcendents appear frequently in the context of random matrix theory. Special solutions to the Painlev\'{e} I and II hierarchies appear in the universal expressions for critical eigenvalue correlation kernels in random matrix theory \cite{IB,CV}, and the partition functions for
random matrix models converge (after appropriate normalization) to $\tau$-functions of Painlev\'{e} transcendents (see \cite{BleherDeano}, and further
conjectures of this claim from physics \cite{GGPZ,DFGZJ}). Painlev\'{e} equations have also found numerous applications in integrable systems \cite{Dubrovin2,CV2} and combinatorics \cite{BDJ,BleherDeano}, among other areas of mathematics and mathematical physics.

All of the aforementioned examples of Painlev\'{e}-type equations are examples of \textit{rank-2 isomonodromic systems}: they can be realized as isomonodromic
deformations of some $2\times 2$ matrix-valued linear differential equation with rational coefficients. Aside from some general theory \cite{JMU1,BHH}, not 
much is known about higher-rank isomonodromic systems, although historically many such systems are of interest to the random matrix theory community. For example, the so-called $(q,p)$ string equations are conjectured to play a role in the classification of universality classes of critical phenomenona in multi-matrix models \cite{Douglas1,GM,Kazakov3}. These equations arise from rank $q$ isomonodromic systems, for arbitrary $q\geq 2$. This can be contrasted to the
much thinner spectrum of critical phenomena in the $1$-matrix model, which are well understood to be indexed by the Painlev\'{e} I and II hierarchies, which correspond to the  $(2,2g+1)$ and $(2,2g)$ families of string equations, respectively.

The first nontrivial instance of such a higher rank system (i.e., one relevant to
random matrix theory\footnote{The connection of this equation to
random matrix theory is explained in Appendix \ref{Appendix:perturbative-expansion}.}, which is not reducible to a rank $2$ isomonodromic system) is the one studied in the present work. In other words,
\eqref{string-equation} can be realized as the equation governing the isomonodromic deformations of a rank $3$ system. 
Part of the subject of \cite{DHL2} was to show that although this system is not of rank $2$, essentially all of the properties enjoyed by the Painlev\'{e}
equations are shared by the string equation \eqref{string-equation}. Let us discuss some of these properties here.

First, we note that the $(3,4)$ string equation admits a (non-autonomous) Hamiltonian formulation \cite{DHL2}, not only in the variable $t_1$, but also
in the variables $t_2$, $t_5$. In other words, there exist functions $P_U,P_V,P_W, Q_U,Q_V,Q_W$ of $t_1,t_2, t_5$, and polynomials $H_1,H_2,H_5$ in the variables $\{P_a,Q_a\}_{a\in \{U,V,W\}}$ and $t_1,t_2,t_5$, such that the family of equations
    \begin{equation}
        \frac{\partial Q_a}{\partial t_k} = \frac{\partial H_1}{\partial P_a}, \qquad \frac{\partial P_a}{\partial t_k} = -\frac{\partial H_1}{\partial Q_a}, \qquad \qquad a\in\{U,V,W\}, \quad k \in \{1,2,5\},
    \end{equation}
are equivalent to the $(3,4)$ string equation \eqref{string-equation}, along with its compatible flows along the $t_2$ and $t_5$ variables. The exact form of these Darboux coordinates and Hamiltonians are given in Appendix \ref{Appendix:Hamiltonian}.

Furthermore, as was demonstrated in \cite{DHL2}, the $(3,4)$ string equation carries a \textit{Lax pair}:
\begin{align}
        &\mathcal{Q}(\lambda;t_{5},t_{2},t_{1}) := 
            \begin{psmallmatrix}
                0 & 0 & 1\\
                0 & 0 & 0\\
                0 & 0 & 0
            \end{psmallmatrix}\lambda + 
            \begin{psmallmatrix}
                0 & \frac{3}{4}U & -\frac{3}{2}V\\
                1 & 0 & \frac{3}{4}U\\
                0 & 1 & 0
            \end{psmallmatrix},\\
        &\mathcal{P}(\lambda;t_{5},t_{2},t_{1})  := 
            \begin{psmallmatrix}
                0 & 0 & 1\\
                0 & 0 & 0\\
                0 & 0 & 0
            \end{psmallmatrix}\lambda^2 + 
                \begin{psmallmatrix}
                    0 & \frac{5}{3}t_{5} + \frac{1}{4}U & -V\\
                    1 & 0 & \frac{5}{3}t_{5}+\frac{1}{4}U\\
                    0 & 1 & 0
                \end{psmallmatrix}\lambda  \label{spectral-lax-operator}\\
             &+  \begin{psmallmatrix}
                    \frac{1}{2}V' -\frac{1}{12}U'' + \frac{1}{8}U^2 -\frac{5}{12}t_{5} U & \frac{1}{12} U''' - \frac{7}{16}UU'-\frac{3}{8}UV +\frac{5}{12}t_{5} U' + t_{2} & \frac{1}{16}(U')^2 -\frac{1}{8}U U'' + \frac{7}{32}U^3 + \frac{3}{4}V^2-\frac{5}{12}t_{5} U^2 + t_{1}\\
                    \frac{1}{2}V-\frac{1}{4}U & \frac{1}{6}U'' - \frac{1}{4}U^2 + \frac{5}{6}t_{5} U & -\frac{1}{12} U''' + \frac{7}{16}U U'-\frac{3}{8}UV -\frac{5}{12}t_{5} U' + t_{2}\\
                    \frac{5}{3}t_{5}-\frac{1}{2}U &\frac{1}{2}V+\frac{1}{4}U & -\frac{1}{2}V' -\frac{1}{12}U'' + \frac{1}{8}U^2 -\frac{5}{12}t_{5} U
                 \end{psmallmatrix}, \nonumber
    \end{align}
with the Lax equation
    \begin{equation}
        \frac{\partial \mathcal{P}}{\partial t_1} - \frac{\partial \mathcal{Q}}{\partial \lambda} + [\mathcal{P},\mathcal{Q}] = 0
    \end{equation}
reproducing the string equation. This makes the $(3,4)$ string equation integrable in the sense of Lax, similarly to the rest of the Painlev\'{e} transcendents and their associated hierarchies \cite{FIKN}.

Finally, as was demonstrated in \cite{DHL2}, the string equation is equivalent to the isomonodromic deformation equations of a linear differential equation
with rational coefficients. We state this Riemann-Hilbert problem (RHP) here:

\begin{problem} \label{prob:MAIN-RHP}
    Let $\omega := e^{\frac{2\pi i}{3}}$, and define contours
\begin{align*}
        \Gamma_{\pm k} := \left\{\lambda \big| \arg \lambda = \pm \frac{\pi}{14} \pm \frac{\pi}{7}(k-1) \right\}, \qquad k = 1,...,7,
    \end{align*}
and $\RR_- := (-\infty,0)$. Find a $3\times 3$ sectionally analytic function $\Psi(\zeta;t_5,t_2,t_1)$ such that:
\begin{equation}\label{psi-asymptotics}
            \begin{cases}
                \Psi_{+}(\zeta;t_5,t_2,t_1) =  \Psi_{-}(\zeta;t_5,t_2,t_1) S_k, & \zeta \in \Gamma_k,\qquad k = \pm 1, ...,\pm 7,\\
                \Psi_{+}(\zeta;t_5,t_2,t_1) =  \Psi_{-}(\zeta;t_5,t_2,t_1) \mathcal{S}, & \zeta \in \RR_-,\\
                \Psi(\zeta;t_5,t_2,t_1) = f(\zeta)\left[\mathbb{I} + \frac{\Psi_1}{\zeta^{1/3}} + \frac{\Psi_2}{\zeta^{2/3}} + \OO(\zeta^{-1})\right]e^{\Theta(\zeta;t_5,t_2,t_1)}, & \zeta \to \infty,
            \end{cases}
        \end{equation}
    where $f(\zeta), \Theta(\zeta;t_5,t_2,t_1)$ are given by
\begin{equation}\label{gauge-matrix}
        f(\zeta) := 
        \frac{i}{\sqrt{3}}\underbrace{\begin{pmatrix}
            \zeta^{1/3} & 0 & 0\\
            0 & 1 & 0\\
            0 & 0 & \zeta^{-1/3}
        \end{pmatrix}}_{\zeta^{\Delta}}
        \underbrace{\begin{pmatrix}
            1 & \omega & \omega^2\\
            1 & 1 & 1\\
            1 & \omega^2 & \omega
        \end{pmatrix}}_{-i\sqrt{3}\mathcal{U}},
    \end{equation}
    \begin{equation}
        \Theta(\zeta;t_5,t_2,t_1) := \text{diag }(\vartheta_1(\zeta;t_5,t_2,t_1),\vartheta_2(\zeta;t_5,t_2,t_1),\vartheta_3(\zeta;t_5,t_2,t_1)),
    \end{equation}
with $\vartheta_j(\zeta;t_5,t_2,t_1) = \frac{3}{7}\omega^{j-1}\zeta^{7/3} + \omega^{1-j}t_5\zeta^{5/3} + \omega^{1-j}t_2\zeta^{2/3} + \omega^{j-1}t_1\zeta^{1/3}$, the jump matrices $S_k$ are given in Figure \eqref{fig:43-equation-jumps}, and the matrix $\mathcal{S}$ is
    \begin{equation}
        \mathcal{S} := 
        \begin{psmallmatrix}
                0 & 1 & 0\\
                0 & 0 & 1\\
                1 & 0 & 0
            \end{psmallmatrix}.
    \end{equation}
These matrices must satisfy the following constraint equation (Stokes equation):
    \begin{equation}\label{Stokes-Equation}
        S_{-7}\cdots S_{-1}S_{1}\cdots S_{7} = \mathcal{S}^T.
    \end{equation}
The matrices $\Psi_j := \Psi_j(t_5,t_2,t_1)$, $j=1,2$, are given by
    \begin{align}
        \Psi_1(t_5,t_2,t_1) &= 
            \begin{psmallmatrix}
                H_1 & 0 & 0\\
                0 & \omega^2 H_1 & 0\\
                0 & 0 & \omega H_1
            \end{psmallmatrix}\\
        \Psi_2(t_5,t_2,t_1) &=
        \begin{psmallmatrix}
            \frac{1}{2}(H_1)^2+\frac{1}{2}H_2 & -\frac{i\omega^2\sqrt{3}}{12} U & \frac{i\omega\sqrt{3}}{12} U \\
            \frac{i\omega^2\sqrt{3}}{12} U& \omega \left(\frac{1}{2}(H_1)^2+\frac{1}{2}H_2\right) & -\frac{i\sqrt{3}}{12} U\\
            -\frac{i\omega\sqrt{3}}{12}  U &  \frac{i\sqrt{3}}{12} U& \omega^2 \left(\frac{1}{2}(H_1)^2+\frac{1}{2}H_2\right)
        \end{psmallmatrix},
    \end{align}
where $H_1$, $H_2$ are the Hamiltonians given above, and $U,V$ are solutions to the string equation. The above jump conditions and 
asymptotics, along with the determination of $\Psi_j$, $j=1,2$, determine the solution to the above Riemann-Hilbert problem uniquely. We remark that the
coefficients $\Psi_j(t_5,t_2,t_1)$ carry the symmetry
    \begin{equation}\label{psi-symmetry}
        \Psi_j(t_5,t_2,t_1) = \omega^{-j}\mathcal{S}^T \Psi_j(t_5,t_2,t_1) \mathcal{S}.
    \end{equation}
\end{problem}

\begin{figure}
    \begin{center}
    \begin{overpic}[scale=.5]{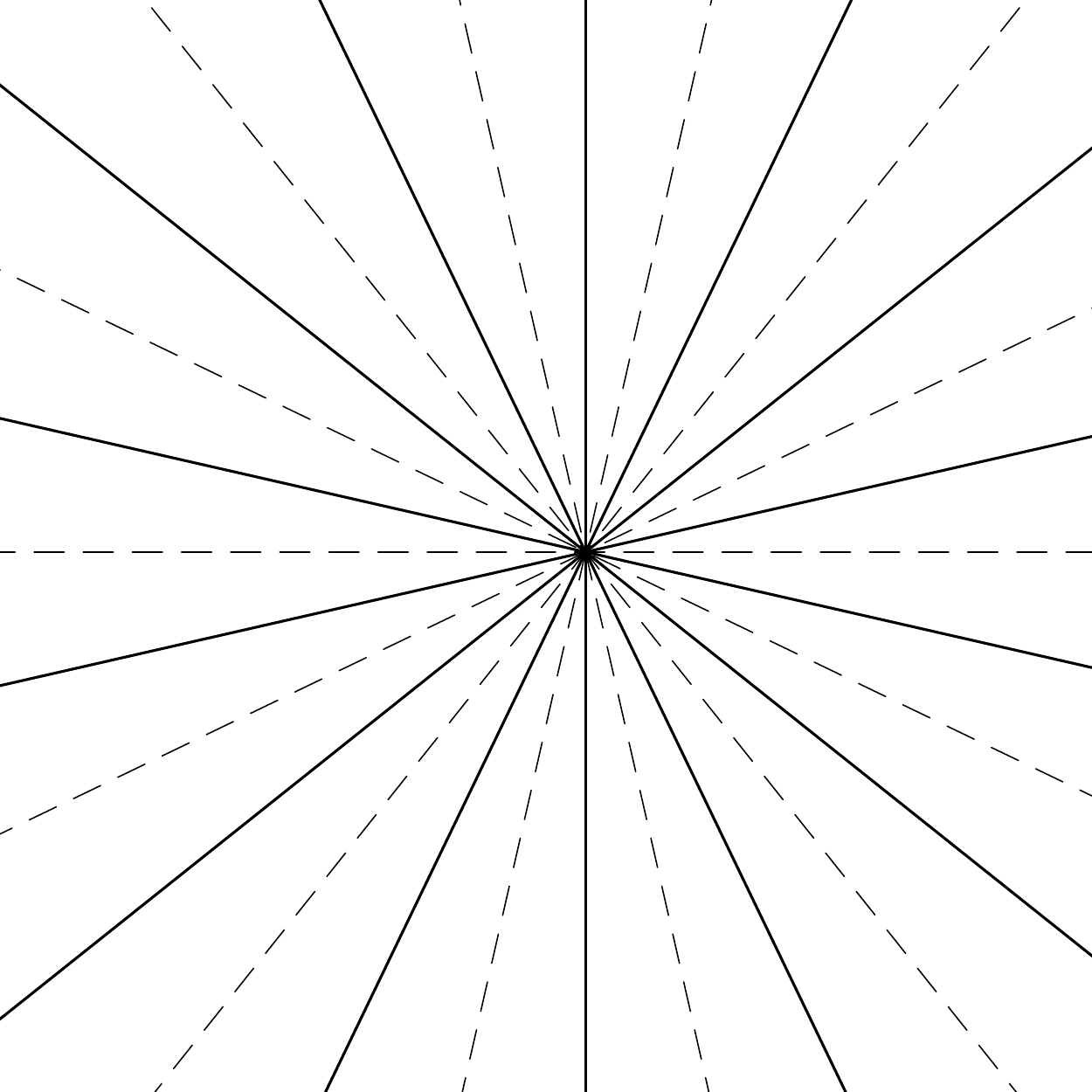}
                \put (100,50) {$\text{Re } \lambda$}
          	\put (103,60) {$\Gamma_1$}
      		  \put (103,37) {$\Gamma_{-1}$}
     		  \put (98,82) {$\Gamma_2$}
      		  \put (98,15) {$\Gamma_{-2}$}
                \put (78,95) {$\Gamma_3$}
      		  \put (78,3) {$\Gamma_{-3}$}
                \put (55,99) {$\Gamma_4$}
      		  \put (55,1) {$\Gamma_{-4}$}
                \put (34,98) {$\Gamma_5$}
      		  \put (34,1) {$\Gamma_{-5}$}
                \put (5,92) {$\Gamma_6$}
      		  \put (5,6) {$\Gamma_{-6}$}
                \put (-5,61) {$\Gamma_7$}
      		  \put (-5,36) {$\Gamma_{-7}$}
                \put (-4,48) {$\RR_-$}
                \put (80,57){\small \rotatebox{15}{\textcolor{BrickRed}{$\mathbb{I} + s_1E_{21}$}} } 
                \put (80,44){\small \rotatebox{-11}{\textcolor{BrickRed}{$\mathbb{I} + s_{-1}E_{31}$}} } 
                \put (76,70){\small \rotatebox{42}{\textcolor{BrickRed}{$\mathbb{I} + s_2E_{23}$}}  } 
                \put (79,30){\small \rotatebox{-36}{\textcolor{BrickRed}{$\mathbb{I} + s_{-2}E_{32}$}}  } 
                \put (66,83){\small \rotatebox{65}{\textcolor{BrickRed}{$\mathbb{I} + s_{3}E_{13}$}}  } 
                \put (69,19){\small \rotatebox{-61}{\textcolor{BrickRed}{$\mathbb{I} + s_{-3}E_{12}$}}  } 
                \put (50,85){\small \rotatebox{90}{\textcolor{BrickRed}{$\mathbb{I} + s_{4}E_{12}$}}  } 
                \put (54,20){\small \rotatebox{-90}{\textcolor{BrickRed}{$\mathbb{I} + s_{-4}E_{13}$}}  } 
                \put (35,90){\small \rotatebox{-63}{\textcolor{BrickRed}{$\mathbb{I} + s_{5}E_{32}$}}  } 
                \put (30,8){\small \rotatebox{65}{\textcolor{BrickRed}{$\mathbb{I} + s_{-5}E_{23}$}}  } 
                \put (18,80){\small \rotatebox{-38}{\textcolor{BrickRed}{$\mathbb{I} + s_{6}E_{31}$}}  } 
                \put (12,19){\small \rotatebox{42}{\textcolor{BrickRed}{$\mathbb{I} + s_{-6}E_{21}$}}  } 
                \put (6,61){\small \rotatebox{-12}{\textcolor{BrickRed}{$\mathbb{I} + s_{7}E_{21}$}}  } 
                \put (6,40){\small \rotatebox{12}{\textcolor{BrickRed}{$\mathbb{I} + s_{-7}E_{31}$}}  } 
                \put (6,50){\small \rotatebox{0}{\textcolor{BrickRed}{$\mathcal{S}$}}  } 
    		\end{overpic}
    \end{center}
    \caption{The Stokes lines $\Gamma_j$ for the Riemann-Hilbert problem for $\Psi(\zeta;t_5,t_2,t_1)$. Each of the Stokes sectors is bisected by an anti-Stokes line, depicted by a dashed line. All contours are oriented \textbf{\textit{outwards}} from the origin. The anti-Stokes line $(-\infty,0]$ is labeled by $\RR_-$. The Stokes matrix $S_k$ is the matrix associated to the parameter $s_k$; these parameters are not all independent, and must satisfy the equation
    $S_{-7}\cdots S_{-1}S_{1}\cdots S_{7} = \mathcal{S}^T$.}
    \label{fig:43-equation-jumps}
\end{figure}

Note that the structure of this RHP (and the appearance of $\omega$) is similar to that which appears in the recent works \cite{CLW,WZZ}, which study the ``good'' Boussinesq equation and its modified versions; one should note that the solution $U$ to the string equation solves the Boussinesq equation in the variables $t_2=t,t_1=x$ (cf. \cite{DHL2}, equation A.28). In \cite{DHL2}, it was shown that a solution to the above Riemann-Hilbert problem exists if and only if a corresponding solution to the string
equation exists, provided $(t_5,t_2,t_1)$ is not a singularity of this solution. However, existence of any particular solution for given Stokes data was 
left open.
In this work, we are interested in a very specific version of this Riemann-Hilbert problem, namely, the one with Stokes parameters
    \begin{align}\label{STOKES_TRUNCATED}
        s_1 &= 0,\qquad s_2 = -1,\qquad s_3 = 0,\qquad s_4 = 0,\qquad s_5 = 1,\qquad s_6 = -1, \qquad s_7 = 0,\nonumber\\
        s_{-1} &= 0,\qquad s_{-2} = 1,\qquad s_{-3} = -1,\qquad s_{-4} = 0,\qquad s_{-5} = 0,\qquad s_{-6} = 1,\qquad s_{-7} = 0.
    \end{align}
This Stokes data is the same as what appears in the multicritical quartic $2$-matrix model \cite{DHL3}, and so directly applies to this 
situation. \textbf{\textit{From here on, we will work exclusively with the Riemann-Hilbert Problem \ref{prob:MAIN-RHP} with Stokes data given by Equation \eqref{STOKES_TRUNCATED}. }}

\begin{remark} \textit{Symmetry properties of the Stokes data.}
One can readily check that the above set of parameters is indeed a solution to the constraint equation \eqref{Stokes-Equation}.
It is easy to see that the above parameters carry the symmetry 
    \begin{equation*}
        s_k = -s_{k+8}, \qquad\qquad k=-7,...,-1,
    \end{equation*}
and consequentially (cf. \cite{DHL2}, Section 4.3), the functions $U,H_1,H_5$ are even functions of $t_2$, and $V, H_2$ are odd functions of $t_2$. 
\end{remark}

The main theorems of this paper pertain to a large-parameter Deift-Zhou analysis of the Riemann-Hilbert problem \ref{prob:MAIN-RHP} with Stokes data \eqref{STOKES_TRUNCATED} in various scaling and double-scaling regimes. The establishment of these asymptotics guarantees existence of a
solution to Problem \ref{prob:MAIN-RHP} (with data \eqref{STOKES_TRUNCATED}) for sufficiently large values of its parameters, thus providing a
partial solution to the problem of existence. We will also study some double-scaling regimes in which solutions to the above equation limit to 
solutions to Painlev\'{e} I.

\subsection{Isomonodromic $\tau$-function.}
Our main theorems will be stated in terms of the so-called \textit{isomonodromic $\tau$-function} of Jimbo, Miwa, and Ueno \cite{JMU1}. 
This is a holomorphic function which is defined in terms of the local data of the RHP \ref{prob:MAIN-RHP}. Usually, the zero locus of a 
$\tau$-function is precisely the set on which its corresponding Riemann-Hilbert problem is not solvable \cite{Palmer,Malgrange}. Furthermore,
solutions to the string equation \eqref{string-equation} can be written as derivatives of the $\tau$-function, and in this sense the 
$\tau$-function is a fundamental object. However, as pointed out in \cite{DHL2}, the definition used in \cite{JMU1} (which we will refer to as the JMU $\tau$ function, and its differential as the JMU \textit{differential}) does not directly
apply to the present situation, as the linear differential equation with rational coefficients associated to RHP \ref{prob:MAIN-RHP} does not
have a diagonalizable leading term. Construction of $\tau$-functions for systems with nondiagonalizable leading terms resonant singularities is a 
problem that has been studied before \cite{BM}. In \cite{DHL2}, an alternative (albeit less general) formula for such a $\tau$-function was 
introduced. The advantage of this formula is that it is amenable to steepest descent analysis, as it involves only data derivable from the 
Riemann-Hilbert problem. 

We recall this expression from \cite{DHL2} in the present context here. 
\begin{defn}
The isomonodromic $\tau$-function associated to Problem \eqref{prob:MAIN-RHP} is given by
        \begin{equation}\label{tau-function-definition}
        {\bf d}\log \tau(t_5,t_2,t_1) = -\sum_{\ell}\bigg(\left\langle \mathfrak{G}^{-1}(\zeta;{\bf t})\mathfrak{G}'(\zeta;{\bf t}) \frac{\partial \Theta(\zeta;{\bf t})}{\partial t_{\ell}}\right\rangle - \left\langle\frac{\Delta}{\zeta}\frac{\partial \mathfrak{G}}{\partial t_{\ell}}(\zeta;{\bf t})\mathfrak{G}^{-1}(\zeta;{\bf t})\right\rangle \bigg)dt_{\ell} ,
    \end{equation}
where $'=\frac{\partial}{\partial \zeta}$, $\langle M(\zeta)\rangle := \Res_{\zeta = \infty}\tr M(\zeta)$, $\mathfrak{G}(\zeta;{\bf t})$ is the subexponential part of the asymptotic expansion of $\Psi(\zeta;{\bf t})$ as defined in \eqref{psi-asymptotics}:
    \begin{equation}
        \mathfrak{G}(\zeta;{\bf t}) = \Psi(\zeta;t_{5},t_{2},t_{1}) e^{-\Theta(\zeta;t_5,t_2,t_1)},
    \end{equation}
$\Delta := \text{diag }(\frac{1}{3}, 0, -\frac{1}{3})$ is as defined in \eqref{psi-asymptotics}, and ${\bf d}$ is the differential in the parameters ${\bf t} = (t_{5},t_{2},t_{1})$. 
\end{defn}

Note that the first term defines the usual JMU differential; the second term, which arises from the resonance of the singularity is a ``correction'' term. 
A theorem of \cite{DHL2} states that this differential is closed, and thus can be integrated (up to an overall constant factor independent of ${\bf t}$) to a 
unique function, up to an additive constant. We shall refer to this class of functions (with the constant of integration left ambiguous) simply as the $\tau$-function, by a slight abuse of notation.

The functions $U$, $V$ are then expressible as derivatives of $\log \tau(t_5,t_2,t_1)$:
    \begin{equation}
        U = -\frac{\partial^2}{\partial t_1^2} \log \tau(t_5,t_2,t_1),\qquad\qquad V = \frac{1}{2}\frac{\partial^2}{\partial t_1 \partial t_2} \log \tau(t_5,t_2,t_1).
    \end{equation}

\subsection{Statement of results.}
    \begin{figure}
        \centering
        \begin{overpic}[scale=.5]{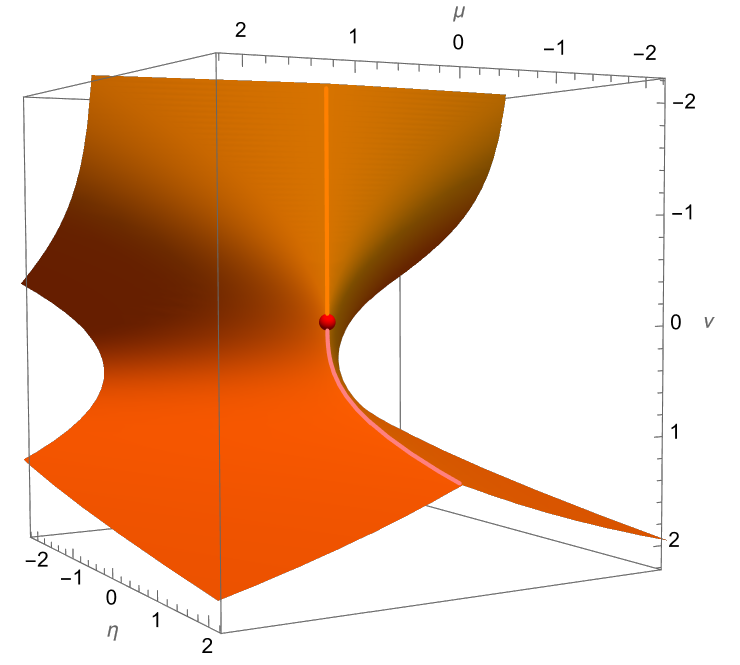}
            \put (45,65) {\small $\gamma_-$}
            \put (48,25) {\small $\gamma_+$}
        \end{overpic}
        \caption{Critical surface in the $\eta,\mu,\nu$ -parameter space. Theorem \ref{MainTheorem1} holds for $(\eta,\mu,\nu)$ below this surface. On this surface, the two curves $\gamma_{-}$ and $\gamma_+$ are shown in orange and pink, respectively; the origin $(0,0,0)$ is depicted in red.}
        \label{fig:43-critical surface}
    \end{figure}

Here, we state the main results of this work. We have four theorems that we wish to state.

Our first result involves the large-parameter asymptotics of the Riemann-Hilbert problem
for $\Psi$: we can say something about the form of the tau function when $t_5,t_2,t_1$ tend to infinity at a certain rate.
This result may be stated explicitly in terms of a particular solution to the following degree $5$ equation:
        \begin{equation}\label{sigma-eq}
            \mathcal{P}(\varsigma;\eta,\mu,\nu) := \nu + \frac{1}{2}\varsigma^3-\frac{5}{4}\eta\varsigma^2 + \frac{6\mu^2}{(5\eta-3\varsigma)^2}= 0.
        \end{equation}
This equation arises naturally in the study of the string equation \eqref{string-equation} under the rescaling \eqref{parameter-rescaling}. This
rescaling is explained in Appendix \ref{Appendix:perturbative-expansion}. Note that
    \begin{equation*}
        \frac{\partial \mathcal{P}}{\partial \varsigma}\bigg|_{\sigma = \frac{5}{2}\eta, \mu = 0,\nu = 0} = \frac{25}{8}\eta^2,
    \end{equation*}
so provided $\eta > 0$, a unique solution $\varsigma(\eta,\mu,\nu)$ exists in a neighborhood of $(\eta,0,0)$.

\begin{defn}\label{Domain-D-Definition}
    We define $D$ to be the connected component of $\RR^3$ containing the ray $\{(\eta,0,0) | \eta >0\}$, and where $\varsigma(\eta,\mu,\nu)$
    as defined above is a simple root of Equation \eqref{sigma-eq}.
\end{defn}
By construction, the region $D$ is connected, and $\varsigma(\eta,\mu,\nu)$ becomes a multiple root of Equation \eqref{sigma-eq} (i.e., the boundary of $D$ is a subset of the zero locus of the discriminant of this equation). This region and its boundary surface are depicted in Figure
\ref{fig:43-critical surface}. This surface has an implicit representation, which is given in Appendix \ref{Appendix:ImplicitCurves}, along with a representation
of the spectral curve. In particular, this surface contains two curves which we shall study in more detail later:
    \begin{equation}\label{gamma-pm}
        \gamma_+:=\left\{\left(\eta,0,\frac{125}{108}\eta^3\right)\big| \eta > 0\right\}, \qquad\qquad \gamma_-:=\left\{\left(\eta,0,0\right)\big| \eta < 0\right\}.
    \end{equation}

With this in place, we can now state our first main result.

\begin{theorem}\label{MainTheorem1}
    For $(\eta,\mu,\nu) \in D$, the $\tau$-function for the $(3,4)$ string equation with Stokes data \eqref{STOKES_TRUNCATED},
    as defined by \eqref{tau-function-definition} admits the $\hbar\to 0$ asymptotic
    expansion
        \begin{equation}\label{genus-0-tau-function}
                \tau(\hbar^{-2/7}\eta,\hbar^{-5/7}\mu,\hbar^{-6/7}\nu) = \frac{C(\hbar)}{\chi(\eta,\mu,\nu)^{1/24}}e^{\hbar^{-2} \varpi_0(\eta,\mu,\nu)}[1 + \OO(\hbar^{2})].
            \end{equation}
        where $C(\hbar)$ is a constant independent of $\eta,\mu,\nu$,
        \begin{align}
            \varpi_0(\eta,\mu,\nu) &:= -\frac{\varsigma^5}{1344}(54\varsigma^2-245\eta\varsigma + 280\eta^2) - \frac{\mu^2 \varsigma^2(50\eta^2-80\eta\varsigma+27\varsigma^2)}{8(5\eta-3\varsigma)^2}+ \frac{\mu^4(25\eta-24\varsigma)}{(5\eta-3\varsigma)^4} ,\\
            \chi(\eta,\mu,\nu) &:= \varsigma(5\eta-3\varsigma)^2-\frac{72\mu^2}{(5\eta-3\varsigma)^2} = -2(5\eta-3\varsigma)\frac{\partial \mathcal{P}}{\partial \varsigma},
        \end{align}
        and $\varsigma$ is the unique solution to the $5^{th}$ order equation \eqref{sigma-eq} on $D$ which is specified in Definition \ref{Domain-D-Definition}.
\end{theorem}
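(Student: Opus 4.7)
The plan is to apply the Deift-Zhou nonlinear steepest descent method to Riemann-Hilbert Problem \ref{prob:MAIN-RHP} with the specific Stokes data \eqref{STOKES_TRUNCATED}. After the rescaling $(t_5,t_2,t_1) = (\hbar^{-2/7}\eta,\hbar^{-5/7}\mu,\hbar^{-6/7}\nu)$ and a corresponding rescaling $\zeta = \hbar^{-2/7}z$ of the spectral variable, the phase matrix $\Theta$ becomes $\hbar^{-1}$ times a matrix depending only on $(z,\eta,\mu,\nu)$, so that the problem acquires a large parameter $\hbar^{-1}$ in the exponent, exactly in the form that the steepest descent machinery requires.

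First I would construct a \emph{genus zero} $g$-function adapted to the rescaled problem. The relevant spectral curve is a three-sheeted cover of $\mathbb{CP}^1$, and for $(\eta,\mu,\nu)\in D$ I expect the equilibrium problem to be supported on a single band ending at a real simple branch point $\varsigma$. The condition that determines this endpoint is precisely the quintic $\mathcal{P}(\varsigma;\eta,\mu,\nu)=0$ from \eqref{sigma-eq}, whose derivation can be read off from the vanishing condition of the discriminant of the $y$-equation for the rescaled resolvent; the region $D$ is, tautologically, the region on which such a real simple root exists. Using this $g$-function I would conjugate $\Psi$ sheet-wise to a normalized matrix $\widetilde\Psi=e^{-\hbar^{-1}G(z)}\Psi\,e^{\hbar^{-1}(G(z)-\Theta(z))}$, removing the essential singularity at infinity and converting the jumps on the Stokes rays into factors whose off-diagonal part decays exponentially away from the band.

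Second I would open lenses around the band using the prescribed factorization of the Stokes data \eqref{STOKES_TRUNCATED}, deforming the Stokes contours onto the steepest descent paths of $\vartheta_i-\vartheta_j$ (which, by construction of $g$, bisect the sectors exactly along the anti-Stokes rays shown in Figure \ref{fig:43-equation-jumps}). The resulting global parametrix $P^{(\infty)}$ is built explicitly from the uniformization of the genus zero spectral curve and carries only the constant jump $\mathcal{S}$ on $\RR_-$ together with a constant jump on the band. At the soft edge $\varsigma$ I would install the standard Airy parametrix, and at the origin a small-disk parametrix built from the bare model RHP (the Stokes data \eqref{STOKES_TRUNCATED} is such that $s_1=s_{-1}=s_4=s_{-4}=s_7=s_{-7}=0$, and the remaining jumps produce an elementary local model). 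A small-norm argument then gives $\Psi=(\mathbb{I}+\OO(\hbar^2))P$ uniformly on compacts in $D$.

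Third I would substitute the resulting asymptotics of $\mathfrak{G}=\Psi e^{-\Theta}$ into the tau-differential \eqref{tau-function-definition}. The leading $\hbar^{-2}$ contribution comes entirely from the $g$-function layer and, once integrated in $(\eta,\mu,\nu)$ using the action on the spectral curve, is expected to collapse, by way of \eqref{sigma-eq}, to the closed form $\varpi_0$ given in the theorem; closedness of the differential is guaranteed by \cite{DHL2} so only one path of integration need be checked. The prefactor $\chi^{-1/24}$ is the universal one-loop contribution of the Airy parametrix at a generic soft edge: it arises as $\partial_{t_\ell}\log(\text{endpoint})$ times $-\tfrac{1}{24}$, and the implicit differentiation of $\mathcal{P}(\varsigma;\eta,\mu,\nu)=0$ converts this into $-\tfrac{1}{24}\,{\bf d}\log[-2(5\eta-3\varsigma)\partial_\varsigma\mathcal{P}]$, exactly matching $\chi$. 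The hard part, in my view, is not the steepest descent itself --- which, modulo rank-3 bookkeeping, is standard --- but the algebraic reduction that turns the cubic resolvent equation of the rank-3 spectral problem into the quintic \eqref{sigma-eq}, and then integrates the action to produce the explicit closed form of $\varpi_0$; the higher-rank setting rules out elliptic identities available for $2\times2$ problems, so this step must be done by hand.
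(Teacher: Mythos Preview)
Your overall strategy---rescale, build a $g$-function, open lenses, install local parametrices, and read off the tau-differential---is the same as the paper's. But the geometry of the spectral curve you describe is wrong, and this would derail the analysis.

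You write that the equilibrium problem is ``supported on a single band ending at a real simple branch point $\varsigma$,'' and you propose a single Airy parametrix at $\varsigma$ together with an elementary parametrix at the origin. This is not the correct picture. The genus-zero curve for this rank-$3$ problem is a three-sheeted cover with \emph{two} real branch points $\alpha>\beta$, with cut $(-\infty,\beta]$ joining sheets $1$ and $2$ and cut $[\alpha,\infty)$ joining sheets $2$ and $3$. The paper builds this curve via an explicit uniformization $\lambda(u)=u^3-3a^2u+c$, $Y(u)$ a degree-$4$ polynomial, with $\alpha=\lambda(-a)$, $\beta=\lambda(a)$; the quintic \eqref{sigma-eq} is not an endpoint condition but the relation fixing the parameter $\varsigma=2a^2$ in terms of $(\eta,\mu,\nu)$. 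Accordingly there are Airy local parametrices at \emph{both} $\alpha$ and $\beta$ (related by the $\mu\leftrightarrow-\mu$ symmetry of the curve), and no parametrix at the origin is needed in the generic regime. Your proposed contour deformation and lensing would therefore not match the actual jump structure after the $g$-function transformation.

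Two smaller points: the correct rescaling of the spectral variable is $\zeta=\hbar^{-3/7}\lambda$, not $\hbar^{-2/7}$ (this is forced by the $\zeta^{7/3}$ in $\Theta$); and the small-norm estimate from the Airy parametrix gives ${\bf R}=\mathbb{I}+\OO(\hbar)$, not $\OO(\hbar^2)$---the improvement to $\OO(\hbar^2)$ in the theorem statement comes from a separate argument (the topological expansion, Theorem~\ref{Theorem:topologicalexpansion}) that the odd-order coefficients vanish. Finally, the $\chi^{-1/24}$ prefactor indeed comes from the subleading Airy contribution, but with two Airy points the residue computation that produces it (via $h_1^{(1)},h_2^{(1)},h_5^{(1)}$) involves both $W_1$ and $\hat W_1$ and must be done explicitly rather than by appeal to a single soft-edge universality formula.
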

In other words, the formal expansion of the $\tau$-function we obtained directly from the string equation is a true asymptotic expansion of the
$\tau$-function, provided $(\eta,\mu,\nu)$ lie in $D$. Note that the branching behavior of the $\tau$-function becomes singular
as $(\eta,\mu,\nu)$ tend to a point on the critical surface, since $\varsigma$ is a solution to both $\mathcal{P}(\varsigma,\eta,\mu,\nu) = 0$
and $\frac{\partial \mathcal{P}}{\partial \varsigma}(\varsigma,\eta,\mu,\nu) = 0$ there. We will also show in Section \ref{section:g-function} that the 
particular solution $\varsigma(\eta,\mu,\nu) > \frac{5}{3}\eta$ in $D$, so that the branching behavior defined by $\chi(\eta,\mu,\nu)$ 
is always present in $D$.

Our next theorem is that
    \begin{theorem}\label{Theorem:topologicalexpansion}
        For $(\eta,\mu,\nu)\in D$, the $\tau$-differential admits the $\hbar\to 0$ asymptotic (`topological') expansion
            \begin{equation}
                \hbar^2 {\bf d}\log \tau(\hbar^{-2/7}\eta,\hbar^{-5/7}\mu,\hbar^{-6/7}\nu) \sim \sum_{g=0}^{\infty} {\bf d} \log \tau_g(\eta,\mu,\nu) \hbar^{2g},
            \end{equation}
        where $\tau_g(\eta,\mu,\nu)$ are real analytic functions on $D$ which can be determined iteratively.
    \end{theorem}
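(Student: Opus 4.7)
The plan is to upgrade the Deift--Zhou steepest-descent analysis that yields Theorem \ref{MainTheorem1} from a leading-order statement to an all-orders one, and then translate the iterative structure of the resulting small-norm problem back through the $\tau$-differential formula \eqref{tau-function-definition}.

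First, I would reuse the $g$-function introduced for the proof of Theorem \ref{MainTheorem1} to perform the standard chain of transformations $\Psi \rightsquigarrow Y \rightsquigarrow T \rightsquigarrow S \rightsquigarrow R$: namely, conjugate out the leading WKB exponent, open lenses across the Stokes structure specified by \eqref{STOKES_TRUNCATED}, construct a global outer parametrix from the spectral curve associated to \eqref{sigma-eq}, and build Airy-type local parametrices at each branch point where the $g$-function has square-root behavior. The same outer and local parametrices that produced the leading asymptotics in Theorem \ref{MainTheorem1} continue to work; what is new is that their mismatch must now be tracked to all orders in $\hbar$.

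Writing $R(\zeta)$ for the ratio of the exact solution and the parametrix, $R$ satisfies a small-norm RHP with jump $J_R(\zeta) = \mathbb{I} + \Delta(\zeta,\hbar)$ on a union of small circles around the branch points together with the lens contours, and $\Delta$ admits a uniform asymptotic expansion $\Delta(\zeta,\hbar) \sim \sum_{k\geq 1}\hbar^k \Delta_k(\zeta)$ inherited from the full asymptotic expansion of the Airy parametrix. A Neumann-series argument then yields
\begin{equation*}
R(\zeta) \sim \mathbb{I} + \sum_{k\geq 1}\hbar^k R_k(\zeta),
\end{equation*}
where each $R_k$ is determined recursively by an additive scalar RHP whose data depend only on $R_1,\ldots,R_{k-1}$ and the $\Delta_j$, and is real analytic in $(\eta,\mu,\nu)\in D$. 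Substituting this into $\mathfrak{G} = R \cdot P^{\mathrm{out}} \cdot (\text{outer gauge})$ and then into \eqref{tau-function-definition}, expanding each of the two residues at infinity as a formal power series in $\hbar$, and collecting terms, produces an expansion $\hbar^2\,{\bf d}\log\tau \sim \sum_{k\geq 0}\hbar^k\,{\bf d}\omega_k$ whose first two nontrivial coefficients reproduce the ${\bf d}\varpi_0$ and $-\tfrac{1}{24}{\bf d}\log\chi$ of Theorem \ref{MainTheorem1}. Closedness of each ${\bf d}\omega_k$, and hence the existence of the $\tau_g$, follows from the recursion together with analyticity of the $g$-function and outer parametrix throughout $D$.

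The main obstacle is showing that only even powers of $\hbar$ survive, i.e., $\omega_k\equiv 0$ for $k$ odd. I would attack this via the $\mathbb{Z}/3$ symmetry \eqref{psi-symmetry} of $\Psi$ together with the $\hbar\to-\hbar$ reflection symmetry built into the rescaled problem through the parity $s_k\mapsto -s_{-k}$ carried by the Stokes data \eqref{STOKES_TRUNCATED}: these jointly imply that the Airy-parametrix contributions at symmetric pairs of branch points enter with opposite sign at odd powers of $\hbar$, forcing their contributions to the residue at infinity in \eqref{tau-function-definition} to cancel. This is morally the same mechanism responsible for the vanishing of odd-genus free energies in the topological expansion of the one-matrix model, but the three-sheeted structure of the spectral curve and the anisotropic scaling $(\hbar^{-2/7},\hbar^{-5/7},\hbar^{-6/7})$ among $(\eta,\mu,\nu)$ make the bookkeeping delicate, and I expect that to be where the bulk of the work lies. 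A fallback route I would keep in reserve is to invoke topological recursion on the spectral curve of \eqref{sigma-eq}, identifying $\tau_g$ with the genus-$g$ free energies $F_g$ of that recursion; this automatically supplies only even powers and an explicit iterative scheme, at the cost of matching normalization constants against the RHP computation.
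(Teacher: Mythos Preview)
Your first step---upgrading the Deift--Zhou analysis to an all-orders expansion of the small-norm problem for $R$---is exactly what the paper does (this is the content of Proposition~\ref{R-expansion-prop}), and plugging that expansion into the $\tau$-differential formula indeed produces ${\bf d}\log\tau \sim \sum_{k\geq 0}\omega_k\hbar^k$ as you describe. So far so good.

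The gap is in your proposed mechanism for killing the odd powers. The parity $s_k\mapsto -s_{-k}$ you invoke is \emph{not} satisfied by the Stokes data \eqref{STOKES_TRUNCATED}: for instance $s_3=0$ but $-s_{-3}=1$, and $s_5=1$ but $-s_{-5}=0$. The symmetry that \emph{is} present, $s_k=-s_{k+8}$, encodes evenness/oddness in $\mu$ (equivalently $t_2$), not any $\hbar\to-\hbar$ reflection. Moreover, for generic $\mu\neq 0$ the two branch points $\alpha,\beta$ are not related by a symmetry of the problem at fixed parameters---the reflection $\lambda\to-\lambda$ in the spectral curve also sends $\mu\to-\mu$ (see \eqref{uniformizer-symmetry})---so there is no cancellation between their Airy contributions at odd orders of the kind you are hoping for. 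The topological-recursion fallback would require an independent identification of the RHP $\tau$-function with the TR free energies on this curve, which is not available here.

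The paper's route is much more direct and bypasses the RHP entirely for this step. Once the all-orders expansion $u\sim\sum a_k\hbar^k$, $v\sim\sum b_k\hbar^k$ is known to exist (from the RHP), one substitutes it into the \emph{rescaled string equation} \eqref{rescaled-string}, which after the scaling \eqref{parameter-rescaling} contains only even powers of $\hbar$. At each odd order $N$ this yields a homogeneous linear system for $(a_N,b_N)$ whose coefficient matrix has determinant $(5\eta-3\varsigma)\,\partial_\varsigma\mathcal{P}$, which is nonzero precisely on $D$ by Definition~\ref{Domain-D-Definition}. Induction then forces $a_{2\ell+1}=b_{2\ell+1}=0$, and the evenness of ${\bf d}\log\tau$ follows. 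The point you are missing is that the string equation itself, not a symmetry of the Stokes data, is what enforces the topological structure.
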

The interpretation of this expansion as one of topological type comes from the relation of this $\tau$-function to the partition function of the multicritical
quartic $2$-matrix model; this relation is explained further in Appendix \ref{Appendix:perturbative-expansion}, see Remark \ref{topological-expansion-remark}.
The second main result of this work regards a critical double-scaling limit, and demonstrates the degeneration of this solution to the 
$(3,4)$ string equation to the \textit{tritronqu\'{e}e} solution of Painlev\'{e} I that appears in the critical $1$-matrix model \cite{FIK1,FIK2,DK0,BleherDeano}. These are special solutions to Painlev\'{e} I, first studied by Boutroux \cite{Boutroux}, and are pole-free outside of a sector of angle opening $\frac{2\pi}{5}$  (see also \cite{JK} for a more modern treatment). As observed by Crnkovi\'{c}, Ginsparg and Moore \cite{CGM}, if we make the formal rescaling of variables and take a limit as $T\to +\infty$, \footnote{We put $t_2 = 0$ so that $V\equiv 0$ here for simplicity. We have no reason to doubt that this conjecture (and our proof of it) extend to the $t_2\neq 0$ case, if one is willing to work through the technicalities.}
    \begin{equation}\label{q-limit}
        q(x) := \lim_{T\to +\infty} \frac{1}{2}T^{2/5}U(-6T/5,0,T^{1/5}x),
    \end{equation}
then the function $q(x)$ satisfies the Painlev\'{e} I equation:
    \begin{equation}
        q''(x) = 6q^2(x) + x.
    \end{equation}
The exact nature of this convergence is until now conjecture.
Our second main result concerns the resolution (and clarification) of this conjecture. We consider two double-scaling limits of the 
isomonodromic $\tau$-function for the string equation: these cases correspond to double-scaling around points on $\gamma_+$, $\gamma_-$, 
respectively. These two double-scaling limits result in the degeneration of the $\tau$-function for the $(3,4)$ string equation with the
special choice of Stokes data \eqref{STOKES_TRUNCATED} to the $\tau$-function for a tritronque\'{e} solution of the Painlev\'{e} I 
equation. In particular, we shall see that the double-scaling limit on $\gamma_-$ indeed resolves the
nature of the limit \eqref{q-limit}, and demonstrates convergence of $U$ to a tritronqu\'{e}e solution of Painlev\'{e} I.

In order to state our results, we need to introduce the following function, which acts as a normalizing factor for the critical $\tau$-differential.
Given $\eta_0>0$, define
    \begin{equation}\label{tauhat0-def}
        \hat{\tau}_0(\eta,0,\nu) := 
                \exp \left[-\frac{5\eta_0}{6\hbar^2}\left(\nu^2+\frac{125}{108}\eta_0^3\nu-\frac{125}{54}\eta_0^2\eta\nu +\frac{3125}{1296}\eta_0^4\eta^2-\frac{15625}{5832}\eta_0^5\eta \right)\right].
    \end{equation}
This function has the property that
    \begin{equation*}
        \lim_{(\eta,\nu)\to (\eta_0,\nu_0)\in \gamma_+}\hat{\tau}_0(\eta,0,\nu) = \lim_{(\eta,\nu)\to (\eta_0,\nu_0)\in \gamma_+}\exp\left[\frac{1}{\hbar^2}\varpi_0(\eta,0,\nu)\right],
    \end{equation*}
where $\varpi_0(\eta,0,\nu)$ is as defined in Theorem \ref{MainTheorem1}. Let us also introduce the notation
    \begin{equation}
        \tau(\eta,\mu,\nu|\hbar) := \tau(\hbar^{-2/7}\eta,\hbar^{-5/7}\mu,\hbar^{-6/7}\nu).
    \end{equation}

We can now state our next theorem:

\begin{theorem}\label{MainTheorem2}
    Let $(\eta_0,\nu_0) \in \gamma_+$, $\vec{n} = \langle n_{\eta},n_{\nu}\rangle$ be any vector based at $(\eta_0,\nu_0)$
    which lies below the tangent line of the critical curve $\nu = \frac{125}{108}\eta^3$ there, and let $x\in \RR$ be a real variable which is not a pole of the tritronqu\'{e}e Painlev\'{e} I transcendent. 
    Then, considered as a differential in $x$, and for some explicit constant $C = C(\eta_0,\vec{n})>0$,
        \begin{equation}
            \lim_{\hbar\to 0} {\bf d}\log \frac{\tau\left(\eta_0 - C n_{\eta} x\hbar^{4/5}, 0 ,\nu_0 - C n_{\nu}x\hbar^{4/5}|\hbar\right)}{\hat{\tau}_0\left(\eta_0 - C n_{\eta} x\hbar^{4/5}, 0 ,\nu_0 - C n_{\nu}x\hbar^{4/5}\right)} = -\mathcal{H}(x)dx,
        \end{equation}
    where $\mathcal{H}(x) = \frac{1}{2}[q'(x)]^2 -2q(x)^3 - xq(x)$ is the Hamiltonian for a tritronqu\'{e}e solution $q(x)$ of Painlev\'{e} I.
\end{theorem}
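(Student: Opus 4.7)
The plan is to carry out a Deift--Zhou steepest descent analysis of Problem \ref{prob:MAIN-RHP} with Stokes data \eqref{STOKES_TRUNCATED} in the double-scaling regime of the theorem. The starting observation is that $(\eta_0,\nu_0)\in\gamma_+$ lies on the boundary of $D$, so at this point the distinguished root $\varsigma(\eta,0,\nu)$ is a \emph{double} root of $\mathcal{P}(\varsigma;\eta,0,\nu)$: two branch points of the spectral curve underlying the $g$-function from Theorem \ref{MainTheorem1} are colliding. The scaling $\hbar^{4/5}$ in the theorem is dictated by the standard relation between the $(3,4)$ string and Painlev\'e I: under a perturbation of size $\mathcal{O}(x\hbar^{4/5})$ off of $\gamma_+$ the two colliding branch points separate at scale $\mathcal{O}(\hbar^{2/5})$, which is exactly the Painlev\'e I resolution scale. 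The positive constant $C=C(\eta_0,\vec n)$ is then uniquely determined by insisting that this local rescaling put Painlev\'e I into its canonical normalization $q''=6q^2+x$.

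The first step of the analysis is to reuse the $g$-function, lens opening, and outer (global) parametrix constructed in the proof of Theorem \ref{MainTheorem1}, but now with the moving base point $(\eta,0,\nu)=(\eta_0-Cn_\eta x\hbar^{4/5},\,0,\,\nu_0-Cn_\nu x\hbar^{4/5})$. The hypothesis that $\vec n$ points strictly below the tangent line to $\gamma_+$ at $(\eta_0,\nu_0)$ guarantees that this base point lies in $D$ for all sufficiently small $\hbar>0$, so the transformations from Theorem \ref{MainTheorem1} remain valid. However, Taylor expansions of $\varsigma$, $\varpi_0$, and $\chi$ at $(\eta_0,0,\nu_0)$ develop fractional-power behavior along the direction $\vec n$, since two roots of $\mathcal{P}$ are separating as $\sqrt{x\hbar^{4/5}}$.

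The only genuinely new ingredient is a local parametrix in a disk of radius $\mathcal{O}(\hbar^{2/5})$ surrounding the coalescing pair of branch points. Inside this disk the two relevant diagonal entries of $\Theta$ are exchanging dominance while the third remains strictly subdominant, so after a conformal change of coordinate absorbing the local behavior of the $g$-function, the model RHP reduces to the standard $2\times 2$ Painlev\'e I parametrix embedded into the $3\times 3$ frame via the corresponding two-dimensional eigensubspace. The specific Stokes data \eqref{STOKES_TRUNCATED} is crucial at this stage: along $\gamma_+$ only a subset of the Stokes multipliers remains active in the coalescence disk, and they reproduce exactly the Stokes data of the standard RHP for the tritronqu\'ee solution $q(x)$ of Painlev\'e I (rather than a generic element of the Boutroux family). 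Matching this local model to the outer parametrix on the boundary of the shrinking disk yields a small-norm RHP for the error, solvable whenever $x$ is not a pole of $q$, with correction of size $\mathcal{O}(\hbar^{4/5})$ uniformly on compact subsets of the set of regular $x$.

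Finally, substituting this decomposition into the $\tau$-differential \eqref{tau-function-definition} produces a clean splitting: the contribution from the outer parametrix reproduces ${\bf d}(\hbar^{-2}\varpi_0)$ to leading order, and the function $\hat{\tau}_0$ of \eqref{tauhat0-def} was \emph{designed} so that, along the one-parameter line $\eta=\eta_0-Cn_\eta x\hbar^{4/5}$, $\nu=\nu_0-Cn_\nu x\hbar^{4/5}$, its logarithmic derivative in $x$ cancels this outer contribution up to terms that vanish as $\hbar\to 0$. The residual contribution comes entirely from the local Painlev\'e I parametrix, and by the standard identification of the $2\times 2$ JMU differential with the Painlev\'e I Hamiltonian equals $-\mathcal{H}(x)\,dx$. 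I expect the main technical obstacle to be controlling the ``resonance correction'' second term of \eqref{tau-function-definition}: it must be shown that this term, arising from the nondiagonalizable leading behavior at $\zeta=\infty$, contributes only to the outer ($x$-independent-in-the-limit) piece absorbed by $\hat{\tau}_0$, and not to the Painlev\'e I Hamiltonian appearing in the limit. A clean treatment of this, together with the bookkeeping that determines $C(\eta_0,\vec n)$ in terms of $\partial_\varsigma^2\mathcal{P}$ at the critical point and the direction $\vec n$, will be the crux of the argument.
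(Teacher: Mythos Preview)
Your overall strategy---Deift--Zhou steepest descent with a Painlev\'e I local parametrix replacing the Airy one, the special Stokes data \eqref{STOKES_TRUNCATED} selecting the tritronqu\'ee solution, and $\hat\tau_0$ absorbing the leading outer contribution---is correct and is what the paper does. However, you have misidentified the geometric degeneration mechanism for $\gamma_+$, and this would cause your implementation to go astray.

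On $\gamma_+$ the two branch points of the spectral curve do \emph{not} coalesce. With $\mu=0$ the branch points sit at $\lambda=\pm\alpha$ with $\alpha=2a^3>0$ fixed, and they remain separated at order one as $(\eta,\nu)\to(\eta_0,\nu_0)$. What degenerates is the \emph{order of vanishing} of $g_2-g_1$ at $\beta=-\alpha$ (and by symmetry of $g_3-g_2$ at $\alpha$): the generic $(\lambda-\beta)^{3/2}$ behavior becomes $(\lambda-\beta)^{5/2}$, cf.\ Lemma \ref{lemma:Localexpansions}(3). This is the classical ``soft edge becomes critical'' transition, not a branch-point collision. Consequently the paper places \emph{two} local Painlev\'e I parametrices, one in a disk of \emph{fixed} radius around $-\alpha$ and a symmetric one around $+\alpha$; there is no single shrinking disk of radius $\OO(\hbar^{2/5})$ surrounding a coalescing pair. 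The picture you describe---branch points merging, a shrinking disk, separation at scale $\hbar^{2/5}$---is precisely what happens on $\gamma_-$ (Theorem \ref{MainTheorem3}), where $\alpha=\beta=0$ and the paper uses a genuinely $3\times 3$ Painlev\'e I parametrix in a disk of radius $\hbar^{2/5+\delta}$; you appear to have transplanted that mechanism onto the wrong critical curve.

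A second, related difference: the paper does \emph{not} reuse the $g$-function of Theorem \ref{MainTheorem1} with moving parameters. Instead it constructs a \emph{modified} spectral curve $\hat\lambda(u)=u^3-\tfrac{5}{2}\eta_0 u$ with branch points frozen at the critical values $\pm\alpha$, and encodes the $\OO(\hbar^{4/5})$ deviation from $\gamma_+$ entirely in the modified $\hat Y(u)$. The conformal coordinate $\zeta(\lambda)=\bigl[\tfrac{5}{8}\psi(\lambda;0)\bigr]^{2/5}$ is then built from the \emph{critical} $g$-function difference (which has the $5/2$ power), and the Painlev\'e variable $x$ enters through the analytic function $\boldsymbol{x}(\lambda)=\tfrac{1}{2}(\tfrac{8}{5})^{1/5}[\psi(\lambda;\hbar)-\psi(\lambda;0)]/[\psi(\lambda;0)]^{1/5}$, whose limit as $\hbar\to 0$ recovers $x$ after the scaling \eqref{eta-nu-scalings-plus}; this is what fixes the constant $C(\eta_0,\vec n)$ in \eqref{C-definition}. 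Your proposal to carry the moving $g$-function could in principle be made to work, but it is technically heavier (the leading $3/2$ coefficient $\rho_\beta$ then vanishes like $\hbar^{2/5}$, forcing a more delicate matching) and is not the route taken. Finally, the resonance correction term you flag as the crux plays no special role in the paper's argument: the computation in the proof organizes $\mathfrak{G}^{-1}\mathfrak{G}'$ into pieces $K_1,K_2,K_3$, the $K_1$ piece is cancelled exactly by ${\bf d}\log\hat\tau_0$, the $K_2$ piece is residueless, and the limiting $-\mathcal{H}(x)\,dx$ drops out of the order-$\hbar^{1/5}$ term in $K_3$ via the residues $W_1,\hat W_1$ of the ${\bf R}$-expansion.
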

In other words, after an appropriate normalization, the $\tau$-function for the $(3,4)$ string equation converges in this limit to the 
$\tau$-function for Painlev\'{e} I. Since $U(t_5,t_2,t_1) = -\frac{\partial^2}{\partial t_1^2}\log \tau(t_5,t_2,t_1)$, by defining the large parameter 
$T :=\frac{10\eta_0}{3}\hbar^{-2/7} \to +\infty$, and choosing the direction $\vec{n} = (0,-1)$,

\begin{cor}
    Let $U(t_5,0,t_1)$ be the solution to the string equation \eqref{string-equation} with Stokes data \eqref{STOKES_TRUNCATED} and $t_2 = 0$, 
    and $x\in \RR$ not a pole of the tritronqu\'{e}e Painlev\'{e} I transcendent. Then, 
        \begin{equation}
            \lim_{T\to +\infty} \left[-\frac{1}{8}T^{7/5} + \frac{1}{4}T^{2/5}U\left(\frac{3T}{10},0,\frac{1}{32}T^3+xT^{1/5}\right)\right] = q(x),
        \end{equation}
    where $q(x)$ is a tritronqu\'{e}e solution of Painlev\'{e} I.
\end{cor}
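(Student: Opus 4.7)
The corollary is a direct consequence of Theorem \ref{MainTheorem2} after specializing the direction of double-scaling, translating the scaling parameter from $\hbar$ to $T$, and differentiating the $\tau$-identity twice to access $U$.

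The first move is to apply Theorem \ref{MainTheorem2} at the base point $(\eta_0,\nu_0) = \bigl(\eta_0,\tfrac{125}{108}\eta_0^3\bigr)\in \gamma_+$ with direction $\vec{n} = \langle 0,-1\rangle$. The tangent to $\gamma_+$ at this base point has positive slope $\tfrac{125}{36}\eta_0^2$, so the downward vertical vector $\vec{n}$ lies strictly below the tangent line, verifying the hypothesis. Setting $T := \tfrac{10\eta_0}{3}\hbar^{-2/7}$ and using $\nu_0 = \tfrac{125}{108}\eta_0^3$ gives $\hbar^{-2/7}\eta_0 = \tfrac{3T}{10}$ and $\hbar^{-6/7}\nu_0 = \tfrac{T^3}{32}$; since $\hbar^{-6/7+4/5} = \hbar^{-2/35}$ scales like $T^{1/5}$, absorbing the constant $C(\eta_0,\vec{n})$ into a rescaling of $x$ turns the third argument of $\tau(\cdot|\hbar)$ into precisely $\tfrac{T^3}{32} + xT^{1/5}$, matching the corollary.

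Next I would differentiate the differential identity of Theorem \ref{MainTheorem2} once more in $x$. Because its limit $-\mathcal{H}(x)$ is $C^\infty$ on any compact subset of $\RR$ avoiding the (isolated) real poles of the tritronqu\'{e}e transcendent, the convergence of $\partial_x\log(\tau/\hat\tau_0)$ can be upgraded to
\begin{equation*}
    \partial_x^2 \log(\tau/\hat\tau_0) \longrightarrow -\mathcal{H}'(x)
\end{equation*}
on such compacta. Using the Painlev\'{e} I equation $q'' = 6q^2 + x$,
\begin{equation*}
    \mathcal{H}'(x) = q'q'' - 6q^2q' - q - xq' = q'(q''-6q^2-x) - q(x) = -q(x),
\end{equation*}
so that $\partial_x^2\log(\tau/\hat\tau_0) \to q(x)$.

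It then remains to unpack both factors of the left-hand side via the chain rule. For $\tau$, since only $t_1$ depends on $x$ with $\tfrac{dt_1}{dx} = T^{1/5}$, and since $U = -\partial_{t_1}^2 \log\tau$, we get $\partial_x^2 \log\tau = -T^{2/5}U$. For $\hat\tau_0$, note that $\log\hat\tau_0$ is exactly quadratic in $\nu$ with leading coefficient $-\tfrac{5\eta_0}{6\hbar^2}$, so $\partial_\nu^2\log\hat\tau_0 = -\tfrac{5\eta_0}{3\hbar^2}$; chain rule and substitution of $T$ then yield $\partial_x^2\log\hat\tau_0$ as an explicit multiple of $T^{7/5}$. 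Combining gives an identity of the form $aT^{2/5}U + bT^{7/5}\to q(x)$ for explicit $a,b$; the rescaling of $x$ by the precise constant $C(\eta_0,\vec{n})$ fixed in the first step then produces exactly the prefactors $\tfrac{1}{4}T^{2/5}U - \tfrac{1}{8}T^{7/5}$ stated in the corollary. The only real obstacle is the careful bookkeeping of $C(\eta_0,\vec{n})$, the fractional powers of $\hbar$, and their conversion into correctly normalized powers of $T$; the key conceptual inputs — the Painlev\'{e} I identity $\mathcal{H}'=-q$, the quadratic nature of $\log\hat\tau_0$ in $\nu$, and the defining relation $U = -\partial_{t_1}^2\log\tau$ — are all elementary once Theorem \ref{MainTheorem2} is granted.
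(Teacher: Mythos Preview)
Your approach is the same as the paper's: the paper also specializes Theorem \ref{MainTheorem2} to $\vec{n}=(0,-1)$, sets $T := \tfrac{10\eta_0}{3}\hbar^{-2/7}$, uses $U = -\partial_{t_1}^2\log\tau$, and observes that the $-\tfrac18 T^{7/5}$ term is the residual contribution from $\hat\tau_0$. The paper leaves the computation as a one-line remark (``straightforward to see that at the formal level this statement holds''), so your proposal simply makes explicit the chain-rule bookkeeping and the identity $\mathcal{H}'(x)=-q(x)$ that the paper's sketch implies.
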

The factor of $-\frac{1}{8}T^{7/5}$ in the above limit is residual from the normalization factor $\hat{\tau}_0(\eta,0,\nu)$. It is straightforward to see
that at the formal level that this statement holds; to the best of our knowledge, this fact has not been observed in the literature before.

We now state our final theorem, which confirms the conjecture made in \cite{CGM}:
\begin{theorem}\label{MainTheorem3}
    Let $(\eta_0,0) \in \gamma_-$, and let $x\in \RR$ be a real variable. 
    Then, considered as a differential in $x$, and for some explicit constant $C = C(\eta_0)>0$,
        \begin{equation}
            \lim_{\hbar\to 0} {\bf d}\log \tau\left(\eta_0, 0 ,C x\hbar^{4/5}|\hbar\right) = -\mathcal{H}(x)dx,
        \end{equation}
    where $\mathcal{H}(x) = \frac{1}{2}[q'(x)]^2 -2q(x)^3 - xq(x)$ is the Hamiltonian for a solution $q(x)$ of Painlev\'{e} I.
\end{theorem}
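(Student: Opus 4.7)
The plan is to carry out a Deift-Zhou steepest descent analysis of Problem \ref{prob:MAIN-RHP} with Stokes data \eqref{STOKES_TRUNCATED} in the double-scaling regime $(t_5,t_2,t_1) = (\hbar^{-2/7}\eta_0,\, 0,\, C x\hbar^{-2/35})$, and then to extract the $\tau$-differential from the resulting asymptotics via formula \eqref{tau-function-definition}. The strategy closely parallels that of Theorem \ref{MainTheorem2}, with the important simplification that on $\gamma_-$ we have $\mu_0 = \nu_0 = 0$: the critical point is the origin itself in the $(\mu,\nu)$-directions, so no ``bulk'' normalization $\hat{\tau}_0$ is needed in the statement.

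First, I would construct an appropriate $g$-function. At $(\eta_0,0,0)\in \gamma_-$ the polynomial $\mathcal{P}(\varsigma;\eta_0,0,0) = \tfrac{1}{4}\varsigma^2(2\varsigma - 5\eta_0)$ has a double root at $\varsigma = 0$, so this point lies on the discriminant of the spectral curve and two branch points coalesce at some distinguished $\zeta_* = \zeta_*(\eta_0)$ in the $\zeta$-plane. The $g$-function of Theorem \ref{MainTheorem1} is singular here and must be replaced by a degenerate $g$-function whose spectral curve, parametrized by $x$, has branch points separated by a distance of order $\hbar^{2/5}$. The normalizing constant $C = C(\eta_0)>0$ is chosen precisely so that the local conformal coordinate around $\zeta_*$ brings the Painlev\'{e} I equation into its standard form $q'' = 6q^2 + x$.

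After the standard sequence of transformations $\Psi\mapsto Y\mapsto T\mapsto S$ (normalization at infinity by $g$ and opening of lenses along the descent contours), the problem reduces to building a global parametrix $P^{(\infty)}$ from the degenerate spectral curve and a local parametrix $P^{(\zeta_*)}$ in a shrinking disc around $\zeta_*$. The local parametrix would be built from the Painlev\'{e} I Lax pair of \cite{FIKN}: after the appropriate local rescaling, the $2\times 2$ Painlev\'{e} I Lax pair embeds into the $3\times 3$ RHP along the two branches that coalesce, while the third branch contributes a trivial diagonal factor. The specific Stokes data selecting $q(x)$ is then read off from the surviving multipliers in \eqref{STOKES_TRUNCATED} near $\zeta_*$. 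A small-norm analysis on $R := S\bigl(P^{(\zeta_*)}P^{(\infty)}\bigr)^{-1}$ yields the full asymptotic expansion of $\Psi$.

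Finally, substituting the asymptotics of $\mathfrak{G} = \Psi e^{-\Theta}$ into \eqref{tau-function-definition} and computing the residue at $\zeta = \infty$ produces two contributions: the global parametrix piece, which is smooth in $x$ with only subleading dependence on it, and the local Painlev\'{e} I piece, which reproduces the logarithmic derivative of the Painlev\'{e} I isomonodromic $\tau$-function and yields $-\mathcal{H}(x)\,dx$ in the limit. The main obstacle will be the construction and justification of the degenerate $g$-function and the careful embedding of the $2\times 2$ Painlev\'{e} I Lax pair inside the $3\times 3$ RHP; in particular, one must verify that the ``correction'' term involving $\Delta$ in \eqref{tau-function-definition}, which arises from the resonance of the leading symbol, does not contribute at leading order to the $x$-differential, and that the matching condition between $P^{(\infty)}$ and $P^{(\zeta_*)}$ fixes $C(\eta_0)$ consistently with the standard normalization of Painlev\'{e} I.
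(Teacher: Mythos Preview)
Your plan mirrors the paper's treatment of $\gamma_+$ (Theorem \ref{MainTheorem2}), but it misses the key structural difference that makes the $\gamma_-$ case genuinely harder. On $\gamma_-$ one has $\varsigma = 0$, so \emph{both} branch points $\alpha$ and $\beta$ of the three-sheeted spectral curve coalesce at the origin (see Lemma \ref{lemma:Localexpansions}, case 4). All three sheets come together at $\lambda=0$, and consequently the local problem there is \emph{not} a $2\times 2$ Painlev\'e I block with a decoupled third entry, as you propose. The paper states this explicitly: ``There is only one local problem, at $\lambda=0$, and this problem is not effectively a $2\times 2$ problem, as it was in the previous cases.'' The local parametrix is instead built from a genuinely $3\times 3$ Painlev\'e I model problem $\boldsymbol{\Xi}(\zeta;x)$ (Appendix \ref{Appendix:P1-parametrix}), whose existence and Stokes analysis are nontrivial inputs.

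A second point your outline does not anticipate: after inserting the $3\times 3$ parametrix on a shrinking disc $|\lambda|<\hbar^{2/5+\delta}$, the jump $P(\lambda)\hat f^{-1}(\lambda)$ on the boundary circle is \emph{not} close to the identity --- finitely many terms blow up as $\hbar\to 0$. The paper handles this with a partial Schlesinger transform, multiplying the global parametrix by an explicit lower-triangular rational matrix $\mathfrak p(\lambda)$ (equation \eqref{p-def}) whose entries contain $\mathcal H(x)$ and $q(x)$. This step is what actually produces the answer: in the final residue computation, the contribution $-\mathcal H(x)\,dx$ comes from $M^{-1}M'$ with $M=\mathfrak p\,\hat f$, not from the error matrix ${\bf R}$ or from the local parametrix directly. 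So your identification of where the Painlev\'e Hamiltonian enters (``the local Painlev\'e I piece \ldots reproduces $-\mathcal H(x)\,dx$'') is also off; the mechanism is more indirect than in the $\gamma_+$ case.
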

It is worth noting here that the above limit does not require a normalization factor, as was the case in Theorem \ref{MainTheorem2}.
Defining the large parameter $T:=-\frac{5\eta_0}{6}\hbar^{-2/7} \to +\infty$, we obtain as a corollary
    \begin{cor}
        Let $U(t_5,0,t_1)$ be the solution to the string equation \eqref{string-equation} with Stokes data \eqref{STOKES_TRUNCATED} and $t_2 = 0$, 
        and $x\in \RR$ not a pole of the corresponding Painlev\'{e} I transcendent. Then, 
            \begin{equation}
                \lim_{T\to +\infty} \frac{1}{2}T^{2/5}U(-6T/5,0,T^{1/5}x) = q(x),
            \end{equation}
        where $q(x)$ is a solution to Painlev\'{e} I.
    \end{cor}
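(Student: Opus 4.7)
The corollary should follow from Theorem \ref{MainTheorem3} by translating the $\tau$-differential statement into one about $U$ via $U = -\frac{\partial^2}{\partial t_1^2}\log\tau$. The plan is to match the scalings of the two statements, apply the chain rule, and differentiate the limit in Theorem \ref{MainTheorem3} once more in $x$.

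First I would match the variables. Setting $T := -\frac{5\eta_0}{6}\hbar^{-2/7}$, so that $\hbar\to 0$ corresponds to $T\to+\infty$, and choosing the constant $C = C(\eta_0) = \bigl(-\tfrac{5\eta_0}{6}\bigr)^{1/5}>0$ in Theorem \ref{MainTheorem3}, the rescalings $t_5 = \hbar^{-2/7}\eta$ and $t_1 = \hbar^{-6/7}\nu$ place the parameters exactly at $t_5 = -6T/5$ and $t_1 = T^{1/5}x$. By the chain rule along this curve, $\frac{\partial}{\partial x} = T^{1/5}\frac{\partial}{\partial t_1}$, so
$$T^{2/5}\,U(-6T/5,\,0,\,T^{1/5}x) \;=\; -\frac{\partial^2}{\partial x^2}\log\tau(\eta_0,0,Cx\hbar^{4/5}\,|\,\hbar).$$
Theorem \ref{MainTheorem3} gives $\frac{\partial}{\partial x}\log\tau \to -\mathcal{H}(x)$ as $\hbar\to 0$; differentiating once more in $x$ yields $\frac{\partial^2}{\partial x^2}\log\tau \to -\mathcal{H}'(x)$, and a one-line calculation using $q''=6q^2+x$ shows $\mathcal{H}'(x)=-q(x)$. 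The factor $\frac{1}{2}$ in the statement is then absorbed into the convention relating the Painlevé I $\tau$-function to its transcendent, and the assumption that $x$ is not a pole of $q$ is exactly what is needed to keep the right-hand side finite.

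The one non-routine step is justifying the additional differentiation in $x$: Theorem \ref{MainTheorem3} only asserts convergence of $\frac{\partial}{\partial x}\log\tau$, whereas the corollary requires convergence of its second derivative. The Deift-Zhou steepest-descent analysis of Problem \ref{prob:MAIN-RHP} that underlies Theorem \ref{MainTheorem3} produces an asymptotic expansion of $\Psi(\zeta;{\bf t})$ near $\zeta=\infty$ that is uniform on compact subsets of the (rescaled) parameter space, with remainders depending analytically on ${\bf t}$; one can therefore differentiate in $t_1$ while retaining the error estimates, promoting the first-derivative limit to the second-derivative limit. A more direct route that avoids the $\tau$-function entirely is to extract $U$ from the off-diagonal entries of the subleading matrix $\Psi_2(t_5,t_2,t_1)$ in \eqref{psi-asymptotics}, since those entries are computed by the same steepest-descent analysis. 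Either way, the main technical obstacle is uniformity in $t_1$ of the small-$\hbar$ expansion of $\Psi$ in the critical double-scaling regime associated with $\gamma_-$.
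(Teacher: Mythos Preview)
Your approach is exactly the one the paper takes: the corollary is stated immediately after Theorem \ref{MainTheorem3} with only the substitution $T:=-\tfrac{5\eta_0}{6}\hbar^{-2/7}$, and the implicit argument is precisely your chain-rule reduction together with $U=-\partial_{t_1}^2\log\tau$ and $\mathcal{H}'(x)=-q(x)$. Your identification $C=(-5\eta_0/6)^{1/5}$ also matches the value fixed in \S\ref{subsection-eta-minus} (see \eqref{nu-definition}), and your caution about differentiating the limit is well placed; the paper does not spell this out, but the uniformity of the Deift--Zhou expansion of ${\bf R}(\lambda)$ in Proposition \ref{eta-negative-R-prop} is what makes it legitimate.

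The only soft spot is your treatment of the factor $\tfrac{1}{2}$: it is not a matter of Painlev\'e~I $\tau$-function conventions but comes from the relation $\boldsymbol{\omega}_{JMU}=\tfrac{1}{2}\boldsymbol{\omega}_{Okamoto}$ (Appendix \ref{Appendix:Hamiltonian}), which together with $\partial_{t_1}H_1|_{\text{explicit}}=-U$ gives $U=-2\,\partial_{t_1}^2\log\tau$. You should track that $\tfrac{1}{2}$ and the sign through explicitly rather than absorb them by fiat; once you do, the formal check $\tfrac{1}{2}T^{2/5}U\to q$ via the rescaled string equation (as in \eqref{q-limit}) confirms the constants.
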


\begin{remark}
    We believe the solution to Painlev\'{e} I appearing in Theorem \ref{MainTheorem3} is indeed the same tritronqu\'{e}e solution that appears in the previous theorems. However, in order to prove this, one must study monodromy map from the space of Stokes parameters for the $2\times 2$ Riemann-Hilbert problem for Painlev\'{e} I to the $3\times 3$ problem. This is in essence straightforward, and has been performed for similar systems (for Painlev\'{e} II, this type of problem is studied in \cite{LW}), and involves a classical steepest descent analysis. However, this calculation is rather involved, and so we do not study it in this work.
\end{remark}

\subsection{Notations.}
Throughout, we shall make use of the following notations without further comment:
\begin{itemize}
    \item $\omega := e^{\frac{2\pi i}{3}} = -\frac{1}{2}+ i\frac{\sqrt{3}}{2}$ denotes the principal third root of unity,
    \item Unless otherwise specified, for $p\geq 2$ the root $\lambda^{1/p}$ denotes the principal branch, i.e. the branch which is positive 
    for $\lambda>0$, and with $-\pi < \arg \lambda <\pi$,
    \item $E_{ij}$ denotes the elementary matrix of size $3\times 3$: $(E_{ij})_{k\ell} = \delta_{ik}\delta_{j\ell}$,
    \item We let
        \begin{equation*}
            \sigma_1 :=
            \begin{psmallmatrix}
                0 & 1\\
                1 & 0
            \end{psmallmatrix},\qquad 
            \sigma_2 :=
            \begin{psmallmatrix}
                0 & -i\\
                i & 0
            \end{psmallmatrix},\qquad
            \sigma_3 :=
            \begin{psmallmatrix}
                1 & 0\\
                0 & -1
            \end{psmallmatrix},
        \end{equation*}
        denote the usual Pauli matrices.
        \item For given diagonal matrix $A=\text{diag }(a_1,a_2,a_3)$, we set
            \begin{equation*}
                \lambda^{A} = 
                \begin{psmallmatrix}
                    \lambda^{a_1} & 0 & 0\\
                    0 & \lambda^{a_2} & 0\\
                    0 & 0 & \lambda^{a_3}
                \end{psmallmatrix}.
            \end{equation*}
    \item If $A$ is an $n\times n$ matrix, and $B$ is an $m\times m$ matrix, we denote by $A\oplus B$ the $(n+m)\times (n+m)$ matrix
        \begin{equation*}
            A\oplus B = 
            \begin{pmatrix}
                A & 0\\
                0 & B
            \end{pmatrix},
        \end{equation*}
    where $0$ denotes a rectangular matrix comprised of all zeros.
\end{itemize}

\subsection{Acknowledgments.}
This research was partially supported by the European Research Council (ERC), Grant Agreement No. 101002013. We would like to thank Great Bay University for their warm hospitality, where this work was initiated. 
The author also acknowledges support from the Royal Swedish Academy of Sciences and special program `Random Matrices and Scaling Limits' hosted at the Mittag–Leffler Institute, where part of this work was completed. We would also like to thank the anonymous referees, whose valuable comments improved the overall presentation of this work.

\section{Preliminary Transformations.}
An issue that arises in the asymptotic analysis of the RHP \ref{prob:MAIN-RHP} is that its solution cannot be determined uniquely without reference
to a certain number of subleading terms in its asymptotic expansion. This causes problems in that these subleading terms involve functions we are
trying to find asymptotics for in the first place, and so we are seemingly stuck in a redundant loop. However, a certain property of the
$\tau$-function \ref{tau-function-definition} allows us to circumvent this issue.

A key property of the above modified $\tau$-function is that it is \textit{gauge-invariant}: we can multiply $\Psi(\zeta;{\bf t})$ on the left
by any upper-triangular matrix-valued function $\mathfrak{h}({\bf t})$ with $1's$ on the diagonal, and the $\tau$-function for this RHP and the one for $\Psi(\zeta;{\bf t})$ are identical.
In other words, if we let $\tau_{\mathfrak{h}}({\bf t})$ denote the expression where $\mathfrak{G}(\zeta;{\bf t})$ is replaced by 
$\mathfrak{h}({\bf t})\mathfrak{G}(\zeta;{\bf t})$ in Equation \eqref{tau-function-definition}, then we have the equality
    \begin{equation*}
        \tau_{\mathfrak{h}}({\bf t}) = \tau({\bf t}).
    \end{equation*}
Define the gauge matrix
    \begin{equation*}
        \mathfrak{h}(t) := 
        \begin{psmallmatrix}
            1 & \frac{1}{2}H_1 & \frac{1}{4}H_2 + \frac{1}{8}H_1^2\\
            0 & 1 & \frac{1}{2}H_1\\
            0 & 0 & 1
        \end{psmallmatrix},
    \end{equation*}
and let 
    \begin{equation}
        \hat{\Psi}(\zeta;{\bf t}) := \mathfrak{h}(t)\Psi(\zeta;{\bf t}).
    \end{equation}
Then $\hat{\Psi}(\zeta;{\bf t})$ has the same jumps as $\Psi(\zeta;{\bf t})$, and at infinity behaves as
    \begin{equation*}
        \hat{\Psi}(\zeta;{\bf t}) =  \mathfrak{h}(t) f(\zeta)\left[\mathbb{I} + \OO(\zeta^{-1/3})\right]f^{-1}(\zeta)f(\zeta)e^{\Theta(\zeta)} = \left[\mathbb{I} + \OO(\zeta^{-1})\right]f(\zeta)e^{\Theta(\zeta)},
    \end{equation*}
where we have used the symmetry \eqref{psi-symmetry} to deduce that $f(\zeta)\left[\mathbb{I} + \OO(\zeta^{-1/3})\right]f^{-1}(\zeta) = \mathfrak{h}^{-1}({\bf t}) + \OO(\zeta^{-1})$ admits a regular expansion at infinity.
In other words, $\hat{\Psi}(\zeta;{\bf t})$ satisfies the RHP
    \begin{equation}
            \begin{cases}
                \hat{\Psi}_{+}(\zeta;t_5,t_2,t_1) =  \hat{\Psi}_{-}(\zeta;t_5,t_2,t_1) S_k, & \zeta \in \Gamma_k,\qquad k = \pm 1, ...,\pm 7,\\
                \hat{\Psi}_{+}(\zeta;t_5,t_2,t_1) =  \hat{\Psi}_{-}(\zeta;t_5,t_2,t_1) \mathcal{S}, & \zeta \in \RR_-,\\
                \hat{\Psi}(\zeta;t_5,t_2,t_1) = \left[\mathbb{I}+ \OO(\zeta^{-1})\right]f(\zeta)e^{\Theta(\zeta;t_5,t_2,t_1)}, & \zeta \to \infty,
            \end{cases}
        \end{equation}

The benefit of this choice of gauge is that the Riemann-Hilbert problem for
$\hat{\Psi}(\zeta;{\bf t})$ admits a unique solution \textit{without the need to specify any subleading terms}. For this reason, we shall
mainly study this characterization of the Riemann-Hilbert problem here; of course, there is no loss of generality, by our observation of 
gauge invariance. Furthermore, since $\mathfrak{h}({\bf t})$ is invertible, any statement we make about $\hat{\Psi}(\zeta;({\bf t}))$ can be readily transferred 
to a statement about $\Psi(\zeta;{\bf t})$.

We now make a few preliminary transformations to the function 
$\hat{\Psi}(\zeta;{\bf t})$ which introduce a scale parameter, and also bring the problem in to a form which is `cleaner' for exposition's sake. The former transformation is important, whereas the latter are cosmetic, and can be thought of as purely for ease of presentation.

To this end, we introduce the scaling parameter $\hbar > 0$, which will eventually act as a small parameter. Make the change of variables
    \begin{equation}\label{parameter-rescaling}
        \zeta = \hbar^{-3/7}\lambda,\qquad t_{5} = \hbar^{-2/7}\eta, \qquad t_2 = \hbar^{-5/7}\mu, \qquad t_1 = \hbar^{-6/7}\nu,
    \end{equation}
and define a `rescaled' $\hat{\Psi}$-function by
    \begin{equation}
        \boldsymbol{\Psi}(\lambda;\eta,\mu,\nu|\hbar) := \hbar^{\frac{1}{7}\hat{\sigma}}\hat{\Psi}(\hbar^{-3/7}\lambda;\hbar^{-2/7}\eta,\hbar^{-5/7}\mu,\hbar^{-6/7}\nu)
    \end{equation}
where $\hat{\sigma} := \text{diag }(1,0,-1)$. $\boldsymbol{\Psi}$ then satisfies its own RHP with
    \begin{equation}
        \begin{cases}
            \boldsymbol{\Psi}_{+}(\lambda;\eta,\mu,\nu|\hbar) = \boldsymbol{\Psi}_{-}(\lambda;\eta,\mu,\nu|\hbar)J_{\Psi}(\lambda), & \lambda \in \Gamma_{\Psi},\\
            \boldsymbol{\Psi}(\lambda;\eta,\mu,\nu|\hbar) =\left[\mathbb{I} + \OO(\lambda^{-1})\right] f(\lambda)e^{\frac{1}{\hbar}\Theta(\lambda;\eta,\mu,\nu)}, & \lambda \to \infty.
        \end{cases}
    \end{equation}
Here, $J_{\Psi}(\lambda)$ denotes the jumps of the RHP for $\Psi(\zeta;;\eta,\mu,\nu|\hbar)$ \ref{prob:MAIN-RHP}, which is defined on the collection of contours $\Gamma_{\Psi}$, and evaluated on the choice of Stokes data \eqref{STOKES_TRUNCATED}:
    \begin{equation}
        \Gamma_{\Psi} := \bigcup_{k\in\{\pm 2,\pm 6, -3,5\}} \Gamma_k \cup \RR_-,
        \qquad\qquad J_{\Psi}(\lambda) := 
        \begin{cases}
            \mathbb{I} - E_{31}, & \lambda\in \Gamma_6,\\
            \mathbb{I} + E_{32}, & \lambda\in \Gamma_5,\\
            \mathbb{I} - E_{23}, & \lambda\in \Gamma_2,\\
            \mathbb{I} - E_{32}, & \lambda\in \Gamma_{-2},\\
            \mathbb{I} + E_{12}, & \lambda\in \Gamma_{-3},\\
            \mathbb{I} - E_{21}, & \lambda\in \Gamma_{-6},\\
            \mathcal{S}, & \lambda\in \RR_-.
        \end{cases}
    \end{equation}
The main effect of this transformation is the 
introduction of an overall scale parameter $\hbar$ multiplying the exponential asymptotics of the RHP.

Before proceeding to the steepest descent analysis, we perform several simple transformations, which involve multiplication on the right by piecewise constant matrices. 
We emphasize that these transformations serve primarily for ease of exposition later, and should be thought of as technical details.

We now set
    \begin{equation}
        {\bf X}(\lambda;\eta,\mu,\nu|\hbar) := \boldsymbol{\Psi}(\lambda;\eta,\mu,\nu|\hbar)\cdot
            \begin{cases}
                1\oplus \sigma_1, & \text{Im } \lambda >0,\\
                \sigma_3\oplus 1, & \text{Im } \lambda < 0,
            \end{cases}
    \end{equation}
    and then immediately set
        \begin{equation}
        {\bf Y}(\lambda;\eta,\mu,\nu | \hbar) :=
            {\bf X}(\lambda;\eta,\mu,\nu | \hbar) \cdot
            \begin{cases}
                \mathbb{I}-E_{23}, & \lambda\in \left[ \RR_+,\Gamma_2\right],\\
                \mathbb{I}-E_{23}, & \lambda\in \left[ \RR_+,\Gamma_{-2}\right],\\
                \mathbb{I} + E_{31}, & \lambda\in \left[\Gamma_{6},\RR_-\right],\\
                \mathbb{I} - E_{31}, & \lambda\in \left[\Gamma_{-6},\RR_-\right],\\
                \mathbb{I}, & \textit{otherwise}.
            \end{cases}
    \end{equation}
Here, we have introduced the following notation: suppose $L_a := (0,e^{i\varphi_a}\cdot\infty)$ and $L_b :=(0,e^{i\varphi_b}\cdot\infty)$ are rays emanating from the origin, $-\pi<\varphi_a < \varphi_b <\pi$. We denote the (acute) region enclosed by these two rays 
to be
    \begin{equation}
        [L_a,L_b] := \{\lambda\in \CC | \varphi_a<\arg \lambda < \varphi_b\}.
    \end{equation}

One then finds that the matrix ${\bf Y}$ satisfies the following RHP:
    \begin{equation}
        {\bf Y}_+(\lambda;\eta,\mu,\nu | \hbar) = {\bf Y}_-(\lambda;\eta,\mu,\nu | \hbar) \cdot 
            \begin{cases}
                \mathbb{I} -E_{23}, & \lambda \in \RR_+,\\
                \mathbb{I} +E_{23}, & \lambda \in \Gamma_5,\\
                \mathbb{I} +E_{12}, & \lambda \in \Gamma_{-3},\\
                \mathbb{I} -E_{12}, & \lambda \in \RR_-,
            \end{cases}
    \end{equation}
with normalization
    \begin{equation}
         {\bf Y}(\lambda;\eta,\mu,\nu | \hbar) = \left[\mathbb{I} + \OO(\lambda^{-1}) \right]\hat{f}(\lambda)e^{\frac{1}{\hbar}\hat{\Theta}(\lambda;\eta,\mu,\nu)},\qquad \lambda\to \infty.
    \end{equation}
where
    \begin{equation}
        \hat{f}(\lambda) := 
        f(\lambda)\cdot
        \begin{cases}
            1\oplus\sigma_1, & \text{Im } \lambda >0,\\
             \sigma_3\oplus 1, & \text{Im } \lambda <0,
        \end{cases}
    \end{equation}
and
    \begin{equation}
    \hat{\Theta}(\lambda) = \hat{\Theta}(\lambda;t_5,t_2,t_1) = 
        \begin{cases}
            \text{diag }(\vartheta_1(\lambda),\vartheta_3(\lambda),\vartheta_2(\lambda)), & \text{Im }\lambda>0,\\
            \text{diag }(\vartheta_1(\lambda),\vartheta_2(\lambda),\vartheta_3(\lambda)), & \text{Im }\lambda<0.
        \end{cases}
    \end{equation}

We need one final `trivial' transformation. Let $\alpha>0, \beta<\alpha$ be real numbers (these will eventually correspond to the branch points appearing in the spectral curve), and let:
\begin{enumerate}
    \item $\hat{\Gamma}_5$ be a contour emanating from $\lambda =\alpha$ and going to $\infty$ in the same direction as $\Gamma_5$,
     \item $\hat{\Gamma}_{-3}$ be a contour emanating from $\lambda =\beta$ and going to $\infty$ in the same direction as $\Gamma_3$.
\end{enumerate}
Finally, let $\Delta_{\alpha}$ be the triangular region enclosed by the contours $\Gamma_{5},\hat{\Gamma}_5,[0,\alpha]$, and $\Delta_{\beta}$ be the triangular region enclosed by the contours $\Gamma_{-3},\hat{\Gamma}_{-3},[\beta,0]$\footnote{It is also possible that $\beta>0$. In this case one should take $[0,\beta]$ as the base of this triangular region; this does not change any further calculations in an essential way.}. These contours and regions are depicted in Figure \ref{fig:PreliminaryTransformations} (b). We set
    \begin{equation}
        {\bf Z}(\lambda;\eta,\mu,\nu|\hbar) := 
        {\bf Y}(\lambda;\eta,\mu,\nu|\hbar) \cdot 
        \begin{cases}
            \mathbb{I} +E_{23}, & \lambda \in \Delta_{\alpha},\\
            \mathbb{I} +E_{12}, & \lambda \in \Delta_{\beta},\\
            \mathbb{I}, & \textit{otherwise.}
        \end{cases}
    \end{equation}
\begin{figure}[t!]
    \centering
    \begin{subfigure}[t]{0.33\textwidth}
        \centering
        \begin{overpic}[scale=.25]{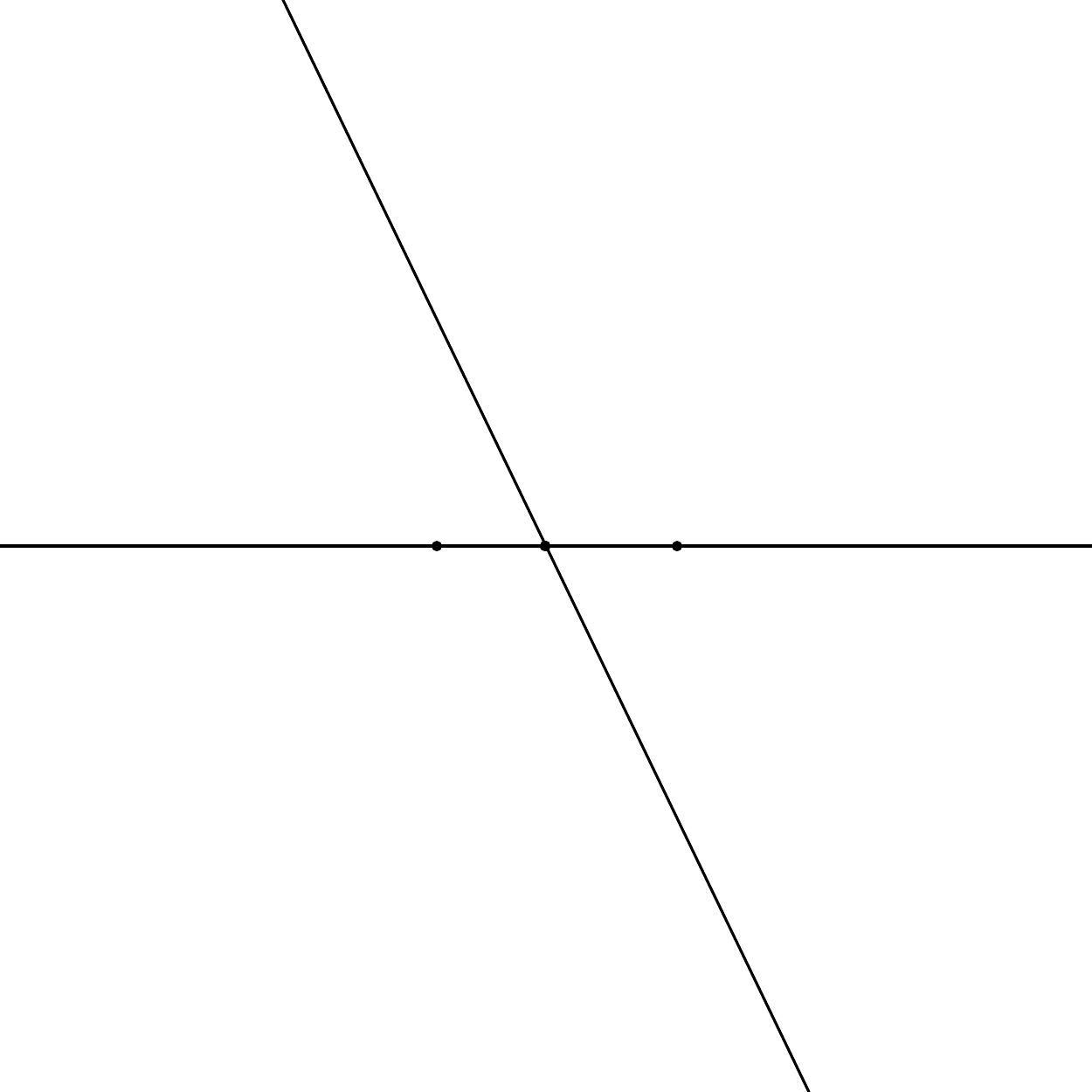}
            \put (66,51) {\footnotesize	 \textcolor{BrickRed}{$\mathbb{I} -E_{23}$}}
            \put (16,51) {\footnotesize	 \textcolor{BrickRed}{$\mathbb{I} -E_{12}$}}
            \put (37,80) {\footnotesize \rotatebox{-60.5}{\textcolor{BrickRed}{$\mathbb{I} + E_{23}$}}}
            \put (21,94) {\footnotesize $\Gamma_{5}$}
            \put (60,32) {\footnotesize \rotatebox{-60.5}{\textcolor{BrickRed}{$\mathbb{I} + E_{12}$}}}
            \put (76,2) {\footnotesize $\Gamma_{-3}$}
            \put (37,44) {\footnotesize $\beta$}
            \put (60,44) {\footnotesize $\alpha$}
        \end{overpic}
        \caption{The jumps of ${\bf Y}$.}
    \end{subfigure}%
    \begin{subfigure}[t]{0.33\textwidth}
        \centering
        \begin{overpic}[scale=.25]{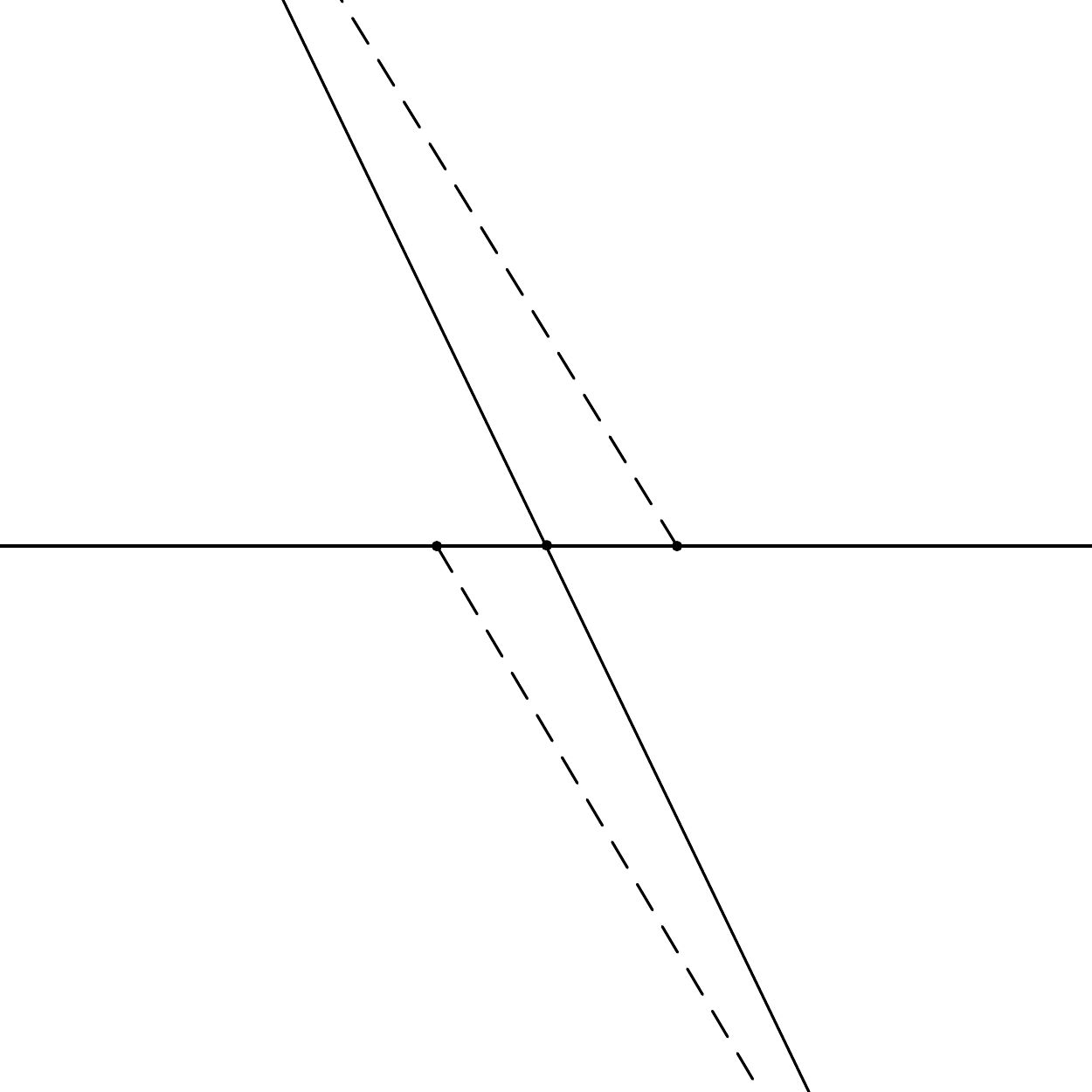}
            \put (21,94) {\footnotesize $\Gamma_{5}$}
            \put (76,2) {\footnotesize $\Gamma_{-3}$}
            \put (37,94) {\footnotesize $\hat{\Gamma}_{5}$}
            \put (56,2) {\footnotesize $\hat{\Gamma}_{-3}$}
            \put (37,44) {\footnotesize $\beta$}
            \put (60,44) {\footnotesize $\alpha$}
            \put (48,56) {\footnotesize $\textcolor{blue}{\Delta_{\alpha}}$}
            \put (45,42) {\footnotesize $\textcolor{blue}{\Delta_{\beta}}$}
        \end{overpic}
        \caption{Definition of domains $\Delta_{\alpha}, \Delta_{\beta}$.}
    \end{subfigure}%
    \begin{subfigure}[t]{0.33\textwidth}
        \centering
        \begin{overpic}[scale=.25]{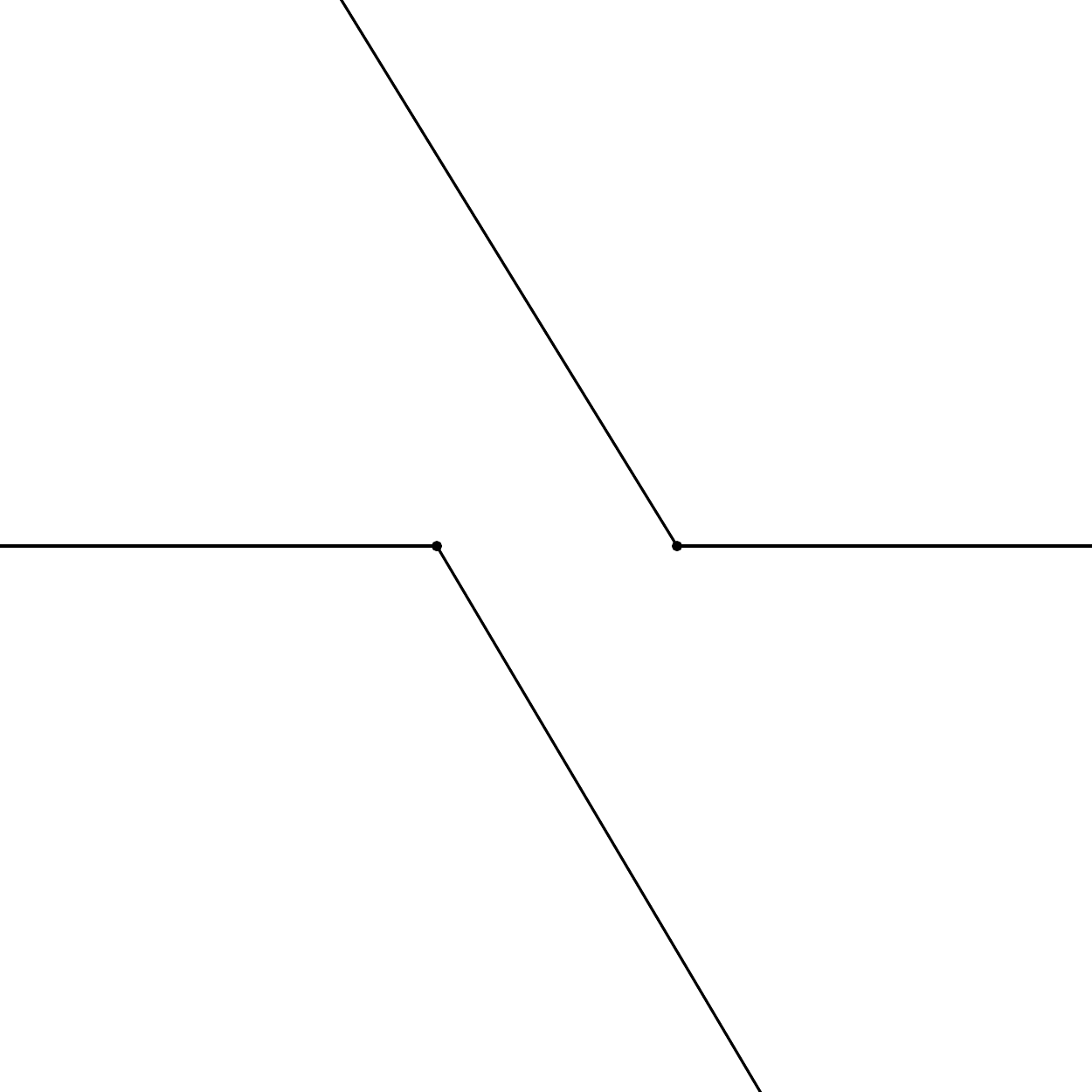}
            \put (66,51) {\footnotesize	 \textcolor{BrickRed}{$\mathbb{I} -E_{23}$}}
            \put (16,51) {\footnotesize	 \textcolor{BrickRed}{$\mathbb{I} -E_{12}$}}
            \put (37,94) {\footnotesize $\hat{\Gamma}_{5}$}
            \put (46,80) {\footnotesize \rotatebox{-58}{\textcolor{BrickRed}{$\mathbb{I} + E_{23}$}}}
            \put (56,2) {\footnotesize $\hat{\Gamma}_{-3}$}
            \put (53,32) {\footnotesize \rotatebox{-58}{\textcolor{BrickRed}{$\mathbb{I} + E_{12}$}}}
            \put (37,44) {\footnotesize $\beta$}
            \put (60,44) {\footnotesize $\alpha$}
        \end{overpic}
        \caption{The jumps of ${\bf Z}$.}
    \end{subfigure}%
    \caption{(a) The jumps of ${\bf Y}(\lambda;\eta,\mu,\nu | \hbar)$. (b) The new contours $\hat{\Gamma}_{5}$ (resp. $\hat{\Gamma}_{-3}$), and the new regions $\Delta_{\alpha}$ (resp. $\Delta_{\beta}$), which are enclosed by 
    and the $\Gamma_5,\hat{\Gamma}_5$, and the real axis (resp. $\Gamma_{-3},\hat{\Gamma}_{-3}$, and the real axis). 
    (c) The jumps of ${\bf Z}(\lambda;\eta,\mu,\nu | \hbar)$. In all figures, rays are oriented \textbf{\textit{outwards}}.}
    \label{fig:PreliminaryTransformations}
\end{figure}
The result is that ${\bf Z}$ has the same asymptotics as ${\bf Y}$, and satisfies a modified jump condition:
    \begin{align}
                {\bf Z}_+(\lambda;\eta,\mu,\nu | \hbar) &= {\bf Z}_-(\lambda;\eta,\mu,\nu | \hbar) \cdot 
            \begin{cases}
                \mathbb{I} -E_{23}, & \lambda \in [\alpha,\infty),\\
                \mathbb{I} +E_{23}, & \lambda \in \hat{\Gamma}_5,\\
                \mathbb{I} +E_{12}, & \lambda \in \hat{\Gamma}_{-3},\\
                \mathbb{I} -E_{12}, & \lambda \in (-\infty,\beta],
            \end{cases}\\
            {\bf Z}(\lambda;\eta,\mu,\nu | \hbar)&= \left[\mathbb{I} + \OO(\lambda^{-1}) \right]\hat{f}(\lambda)e^{\frac{1}{\hbar}\hat{\Theta}(\lambda;\eta,\mu,\nu)},\qquad \lambda\to \infty.
    \end{align}
The jumps of ${\bf Z}$ are depicted in Figure \ref{fig:PreliminaryTransformations} (c).
It is the piecewise analytic function ${\bf Z}(\lambda;\eta,\mu,\nu | \hbar)$ that we will perform a Deift-Zhou analysis of in the below. Since the
transformations of this subsection are all invertible, any result we derive for ${\bf Z}(\lambda;\eta,\mu,\nu | \hbar)$ automatically
applies to our original RHP for $\Psi$.

\section{Existence of solution for sufficiently large values of the parameters.} \label{genus-zero-existence}
In this section, we prove Theorems \ref{MainTheorem1} and \ref{Theorem:topologicalexpansion}.

Our goal is now to perform a Deift-Zhou steepest descent analysis for the RHP ${\bf Z}(\lambda;\eta,\mu,\nu|\hbar)$, with $\hbar$ playing the role of the small parameter. This analysis is by now fairly standard. The first relevant transformation
will be the one which removes the exponential part of the asymptotics; this is typically called the `$g$-function' transformation. We will begin with a discussion of the construction of the $g$-function, and then proceed with the Deift-Zhou analysis.

\subsection{Spectral curve and construction of the $g$-function.}\label{section:g-function}
    We must construct a function $g(\lambda)$, which is defined on a $3$-sheeted Riemann surface, and whose restriction $g_j(\lambda)$ to sheet $j$
    satisfies
        \begin{equation*}
            g_j(\lambda) = \hat{\Theta}_{jj}(\lambda;\eta,\mu,\nu) + \OO(\lambda^{-1/3}) , \qquad \lambda\to \infty,
        \end{equation*}
    and the $g_j(\lambda)$ satisfy certain inequalities on the branch cuts, which we will describe in more detail below.
    The fact that the Riemann surface is $3$-sheeted complicates matters. At the present time, the authors do not know an efficient way to treat this problem in general, and so we settle on searching for solutions to the above problem under the assumption that \textit{the Riemann surface of $g(\lambda)$ is of genus $0$}. In this case, this surface can be resolved in terms of rational functions, and we can solve the above problem concretely, albeit with some restrictions on the values of the parameters $\eta,\mu,\nu$. We have the following proposition.
        \begin{prop}\label{prop:true-g-function}
            Let $a,b,c$ be real parameters. Define polynomials
                \begin{equation}
                    \lambda(u) := u^3-3a^2u+c, \qquad\qquad Y(u) := u^4 - bu^2 + \frac{4}{3}cu - 6a^4+2a^2b,
                \end{equation}
            and put
                \begin{equation}\label{etamunu-abc}
                    \eta := \frac{3}{5}( 4a^2 - b ),\qquad \mu := c(b-2a^2), \qquad \nu := 8a^6 - 3a^4b - \frac{2}{3}c^2,
                \end{equation}
            and
                \begin{align}
                    g(u) &:= \frac{3}{7}u^7 - \frac{3}{5}(b+a^2)u^5 + cu^4 -a^2(6a^2+b)u^3 -2a^2cu^2 - 3a^2(-6a^4-2a^2b)u -2ca^4\\
                        &= \int Y(u)\lambda'(u) du -2ca^4.\nonumber
                \end{align}
            Let $u^{*}(\lambda)$ be a solution to the equation $\lambda(u) = \lambda$ which satisfies $u^{*}(\lambda) = \omega^{j-1}\lambda^{1/3} + \OO(\lambda^{-1/3})$, $\lambda\to \infty$. Then the function $g^{*}(\lambda) := g(u^{*}(\lambda))$
            formally satisfy the asymptotic condition
                \begin{equation}
                    g^{*}(\lambda) = \hat{\Theta}_{jj}(\lambda;\eta,\mu,\nu) + \OO(\lambda^{-1/3}) , \qquad \lambda\to \infty.
                \end{equation}
        \end{prop}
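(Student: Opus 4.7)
The defining identity $g(u) = \int Y(u)\lambda'(u)\,du - 2ca^4$ means that $g$ is an antiderivative of $Y(u)\lambda'(u)$, so the chain rule yields
\begin{equation*}
\frac{dg^*}{d\lambda}(\lambda) = Y(u_j^*(\lambda))
\end{equation*}
for each branch $u_j^*$ of the algebraic equation $\lambda(u) = \lambda$. The plan is therefore to reduce the asserted asymptotic identity for $g^*$ to the derivative identity
\begin{equation*}
Y(u_j^*(\lambda)) = \hat\Theta'_{jj}(\lambda;\eta,\mu,\nu) + O(\lambda^{-4/3}), \qquad \lambda \to \infty,
\end{equation*}
and then separately to verify that the additive constant $-2ca^4$ in $g$ is the one that kills the would-be zeroth-order constant upon integration.

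To establish the derivative identity, I would first expand $u_j^*(\lambda)$ at infinity. Setting $v := \omega^{j-1}\lambda^{1/3}$ (so $v^3 = \lambda$) and writing $u_j^*(\lambda) = v + \sum_{n\ge 1}\alpha_n v^{-n}$, substitution into $u^3 - 3a^2 u + c = v^3$ produces a triangular recursion whose first several solutions are
\begin{equation*}
\alpha_1 = a^2,\quad \alpha_2 = -\tfrac{c}{3},\quad \alpha_3 = 0,\quad \alpha_4 = \tfrac{a^2 c}{3},\quad \alpha_5 = -\tfrac{c^2}{9}-\tfrac{a^6}{3}.
\end{equation*}
Plugging this into $Y(u) = u^4 - bu^2 + \tfrac{4c}{3}u - 6a^4 + 2a^2 b$ and collecting Laurent coefficients through order $v^{-2}$, one finds
\begin{equation*}
Y(u_j^*(\lambda)) = v^4 + (4a^2-b)v^2 + \frac{2c(b-2a^2)}{3v} + \left(\frac{8a^6}{3} - a^4 b - \frac{2c^2}{9}\right)\frac{1}{v^2} + O(v^{-3}),
\end{equation*}
the vanishing of the $v^1$ and $v^0$ coefficients being the algebraic identities that fix the $\frac{4c}{3}u$ and $-6a^4 + 2a^2 b$ terms in $Y$. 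Converting back via $v^k = \omega^{k(j-1)}\lambda^{k/3}$ and using $\omega^3 = 1$, a term-by-term comparison with
\begin{equation*}
\hat\Theta'_{jj}(\lambda) = \omega^{j-1}\lambda^{4/3} + \tfrac{5}{3}\omega^{1-j}\eta\lambda^{2/3} + \tfrac{2}{3}\omega^{1-j}\mu\lambda^{-1/3} + \tfrac{1}{3}\omega^{j-1}\nu\lambda^{-2/3}
\end{equation*}
reproduces precisely the formulas \eqref{etamunu-abc} for $\eta,\mu,\nu$. Because the coefficients $\alpha_n$ are independent of $j$, this matching is simultaneous for all three branches.

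Integrating the derivative identity gives $g^*(\lambda) = \hat\Theta_{jj}(\lambda) + C_j + O(\lambda^{-1/3})$ for a constant $C_j$, and showing $C_j = 0$ amounts to checking that the $\lambda^0$ coefficient of $g(u_j^*(\lambda))$ vanishes; this is the analogous algebraic identity that fixes the additive constant $-2ca^4$ in $g$. The principal obstacle in the whole argument is the formal-series bookkeeping: the $v^{-2}$ coefficient of $u^4 = (u^2)^2$ receives contributions both from the $v^{-4}$ coefficient of $u^2$ (itself assembled from $\alpha_4$ and the cross-product $\alpha_2^2$) and from products of lower-order terms in the square, and an expansion that omits any of these produces the wrong value of $\nu$. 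Once these cross-terms are tracked correctly, the remainder of the verification is routine formal power-series manipulation.
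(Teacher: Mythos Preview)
Your approach is correct and is essentially the paper's own: the paper records that the proposition ``is a direct calculation'' and only remarks that one must expand $u^*(\lambda)$ to order $\lambda^{-7/3}$ before substituting into $g$. Your organization via the derivative $dg^*/d\lambda = Y(u^*)$ is a mild streamlining of that same calculation, since $Y$ has degree $4$ rather than $7$ and so fewer coefficients $\alpha_n$ are needed to extract the relevant terms.

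One small expository point: you announce the target derivative identity with remainder $O(\lambda^{-4/3})$, but your expansion of $Y(u^*)$ is only carried to $O(v^{-3}) = O(\lambda^{-1})$, so naively integration could produce a $\log\lambda$. This is harmless because $g^* = g(u^*)$ is a polynomial in $u^*$ and hence a genuine Laurent series in $v = \omega^{j-1}\lambda^{1/3}$; differentiating any such series with respect to $\lambda = v^3$ automatically kills the $\lambda^{-1}$ coefficient (the $v^0$ term of $g^*$ contributes nothing to the derivative). It is worth stating this explicitly, after which your integration step and separate verification of the constant $-2ca^4$ go through as written.
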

        \begin{proof}
            The proof of the above proposition is a direct calculation, and so we omit it. The only relevant comment to be made is that one must
            calculate $u^{*}(\lambda)$ to order $\lambda^{-7/3}$ in order to check the validity of the proposition.
        \end{proof}
        \begin{remark}
            The coordinates $(a,b,c)$ will be convenient for us in proving that the function $g$ satisfies certain inequalities necessary
            for our analysis. These parameters relate to the parameters $\eta,\mu,\nu$ through the formulae \eqref{etamunu-abc}, and the 
            parameter $\varsigma$ is then
                \begin{equation}
                    \varsigma = 2a^2.
                \end{equation}
            Indeed, it is straightforward to check that, for $(\eta,\mu,\nu)$ given by \eqref{etamunu-abc}, a solution to 
            Equation \eqref{sigma-eq} is given by the above choice of $\varsigma$. 

            Later in this section (see Proposition \ref{prop:bijection}), we shall show that the region $D$ defined in 
            \ref{Domain-D-Definition} is the image of the domain in the $(a,b,c)$-space that we will be working with, and so outside of this
            section, we shall not refer to the coordinates $(a,b,c)$, and only to their counterparts in the region $D$.

            We end this remark by providing a representation of the uniformization coordinates in terms of the 
            parameters $(\eta,\mu,\nu)$ and $\varsigma$. The spectral curve we have defined is parameterized by
                \begin{equation}
                    \lambda(u) = u^3 -\frac{3}{2}\varsigma u - \frac{3\mu}{5\eta-3\varsigma},\qquad\qquad Y(u)=u^4 + \left(\frac{5}{3}\eta -2\varsigma\right)u^2 -\frac{4\mu}{5\eta-3\varsigma}u + \frac{1}{2}\varsigma^2-\frac{5}{3}\eta\varsigma.
                \end{equation}
        \end{remark}
        The previous proposition gives us a candidate $g$-function for our later steepest descent analysis. However, we require more of the function
        $g(\lambda)$: we need that certain inequalities hold on the branch cuts of the associated Riemann surface where $g$ lives. 

        We now carefully define the Riemann surface $\mathcal{R}$ of $g$, as a branched covering over the $\lambda$-coordinate. The three sheets $\mathcal{R} := \mathcal{R}_1 \sqcup \mathcal{R}_2 \sqcup \mathcal{R}_3$ of this surface are defined as follows (here, $\alpha =\lambda(-a), \beta = \lambda(a)$):
        \begin{equation}
        \mathcal{R}_1 := \CC\setminus (-\infty,\beta],\qquad \mathcal{R}_2 := \CC\setminus \left((-\infty,\beta]\cup [\alpha,\infty)\right)\qquad \mathcal{R}_3 := \CC\setminus [\alpha,\infty),
    \end{equation}
    with sheets $1$ and $2$ glued along $(-\infty,\beta]$, and sheets $2$ and $3$ glued along $[\alpha,\infty)$. Note that, for any
    values of $(\eta,\mu,\nu) \in D$, we have the inequalities
        \begin{equation*}
            \alpha >0, \qquad \alpha > \beta.
        \end{equation*}
    The sign of $\beta$ is variable, but is ultimately inessential. The spectral curve in the spectral ($\lambda$) and uniformizing ($u$) planes are shown in Figure \ref{fig:SpectralCurve} (a) and (b), respectively.
    On each sheet, we can define a uniformizing coordinate $u_j(\lambda)$, which are uniquely determined by the property that $\lambda(u_j(\lambda)) = \text{id}_{\mathcal{R}_j}$, for $\lambda\in \mathcal{R}_j$, and their asymptotic behavior at infinity on each sheet, which are given below:
    \begin{align}
        u_1(\lambda) &= \lambda^{1/3}[1 + \OO(\lambda^{-1/3})], \qquad \lambda\to \infty,\label{u1-asymptotics}\\
        u_2(\lambda) &= 
        \begin{cases}
            \omega^2 \lambda^{1/3}[1 + \OO(\lambda^{-1/3})], & \lambda\to \infty,\quad \text{Im } \lambda>0,\\
            \omega \lambda^{1/3}[1 + \OO(\lambda^{-1/3})], & \lambda\to \infty,\quad \text{Im } \lambda<0,
        \end{cases}\label{u2-asymptotics}\\
        u_3(\lambda) &=
        \begin{cases}
            \omega \lambda^{1/3}[1 + \OO(\lambda^{-1/3})], & \lambda\to \infty,\quad \text{Im } \lambda>0,\\
            \omega^2 \lambda^{1/3}[1 + \OO(\lambda^{-1/3})], & \lambda\to \infty,\quad \text{Im } \lambda<0.
        \end{cases}\label{u3-asymptotics}
    \end{align}
    We further set 
        \begin{equation}
            g_j(\lambda) := g(u_j(\lambda)).
        \end{equation}
    In order to guarantee that we will eventually be able to open lenses, it is necessary that the functions $g_j(\lambda)$ satisfy certain inequalities. This will place certain restrictions of the range of the parameters $\eta,\mu,\nu$. Unless one can find a way to circumvent the genus $0$ ansatz, this restriction for now remains insurmountable, and so our existence results will hold only for values of these parameters in the given range.
    \begin{defn}
        We define the region $R$ by
        \begin{equation}
                R:= \left\{ (a,b,c)\in \RR^3 \big| 0 < b < \infty, \qquad 0 \leq c < \frac{b^{3/2}}{\sqrt{6}}, \qquad z_0(b,c) < a < z_+(b,c)\right\},
            \end{equation}
            where $z_-(b,c) < z_0(b,c) < z_+(b,c)$ are the three real solutions to the equation $z^3-\frac{1}{2}bz+\frac{1}{3}c = 0$.
    \end{defn}
    \begin{remark}
        Since this cubic has $3$ real solutions, and so we may use the classical trigonometric Vi\`{e}te formulae to represent these roots:
            \begin{equation*}
                z_0 = \sqrt{\frac{2b}{3}}\sin\left(\frac{1}{3}\arcsin\left(\frac{c\sqrt{6}}{b^{3/2}}\right)\right), \qquad 
                z_{\pm} = \sqrt{\frac{2b}{3}}\sin\left(\frac{1}{3}\arcsin\left(\frac{c\sqrt{6}}{b^{3/2}}\right) \pm \frac{2\pi}{3}\right).
            \end{equation*}
        The above formulae allow us to readily derive a number of inequalities relating to these roots. For instance, we can see immediately that
        for fixed $b>0$, $z_0$ is monotone increasing in $c$, and $z_{\pm}$ are monotone decreasing functions of $c$.
        It is also immediate that $z_- < 0 < z_0 < z_+$, for $0 < b < \infty, 0 < c < \frac{b^{3/2}}{\sqrt{6}}$.
    \end{remark}

            \begin{prop}
            For $(a,b,c) \in R$,
                \begin{equation}
                    \mu(a,b,c) >0,
                \end{equation}
            and furthermore
                \begin{equation}
                    \qquad\qquad \varsigma(\eta,\mu,\nu) >\max\left\{\frac{5}{3}\eta,0\right\}.
                \end{equation}
        \end{prop}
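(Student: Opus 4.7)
The plan is to reduce both claims to the single algebraic inequality $b > 2a^2$ on $R$. Writing out \eqref{etamunu-abc} immediately gives the two identities
\[
\mu = c\,(b - 2a^2), \qquad \varsigma - \tfrac{5}{3}\eta \;=\; 2a^2 - (4a^2 - b) \;=\; b - 2a^2,
\]
so once $b > 2a^2$ is established, both $\mu \geq 0$ (strict whenever $c > 0$) and $\varsigma > \tfrac{5}{3}\eta$ follow at once; the remaining claim $\varsigma = 2a^2 > 0$ reduces to $a \neq 0$.

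To prove $b > 2a^2$, I would exploit the cubic defining $z_+$. Since $z_+$ satisfies $z_+^3 - \tfrac{b}{2}z_+ + \tfrac{c}{3} = 0$, and the trigonometric Vi\`ete representation in the preceding remark gives $z_+ = \sqrt{2b/3}\,\sin(\theta/3 + 2\pi/3) > 0$ for $\theta := \arcsin(c\sqrt{6}/b^{3/2}) \in [0, \pi/2]$, I can divide by $z_+$ and rearrange to obtain
\[
z_+^2 - \tfrac{b}{2} = -\tfrac{c}{3 z_+} \leq 0,
\]
so $z_+^2 \leq \tfrac{b}{2}$. The same Vi\`ete formula also yields $z_0 = \sqrt{2b/3}\,\sin(\theta/3) \geq 0$. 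Since $R$ is defined by the strict inequalities $z_0 < a < z_+$, I conclude $0 \leq z_0 < a < z_+$, whence $a > 0$ and $a^2 < z_+^2 \leq b/2$, i.e., $b - 2a^2 > 0$. This simultaneously delivers $\varsigma = 2a^2 > 0$ as required.

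There is no serious obstacle here: the argument amounts to one algebraic identity plus an elementary one-line manipulation of the cubic satisfied by $z_+$, combined with sign information that is already packaged in the trigonometric Vi\`ete formulas stated in the remark. The only mildly delicate point is that the proposition asserts $\mu > 0$ strictly, whereas the definition of $R$ permits $c = 0$, at which boundary the identity forces $\mu = 0$; this is a minor artefact of the definition of $R$, and the strict statement $\mu > 0$ holds throughout the interior where $c > 0$.
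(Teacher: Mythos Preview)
Your proof is correct and follows essentially the same route as the paper: reduce both claims to the inequality $b-2a^2>0$, then deduce this from $a<z_+$ together with $z_+^2\leq b/2$. The only minor difference is that the paper obtains $z_+^2\leq b/2$ by invoking the monotonicity of $z_+(b,c)$ in $c$ from the trigonometric Vi\`ete formula (so $z_+(b,c)\leq z_+(b,0)=\sqrt{b/2}$), whereas you extract it directly from the cubic relation $z_+^2=b/2-c/(3z_+)$; your derivation is arguably a shade more direct. You also correctly flag the boundary issue at $c=0$, where $\mu=0$ rather than $\mu>0$; the paper's own argument has the same artefact.
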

        \begin{proof}
            Note that, in the coordinates $(a,b,c)$, the sign of $\mu(a,b,c)$ is the same as the sign of $b-2a^2$. Similarly,
                \begin{equation*}
                    \varsigma(\eta,\mu,\nu) - \frac{5}{3}\eta = 2a^2 - \frac{5}{3}\eta(a,b,c) = b-2a^2,
                \end{equation*}
            and since $\varsigma = 2a^2 > z_+>0$, it is enough to check that $b-2a^2>0$ for $(a,b,c) \in R$. Indeed, since $a,z_+>0$, and $a<z_+(b,c)$, and $z_+(b,c) < z_+(b,0) = \sqrt{b/2}$,
                \begin{align*}
                    b-2a^2 > b-2z_+^2(b,c) > b-2z_+(b,0)^2 = b-2(\sqrt{b/2})^2 = b-b = 0.
                \end{align*}
        \end{proof}
        \begin{remark}
            In fact, we can also study the above equation for $\mu <0$ purely by symmetry. To see this, we remark that
            for $\mu < 0$ we take the parameter range $\tilde{R}$ to be
                \begin{equation}
                    \tilde{R} := \left\{(a,b,c)\in \RR^3 | 0<b<\infty, \qquad -b^{3/2} < c \leq 0, \qquad z_-(b,c) < a < z_0(b,c)\right\}.
                \end{equation}
            Then all of the above propositions hold by the same argumentation used in this section, by noticing that 
            $z_-(b,c) = -z_+(b,-c)$. 
            Thus, the $\mu <0$ case is a trivial corollary of the $\mu>0$ case. Figure \eqref{fig:43-critical surface} shows the
            critical surface for both $\mu>0$ and $\mu <0$. For ease of exposition, in what follows we will continue only to treat the 
            $\mu \geq 0$ case. Note that $\varsigma(\eta,\mu,\nu) = \varsigma(\eta,-\mu,\nu)$.
        \end{remark}

        We now show that the map from the region $R\cup \tilde{R}$ to the region $D$ defined in the introduction is a bijection.
        \begin{prop}\label{prop:bijection}
            Define the map $\Pi: R\cup \tilde{R} \to \mathbb{R}^3$ by
                \begin{equation}
                    \Pi(a,b,c) = (\eta(a,b,c),\mu(a,b,c),\nu(a,b,c)).
                \end{equation}
            Then, this map is a homeomorphism onto the domain $D$ from Definition \ref{Domain-D-Definition}.
        \end{prop}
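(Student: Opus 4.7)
The plan is to exhibit an explicit continuous two-sided inverse of $\Pi$. Given $(\eta,\mu,\nu)\in D$, I would extract the distinguished simple root $\varsigma=\varsigma(\eta,\mu,\nu)$ of $\mathcal{P}(\cdot;\eta,\mu,\nu)=0$ guaranteed by Definition \ref{Domain-D-Definition}, and (using the inequality $\varsigma>\max\{5\eta/3,0\}$, which holds on $D$ since the $6\mu^2/(5\eta-3\varsigma)^2$ term of $\mathcal{P}$ blows up as $\varsigma\to 5\eta/3$ whenever $\mu\neq 0$) define
\[
    a := \mathrm{sgn}(\mu)\sqrt{\varsigma/2},\qquad b := 2\varsigma - \tfrac{5\eta}{3},\qquad c := \frac{\mu}{\varsigma-\tfrac{5\eta}{3}},
\]
with the convention $a:=\sqrt{\varsigma/2}$ when $\mu=0$. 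That $\Pi\circ\Pi^{-1}=\mathrm{id}$ is verified by direct substitution: the $\eta$- and $\mu$-coordinates are immediate, and the $\nu$-coordinate collapses, after algebra, precisely to the equation $\mathcal{P}(\varsigma;\eta,\mu,\nu)=0$.

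For the reverse identity $\Pi^{-1}\circ\Pi=\mathrm{id}$, the key point is that for $(a,b,c)\in R\cup\tilde R$, the value $2a^2$ is the distinguished simple root of $\mathcal{P}$ singled out by $D$. A short Jacobian computation gives $\det D\Pi(a,b,c) = \tfrac{4a}{5}[9a^2(b-2a^2)^2 - 4c^2]$, which vanishes exactly when $a$ satisfies $z^3-\tfrac{b}{2}z+\tfrac{c}{3}=0$ (the auxiliary cubic whose real roots are $z_-,z_0,z_+$); since $R$ strictly separates $a$ from $\{z_0,z_+\}$ and $\tilde R$ from $\{z_-,z_0\}$, this determinant is non-zero on the interior. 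The same quantity is proportional to $\chi(\eta,\mu,\nu)=-2(5\eta-3\varsigma)\partial_\varsigma\mathcal{P}(2a^2;\eta,\mu,\nu)$, confirming that $\varsigma=2a^2$ is a simple root. Path-connectedness of $R$ together with the observation that $(\sqrt{5\eta_0/4},\,10\eta_0/3,\,0)\in R$ maps to $(\eta_0,0,0)$ then forces $\Pi(R)$ to lie in the connected component of the complement of the discriminant locus of $\mathcal{P}$ containing the defining ray of $D$, namely $D$ itself; the symmetry $(a,c)\mapsto(-a,-c)$, which preserves $\eta,\nu$ and reverses $\mu$, extends the argument to $\tilde R$.

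Surjectivity of $\Pi:R\cup\tilde R\to D$ is handled by an open-and-closed argument in $D$: openness of $\Pi(R\cup\tilde R)$ follows from the non-vanishing Jacobian, and for closedness one takes a sequence $(\eta_n,\mu_n,\nu_n)\in\Pi(R)$ converging to $(\eta_*,\mu_*,\nu_*)\in D$; simplicity of $\varsigma_*$ combined with the implicit function theorem lets one pull back to a limit $(a_*,b_*,c_*)$, and any degeneration of the strict inequalities defining $R$ (namely $a_*=z_0$, $a_*=z_+$, or $c_*=b_*^{3/2}/\sqrt{6}$) would force $\det D\Pi=0$, making $\varsigma_*$ a multiple root of $\mathcal{P}$ and contradicting $(\eta_*,\mu_*,\nu_*)\in D$. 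Connectedness of $D$ then yields $\Pi(R\cup\tilde R)=D$, and continuity of $\Pi^{-1}$ is immediate from the implicit function theorem at a simple root of $\mathcal{P}$.

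The main obstacle I anticipate is this closedness step, which requires translating degenerations of the auxiliary cubic $z^3-\tfrac{b}{2}z+\tfrac{c}{3}$ on $\partial R$ into degenerations of $\mathcal{P}(\cdot;\eta,\mu,\nu)$ in $\varsigma$ on $\partial D$—essentially the content of the implicit description of $\partial D$ in Appendix \ref{Appendix:ImplicitCurves}—together with a careful sign analysis to track which root of $\mathcal{P}$ is the distinguished one and which branch of $\sqrt{\varsigma/2}$ to select as $\mu$ passes through zero.
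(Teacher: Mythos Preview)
Your approach shares the paper's computational core---the Jacobian calculation and the identity linking $\det D\Pi$ to $(5\eta-3\varsigma)\,\partial_\varsigma\mathcal P$---but you go further than the paper does. The paper's proof is terse: it checks that a point of $R$ hits the defining ray of $D$, verifies $\det D\Pi\neq 0$ on $R\cup\tilde R$, rewrites $\partial_\varsigma\mathcal P$ in $(a,b,c)$-coordinates to see it is nonzero there, and then simply asserts that on $\Pi(\partial(R\cup\tilde R))$ the root becomes multiple. Global injectivity and surjectivity are left implicit in that boundary-to-boundary remark. Your explicit inverse $(a,b,c)=\bigl(\mathrm{sgn}(\mu)\sqrt{\varsigma/2},\,2\varsigma-\tfrac{5}{3}\eta,\,\mu/(\varsigma-\tfrac{5}{3}\eta)\bigr)$ and your open--closed argument in $D$ make these steps genuinely rigorous, which is a real improvement.

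Two points to watch. First, both $R$ and $\tilde R$ as written include the slice $c=0$, and there $\Pi(a,b,0)=\Pi(-a,b,0)$ since $\eta,\nu$ depend only on $a^2$; your sign convention $a:=+\sqrt{\varsigma/2}$ at $\mu=0$ then fails to invert $\Pi$ on the $\tilde R$ side. This is really a defect of the stated domain rather than of your method---one should take $c<0$ strictly in $\tilde R$, or pass to the quotient by $(a,c)\mapsto(-a,-c)$---but you should flag it. Second, your closedness step must also rule out the degenerations $b\to 0$ and $b\to\infty$ (not only $a\to z_0,z_+$ and $c\to b^{3/2}/\sqrt6$); the former forces $(a,c)\to 0$ and hence $(\eta,\mu,\nu)\to(0,0,0)\in\partial D$, while the latter sends $|\eta|$ or $|\nu|$ to infinity, so neither is dangerous, but they should be mentioned.
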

        \begin{proof}
            Note first that the domain $R$ indeed contains the ray $\{(\eta,0,0)|\eta>0\}$: the point $\left(\sqrt{\frac{5}{4}\eta},\frac{10}{3}\eta,0\right) \in R$ maps onto the point $(\eta,0,0) \in R$. Since the function 
            $\varsigma  = \varsigma(a,b,c) = 2a^2 = \frac{5}{2}\eta$ at this point, we are indeed in the domain $R$.
            
            On the other hand, by direct calculation, one finds that
                \begin{equation*}
                    \left|\frac{\partial (\eta,\mu,\nu)}{\partial(a,b,c)}\right| = \frac{4}{3}|a(6a^3-3ba+2c)(6a^3-3ba-2c)|.
                \end{equation*}
            Now, $a>0$ on $R$, and $6a^3-3ba+2c = 0$ only if $a\to z_{\pm},z_0$. One can readily check that $6a^3-3ba+2c <0$ on $R$; furthermore,
            since $c<0$,
                \begin{equation*}
                    6a^3-3ba-2c = 6a^3-3ba+2c - 4c <0,
                \end{equation*}
            and thus $\left|\frac{\partial (\eta,\mu,\nu)}{\partial(a,b,c)}\right|\neq 0$ on $R$; an identical calculation shows that the same is true on the domain $\tilde{R}$, and so the Jacobian of the map $\Pi$ is nonvanishing on $R\cup\tilde{R}$, and $\Pi$ is a locally 
            injective function.

            On the other hand, calculating $\frac{\partial \mathcal{P}}{\partial \varsigma}$ in the variables $(a,b,c)$, we obtain
                \begin{equation*}
                    \frac{\partial \mathcal{P}}{\partial \varsigma} = -\frac{(6a^3-3ba+2c)(6a^3-3ba-2c)}{3(2a^2-b)} \neq 0,
                \end{equation*}
            by our previous observations. It follows that on the domain $\Pi(R\cup\tilde{R})$, the discriminant of the equation 
            \eqref{sigma-eq} is non-vanishing, i.e., $\varsigma$ is a simple root of Equation \eqref{sigma-eq}. On $\partial\Pi(R\cup\tilde{R})$ (which is the same as $\Pi(\partial(R\cup\tilde{R}))$, as one can readily check), $\frac{\partial \mathcal{P}}{\partial \varsigma} \to 0$, and so $\sigma$ becomes a multiple root of equation \eqref{sigma-eq}. 
        \end{proof}
    
    We now have the following lemma, which mimics a technique for proving such inequalities developed in \cite{DHL1}:
        \begin{lemma} \label{Lemma:Lensing}
            Let $\Gamma$ denote the preimages of the branch cuts in the uniformization plane:
                \begin{equation*}
                    \Gamma:= \left\{u\in \CC | \text{Im } \lambda(u) = 0, \text{Im }u \neq 0\right\} = \left\{u=x+iy|y^2-3x^2+3a^2 = 0\right\},
                \end{equation*}
            and consider the curve 
                \begin{equation*}
                     \mathcal{C} := \left\{u\in \CC | \text{Im } Y(u) = 0, \text{Im }u \neq 0\right\} = \left\{u=x+iy|4x^3-4xy^2-2bx+\frac{4}{3}c = 0\right\}.
                \end{equation*}
            Provided that $(a,b,c)\in R$, the curves
            $\Gamma$ and $\mathcal{C}$ do not intersect:
                \begin{equation*}
                    \Gamma \cap \mathcal{C} = \emptyset.
                \end{equation*}
        \end{lemma}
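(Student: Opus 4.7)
The plan is to reduce the non-intersection statement to a single-variable cubic inequality. Writing $u = x + iy$ with $y \neq 0$, a direct computation gives
\begin{equation*}
\mathrm{Im}\,\lambda(u) = y(3x^2 - y^2 - 3a^2), \qquad \mathrm{Im}\,Y(u) = y\!\left(4x^3 - 4xy^2 - 2bx + \tfrac{4}{3}c\right),
\end{equation*}
so membership in $\Gamma$ forces $y^2 = 3x^2 - 3a^2$ (and in particular $|x| > a$), while membership in $\mathcal{C}$ forces $4x^3 - 4xy^2 - 2bx + \frac{4}{3}c = 0$. Substituting the first equation into the second eliminates $y$ entirely, and the intersection condition collapses to $P(x) = 0$, where
\begin{equation*}
P(x) := x^3 - \frac{6a^2 - b}{4}\,x - \frac{c}{6}.
\end{equation*}
So it suffices to show that $P$ has no real root with $|x| > a$ for $(a,b,c) \in R$.

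Next, I would exploit the monotonicity of $P$ outside $[-a,a]$. Its derivative $P'(x) = 3x^2 - (6a^2 - b)/4$ has (possibly no) real zeros at $x = \pm\sqrt{(6a^2 - b)/12}$, and since $b - 2a^2 > 0$ on $R$ (by the preceding proposition) one has $(6a^2 - b)/12 < a^2/3 < a^2$, so both critical points lie strictly inside $(-a,a)$ and $P$ is strictly increasing on each of $(-\infty, -a]$ and $[a, +\infty)$. Hence the two sign conditions $P(-a) < 0$ and $P(a) > 0$ already rule out real roots with $|x| > a$. A direct evaluation gives
\begin{equation*}
P(\pm a) = \pm\frac{a(b - 2a^2)}{4} - \frac{c}{6},
\end{equation*}
so $P(-a) < 0$ is immediate from $a > 0$, $b - 2a^2 > 0$, $c \geq 0$.

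The real crux, and where I expect the main content of the proof to live, is the sign of $P(a)$. The key identity is
\begin{equation*}
12\, P(a) = 3ab - 6a^3 - 2c = -6\!\left(a^3 - \tfrac{b}{2}\,a + \tfrac{c}{3}\right) = -6\, q(a),
\end{equation*}
where $q(z) := z^3 - \frac{b}{2}z + \frac{c}{3}$ is precisely the defining cubic of $z_-, z_0, z_+$. Since $q$ has positive leading coefficient and three real roots $z_- < z_0 < z_+$, it is negative on the middle interval $(z_0, z_+)$; and the membership condition $(a,b,c) \in R$ is precisely $z_0 < a < z_+$. Thus $q(a) < 0$ and $P(a) > 0$, completing the argument. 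The $\mu \leq 0$ case $(a,b,c) \in \tilde R$ follows by the symmetry $c \mapsto -c$, $a \mapsto -a$ noted in the preceding remark. The only nontrivial ingredient is really the identification $P(a) = -\tfrac{1}{2} q(a)$, which translates the geometric defining condition of $R$ directly into the sign we need; everything else is routine sign bookkeeping and the already established inequality $b - 2a^2 > 0$.
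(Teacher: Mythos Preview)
Your proof is correct and, in my view, cleaner than the paper's. Both arguments ultimately rest on the same algebraic content, but they organize it differently.

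The paper proceeds geometrically: it writes $\mathcal{C}$ as the union of three graph branches $y = y_+(x)$ over the intervals $(-\infty,z_-)$, $(0,z_0)$, $(z_+,\infty)$, and checks branch by branch that none of them meets the hyperbola $\Gamma$. For the outer branches it introduces the auxiliary function $H(x) = (3x^2-3a^2) - y_+(x)^2$, verifies $H(z_\pm)>0$ using $a<z_+$ (resp.\ $|z_-|<a$), and then shows $H$ is monotone via $H'(x) = (12x^3+c)/(3x^2)$. The middle branch is disposed of by noting it lives over $(0,z_0)\subset(0,a)$, where $\Gamma$ has no points.

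Your elimination of $y$ collapses all of this into a single cubic $P(x)$, and the identity $P(a) = -\tfrac12 q(a)$ is exactly the algebraic heart of the matter. In fact the two arguments are linked by the relation $H(x) = 2P(x)/x$: the paper's branch analysis and monotonicity of $H$ are doing, in disguise, what your sign analysis of $P$ on $(-\infty,-a]\cup[a,\infty)$ does directly. Your route avoids the case split over the three $x$-intervals and the somewhat delicate handling of the graph branches, at the cost of being less visual. Either way, the defining inequality $z_0<a<z_+$ is precisely what forces the needed sign, and you have identified that cleanly.
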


        \begin{figure}
            \centering
            \begin{subfigure}[t]{0.48\textwidth}
            \centering
            \begin{overpic}[scale=.25]{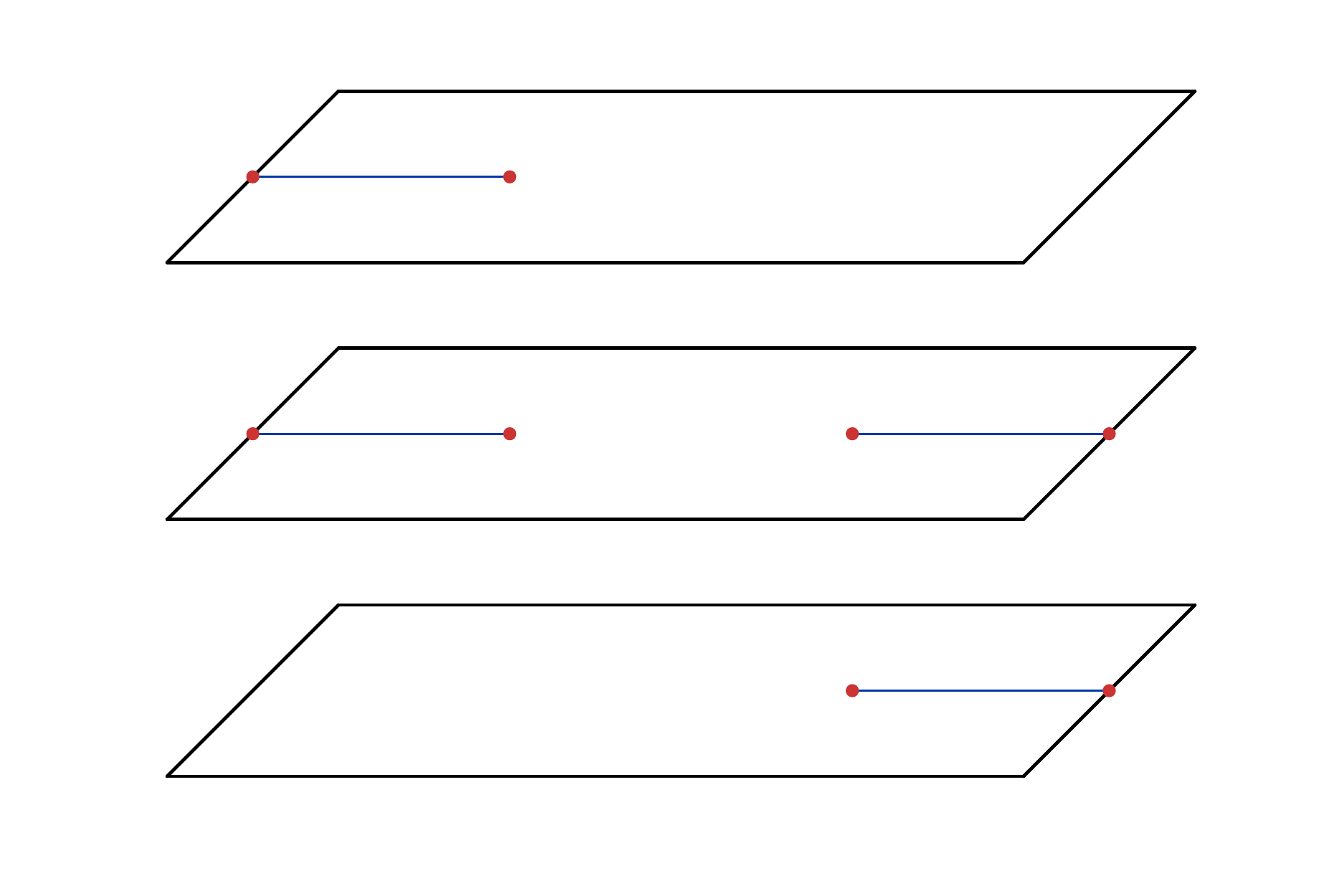}
                \put (90,53) {$\mathcal{R}_1$}
                \put (90,32) {$\mathcal{R}_2$}
                \put (90,14) {$\mathcal{R}_3$}
                \put (37,56) {$\beta$}
                \put (62,36) {$\alpha$}
            \end{overpic}
            \caption{Spectral curve in the $\lambda$-plane.}
            \end{subfigure}
            \begin{subfigure}[t]{0.48\textwidth}
            \centering
            \begin{overpic}[scale=.25]{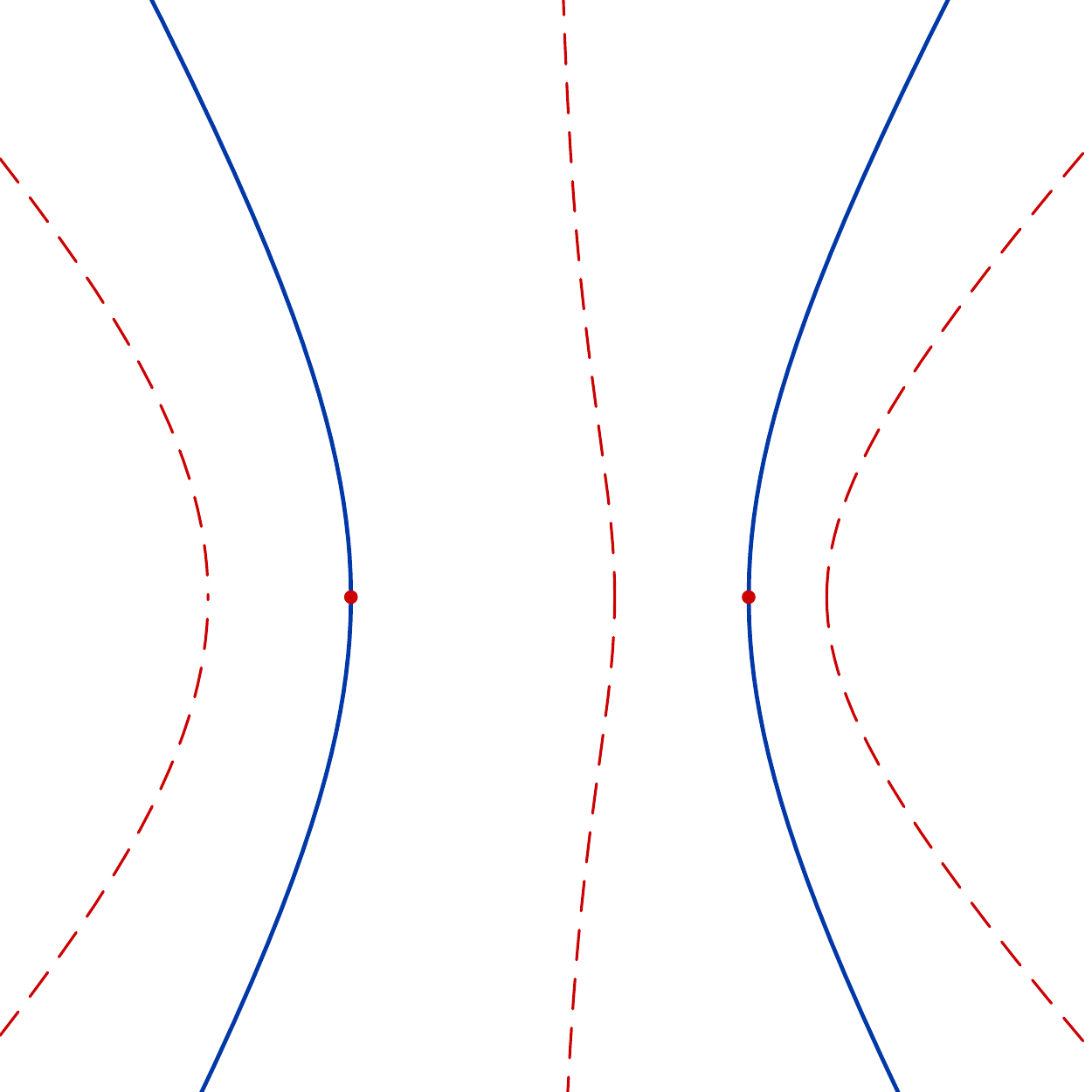}
                \put (90,48) {I}
                \put (45,80) {II}
                \put (20,48) {III}
                \put (35,42) {\footnotesize $-a$}
                \put (62,42) {\footnotesize $a$}
            \end{overpic}
            \caption{Spectral curve in the $u$-plane.}
            \end{subfigure}
            \caption{(a) The sheets of the spectral curve $\mathcal{R}_j$, $j=1,2,3$, glued along the cuts $(-\infty,\beta]$ and $[\alpha,\infty)$. (b) The preimages of the sheets $\mathcal{R}_j$ in the uniformizing plane, with $a = 0.8,b = 3.2,c = 1.2$. The preimage of sheet $\mathcal{R}_j$ is labeled by its corresponding Roman numeral. Furthermore, 
            there is the correspondence $\lambda(-a) = \alpha$, $\lambda(a) = \beta$. Dashed red lines correspond to the places where the $g$-function changes sign; we require that these curves do not intersect the branch cuts (shown in blue).}
            \label{fig:SpectralCurve}
        \end{figure}

        \begin{proof}
            The fact that the polynomial $z^3-\frac{1}{2}bz+\frac{1}{3}c$ has real roots is immediate from the constraint $|c|< \frac{b^{3/2}}{\sqrt{6}}$. To see that the curves $\Gamma, \mathcal{C}$ indeed do not intersect, note that $\Gamma$ is a hyperbola with vertices at
            $u = \pm a$. The curve $\mathcal{C}$ is quadratic in $y$, and thus can be written as the graph of a pair of functions:
                \begin{equation*}
                    x+iy\in \mathcal{C} \Leftrightarrow y = y(x) = \pm\sqrt{\left[\frac{6x^3-3bx+2c}{6x}\right]_+}.
                \end{equation*}
            Note that the numerator of this expression is precisely the polynomial equation $z^3-\frac{1}{2}bz+\frac{1}{3}c$. Provided $|c|< \frac{b^{3/2}}{\sqrt{6}}$,
            these roots are distinct, and two of these roots are positive if $c>0$. Due to the symmetry $y\to -y$ of both $\Gamma$ and $\mathcal{C}$, it is enough to prove that the graph of $y_+(x)$ does not intersect $\Gamma$ in the upper half plane.

            $y_+(x)$ has three branches emanating from the three roots $z_{\pm},z_0$ (corresponding to the regions where $\frac{6x^3-3bx+2c}{6x}>0$),
            defined on the domains i. $(-\infty,z_-)$, ii. $(0,z_0)$, and iii. $(z_+,\infty)$. Let us analyze the behaviors of each of these branches.
            
            i. Consider the branch of $y_+(x)$ defined on the domain $(-\infty,z_-)$. As $x\to -\infty$, $y_+(x) = -x[1+ \OO(x^{-2}) ]$, and
            decreases monotonically to $0$ as $x\to z_-$ from the left. 
            
            ii. Let us now examine the branch of $y_+(x)$ defined on the domain $(0,z_0)$. As $x\to 0_+$, $y_+(x) \to \sqrt{\frac{c}{3x}}[1+\OO(x)]$, and $y_+(x)$
            decreases monotonically to $0$ as $y_+(x)\to z_0$ from the left.

            iii. Finally, consider the branch of $y_+(x)$ defined on the domain $(z_+,\infty)$. As $x\to +\infty$, $y_+(x) = x[1 + \OO(x^{-2})]$. It is
            easy to check that $y_+(x)$ is again monotonic on $(z_+,\infty)$, and approaches $0$ as $x\to z_+$ from the right.

            We now show that graph of $y_+(x)$ and $\Gamma$ do not intersect in the upper half plane, provided $z_0 < a < z_+$. From the above analysis, it is 
            easy to see that branch ii. of $y_+(x)$ (defined on $(0,z_0)$) does not intersect $\Gamma$ provided $z_0 < a$. Let us now examine the intersection properties
            branch iii. of $y_+(x)$ and $\Gamma$.  Define the function $H(x) :=  [\sqrt{3x^2-3a^2} -y_+(x)] [\sqrt{3x^2-3a^2} +y_+(x)]$, and note that 
            $\sqrt{3x^2-3a^2} -y_+(x) > H(x)$, so if we can show $H(x)>0$, we are done. We have that
                \begin{equation*}
                    \lim_{x\to z_+} H(x) > \lim_{x\to z_+} \left[\sqrt{3x^2-3z_+^2} -y_+(x)\right] \left[\sqrt{3x^2-3a^2} +y_+(x)\right] = 0,
                \end{equation*}
            and so $H(z_+) > 0$. Furthermore,
                \begin{align*}
                    \frac{d}{dx}H(x)  = \frac{d}{dx} \left[\frac{12x^3 + (3b-18a^2)x - 2c}{6x}\right] = \frac{12x^3+c}{3x^2} > 0,
                \end{align*}
            and so $H(x)>0$ on $(z_+,\infty)$, since $(\sqrt{3x^2-3z_+^2} +y_+(x)) > 0$ there trivially.
            So $H(x)$ is monotone increasing on $(z_+,\infty)$, and thus $\Gamma$ does not intersect the graph of $y_+(x)$ in this region. 

            It remains to see that the graph of branch i. of $y_+(x)$ does not intersect $\Gamma$. We again consider the function $H(x)$ defined above;
                \begin{equation*}
                    \lim_{x\to z_+} H(x) > \lim_{x\to z_-} [\sqrt{3x^2-3z_-^2} -y_+(x)] [\sqrt{3x^2-3a^2} +y_+(x)] = 0,
                \end{equation*}
            and so we see that $H(z_-)>0$. For $x<z_-<0$, we have that
                \begin{equation*}
                    \frac{d}{dx}H(x)  = \frac{12x^3+c}{3x^2} < 0,
                \end{equation*}
            so that $H(x)<0$ on $(-\infty,z_-)$, and we have proven the lemma.
            
        \end{proof}
        The following proposition we will need in order to guarantee that the lensing inequalities hold. This lemma also demonstrates in what 
        sense the spectral curve changes when we approach the critical surface. The proof of this lemma follows from straightforward local analysis
        of the functions $g_j(\lambda) = g(u_j(\lambda))$, and so we omit it.
        
        \begin{lemma}\label{lemma:Localexpansions}
    \textit{Behavior of $g_j(\lambda)$ at the branch points.}
    
        \begin{enumerate}
            \item (Generic Case). Let $(\eta,\mu,\nu) \in D$. Then, as $\lambda\to \alpha$,
                \begin{align*}
                    (g_3-g_2)(\lambda) &= \begin{cases}
                        +i\rho_{\alpha}\cdot (\lambda-\alpha)^{3/2}[1 + \OO(\lambda-\alpha)], & \text{Im } \lambda >0,\\
                        -i\rho_{\alpha}\cdot (\lambda-\alpha)^{3/2}[1 + \OO(\lambda-\alpha)], & \text{Im } \lambda <0,
                    \end{cases}
                \end{align*}
             where $\rho_{\alpha} = \rho_{\alpha}(a,b,c) := -\frac{8}{9\sqrt{3a}}(6a^3-3ab-2c) > 0$. Furthermore, as $\lambda\to \beta$,
                \begin{align*}
                    (g_2- g_1)(\lambda) = \rho_{\beta}\cdot (\lambda-\beta)^{3/2}[1 + \OO(\lambda-\beta)],
                \end{align*}
            where $\rho_{\beta} = \rho_{\beta}(a,b,c) := -\frac{8}{9\sqrt{3a}}(6a^3-3ab+2c) > 0$.
            \item (Critical Surface, $\mu \neq 0$). Let $(\eta,\mu,\nu) \in \partial D \setminus \gamma_{\pm}$. Then, as $\lambda\to \alpha$,
                \begin{align*}
                    (g_3-g_2)(\lambda) &= \begin{cases}
                        +i\rho_{\alpha}\cdot (\lambda-\alpha)^{3/2}[1 + \OO(\lambda-\alpha)], & \text{Im } \lambda >0,\\
                        -i\rho_{\alpha}\cdot (\lambda-\alpha)^{3/2}[1 + \OO(\lambda-\alpha)], & \text{Im } \lambda <0,
                    \end{cases}
                \end{align*}
             where $\rho_{\alpha} = \rho_{\alpha}(a,b,c) := -\frac{8}{9\sqrt{3a}}(6a^3-3ab-2c) > 0$. Furthermore, as $\lambda\to \beta$,
                \begin{align*}
                    (g_2-g_1)(\lambda) &= -\hat{\rho}_{\beta} \cdot  (\lambda-\beta)^{5/2}[1 + \OO(\lambda-\beta)],
                \end{align*}
            where $\hat{\rho}_{\beta} := \hat{\rho}_{\beta}(a,b,c) := \frac{8\sqrt{3}}{135a^{7/2}}(2ab-c) > 0$.
            \item (The curve $\gamma_+$). Let $(\eta,\mu,\nu) \in \gamma_+$. Then, as $\lambda\to \alpha$,
                \begin{align*}
                    (g_3-g_2)(\lambda) &= \begin{cases}
                        +i\hat{\rho} \cdot (\lambda-\alpha)^{5/2}[1 + \OO(\lambda-\alpha)], & \text{Im } \lambda >0,\\
                        -i\hat{\rho} \cdot (\lambda-\alpha)^{5/2}[1 + \OO(\lambda-\alpha)], & \text{Im } \lambda <0,
                    \end{cases}
                \end{align*}
            where $\hat{\rho} := \frac{8\sqrt{3}}{135a^{7/2}}ab > 0$. Furthermore, as $\lambda\to \beta$,
                \begin{align*}
                    (g_2-g_1)(\lambda) &= -\hat{\rho} \cdot  (\lambda-\beta)^{5/2}[1 + \OO(\lambda-\beta)],
                \end{align*}
            where $\hat{\rho}> 0$ is as previously defined.
            \item (The curve $\gamma_-$). Let $(\eta,\mu,\nu) \in \gamma_-$. Then, $\alpha = \beta = 0$, and as $\lambda \to 0$,
                \begin{align*}
                    (g_2-g_1)(\lambda) &= 
                    \begin{cases}
                        \frac{3}{5}b(1-\omega)\lambda^{5/3}[1+\OO(\lambda^{2/3})], & \text{Im } \lambda >0,\\
                        \frac{3}{5}b(1-\omega^2)\lambda^{5/3}[1+\OO(\lambda^{2/3})], & \text{Im } \lambda <0,
                    \end{cases}\\
                    (g_3-g_2)(\lambda) &= 
                    \begin{cases}
                        \frac{3}{5}b(\omega-\omega^2)\lambda^{5/3}[1+\OO(\lambda^{2/3})], & \text{Im } \lambda >0,\\
                        \frac{3}{5}b(\omega^2-\omega)\lambda^{5/3}[1+\OO(\lambda^{2/3})], & \text{Im } \lambda <0.
                    \end{cases}
                \end{align*}
        \end{enumerate}
    \end{lemma}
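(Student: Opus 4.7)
The plan is to prove each of the four cases by direct local Taylor expansion of $g(u)$ at the appropriate uniformization points, pulling the result back to the $\lambda$-plane using the local behavior of the uniformizing map $\lambda(u) = u^3 - 3a^2 u + c$. Since the only non-trivial ingredient is bookkeeping, I will emphasize structure over arithmetic.

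First, I would establish the local structure of the uniformizing map. Near $u = -a$, writing $s = u+a$, one has
\begin{equation*}
    \lambda - \alpha = \lambda(-a+s)-\lambda(-a) = -3a\, s^2 + s^3 = -3a\, s^2\Bigl[1 - \tfrac{s}{3a}\Bigr],
\end{equation*}
so that $s = \pm i(3a)^{-1/2}(\lambda-\alpha)^{1/2}[1+O(\lambda-\alpha)]$, the two signs labelling the sheets $\mathcal{R}_2, \mathcal{R}_3$ which are glued along $[\alpha,\infty)$. The choice of sign in each half-plane is fixed by the asymptotic prescriptions \eqref{u2-asymptotics}--\eqref{u3-asymptotics}, and is what produces the $\pm i$ discrepancy between the $\mathrm{Im}\,\lambda>0$ and $\mathrm{Im}\,\lambda<0$ branches in cases 1--3. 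An identical expansion near $u = a$ with $s = u-a$ gives $\lambda - \beta = 3a s^2 + s^3$, and this time the sign of $s^2$ makes $s$ real for $\lambda>\beta$, explaining why the expansion of $g_2-g_1$ has no $\pm i$ factor.

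Next, using $g'(u) = Y(u)\lambda'(u)$ together with $\lambda'(\pm a)=0$, I would Taylor-expand $g$ at $u=\pm a$: at $u=-a$ one finds
\begin{equation*}
    g(-a+s)-g(-a) = -3aY(-a)\,s^2 + \bigl[Y(-a) - 2a Y'(-a)\bigr]s^3 + O(s^4),
\end{equation*}
and analogously at $u=a$. The sheet difference is picked out entirely by the \emph{odd} part of this expansion under $s\to -s$. Substituting $s\sim \pm i(\lambda-\alpha)^{1/2}/\sqrt{3a}$ (resp.\ $\pm(\lambda-\beta)^{1/2}/\sqrt{3a}$) and simplifying gives the generic case 1 with $\rho_\alpha$, $\rho_\beta$ proportional to $Y(-a) - 2aY'(-a)$ and $Y(a) + 2aY'(a)$ respectively; after substituting the definition of $Y$, these become multiples of $(6a^3 - 3ab \mp 2c)$, matching the stated expressions.

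For cases 2 and 3, I would observe that $(\eta,\mu,\nu)\in\partial D$ is precisely the vanishing locus of the discriminant of \eqref{sigma-eq}, and, translating into $(a,b,c)$ via Proposition \ref{prop:bijection}, corresponds to $6a^3-3ab\pm 2c = 0$. These are exactly the conditions under which the cubic coefficient in the Taylor expansion of $g$ above vanishes, so the next odd term $s^5$ dominates; carrying this term through the same substitution produces the $(\lambda-\beta)^{5/2}$ and $(\lambda-\alpha)^{5/2}$ asymptotics, with $\hat{\rho}_\beta,\hat{\rho}$ read off from the fifth derivative of $g$ at $\pm a$ (which by the explicit form of $g$ reduces to simple polynomials in $a,b,c$). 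Positivity of all these prefactors on $R\cup\tilde{R}$ follows from the inequalities among $z_-, z_0, z_+$ already established in this section.

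For case 4, the parameters lie on $\gamma_-$, where $a=0$ and both branch points collide at the origin: the uniformizing map degenerates to $\lambda(u) = u^3 + c$ (and in fact $c=0$ as well on $\gamma_-$, since $\mu=0$ and $\nu=0$ force it), so the three sheets are now exchanged cyclically by $u\mapsto \omega u$. Direct expansion of $g(u) = \tfrac{3}{7}u^7 - \tfrac{3}{5}b u^5 + \cdots$ at $u=0$ and substitution of $u_j(\lambda)\sim \omega^{j-1}\lambda^{1/3}$ (with the appropriate half-plane convention from \eqref{u1-asymptotics}--\eqref{u3-asymptotics}) produces the stated $\lambda^{5/3}$ leading behavior with coefficients $\tfrac{3}{5}b(1-\omega^k)$. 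The main obstacle is purely bookkeeping: keeping the sheet labels and $\pm i$ signs consistent across the four cases; the algebra itself is mechanical once the Taylor expansion of $g$ is written out.
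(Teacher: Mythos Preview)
Your approach is exactly what the paper has in mind: it states that ``the proof of this lemma follows from straightforward local analysis of the functions $g_j(\lambda) = g(u_j(\lambda))$, and so we omit it.'' So there is nothing to compare against beyond the outline you give.

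One computational slip worth flagging: your claim that ``the sheet difference is picked out entirely by the odd part of this expansion under $s\to -s$'' is not quite right, and it leads to the incorrect intermediate coefficient $Y(-a)-2aY'(-a)$. The two small roots $s_2,s_3$ of $-3as^2+s^3=\lambda-\alpha$ satisfy $s_2+s_3=O(\lambda-\alpha)$, so the even term $-3aY(-a)s^2$ also contributes at order $(\lambda-\alpha)^{3/2}$; combining it with the cubic term (using $s_3^3-s_2^3=3a(s_3^2-s_2^2)$, which follows from the relation both roots satisfy) collapses the coefficient to $-6a^2Y'(-a)$, i.e.\ to $Y'(-a)=-\tfrac{2}{3}(6a^3-3ab-2c)$ alone. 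Equivalently and more cleanly, use $dg_j/d\lambda=Y(u_j(\lambda))$ so that $(g_3-g_2)'=Y'(-a)(s_3-s_2)+O(\lambda-\alpha)$ and integrate. Either way you land on the stated $\rho_\alpha$; the rest of your plan (degeneration to $s^5$ on $\partial D$, and the $a=c=0$ cube-root case on $\gamma_-$) is fine.
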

        The above lemmas are enough to guarantee that the inequalities we will need for lensing indeed hold:
        \begin{prop}\label{LensingProposition}
            Let $(\eta,\mu,\nu)\in D$. We have the following inequalities:
                \begin{equation*}
                    \begin{cases}
                        (a.)\qquad \text{Re}[g_3(\lambda) - g_2(\lambda)] > 0, & \lambda \in \hat{\Gamma}_5,\\
                        (b.)\qquad \text{Re}[g_2(\lambda) - g_1(\lambda)] > 0, & \lambda \in \hat{\Gamma}_{-3},\\
                        (c.)\qquad \text{Re}[g_3(\lambda) - g_2(\lambda)] < 0, & \text{in a lens around $[\alpha,\infty)$},\\
                        (d.)\qquad \text{Re}[g_2(\lambda) - g_1(\lambda)] < 0, & \text{in a lens around $(-\infty,\beta]$}.
                    \end{cases}
                \end{equation*}
        \end{prop}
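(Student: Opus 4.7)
The plan is to study the zero set of $\text{Re}(g_3 - g_2)$ in the upper half plane (and of $\text{Re}(g_2 - g_1)$ in the lower half plane), and to place the contours $\hat{\Gamma}_5$, $\hat{\Gamma}_{-3}$ and the lenses around $[\alpha,\infty)$, $(-\infty,\beta]$ in the connected components of the correct sign. Since each $\text{Re}(g_i - g_j)$ is harmonic away from the cuts (and vanishes along the cut exchanging the two sheets in question), the sign structure in each half-plane is governed by a graph of critical arcs emanating from the branch points and extending to infinity.

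First I would extract the local sector data from Lemma~\ref{lemma:Localexpansions}. The expansion $(g_3 - g_2)(\lambda) = i\rho_\alpha(\lambda-\alpha)^{3/2}[1+O(\lambda-\alpha)]$ in the upper half plane produces, in addition to the cut itself, a single Stokes arc leaving $\alpha$ at angular direction $2\pi/3$, with $\text{Re}(g_3 - g_2) < 0$ in the sector adjacent to $[\alpha,\infty)$ (immediately giving the local version of (c) above the cut, and similarly below) and $\text{Re}(g_3-g_2) > 0$ in the adjoining sector. The leading asymptotic $g_3 - g_2 \sim \frac{3i\sqrt{3}}{7}\lambda^{7/3}$ places the asymptotic critical directions at angles $3\pi/7$ and $6\pi/7$, with the direction of $\Gamma_5$ at $9\pi/14$ lying in the intermediate asymptotic sector where $\text{Re}(g_3 - g_2) > 0$. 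The parallel analysis at $\beta$ in the lower half plane for $g_2 - g_1$ shows that the direction of $\Gamma_{-3}$ at $-5\pi/14$ sits in an asymptotic positivity sector for that difference.

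Globalization requires the Stokes arc emanating from $\alpha$ in the upper half plane to reach infinity along the critical direction $6\pi/7$ without doubling back to a cut or closing on itself. This is exactly the content of Lemma~\ref{Lemma:Lensing}: the arc pulls back to the uniformization plane as a trajectory of the one-form $Y(u)\lambda'(u)\,du$, and any return to a branch cut would require the locus $\mathcal{C} = \{\text{Im}\,Y(u)=0\}$ to meet the locus $\Gamma = \{\text{Im}\,\lambda(u)=0\}$, which the lemma forbids. Once the topology of the critical graph is pinned down, the component $\{\text{Re}(g_3 - g_2) > 0\}$ in the upper half plane is a single connected region containing both a local sector at $\alpha$ and the asymptotic direction of $\Gamma_5$, so $\hat{\Gamma}_5$ can be drawn within it to yield (a); the same argument at $\beta$ gives (b); and inequalities (c), (d) follow by thickening each cut into a thin enough lens to remain in the negative-sign sector adjacent to it.

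The main obstacle is precisely this global step. Local expansions at branch points and the leading asymptotic at infinity do not, on their own, exclude spurious critical trajectories -- closed loops or arcs returning to unintended cuts -- that could disconnect the expected positivity region or block the lensing sectors. Lemma~\ref{Lemma:Lensing} is the key input that rules out these pathologies, after which the proof reduces to sign bookkeeping in the local and asymptotic sectors already identified above.
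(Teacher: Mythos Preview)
Your plan is correct and matches the paper's approach in outline: local data from Lemma~\ref{lemma:Localexpansions}, asymptotic sectors at infinity, and Lemma~\ref{Lemma:Lensing} as the global input that pins down the sign regions. The difference lies in how Lemma~\ref{Lemma:Lensing} is actually deployed. The paper does not track Stokes arcs directly; instead it reinterprets $\Gamma\cap\mathcal{C}=\emptyset$ as the statement that the normal derivative $\partial_n\,\text{Re}\,g(u)$ is of constant sign along each connected component of the branch cut in the $u$-plane (since that normal derivative vanishes precisely on $\mathcal{C}$). For (c) and (d) this gives the lensing inequalities in one stroke: $\text{Re}(g_3-g_2)=0$ on $(\alpha,\infty)$ by conjugation symmetry, its normal derivative equals $2\,\partial_n\,\text{Re}\,g_3$, and the latter has one sign all along the cut, fixed by the local expansion at $\alpha$. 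For (a) and (b) the paper combines this (the boundary of the positivity set stays off the cuts) with the maximum principle to force the positivity region to be unbounded and to reach the correct asymptotic sector, after which $\hat{\Gamma}_5$ is placed inside it.

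Your statement that the Stokes arc ``pulls back to the uniformization plane as a trajectory of the one-form $Y(u)\lambda'(u)\,du$'' is not accurate: the level set $\text{Re}(g_3-g_2)=0$ involves values of $g$ on two different sheets simultaneously, so it is not the preimage of a single trajectory of $dg$ in the $u$-plane. The paper's normal-derivative reading of Lemma~\ref{Lemma:Lensing} sidesteps this issue entirely and is the cleaner way to make the global step rigorous.
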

        \begin{proof}
            Lemma \ref{Lemma:Lensing} showed that the branch cuts $\Gamma$ and the curve $\mathcal{C}$ do not intersect. Let us give
            an alternate interpretation of this result. Let 
                \begin{equation*}
                    \hat{{\bf n}} := \frac{\nabla \text{Im } \lambda(u)}{||\nabla \text{Im } \lambda(u)||}.
                \end{equation*}
            We claim that $\nabla \text{Re } g(u) \cdot \hat{{\bf n}} = \frac{\partial }{\partial n} \text{Re } g(u)$ is of constant sign on 
            each connected component of the branch cuts. To see this, observe that $\partial [\text{Re } g(u)] = \frac{1}{2}\partial g(u)$,
            and similarly $\partial [\text{Im } \lambda(u)] = \frac{1}{2 i}\partial \lambda(u)$, where $\partial$ here denotes the holomorphic 
            derivative in $u$. It follows that $\nabla \text{Re } g(u)$ and $\nabla \text{Re } \lambda(u)$ are perpendicular if and only if
            the ratio of these expressions is purely imaginary:
                \begin{equation*}
                    \frac{\partial g(u)}{i\partial \lambda(u)} \in i\RR. 
                \end{equation*}
            What we have shown is that the curve defined by $\frac{\partial }{\partial n} \text{Re } g(u) = 0$ 
            (i.e. where $\text{Re } g(u)$ changes sign) is characterized by
                \begin{equation*}
                    \text{Im } \frac{\partial g(u)}{\partial \lambda(u)} = \text{Im } Y(u) = 0,
                \end{equation*}
            which is precisely the curve $\mathcal{C}$. Thus, $\text{Re }g(u)$ is of constant sign on each connected component of the branch cuts.
            
            With this fact in place, let us proceed to the proof of the inequalities $(a.)$--$(d.)$.

            \textit{Proof of $(a.)$} Let 
                \begin{equation*}
                    E:=\{\lambda \,|\, \text{Re }(g_3-g_2)(\lambda) >0\}.
                \end{equation*}
            For $\lambda$ sufficiently large in the lower half plane, formulae \eqref{u1-asymptotics}--\eqref{u3-asymptotics} and the definition $g_j(\lambda) = g(u_j(\lambda))$ $\text{Re }(g_3-g_2)(\lambda)>0$ for $-\frac{6\pi}{7} <\arg \lambda < -\frac{3\pi}{7}$. Furthermore,
            for near $\lambda = \alpha$ in the lower half plane, Lemma \ref{lemma:Localexpansions} yields that  $\text{Re }(g_3-g_2)(\lambda) \sim - \rho_{\alpha}\cdot \text{Im } (\lambda-\alpha)^{3/2} >0$ for $|\lambda-\alpha|$ sufficiently small and 
            $-\pi < \arg (\lambda-\alpha) < -\frac{2\pi}{3}$. Since Lemma \ref{Lemma:Lensing} guarantees that the boundary of $E$ is bounded away from the branch cuts, and $\text{Re } (g_3-g_2)(\lambda)$ is a non-constant harmonic function, the maximum principle implies that $E$ is necessarily unbounded, and reaches infinity in the sector
            $-\frac{6\pi}{7} <\arg \lambda < -\frac{3\pi}{7}$. Since this sector contains the contour $\hat{\Gamma}_5$ for $\lambda$ sufficiently 
            large, by a possible slight redefinition of $\hat{\Gamma}_5$ near $\lambda=\alpha$, we can conclude that 
            $\text{Re }(g_3-g_2)(\lambda)>0$ on $\hat{\Gamma}_5$.

            \textit{Proof of $(b.)$} The proof of $(b.)$ is almost identical to that of $(a.)$, and so we omit it.
            
            \textit{Proof of $(c.)$} For $\lambda \in (\alpha,\infty)$, we have that $u_{2,+}(\lambda) = \overline{u_{3,-}(\lambda)}$. Since 
            $\overline{g(u)} = g(\overline{u})$, we see that
                \begin{equation*}
                    \overline{g(u_{2,+}(\lambda))} = g(u_{3,-}(\lambda)),
                \end{equation*}
            and so $\text{Re }(g_3-g_2)(\lambda) = 0$ for $z\in (\alpha,\infty)$. By the definition of sheets $2$ and $3$,
                \begin{equation*}
                    \frac{\partial}{\partial n_+}  \text{Re } g_2(\lambda)= -\frac{\partial}{\partial n_-}  \text{Re } g_3(\lambda), \qquad\qquad
                    \frac{\partial }{\partial n_-} \text{Re } g_2(\lambda) = -\frac{\partial}{\partial n_+}  \text{Re } g_3(\lambda),
                \end{equation*}
            where $\frac{\partial}{\partial n_{\pm}}$ denote the normal derivatives in the upper and lower half $\lambda$-planes, respectively
            (note that these normal derivatives differ from the normal derivatives of $g(u)$ only by an overall positive factor of $|\lambda'(u)|>0$). Furthermore, since $\overline{g(u)} = g(\overline{u})$,
                \begin{equation*}
                    \frac{\partial}{\partial n_+} \text{Re }g_2(\lambda) = \frac{\partial}{\partial n_-} \text{Re }g_2(\lambda),\qquad\qquad 
                    \frac{\partial}{\partial n_+} \text{Re }g_3(\lambda) = \frac{\partial}{\partial n_-} \text{Re }g_3(\lambda),
                \end{equation*}
            and so 
                \begin{equation*}
                    \frac{\partial}{\partial n_{\pm}} \left[\text{Re }(g_3-g_2)(\lambda)\right] = 2\frac{\partial}{\partial n_{\pm}} \text{Re } g_3(\lambda). \qquad\qquad (\star)
                \end{equation*}
            Now, Lemma \ref{lemma:Localexpansions} allows us to compute that locally near $\lambda = \alpha$, $\text{Re }(g_3-g_2)(\lambda)<0$
            for $|\arg (\lambda-\alpha)| < \frac{2\pi}{3}$. Since this quantity is negative near $\lambda = \alpha$, Lemma \eqref{Lemma:Lensing} along with the equality $(\star)$ allow us to conclude that $\text{Re }(g_3-g_2)(\lambda) <0$ in a lens-shaped region around $[\alpha,\infty)$.
        
            \textit{Proof of $(d.)$} The proof of $(d.)$ is almost identical to that of $(c.)$, and so we omit it.
        \end{proof}

\begin{remark}
    Actually, all of these inequalities of Proposition \ref{LensingProposition} also extend to the boundary of $D$ as well (i.e., these inequalities also hold for $(\eta,\mu,\nu)$ on the critical surface). The proof of this fact is virtually identical to the proof of Proposition \ref{LensingProposition}, and so we omit it.
\end{remark}

With these inequalities in place, we are ready to proceed to the Deift-Zhou steepest descent analysis for ${\bf Z}(\lambda;\eta,\mu,\nu | \hbar)$.
    
\subsection{Deift-Zhou steepest descent.}
The rest of this section is devoted to the proof of Theorem \eqref{MainTheorem1}; this requires a steepest descent analysis of the Riemann-Hilbert
problem for ${\bf Z}(\lambda;\eta,\mu,\nu|\hbar)$.

We now define the first transformation: let 
    \begin{equation}
        G(\lambda) := \text{diag }(g_1(\lambda),g_2(\lambda),g_3(\lambda)),
    \end{equation}
where $g_k(\lambda)$ are the $g$-functions defined in the previous section, and
        \begin{equation}
                \mathfrak{h}^{(0)} = 
                    \begin{psmallmatrix}
                        1 & -\frac{1}{2\hbar}h_1^{(0)} & \frac{1}{8\hbar^2}(h_1^{(0)})^2 - \frac{1}{4\hbar}h_2^{(0)}\\
                        0 & 1 & -\frac{1}{2\hbar}h_1^{(0)}\\
                        0 & 0 & 1
                    \end{psmallmatrix},
            \end{equation}
where $h_1^{(0)}$, $h_2^{(0)}$ are leading parts of the Hamiltonians $H_1$, $H_2$ (as defined in by formulae \eqref{h1_0} and \eqref{h2_0}).
We then define the first transformation to be
    \begin{equation}
        {\bf U}(\lambda;\eta,\mu,\nu | \hbar):=(\mathfrak{h}^{0})^{-1}{\bf Z}(\lambda;\eta,\mu,\nu | \hbar)e^{-\frac{1}{\hbar}G(\lambda)}.
    \end{equation}
Then, we have the following proposition:
    \begin{prop}
        ${\bf U}(\lambda;\eta,\mu,\nu | \hbar)$ satisfies the following RHP:
            \begin{equation*}
                {\bf U}_+(\lambda;\eta,\mu,\nu | \hbar ) = {\bf U}_-(\lambda;\eta,\mu,\nu | \hbar)\cdot
                \begin{cases}
                    \begin{psmallmatrix}
                        e^{\frac{1}{\hbar}[g_{1,-}(\lambda)-g_{2,-}(\lambda)]} & -1\\
                        0 & e^{-\frac{1}{\hbar}[g_{1,-}(\lambda)-g_{2,-}(\lambda)]}
                    \end{psmallmatrix}\oplus 1, & \lambda \in (-\infty,\beta],\\
                    1 \oplus \begin{psmallmatrix}
                        e^{\frac{1}{\hbar}[g_{2,-}(\lambda)-g_{3,-}(\lambda)]} & -1\\
                        0 & e^{-\frac{1}{\hbar}[g_{2,-}(\lambda)-g_{3,-}(\lambda)]}
                    \end{psmallmatrix}, & \lambda \in [\alpha,\infty),\\
                    \mathbb{I} + E_{23}e^{\frac{1}{\hbar}(g_2-g_3)(\lambda)}, & \lambda \in \hat{\Gamma}_5,\\
                    \mathbb{I} + E_{12}e^{\frac{1}{\hbar}(g_1-g_2)(\lambda)}, & \lambda \in \hat{\Gamma}_{-3},
                \end{cases}
            \end{equation*}
        subject to the normalization condition
            \begin{equation} \label{U-normalization}
                {\bf U}(\lambda;\eta,\mu,\nu | \hbar) = \left[\mathbb{I} + \OO(\lambda^{-1}) \right]\hat{f}(\lambda),
            \end{equation}
    \end{prop}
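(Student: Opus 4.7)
The plan is to verify the jumps and the normalization by direct algebraic manipulation, exploiting two structural facts: $\mathfrak{h}^{(0)}$ is $\lambda$-independent (so it plays no role in any jump relation), and $G(\lambda)$ is diagonal (so right-multiplication by $e^{-\frac{1}{\hbar}G(\lambda)}$ simply conjugates the ${\bf Z}$-jumps).

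First I would compute the jumps. The general formula is
\begin{equation*}
{\bf U}_+ = {\bf U}_-\cdot e^{\frac{1}{\hbar}G_-(\lambda)} J_{{\bf Z}}(\lambda)\, e^{-\frac{1}{\hbar}G_+(\lambda)},
\end{equation*}
and it suffices to evaluate this on each contour. On $\hat{\Gamma}_5$ and $\hat{\Gamma}_{-3}$, all three $g_j$ are analytic, so $G_+ = G_-$; the elementary matrix $E_{ij}$ acquires the scalar factor $e^{\frac{1}{\hbar}(g_i - g_j)}$, yielding the stated dressings. On the branch cut $[\alpha,\infty)$, sheets $2$ and $3$ are exchanged, so $g_{2,+} = g_{3,-}$, $g_{3,+} = g_{2,-}$, while $g_1$ is analytic; combined with the jump $\mathbb{I} - E_{23}$ of ${\bf Z}$, a one-line computation yields the triangular block
$\bigl(\begin{smallmatrix} e^{\frac{1}{\hbar}(g_{1,-}-g_{2,-})} & -1\\ 0 & e^{-\frac{1}{\hbar}(g_{1,-}-g_{2,-})}\end{smallmatrix}\bigr)$ shifted into the $(2,3)$-block; the computation on $(-\infty,\beta]$ is identical with sheets $1,2$ exchanged.

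For the normalization, inserting the large-$\lambda$ expansion of ${\bf Z}$ gives
\begin{equation*}
{\bf U}(\lambda) = (\mathfrak{h}^{(0)})^{-1}\bigl[\mathbb{I} + O(\lambda^{-1})\bigr]\hat{f}(\lambda)\, e^{\frac{1}{\hbar}(\hat{\Theta}(\lambda) - G(\lambda))}.
\end{equation*}
By Proposition \ref{prop:true-g-function}, the diagonal matrix $\hat{\Theta}(\lambda) - G(\lambda)$ is $O(\lambda^{-1/3})$, so the exponential expands as a diagonal series in $\lambda^{-1/3}$. Because the gauge factor $\hat{f}(\lambda)$ contains the block $\lambda^{\Delta} = \mathrm{diag}(\lambda^{1/3},1,\lambda^{-1/3})$, conjugation of this diagonal series by $\hat{f}$ redistributes powers of $\lambda^{\pm 1/3}$ off the diagonal and produces a strictly upper-triangular $\lambda$-independent piece plus $O(\lambda^{-1})$.

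The main computational obstacle is showing that this upper-triangular constant matches $\mathfrak{h}^{(0)}$ exactly, so that $(\mathfrak{h}^{(0)})^{-1}$ absorbs it and leaves ${\bf U}(\lambda) = [\mathbb{I} + O(\lambda^{-1})]\hat{f}(\lambda)$. Concretely, one must expand $u_j(\lambda)$ to order $\lambda^{-7/3}$ (as already noted in Proposition \ref{prop:true-g-function}) and read off the $\lambda^{-1/3}$ and $\lambda^{-2/3}$ coefficients of $g_j(\lambda) - \hat{\Theta}_{jj}(\lambda)$. After conjugation by $\hat{f}$, these identify with $-\frac{1}{2\hbar}h_1^{(0)}$ and $\frac{1}{8\hbar^2}(h_1^{(0)})^2 - \frac{1}{4\hbar}h_2^{(0)}$ respectively, reproducing the prescribed entries of $\mathfrak{h}^{(0)}$. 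This matching is precisely the reason $\mathfrak{h}^{(0)}$ was defined with leading Hamiltonian data $h_1^{(0)}, h_2^{(0)}$, and is a tedious but mechanical residue calculation on the rational spectral curve; no further input is required.
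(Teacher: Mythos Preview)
Your proposal is correct and follows essentially the same approach as the paper's own proof, which simply asserts that the jumps follow from direct calculation and that the normalization follows from the identity $\hat{f}(\lambda)e^{\frac{1}{\hbar}[\hat{\Theta}(\lambda) - G(\lambda)]}\hat{f}^{-1}(\lambda) = \mathfrak{h}^{(0)} + \OO(\lambda^{-1})$; you have merely unpacked both steps in more detail. One small slip: in your description of the $[\alpha,\infty)$ jump you wrote the exponent $g_{1,-}-g_{2,-}$ in the $2\times 2$ block, but on that cut it should be $g_{2,-}-g_{3,-}$ (the indices $1,2$ belong to the $(-\infty,\beta]$ case).
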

\begin{proof}
    The form of the jumps of ${\bf U}(\lambda;\eta,\mu,\nu | \hbar)$ follows from direct calculation, using the definitions of $G$, ${\bf Z}$.
    The fact that the asymptotic \eqref{U-normalization} holds can be seen immediately from the fact that
        \begin{equation*}
            \hat{f}(\lambda)e^{\frac{1}{\hbar}[\Theta(\lambda) - G(\lambda)]}\hat{f}^{-1}(\lambda) = \mathfrak{h}^{(0)} + \OO(\lambda^{-1}), 
            \qquad\qquad \lambda\to \infty.
        \end{equation*}
\end{proof}

Note that the jumps on the real axis are in standard form for lens opening; by construction (our choice of the $g$-function), it is possible to open lenses, and after doing so, all jumps off the real axis decay exponentially as $\hbar \to 0$. 
We define two lens-shaped domains, one about $\lambda = \beta$, which opens symmetrically about $\arg (\lambda -\beta) = \pi$, and one about 
$z= \alpha$, which opens symmetrically about $\arg(\lambda - \alpha) = 0$. We label these regions $\Delta_{\beta}$, $\Delta_{\alpha}$, respectively,
and let $\Delta_{\beta}^{\pm}$, $\Delta_{\alpha}^{\pm}$ be the connected components of these domains in the upper (resp. lower) half planes, and let
$\Gamma_{\beta}^{\pm}$, $\Gamma_{\alpha}^{\pm}$ denote the boundary component of $\Delta_{\beta}^{\pm}$, $\Delta_{\alpha}^{\pm}$ which does not include the real line.

Define $2\times 2$ matrices
    \begin{equation*}
        v_{\alpha}(\lambda) := \begin{psmallmatrix}
            1 & 0\\
            -e^{\frac{1}{\hbar}(g_3-g_2)(\lambda)} & 1
        \end{psmallmatrix}, \qquad\qquad 
        v_{\beta}(\lambda) := \begin{psmallmatrix}
            1 & 0\\
            -e^{\frac{1}{\hbar}(g_2-g_1)(\lambda)} & 1
        \end{psmallmatrix}.
    \end{equation*}
Note that these matrices are analytic and invertible in the domains $\Delta_{\alpha}^{\pm}$, $\Delta_{\beta}^{\pm}$, respectively. Now, put
    \begin{equation*}
        {\bf V}(\lambda;\eta,\mu,\nu | \hbar ) :=
        {\bf U}(\lambda;\eta,\mu,\nu | \hbar ) \times
            \begin{cases}
                v_{\beta}(\lambda) \oplus 1, & \lambda \in \Delta_{\beta}^{+},\\
                v_{\beta}^{-1}(\lambda) \oplus 1, & \lambda \in \Delta_{\beta}^{-},\\
                1 \oplus v_{\alpha}^{-1}(\lambda), & \lambda \in \Delta_{\alpha}^{+},\\
                1 \oplus v_{\alpha}(\lambda), & \lambda \in \Delta_{\alpha}^{-},\\
                \mathbb{I}, & \textit{ otherwise.}
            \end{cases}
    \end{equation*}

Then, we have the following Proposition.
\begin{prop}\label{V-prop}
    ${\bf V}(\lambda;\eta,\mu,\nu|\hbar)$ satisfies the following RHP.
        \begin{equation*}
            {\bf V}_+(\lambda;\eta,\mu,\nu|\hbar) = {\bf V}_-(\lambda;\eta,\mu,\nu|\hbar) \times
                \begin{cases}
                    \mathbb{I} + E_{23}e^{-\frac{1}{\hbar}(g_3-g_2)(\lambda)}, & \lambda \in \hat{\Gamma}_5,\\
                    \mathbb{I} + E_{12}e^{-\frac{1}{\hbar}(g_2-g_1)(\lambda)}, & \lambda \in \hat{\Gamma}_{-3},\\
                    \mathbb{I} - E_{32}e^{\frac{1}{\hbar}(g_3-g_2)(\lambda)}, & \lambda \in \Gamma^{\pm}_{\alpha},\\
                    \mathbb{I} - E_{21}e^{\frac{1}{\hbar}(g_2-g_1)(\lambda)}, & \lambda \in \Gamma^{\pm}_{\beta},\\
                    (-i\sigma_2)\oplus 1, & \lambda \in (-\infty,\beta],\\
                    1\oplus(-i\sigma_2), & \lambda \in [\alpha,\infty).
                \end{cases}
        \end{equation*}
    Furthermore, ${\bf V}(\lambda;\eta,\mu,\nu|\hbar)$ is normalized as
        \begin{equation*}
            {\bf V}(\lambda;\eta,\mu,\nu|\hbar) = \left[\mathbb{I} + \OO(\lambda^{-1})\right]\hat{f}(\lambda).
        \end{equation*}
\end{prop}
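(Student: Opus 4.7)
\emph{Plan.} The proposition is a direct consequence of the lens-opening construction. The argument proceeds contour by contour, with most of the content being routine algebraic verification that relies on the analyticity of the $v$-matrices inside the lenses and the sheet-swap identities of the $g_j$ across the branch cuts.

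First, $\mathbf{V}$ and $\mathbf{U}$ coincide outside the union of the lens domains $\Delta_\beta^\pm\cup\Delta_\alpha^\pm$. These domains may be taken as thin as desired and placed disjoint from the rays $\hat{\Gamma}_5$ and $\hat{\Gamma}_{-3}$, so on those rays the jumps of $\mathbf{V}$ are inherited verbatim from $\mathbf{U}$; the trivial rewriting $e^{(g_2-g_3)/\hbar}=e^{-(g_3-g_2)/\hbar}$ brings them to the form stated. On the new lens boundaries $\Gamma_\alpha^\pm$ and $\Gamma_\beta^\pm$, $\mathbf{U}$ has no jump, and $\mathbf{V}$ picks up precisely a factor of $1\oplus v_\alpha^{\pm 1}$ (respectively $v_\beta^{\pm 1}\oplus 1$). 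These factors are analytic in their respective lens domains because the differences $g_3-g_2$ and $g_2-g_1$ branch only along the cuts, so no monodromy contaminates the construction. Writing out $1\oplus v_\alpha = \mathbb{I} - E_{32}e^{(g_3-g_2)/\hbar}$ and the three analogous expressions reproduces the claimed off-diagonal jumps.

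The genuinely computational step is the reduction of the jumps on the cuts themselves to the constant blocks $(-i\sigma_2)\oplus 1$ and $1\oplus(-i\sigma_2)$. For $[\alpha,\infty)$ I would use the sheet identification $u_{2,+}(\lambda)=u_{3,-}(\lambda)$ baked into the definition of $\mathcal{R}$, which gives $g_{2,\pm}=g_{3,\mp}$ and hence $(g_3-g_2)_+=-(g_3-g_2)_-$. Setting $\phi_\pm:=(g_3-g_2)_\pm/\hbar$, the new jump equals $(1\oplus v_{\alpha,-}^{-1})\,J_{\mathbf U}\,(1\oplus v_{\alpha,+}^{-1})$, and a short $2\times 2$ multiplication---combined with the cancellation $e^{-\phi_-}=e^{\phi_+}$---collapses the central block to $-i\sigma_2$. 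The cut $(-\infty,\beta]$ is treated identically, using $u_{1,+}=u_{2,-}$.

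Finally, the normalization $\mathbf{V}(\lambda)=[\mathbb{I}+\OO(\lambda^{-1})]\hat{f}(\lambda)$ as $\lambda\to\infty$ is inherited verbatim from $\mathbf{U}$, since all of the modifications have compact support in $\lambda$. I do not foresee any substantive obstacle: the argument is bookkeeping, and the only nontrivial input is the sheet-swap identity $g_{2,\pm}=g_{3,\mp}$ (and its counterpart across $(-\infty,\beta]$), which is guaranteed by the construction of $\mathcal{R}$ in Section \ref{section:g-function}.
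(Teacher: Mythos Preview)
Your approach is correct and matches the paper's treatment: the paper states Proposition~\ref{V-prop} without proof, regarding it as a routine consequence of the lens-opening definition, and your sketch fills in exactly the standard verification (inherited jumps on $\hat{\Gamma}_5,\hat{\Gamma}_{-3}$, new jumps on lens boundaries, LDU factorization combined with the sheet-swap $g_{2,\pm}=g_{3,\mp}$ on the cuts).

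One small correction: your final sentence asserts that ``all of the modifications have compact support in $\lambda$,'' but this is not true---the lens regions $\Delta_\alpha^\pm,\Delta_\beta^\pm$ follow the cuts $[\alpha,\infty)$ and $(-\infty,\beta]$ to infinity. The normalization at infinity is nonetheless preserved because, inside the lenses, the factors $v_\alpha,v_\beta$ tend to $\mathbb{I}$ exponentially as $\lambda\to\infty$ by the lensing inequalities (Proposition~\ref{LensingProposition}~(c),(d)), so $\mathbf{V}$ and $\mathbf{U}$ have the same asymptotics regardless of whether $\lambda\to\infty$ inside or outside a lens. With that adjustment the argument is complete.
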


By Proposition \ref{LensingProposition}, all jumps in the above are exponentially close to the identity matrix, 
except for the jumps on the real axis. As is standard, we search for the solution to a model Riemann-Hilbert problem for a function 
$M(\lambda)$, which solves a RHP identical to ${\bf V}$, but ignores the exponentially small jumps:
\begin{problem}\label{prob:global-parametrix}
\textit{(Global parametrix problem).} Find a $3\times 3$ piecewise analytic function in $\CC\setminus \left((-\infty,\beta]\cup[\alpha,\infty)\right)$, which satisfies the boundary conditions
    \begin{equation*}
        M_+(\lambda) = M_-(\lambda)\cdot
        \begin{cases}
            (-i\sigma_2)\oplus 1, & \lambda \in (-\infty,\beta],\\
            1\oplus(-i\sigma_2), & \lambda \in [\alpha,\infty),
        \end{cases}
    \end{equation*}
with normalization
    \begin{equation*}
        M(\lambda) = 
        \begin{cases}
            \left[\mathbb{I} +\OO(\lambda^{-1})\right]\hat{f}(\lambda),& \lambda\to\infty,\\
            \OO( |\lambda-\alpha|^{-1/4} ), & \lambda\to \alpha,\\
            \OO( |\lambda-\beta|^{-1/4} ), & \lambda\to \beta.
        \end{cases}
    \end{equation*}
\end{problem}
It turns out we can solve problem \ref{prob:global-parametrix} explicitly:
    \begin{prop}\label{prop:global-parametrix-solution}
        Define functions
            \begin{equation}
                \phi_1(u) = \frac{i}{\sqrt{3}}\frac{u^2 -\frac{3}{4}\varsigma}{\sqrt{u^2-\frac{1}{2}\varsigma}} ,\qquad \phi_2(u) = \frac{i}{\sqrt{3}}\frac{u}{\sqrt{u^2-\frac{1}{2}\varsigma}},\qquad \phi_3(u) = \frac{i}{\sqrt{3}}\frac{1}{\sqrt{u^2-\frac{1}{2}\varsigma}},
            \end{equation}
        where the branch cut of the square root is taken to be the interval $[-\sqrt{\varsigma/2},\sqrt{\varsigma/2}]$, and the branch is chosen so that
        \begin{equation*}
            \frac{1}{\sqrt{u^2-\frac{1}{2}\varsigma}} = u^{-1} +\OO(u^{-2}), \qquad\qquad u\to \infty.
        \end{equation*}
        Then, the unique solution to the RHP \ref{prob:global-parametrix}     is given by
            \begin{equation}
                M_{ij}(\lambda) = \phi_i(u_j(\lambda))\cdot\begin{cases}
                \mathbb{I}, & \text{Im } \lambda >0,\\
                \text{diag }(1, -1, 1), & \text{Im } \lambda <0,
            \end{cases} \qquad i,j = 1,2,3,
            \end{equation}
        where $u_j(\lambda)$ are the uniformization coordinates on the spectral curve.
    \end{prop}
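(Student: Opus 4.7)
My approach is to exhibit the explicit formula, verify directly that it satisfies the Riemann-Hilbert problem, and deduce uniqueness by a standard Liouville argument. The form of the ansatz is dictated by the fact that $\mathcal{R}$ has genus zero, so the required meromorphic functions are rational in the uniformization coordinate $u$, up to one auxiliary square root $\sqrt{u^2-\varsigma/2}$ accounting for a further double cover branched at $u=\pm a = \pm\sqrt{\varsigma/2}$ (the critical points of $\lambda(u)$, whose $\lambda$-images are $\alpha,\beta$).

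To verify the jump on $[\alpha,\infty)$, I would use the sheet gluing $u_{2,+}(\lambda)=u_{3,-}(\lambda)$, $u_{3,+}(\lambda)=u_{2,-}(\lambda)$, $u_{1,+}(\lambda)=u_{1,-}(\lambda)$. Combined with the sign matrix $\mathrm{diag}(1,-1,1)$ in the lower half plane, a column-by-column calculation yields $M_+=M_-\cdot(1\oplus(-i\sigma_2))$. The verification on $(-\infty,\beta]$ is analogous, using $u_{1,+}=u_{2,-}$ and $u_{2,+}=u_{1,-}$. One then checks that the auxiliary branch cut $[-a,a]$ of $\sqrt{u^2-\varsigma/2}$ induces no spurious $\lambda$-plane jump except possibly across the real interval $[\beta,\alpha]$, since for $\lambda$ on the two genuine cuts the values $u_j(\lambda)$ all lie outside $[-a,a]$ (by inspection of the real roots of $\lambda(u)=\lambda$ together with the asymptotics distinguishing the three branches).

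The critical subtlety is to show that no jump appears across $[\beta,\alpha]$, where $u_2(\lambda)$ does land on $[-a,a]$. The key observation is that $\lambda'(u)=3(u^2-a^2)<0$ on $(-a,a)$, so approaching $\lambda_0\in(\beta,\alpha)$ from the upper $\lambda$-plane takes $u_2$ to the \emph{lower} side of $[-a,a]$ in the $u$-plane. Hence $\phi_i(u_{2,+}(\lambda_0))=-\phi_i(u_{2,-}(\lambda_0))$, and this sign flip is precisely compensated by the $-1$ in column $2$ of the lower-half-plane sign matrix, making $M$ continuous across $[\beta,\alpha]$. Columns $j=1,3$ pose no issue, as the corresponding $u_j(\lambda_0)$ lies in $(a,\infty)$ or $(-\infty,-a)$, safely off the auxiliary cut.

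The asymptotic condition at $\infty$ follows from $\phi_1(u)\sim\tfrac{i}{\sqrt{3}}u$, $\phi_2(u)\sim\tfrac{i}{\sqrt{3}}$, $\phi_3(u)\sim\tfrac{i}{\sqrt{3}}u^{-1}$ combined with \eqref{u1-asymptotics}--\eqref{u3-asymptotics}; the column-swap $\sigma_1$ (resp.\ column-sign $\sigma_3$) in $\hat{f}$ above (resp.\ below) the real axis matches the $u_2\leftrightarrow u_3$ exchange together with the sign matrix. The local singularities follow from $\sqrt{u^2-a^2}\sim\sqrt{\pm 2a(u\mp a)}$ together with $\lambda-\beta\sim 3a(u-a)^2$ and $\lambda-\alpha\sim -3a(u+a)^2$, yielding the $|\lambda-\alpha|^{-1/4}$ and $|\lambda-\beta|^{-1/4}$ growth. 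For uniqueness, $\det(-i\sigma_2)=1$ makes $\det M$ jump-free, with at worst $|\lambda-\alpha,\beta|^{-3/4}$ growth (hence removable) and nonzero limit at $\infty$; so $\det M$ is a nonzero constant, $M^{-1}$ has the same structural bounds, and any other solution $M'$ yields $M'M^{-1}$ entire, bounded, and asymptotic to $\mathbb{I}$, hence $\mathbb{I}$ by Liouville. I expect the main obstacle to be the careful bookkeeping of the auxiliary square root's branch and its cancellation across $[\beta,\alpha]$; all remaining steps are routine.
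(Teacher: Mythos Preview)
Your proof is correct and complete. It differs in spirit from the paper's argument: the paper \emph{derives} the formula, while you \emph{verify} it.

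The paper works row by row, writing a generic row as $[r_1(u_1),r_2(u_2),r_3(u_3)]$ above the axis and $[q_1(u_1),q_2(u_2),q_3(u_3)]$ below, then uses the jump relations together with the sheet permutations $u_1\leftrightarrow u_2$ on $(-\infty,\beta]$ and $u_2\leftrightarrow u_3$ on $[\alpha,\infty)$ to reduce everything to a single function $\phi$ per row; it then posits the ansatz $\phi(u)=(Au^2+Bu+C)/\sqrt{u^2-\varsigma/2}$ and matches coefficients at infinity. The auxiliary cut on $[\beta,\alpha]$ is acknowledged in one sentence (``the jumps of $M(\lambda)$ will automatically be satisfied'') but not worked out.

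Your approach trades that derivation for a direct check, and in doing so you are more explicit on two points the paper passes over: the cancellation across $[\beta,\alpha]$ via the sign of $\lambda'(u)$ on $(-a,a)$, and the Liouville uniqueness argument. What you lose is the motivation for the ansatz --- why a rational function over $\sqrt{u^2-\varsigma/2}$ with numerator of degree two suffices --- which the paper's row-reduction makes transparent. Either route is fine here; yours is the more self-contained proof of the proposition as stated, while the paper's is the more natural way to \emph{discover} the formula.
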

    \begin{proof}
    Let
        \begin{equation*}
            \vec{m}(\lambda) =
            \begin{cases}
                \left[r_1(u_1(\lambda)),r_2(u_2(\lambda)),r_3(u_3(\lambda))\right], & \text{Im } \lambda >0,\\
                \left[q_1(u_1(\lambda)),q_2(u_2(\lambda)),q_3(u_3(\lambda))\right], & \text{Im } \lambda <0
            \end{cases}
        \end{equation*}
    be a row of $M(\lambda)$. Equating the analytic continuation of $\vec{m}(\lambda)$ through $(-\infty,\beta]$ using the properties of $u_j(\lambda)$ to the analytic continuation coming from the jump condition, one finds that
        \begin{equation*}
            \left[r_1(u_2(\lambda)),r_2(u_1(\lambda)),r_3(u_3(\lambda))\right]
            = \left[-q_2(u_2(\lambda)),q_1(u_1(\lambda)),q_3(u_3(\lambda))\right].
        \end{equation*}
    A similar calculation on $[\alpha,\infty)$ yields that 
        \begin{equation*}
            \left[r_1(u_1(\lambda)),r_2(u_3(\lambda)),r_3(u_2(\lambda))\right]
            = \left[q_1(u_1(\lambda)),q_3(u_3(\lambda)),-q_2(u_2(\lambda))\right].
        \end{equation*}
    Thus, if we set $r_1(u) := \phi(u)$, one finds that
        \begin{equation*}
            \vec{m}(\lambda) = \left[\phi(u_1(\lambda)),\phi(u_2(\lambda)),\phi(u_3(\lambda))\right]\cdot
            \begin{cases}
                \mathbb{I}, & \text{Im } \lambda >0,\\
                \text{diag }(1, -1, 1), & \text{Im } \lambda <0.
            \end{cases}
        \end{equation*}
    We now must choose an appropriate $\phi$ to match the $\lambda\to \infty$ asymptotics of each row of $M(\lambda)$.
        Take the ansatz that
            \begin{equation*}
                \phi(u) = \frac{Au^2 + Bu + C}{\sqrt{u^2-\frac{1}{2}\varsigma}}.
            \end{equation*}
        The cut of $\phi(u)$ is take to be the segment $[-\sqrt{\varsigma/2},\sqrt{\varsigma/2}]$, so that the function on the Riemann surface of $\lambda(u)$ has a cut on $[\beta,\alpha]$ on the second sheet, and the jumps of $M(\lambda)$ will automatically be satisfied. 
        The form of this ansatz is enough to guarantee we can match the expansion of $M(\lambda)$ at infinity, by requiring
            \begin{equation*}
                M(\lambda)\hat{f}^{-1}(\lambda) = \mathbb{I} + \OO(\lambda^{-1}).
            \end{equation*}
        Expansion of the uniformization coordinate at infinity and performing a matching of parameters yields the result of the proposition.
        \end{proof}

\begin{figure}
    \centering
    \begin{overpic}[scale=0.4]{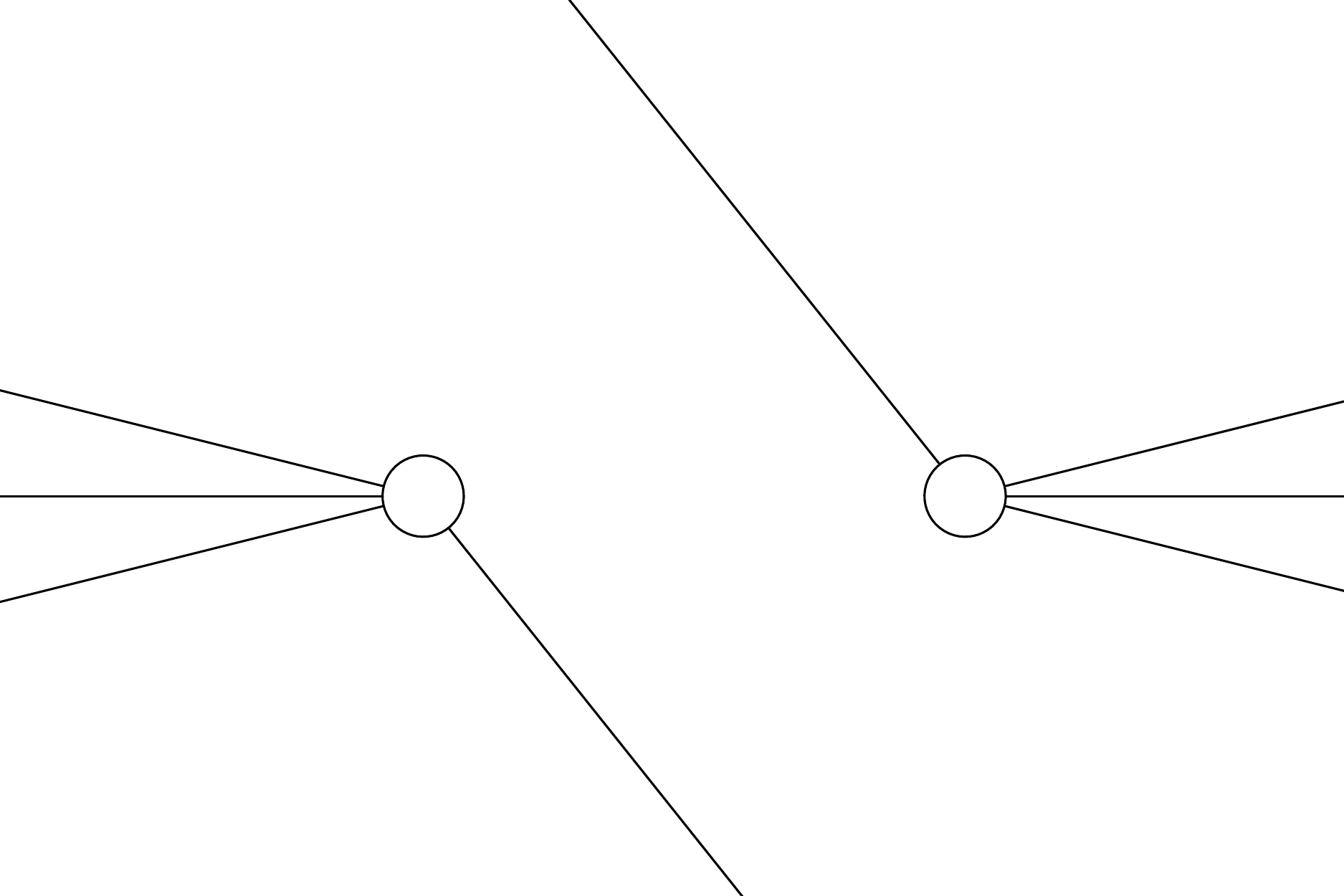}
        \put (50,50) {$\hat{\Gamma}_5$}
        \put (40,10) {$\hat{\Gamma}_{-3}$}
        \put (6,27) {\tiny \textcolor{blue}{$\Delta_{\beta}^{-}$}}
        \put (6,32) {\tiny \textcolor{blue}{$\Delta_{\beta}^{+}$}}
        \put (92,27) {\tiny \textcolor{blue}{$\Delta_{\alpha}^{-}$}}
        \put (92,32) {\tiny \textcolor{blue}{$\Delta_{\alpha}^{+}$}}
        \put (5,38)  {$\Gamma_\beta^{+}$}
        \put (5,20)  {$\Gamma_\beta^{-}$}
        \put (90,38)  {$\Gamma_\alpha^{+}$}
        \put (90,20)  {$\Gamma_\alpha^{-}$}
        \put (30,34) {$C_{\beta}$}
        \put (70,34) {$C_{\alpha}$}
    \end{overpic}
    \caption{Final set of contours after lens opening for the Riemann-Hilbert problem for ${\bf R}$. All rays in the figure are oriented \textbf{\textit{outwards}}; circles are oriented counterclockwise.}
    \label{fig:enter-label}
\end{figure}

\begin{remark}
    As a consequence of the symmetry of the uniformization coordinates
    \begin{align}
        u_1(-\lambda;\eta,-\mu,\nu) &= -u_3(\lambda;\eta,\mu,\nu), \nonumber\\
        u_2(-\lambda;\eta,-\mu,\nu) &= -u_2(\lambda;\eta,\mu,\nu), \label{uniformizer-symmetry}\\
        u_3(-\lambda;\eta,-\mu,\nu) &= -u_1(\lambda;\eta,\mu,\nu). \nonumber
    \end{align}
    and the even/oddness of the functions $\phi_j(u)$, the global parametrix carries the symmetry
        \begin{equation*}
            M(\lambda;\eta,\mu,\nu) = 
            \begin{psmallmatrix}
                1 & 0 & 0\\
                0 & -1 & 0\\
                0 & 0 & 1
            \end{psmallmatrix}M(-\lambda;\eta,-\mu,\nu)
            \begin{psmallmatrix}
                0 & 0 & 1\\
                0 & -1 & 0\\
                1 & 0 & 0
            \end{psmallmatrix}.
        \end{equation*}
\end{remark}

What remains is to open local lenses around the turning points $\lambda = \alpha,\beta$. This is a standard construction involving Airy functions
\cite{DKMVZ}. By the symmetry \eqref{uniformizer-symmetry}, we claim that it is enough to construct the local parametrix at $\lambda = \beta$. If we call this parametrix $P(\lambda)$, then the parametrix at $\lambda = \alpha$ can be constructed as
    \begin{equation*}
        P_{\alpha}(\lambda) = 
        \begin{psmallmatrix}
                1 & 0 & 0\\
                0 & -1 & 0\\
                0 & 0 & 1
            \end{psmallmatrix}P(-\lambda;\eta,-\mu,\nu)
            \begin{psmallmatrix}
                0 & 0 & 1\\
                0 & -1 & 0\\
                1 & 0 & 0
            \end{psmallmatrix}.
    \end{equation*}

Let $D_{\alpha}$, $D_{\beta}$ be local discs about
$\lambda=\alpha,\beta$, respectively, with boundaries $C_{\alpha}$, $C_{\beta}$. In the disc $D_{\beta}$, we define the conformal map
    \begin{align}
        \zeta(\lambda) &:= \left[\frac{3}{4}(g_2-g_1)(\lambda)\right]^{2/3}.
    \end{align}
The fact that this map is conformal near $\lambda =\beta$ follows immediately from Lemma \ref{lemma:Localexpansions}. We consider the
\textit{Airy parametrix} ${\bf A}(\zeta)$, as defined in \cite{DKMVZ}:
        \begin{align*}
            &{\bf A}_+(\zeta) = {\bf A}_-(\zeta) \cdot 
            \begin{cases}
                \begin{psmallmatrix}
                    0 & -1\\
                    1 & 0
                \end{psmallmatrix}, & \arg \zeta = \pi,\\
                \begin{psmallmatrix}
                    1 & 0\\
                    -1 & 1
                \end{psmallmatrix}, & \arg \zeta = \pm\frac{2\pi}{3},\\
                \begin{psmallmatrix}
                    1 & 1\\
                    0 & 1
                \end{psmallmatrix}, &\arg\zeta = 0.
            \end{cases},\\
            &{\bf A}(\zeta) = \frac{1}{\sqrt{2}}\zeta^{-\frac{1}{4}\sigma_3}
            \begin{psmallmatrix}
                1 & i\\
                i & 1
            \end{psmallmatrix}\left[\mathbb{I} + \OO(\zeta^{-3/2})\right]e^{-\frac{2}{3}\zeta^{3/2}\sigma_3},\qquad \zeta\to \infty.
        \end{align*}
In the above, all rays are oriented outwards, away from the origin.
The solution to this parametrix is given explicitly in terms of the Airy function. For example,
    \begin{equation*}
        {\bf A}(\zeta) = 
        \sqrt{2\pi}
        \begin{pmatrix}
            \Ai(\zeta) & -\omega^2\Ai(\omega^2\zeta)\\
            -i\Ai'(\zeta) & i\omega\Ai(\omega^2\zeta)
        \end{pmatrix}, \qquad 0 < \arg \zeta < \frac{2\pi}{3},
    \end{equation*}
and the determination ${\bf A}(\zeta)$ in the other sectors can be inferred using the jump condition for ${\bf A}(\zeta)$ and the connection formula for the Airy function.
W will not need the exact formula here; the only information we require of this parametrix (aside from its jump matrices)
is its large-$\zeta$ expansion (cf. \cite{DKMVZ}, or for more detail \cite{BDY}, formula 7.9, for instance):
    \begin{equation}\label{Airy-Asymptotics}
        {\bf A}(\zeta)e^{\frac{2}{3}\zeta^{3/2}\sigma_3} \sim \frac{\zeta^{-\frac{1}{4}\sigma_3}}{\sqrt{2}}\sum_{k=0}^{\infty}
        \begin{pmatrix}
            s_k & 0\\
            0 & t_k
        \end{pmatrix}
        \begin{pmatrix}
            (-1)^k & i\\
            (-1)^ki & 1
        \end{pmatrix}\left(\frac{2}{3}\zeta^{3/2}\right)^{-k},
    \end{equation}
where
    \begin{equation}
        s_0 = t_0 = 1, \qquad s_k = \frac{\Gamma\left(3k+\frac{1}{2}\right)}{(54)^k k!\Gamma\left(k+\frac{1}{2}\right)},\qquad t_{k} = \frac{1+6k}{1-6k}s_k,\quad k\geq 1.
    \end{equation}
By a possible local redefinition of the lens boundaries and contour $\hat{\Gamma}_{5}$ near $\lambda=\beta$, we define the local parametrix
    \begin{align}
        P(\lambda) &:= E(\lambda)\left[
        {\bf A}\left(\hbar^{-2/3}\zeta(\lambda)\right)e^{\frac{2}{3\hbar}[\zeta(\lambda)]^{3/2}\sigma_3} \oplus 1\right],
    \end{align}
where $E(\lambda)$ is the analytic function (in a neighborhood of $\lambda = \beta$) 
    \begin{align}
        E(\lambda):=  M(\lambda)\left[\frac{1}{\sqrt{2}}
        \begin{psmallmatrix}
            1 &-i\\ -i & 1
        \end{psmallmatrix} [\hbar^{-2/3}\zeta(\lambda)]^{\frac{1}{4}\sigma_3} \oplus 1\right].
    \end{align}
It then follows immediately that 
\begin{lemma}\label{local-parametrix-lemma}
    As $\hbar \to 0$,
        \begin{equation}
            P(\lambda) \sim M(\lambda)\left[\mathbb{I} + \sum_{k=1}^{\infty} \left(P_k(\lambda)\oplus 0\right) \hbar^{k}\right],
        \end{equation}
    where
        \begin{equation}\label{P_k-def}
            P_k(\lambda) = \frac{1}{2}
            \begin{pmatrix}
                1 & -i\\
                -i & 1
            \end{pmatrix}
            \begin{pmatrix}
            s_k & 0\\
            0 & t_k
        \end{pmatrix}
        \begin{pmatrix}
            (-1)^k & i\\
            (-1)^ki & 1
        \end{pmatrix}\left(\frac{2}{3}\zeta(\lambda)^{3/2}\right)^{-k}.
        \end{equation}
\end{lemma}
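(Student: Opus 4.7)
The statement is a bookkeeping exercise: one directly substitutes the known $\zeta\to\infty$ expansion \eqref{Airy-Asymptotics} of $\mathbf{A}(\zeta)$ into the definition of $P(\lambda)$ and verifies that the result matches the claimed series. My plan is to carry this out in three short steps.

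First, I would plug $\zeta \mapsto \hbar^{-2/3}\zeta(\lambda)$ into \eqref{Airy-Asymptotics}. The key identity is $(\tfrac{2}{3}[\hbar^{-2/3}\zeta(\lambda)]^{3/2})^{-k} = \hbar^{k}(\tfrac{2}{3}\zeta(\lambda)^{3/2})^{-k}$, which converts the large-argument expansion of the Airy parametrix into a small-$\hbar$ expansion. After multiplying on the right by $e^{\frac{2}{3\hbar}\zeta(\lambda)^{3/2}\sigma_3}$, I obtain
\begin{equation*}
\mathbf{A}(\hbar^{-2/3}\zeta(\lambda))\,e^{\frac{2}{3\hbar}\zeta(\lambda)^{3/2}\sigma_3}\sim\frac{[\hbar^{-2/3}\zeta(\lambda)]^{-\frac{1}{4}\sigma_3}}{\sqrt{2}}\sum_{k=0}^{\infty}\begin{pmatrix}s_k & 0\\ 0 & t_k\end{pmatrix}\begin{pmatrix}(-1)^k & i\\ (-1)^k i & 1\end{pmatrix}\left(\tfrac{2}{3}\zeta(\lambda)^{3/2}\right)^{-k}\hbar^{k}.
\end{equation*}

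Second, I would premultiply by $E(\lambda)$. The prefactor $[\hbar^{-2/3}\zeta(\lambda)]^{\frac{1}{4}\sigma_3}$ built into $E(\lambda)$ is precisely the inverse of the $\hbar$-dependent scaling factor appearing in the Airy expansion, so the two cancel exactly. What survives is the constant matrix product $\tfrac{1}{\sqrt{2}}\bigl(\begin{smallmatrix}1 & -i\\ -i & 1\end{smallmatrix}\bigr)\cdot \tfrac{1}{\sqrt{2}}\bigl(\begin{smallmatrix}s_k & 0\\ 0 & t_k\end{smallmatrix}\bigr)\bigl(\begin{smallmatrix}(-1)^k & i\\ (-1)^k i & 1\end{smallmatrix}\bigr)\bigl(\tfrac{2}{3}\zeta(\lambda)^{3/2}\bigr)^{-k}$, which is exactly $P_k(\lambda)$ as defined in \eqref{P_k-def}. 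At this point the $2\times 2$ block stands alone; appending the trivial $\oplus 0$ (from the $\oplus 1$ in the definition of $P(\lambda)$, after subtracting the identity) yields the stated block structure.

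Third, I would verify the $k=0$ term reproduces the leading behavior $M(\lambda)$. Since $s_0=t_0=1$ and $\tfrac{1}{2}\bigl(\begin{smallmatrix}1 & -i\\ -i & 1\end{smallmatrix}\bigr)\bigl(\begin{smallmatrix}1 & i\\ i & 1\end{smallmatrix}\bigr)=\mathbb{I}$, the $k=0$ contribution is $M(\lambda)\cdot (\mathbb{I}\oplus 1) = M(\lambda)$, consistent with the factored-out leading term. Strictly speaking one should also note that the expansion is asymptotic rather than convergent, but this is inherited verbatim from the asymptotic nature of \eqref{Airy-Asymptotics}, which is standard. There is no real obstacle in this argument; the only thing to keep track of is the cancellation of the scaling prefactors, which is the raison d'\^etre of the particular choice of $E(\lambda)$.
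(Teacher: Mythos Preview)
Your proposal is correct and is precisely the direct substitution argument the paper has in mind; indeed, the paper does not write out a proof at all but simply states the lemma ``follows immediately'' from the Airy expansion \eqref{Airy-Asymptotics} and the definitions of $P(\lambda)$ and $E(\lambda)$. Your three steps (rescaling the Airy expansion, cancelling the $[\hbar^{-2/3}\zeta(\lambda)]^{\pm\frac{1}{4}\sigma_3}$ factors via $E(\lambda)$, and checking the $k=0$ term gives the identity) are exactly the bookkeeping that justifies this.
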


With this lemma in place, we can then set
    \begin{equation}
        {\bf R}(\lambda;\eta,\mu,\nu|\hbar) :=
        {\bf V}(\lambda;\eta,\mu,\nu|\hbar) \cdot
            \begin{cases}
                M^{-1}(\lambda), & \lambda \in \CC\setminus (D_{\alpha} \cup D_{\beta}),\\
                \begin{psmallmatrix}
                    0 & 0 & 1\\
                    0 & -1 & 0\\
                    1 & 0 & 0
                \end{psmallmatrix}P^{-1}(-\lambda)
                \begin{psmallmatrix}
                    1 & 0 & 0\\
                    0 & -1 & 0\\
                    0 & 0 & 1
                \end{psmallmatrix}, & \lambda \in D_{\alpha},\\
                P^{-1}(\lambda), & \lambda \in D_{\beta}.
            \end{cases}
    \end{equation}

Defining $\Gamma_{\bf R}$ to be the discontinuity set of ${\bf R}(\lambda;\eta,\mu,\nu|\hbar)$, the following Proposition then holds immediately:
\begin{prop}
    ${\bf R}(\lambda;\eta,\mu,\nu|\hbar)$ satisfies the following Riemann-Hilbert problem:
    \begin{equation}
        \begin{cases}
            {\bf R}_+(\lambda;\eta,\mu,\nu|\hbar) = {\bf R}_-(\lambda;\eta,\mu,\nu|\hbar) J_{\bf R}(\lambda), & \lambda \in \Gamma_{\bf R},\\
            {\bf R}(\lambda;\eta,\mu,\nu|\hbar) = \mathbb{I} + \OO(\lambda^{-1}) & \lambda\to \infty,
        \end{cases}
    \end{equation}
    where
        \begin{equation}
            J_{\bf R}(\lambda) = 
            \begin{cases}
                \mathbb{I} +e^{-\frac{1}{\hbar}(g_3-g_2)(\lambda)} M(\lambda)E_{23}M^{-1}(\lambda), & \lambda \in \hat{\Gamma}_5 \setminus D_{\beta},\\
                \mathbb{I} + e^{-\frac{1}{\hbar}(g_2-g_1)(\lambda)}M(\lambda)E_{12}M^{-1}(\lambda), & \lambda \in \hat{\Gamma}_{-3} \setminus D_{\alpha},\\
                \mathbb{I} + e^{\frac{1}{\hbar}(g_3-g_2)(\lambda)}M(\lambda)E_{32}M^{-1}(\lambda), & \lambda \in \hat{\Gamma}^{\pm}_{\beta} \setminus D_{\beta},\\
                \mathbb{I} + e^{\frac{1}{\hbar}(g_2-g_1)(\lambda)}M(\lambda)E_{21}M^{-1}(\lambda), & \lambda \in \hat{\Gamma}^{\pm}_{\alpha} \setminus D_{\alpha},\\
                \begin{psmallmatrix}
                    1 & 0 & 0\\
                    0 & -1 & 0\\
                    0 & 0 & 1
                \end{psmallmatrix}P(-\lambda;\eta,-\mu,\nu)M^{-1}(-\lambda;\eta,-\mu,\nu)
                \begin{psmallmatrix}
                    1 & 0 & 0\\
                    0 & -1 & 0\\
                    0 & 0 & 1
                \end{psmallmatrix}, & \lambda \in C_{\alpha},\\
                P(\lambda)M^{-1}(\lambda), & \lambda \in C_{\beta}.
            \end{cases}
        \end{equation}
\end{prop}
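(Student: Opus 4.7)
The statement is a routine but careful bookkeeping of the transformation ${\bf R}={\bf V}\cdot M^{-1}$ (outside the discs) and ${\bf R}={\bf V}\cdot P^{-1}$ (inside the discs). My plan is to verify the asymptotic normalization, then check contour-by-contour that the claimed jump matrix is what emerges from this product.

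First, I would establish the behavior at infinity. Since $M(\lambda)$ is the unique solution of Problem \ref{prob:global-parametrix}, it satisfies $M(\lambda) = [\mathbb{I}+\OO(\lambda^{-1})]\hat{f}(\lambda)$ as $\lambda\to\infty$. Combined with the normalization ${\bf V}(\lambda;\eta,\mu,\nu|\hbar) = [\mathbb{I}+\OO(\lambda^{-1})]\hat{f}(\lambda)$ from Proposition \ref{V-prop}, the product ${\bf V}M^{-1}$ is $\mathbb{I}+\OO(\lambda^{-1})$ at infinity, which is what is needed.

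Second, I would treat the real-axis cuts $(-\infty,\beta]$ and $[\alpha,\infty)$ outside the discs. On each such segment, ${\bf V}$ and $M$ carry the same jump matrices ($(-i\sigma_2)\oplus 1$ and $1\oplus(-i\sigma_2)$ respectively). Writing ${\bf R}_+={\bf V}_+M_+^{-1}={\bf V}_-J_{{\bf V}}(J_{{\bf V}}^{-1}M_-^{-1})={\bf R}_-$ shows that these jumps cancel exactly, so no contour remains on these segments outside the discs. Since $M(\lambda)$ is analytic elsewhere off the cuts, conjugation by $M$ is well-defined on each of the lens contours.

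Third, on the remaining contours $\hat{\Gamma}_5$, $\hat{\Gamma}_{-3}$, $\Gamma^{\pm}_{\alpha}$, $\Gamma^{\pm}_{\beta}$ (outside the discs), $M(\lambda)$ has no jump, so the jump of ${\bf R}$ is simply the conjugate $M\,J_{{\bf V}}\,M^{-1}$ of the corresponding jump listed in Proposition \ref{V-prop}. Substituting the explicit forms $\mathbb{I}+E_{ij}e^{\pm(g_j-g_i)/\hbar}$ and using $ME_{ij}M^{-1}$ produces the stated matrix-valued coefficients, with the signs in the exponentials matching the sheet labeling used to define the $g_j$. Proposition \ref{LensingProposition} guarantees these jumps are exponentially close to the identity on the relevant contours, though that decay is not needed for the present statement (it is a feature we will invoke when solving for ${\bf R}$ via small-norm theory).

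Finally, inside the discs one uses the local parametrix. On each lens contour piece inside $D_{\alpha}$ or $D_{\beta}$, the Airy parametrix ${\bf A}$ was designed so that $P(\lambda)$ has exactly the same jumps as ${\bf V}(\lambda)$ (this is the content of the Airy jump matrices in \cite{DKMVZ} pulled back through $\zeta(\lambda)$ and conjugated by the analytic prefactor $E(\lambda)$); hence ${\bf V}P^{-1}$ is analytic across these arcs. The only genuinely new contours are the boundary circles $C_{\alpha}$, $C_{\beta}$, where ${\bf R}$ jumps by $P(\lambda)M^{-1}(\lambda)$ (for $C_{\beta}$) and by the symmetry-transformed analogue (for $C_{\alpha}$), producing exactly the formulas claimed. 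The only step requiring real care is the verification for $C_{\alpha}$, which must be deduced from the symmetry relation used to define $P_{\alpha}$; that amounts to pushing the symmetry \eqref{uniformizer-symmetry} through the definitions of $P$ and $M$ and keeping track of the conjugating permutation matrices. The main expected nuisance is bookkeeping signs and orientations consistently across these cases rather than any genuine analytic obstacle.
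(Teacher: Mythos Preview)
Your proposal is correct and matches the paper's treatment: the paper states that the proposition ``holds immediately'' from the definition of ${\bf R}$ and gives no further argument, and what you have written is precisely the standard bookkeeping (normalization at infinity, cancellation of the real-axis jumps against those of $M$, conjugation of the remaining lens jumps by $M$, cancellation of the interior jumps against those of the Airy parametrix, and the resulting jump $PM^{-1}$ on the boundary circles together with its symmetry-transformed analogue on $C_{\alpha}$) that justifies this.
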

Note that in the above proposition, all jumps are exponentially close to the identity as $\hbar\to 0$, except for the last two. Furthermore,
since these jumps are of the same form, we have only to check that $P(\lambda)M^{-1}(\lambda)$ is sufficiently close to the identity matrix,
and that this estimate does not change upon sending $\mu\to -\mu$. Indeed, we have that
    \begin{prop}\label{R-expansion-prop}
       For $\hbar$ sufficiently small, and $(\eta,\mu,\nu)\in D$, the Riemann-Hilbert problem for ${\bf R}(\lambda;\eta,\mu,\nu|\hbar)$ has a
       solution, and admits the $\hbar\to 0$ asymptotic expansion
        \begin{equation}
            {\bf R}(\lambda;\eta,\mu,\nu|\hbar) \sim \mathbb{I} + \sum_{k=1}^{\infty} {\bf R}_k(\lambda;\eta,\mu,\nu)\hbar^k,
        \end{equation}
    which holds uniformly for $\lambda \in \CC\setminus \Gamma_{{\bf R}}$.
    \end{prop}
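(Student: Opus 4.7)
The plan is to carry out a standard small-norm argument for the Riemann--Hilbert problem for ${\bf R}$, using the ingredients assembled in the previous propositions and lemmas. First I would verify that the jump matrix $J_{\bf R}(\lambda)$ is uniformly close to $\mathbb{I}$ on all of $\Gamma_{\bf R}$. On the lens boundaries $\hat{\Gamma}_5 \setminus D_\beta$, $\hat{\Gamma}_{-3} \setminus D_\alpha$, $\Gamma_\beta^{\pm} \setminus D_\beta$, and $\Gamma_\alpha^{\pm} \setminus D_\alpha$, the lensing inequalities of Proposition \ref{LensingProposition} (items (a)--(d)) guarantee that the exponents $\pm \tfrac{1}{\hbar}(g_i - g_j)(\lambda)$ have strictly negative real parts, uniformly bounded away from zero away from the branch points and uniformly for $(\eta,\mu,\nu)$ in compact subsets of $D$. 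Combined with the fact that the conjugating factors $M(\lambda) E_{ij} M^{-1}(\lambda)$ are uniformly bounded on the contours (which follows from the explicit formula in Proposition \ref{prop:global-parametrix-solution} together with the fact that $M(\lambda)$ is uniformly bounded away from the branch points), this yields an estimate of the form $\|J_{\bf R} - \mathbb{I}\|_{L^\infty \cap L^2} = O(e^{-c/\hbar})$ on these contours for some $c = c(\eta,\mu,\nu) > 0$.

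Next I would treat the two circle contours $C_\alpha$ and $C_\beta$. By Lemma \ref{local-parametrix-lemma}, $P(\lambda) M^{-1}(\lambda) = \mathbb{I} + \sum_{k \geq 1} M(\lambda) \bigl(P_k(\lambda)\oplus 0\bigr) M^{-1}(\lambda)\, \hbar^k$, and since $\zeta(\lambda)^{3/2}$ is analytic and bounded away from zero on $C_\beta$, the coefficients are bounded, giving $\|J_{\bf R} - \mathbb{I}\|_{L^\infty(C_\beta)} = O(\hbar)$. The symmetry construction of the parametrix at $\alpha$ via $\mu \to -\mu$ is explicit, so the same estimate transfers to $C_\alpha$ by direct substitution; one checks that swapping $\mu \to -\mu$ does not spoil the bound since the region $D$ has the symmetry $(\eta,\mu,\nu) \mapsto (\eta,-\mu,\nu)$ and the $(a,b,c)$ analysis of Section \ref{section:g-function} treats both cases.

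Once these estimates are in place, the standard small-norm theory for Riemann--Hilbert problems (e.g.\ \cite{DKMVZ}) applies: the singular integral operator $\mathcal{C}_{J_{\bf R}-\mathbb{I}}$ is contractive on $L^2(\Gamma_{\bf R})$ for $\hbar$ small, so ${\bf R}$ exists and is given by a convergent Neumann series. The asymptotic expansion $ {\bf R} \sim \mathbb{I} + \sum_{k\geq 1} {\bf R}_k \hbar^k$ is then obtained by formally substituting the expansion of $J_{\bf R}$ on $C_\alpha \cup C_\beta$ coming from Lemma \ref{local-parametrix-lemma} (the contribution of the exponentially small jumps on the lens boundaries is subleading to any polynomial order and can be absorbed into the error at each step); this reduces the determination of ${\bf R}_k$ to a sequence of additive scalar RHPs on the two circles, each of which is solved explicitly by a contour integral (essentially a residue computation at $\lambda = \alpha, \beta$) since the coefficient functions $M(\lambda)(P_k \oplus 0)M^{-1}(\lambda)$ are meromorphic inside $D_\alpha \cup D_\beta$ with poles only at the branch points.

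The main obstacle I anticipate is not the small-norm argument itself, which is routine, but ensuring uniformity of all estimates as $(\eta,\mu,\nu)$ approaches $\partial D$. Near the critical surface, the constants $\rho_\alpha, \rho_\beta$ in Lemma \ref{lemma:Localexpansions} degenerate, the conformal map $\zeta(\lambda)$ loses its conformality, and the separation of contours from the branch cuts used in Lemma \ref{Lemma:Lensing} shrinks; this forces the estimates to depend on the distance from $\partial D$. For the present proposition, however, $(\eta,\mu,\nu)$ is fixed in the open region $D$, so uniformity on compact subsets suffices, and the argument goes through as outlined.
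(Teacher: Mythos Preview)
Your proposal is correct and follows essentially the same approach as the paper: both use the lensing inequalities (Proposition~\ref{LensingProposition}) for exponential smallness off the circles, Lemma~\ref{local-parametrix-lemma} for the $O(\hbar)$ bound on $C_\alpha\cup C_\beta$, and then invoke the standard small-norm theory from \cite{DKMVZ} to obtain existence and the term-by-term expansion via additive RHPs. The paper is somewhat terser and additionally records the explicit formula for ${\bf R}_1$ as a sum of simple poles at $\alpha,\beta$ with residues computed from $M(P_1\oplus 0)M^{-1}$, which you correctly anticipate as a residue computation.
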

\begin{proof}
    Let
        \begin{equation*}
            \boldsymbol{\Delta}(\lambda) := {\bf R}_-^{-1}{\bf R}_+ - \mathbb{I}
        \end{equation*}
    denote the deviation of the jumps of ${\bf R}$ from the identity $\mathbb{I}$. We have already observed that $\boldsymbol{\Delta}(\lambda)$ is exponentially close to 
    the identity as $\hbar\to 0$ on $\Gamma_{{\bf R}} \setminus \left(C_{\alpha} \cup C_{\beta}\right)$. Indeed, by Lemma \ref{local-parametrix-lemma}, we have that
        \begin{equation*}
            \boldsymbol{\Delta}(\lambda) \sim \sum_{k=1}^{\infty} J_k(\lambda) \hbar^{k},
        \end{equation*}
    where
        \begin{align*}
            J_k(\lambda) &= 
            \begin{cases}
                \begin{psmallmatrix}
                    1 & 0 & 0\\
                    0 & -1 & 0\\
                    0 & 0 & 1
                \end{psmallmatrix}
                    M(-\lambda;\eta,-\mu,\nu)\left(P_k(-\lambda;\eta,-\mu,\nu)\oplus 0\right)M^{-1}(-\lambda;\eta,\mu,-\nu)
                \begin{psmallmatrix}
                    1 & 0 & 0\\
                    0 & -1 & 0\\
                    0 & 0 & 1
                \end{psmallmatrix}, & \lambda \in C_{\alpha},\\
                M(\lambda;\eta,\mu,\nu)\left(P_k(\lambda;\eta,\mu,\nu)\oplus 0\right)M^{-1}(\lambda;\eta,\mu,\nu), & \lambda \in C_{\beta},
            \end{cases}
        \end{align*}
    and $P_k(\lambda) = P_k(\lambda;\eta,\mu,\nu)$ are as defined in Equation \eqref{P_k-def}. Using standard results \cite{DKMVZ}, we can therefore 
    expand ${\bf R}$ as an asymptotic series in $\hbar$:
        \begin{equation*}
            {\bf R}(\lambda;\eta,\mu,\nu|\hbar) \sim \mathbb{I} + \sum_{k=1}^{\infty} {\bf R}_k(\lambda;\eta,\mu,\nu)\hbar^k,
        \end{equation*}
    where the functions ${\bf R}_k(\lambda;\eta,\mu,\nu)$ can be determined iteratively as the solutions to certain additive Riemann-Hilbert problems. For $\lambda \in \CC\setminus D_{\alpha}\cup D_{\beta}$, the first
    term ${\bf R}_1$ is given by
        \begin{equation}\label{R1-sol}
            {\bf R}_1(\lambda;\eta,\mu,\nu) = \frac{W_1(\eta,\mu,\nu)}{\lambda - \beta} + \frac{\hat{W}_1(\eta,\mu,\nu)}{\lambda-\alpha},
        \end{equation}
    where
        \begin{align}
            W_1(\eta,\mu,\nu) &= \Res_{\lambda = \beta} \left[M(\lambda;\eta,\mu,\nu)\left(P_1(\lambda;\eta,\mu,\nu)\oplus 0\right)M^{-1}(\lambda;\eta,\mu,\nu)\right],\\
            \hat{W}_1(\eta,\mu,\nu) &=\begin{psmallmatrix}
                    1 & 0 & 0\\
                    0 & -1 & 0\\
                    0 & 0 & 1
                \end{psmallmatrix}W_1(\eta,-\mu,\nu)\begin{psmallmatrix}
                    1 & 0 & 0\\
                    0 & -1 & 0\\
                    0 & 0 & 1
                \end{psmallmatrix}
        \end{align}
\end{proof}

\subsection{Proof of Theorems \ref{MainTheorem1} \& \ref{Theorem:topologicalexpansion}.}
With the steepest descent analysis for ${\bf Z}$ completed, we can now proceed to the proof of Theorem \ref{MainTheorem1}.

\begin{proof}
    We must calculate the first two terms in the expansion of $\log\tau$; we must therefore calculate the error of ${\bf R}(\lambda)$ to order $\hbar$.
    By the results of the previous section, and by choosing an appropriate sector in which $\lambda\to \infty$, we can write 
        \begin{equation*}
            \mathfrak{G}(\lambda) = {\bf Z}(\lambda;{\bf t})e^{-\frac{1}{\hbar}\hat{\Theta}(\lambda;{\bf t})} = \mathfrak{h}^{(0)}{\bf R}(\lambda)\cdot M(\lambda)e^{\frac{1}{\hbar}(G-\hat{\Theta})(\lambda)},
        \end{equation*}
    so that
        \begin{align*}
            \mathfrak{G}^{-1}(\lambda)\mathfrak{G}'(\lambda) &= \underbrace{e^{-\frac{1}{\hbar}(G-\hat{\Theta})(\lambda)}\frac{d}{d\lambda}\left[e^{\frac{1}{\hbar}(G-\hat{\Theta})(\lambda)}\right]}_{K_1(\lambda)} + \underbrace{e^{-\frac{1}{\hbar}(G-\hat{\Theta})(\lambda)}M^{-1}(\lambda)M'(\lambda)e^{\frac{1}{\hbar}(G-\hat{\Theta})(\lambda)} }_{K_2(\lambda)}\\
            &+\underbrace{e^{-\frac{1}{\hbar}(G-\hat{\Theta})(\lambda)}M^{-1}(\lambda){\bf R}^{-1}(\lambda){\bf R}'(\lambda)M(\lambda)e^{\frac{1}{\hbar}(G-\hat{\Theta})(\lambda)}}_{K_3(\lambda)}
        \end{align*}
    One can readily check that
        \begin{equation*}
            \frac{1}{\hbar}\tr \left[K_2(\lambda)\frac{\partial \hat{\Theta}}{\partial \eta}\right] = \frac{1}{\hbar}\tr \left[K_2(\lambda)\frac{\partial \hat{\Theta}}{\partial \mu}\right] = \frac{1}{\hbar}\tr \left[K_2(\lambda)\frac{\partial \hat{\Theta}}{\partial \nu}\right] = \OO(\lambda^{-3}),\qquad \lambda\to \infty,
        \end{equation*}
    and so this term does not contribute to the $\tau$-function, since it is residueless. Furthermore, it is easy to see that
        \begin{equation*}
            \frac{1}{\hbar}\tr \left[K_3(\lambda)\frac{\partial \hat{\Theta}}{\partial \eta}\right] = \frac{1}{\hbar}\tr \left[K_3(\lambda)\frac{\partial \hat{\Theta}}{\partial \mu}\right] = \frac{1}{\hbar}\tr \left[K_3(\lambda)\frac{\partial \hat{\Theta}}{\partial \nu}\right] = \OO(1), \qquad \hbar\to 0.
        \end{equation*}
    So, the leading contribution to the $\tau$-function arises from the term involving $K_1(\lambda)$. By direct calculation, we obtain that
        \begin{align*}
            -\frac{1}{\hbar}\Res_{\lambda=\infty}\tr \left[K_1(\lambda)\frac{\partial \hat{\Theta}}{\partial \eta}\right] &= \frac{1}{2\hbar^2}h_5^{(0)},\\
            -\frac{1}{\hbar}\Res_{\lambda=\infty}\tr \left[K_1(\lambda)\frac{\partial \hat{\Theta}}{\partial \mu}\right] &= \frac{1}{2\hbar^2}h_2^{(0)},\\
            -\frac{1}{\hbar}\Res_{\lambda=\infty}\tr \left[K_1(\lambda)\frac{\partial \hat{\Theta}}{\partial \nu}\right] &= \frac{1}{2\hbar^2}h_1^{(0)}.
        \end{align*}
    All terms of subleading order in $\hbar$ then arise from $K_3(\lambda)$. From our calculations in Proposition \ref{R-expansion-prop} (see in particular Equation 
    \eqref{R1-sol}), we have that
        \begin{align*}
            {\bf R}^{-1}(\lambda){\bf R}'(\lambda) &= \hbar {\bf R}_1'(\lambda) + \OO(\hbar^2), \qquad \hbar\to 0, \qquad \text{and}\\
            \hbar {\bf R}_1'(\lambda) &= -\frac{\hbar (W_1+\hat{W}_1)}{\lambda^2} - \frac{2\hbar(\beta W_1 +\alpha\hat{W_1})}{\lambda^3}+ \OO(\lambda^{-4}),\qquad \lambda\to \infty.
        \end{align*}
    On the other hand, as $\lambda\to \infty$,
        \begin{align*}
            M(\lambda)e^{\frac{1}{\hbar}(G-\hat{\Theta})(\lambda)}\frac{\partial \hat{\Theta}}{\partial \nu}e^{-\frac{1}{\hbar}(G-\hat{\Theta})(\lambda)}M^{-1}(\lambda) &=M(\lambda)\frac{\partial \hat{\Theta}}{\partial \nu}M^{-1}(\lambda)  = E_{13}\lambda + \OO(1),\\
            M(\lambda)e^{\frac{1}{\hbar}(G-\hat{\Theta})(\lambda)}\frac{\partial \hat{\Theta}}{\partial \mu}e^{-\frac{1}{\hbar}(G-\hat{\Theta})(\lambda)}M^{-1}(\lambda) &=M(\lambda)\frac{\partial \hat{\Theta}}{\partial \mu}M^{-1}(\lambda) = (E_{12}+E_{23})\lambda + \OO(1),\\
            M(\lambda)e^{\frac{1}{\hbar}(G-\hat{\Theta})(\lambda)}\frac{\partial \hat{\Theta}}{\partial \eta}e^{-\frac{1}{\hbar}(G-\hat{\Theta})(\lambda)}M^{-1}(\lambda) &=M(\lambda)\frac{\partial \hat{\Theta}}{\partial \eta}M^{-1}(\lambda)\\
            &= (E_{12}+E_{23})\lambda^2 + 
            \begin{psmallmatrix}
                -\frac{1}{4}\varsigma & \frac{\mu}{5\eta-3\varsigma} & \frac{5}{16}\varsigma^2\\
                0 & \frac{1}{2}\varsigma & \frac{\mu}{5\eta-3\varsigma}\\
                1 & 0 & -\frac{1}{4}\varsigma
            \end{psmallmatrix}\lambda
            +\OO(1),\\
        \end{align*}
so that
    \begin{align*}
        \frac{1}{\hbar}\Res_{\lambda=\infty}\tr \left[K_3(\lambda)\frac{\partial \hat{\Theta}}{\partial \nu}\right] &=-\tr \left(E_{13}(W_1+\hat{W}_1)\right) + \OO(\hbar),\\
        \frac{1}{\hbar}\Res_{\lambda=\infty}\tr \left[K_3(\lambda)\frac{\partial \hat{\Theta}}{\partial \mu}\right] &=-\tr \left((E_{12}+E_{23})(W_1+\hat{W}_1)\right)+ \OO(\hbar),\\
        \frac{1}{\hbar}\Res_{\lambda=\infty}\tr \left[K_3(\lambda)\frac{\partial \hat{\Theta}}{\partial \eta}\right] &=-2\tr\left((E_{12}+E_{23})(\beta W_1+\alpha\hat{W}_1)\right)
        - \tr\left((W_1+\hat{W}_1)\begin{psmallmatrix}
                -\frac{1}{4}\varsigma & \frac{\mu}{5\eta-3\varsigma} & \frac{5}{16}\varsigma^2\\
                0 & \frac{1}{2}\varsigma & \frac{\mu}{5\eta-3\varsigma}\\
                1 & 0 & -\frac{1}{4}\varsigma
            \end{psmallmatrix}\right)+ \OO(\hbar).
    \end{align*}
Evaluation of these residues gives that
    \begin{align*}
        -\frac{1}{\hbar}\Res_{\lambda=\infty}\tr \left[K_3(\lambda)\frac{\partial \hat{\Theta}}{\partial \nu}\right] &= \frac{1}{2}h^{(1)}_1 + \OO(\hbar),\\
        -\frac{1}{\hbar}\Res_{\lambda=\infty}\tr \left[K_3(\lambda)\frac{\partial \hat{\Theta}}{\partial \mu}\right] &= \frac{1}{2}h^{(1)}_2 + \OO(\hbar),\\
        -\frac{1}{\hbar}\Res_{\lambda=\infty}\tr \left[K_3(\lambda)\frac{\partial \hat{\Theta}}{\partial \eta}\right] &= \frac{1}{2}h^{(1)}_5 + \OO(\hbar).
    \end{align*}
\end{proof}

An almost immediate consequence of the above results is that the $\tau$-differential admits a topological expansion.
\begin{cor}
    Theorem \ref{Theorem:topologicalexpansion} holds.
\end{cor}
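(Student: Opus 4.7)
The plan is to extend the residue calculations from the proof of Theorem \ref{MainTheorem1} iteratively to all orders in $\hbar$, and then identify odd-order contributions as vanishing via comparison with the formal perturbative solution to the string equation.

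First, by Proposition \ref{R-expansion-prop}, ${\bf R}$ admits a full asymptotic expansion ${\bf R}\sim\mathbb{I}+\sum_{k\geq 1}{\bf R}_k\hbar^k$, with each ${\bf R}_k(\lambda)$ rational in $\lambda$ (poles only at $\alpha,\beta$) and real-analytic in $(\eta,\mu,\nu)\in D$. Substituting into the formula for $K_3(\lambda)$ and computing the residue at $\lambda=\infty$ of $-\tfrac{1}{\hbar}\tr\bigl[K_3(\lambda)\,\partial\hat\Theta/\partial t_\ell\bigr]$ to all orders, together with the classical $\hbar^{-2}$ piece arising from $K_1$, yields an asymptotic expansion
\begin{equation*}
\hbar^2\,{\bf d}\log\tau\;\sim\;\sum_{k=0}^\infty\Omega_k(\eta,\mu,\nu)\,\hbar^k,
\end{equation*}
where each $\Omega_k$ is a real-analytic 1-form on $D$, explicitly computable from $M$, the Airy data $\{P_j\}$ of Lemma \ref{local-parametrix-lemma}, and the branch points $\alpha,\beta$. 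The coefficients of this expansion are closed 1-forms, since they arise as coefficients in the asymptotic expansion of the closed $\tau$-differential \cite{DHL2}.

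The main obstacle is to show $\Omega_{2k+1}\equiv 0$ for all $k\geq 0$. I would establish this by matching the above against the formal topological expansion of the string equation derived in Appendix \ref{Appendix:perturbative-expansion}: the rescaling \eqref{parameter-rescaling} together with the corresponding rescalings $U\mapsto\hbar^{4/7}U$, $V\mapsto\hbar^{5/7}V$ preserves the form of \eqref{string-equation}, so that iteratively solving for corrections to the leading (genus $0$) solution produces only even powers of $\hbar$. Since $U=-\partial_{t_1}^2\log\tau$ and $V=\tfrac{1}{2}\partial_{t_1}\partial_{t_2}\log\tau$, the corresponding formal $\tau$-differential is likewise topological. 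By uniqueness of Poincar\'{e} asymptotics, the formal and rigorous expansions must coincide, so $\Omega_{2k+1}\equiv 0$. Finally, $\tau_g$ is defined (up to an additive constant) as a primitive of the closed 1-form $\Omega_{2g}$ on the simply-connected domain $D$, and real-analyticity is inherited from the explicit form of the $\Omega_{2g}$.
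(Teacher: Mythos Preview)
Your overall strategy coincides with the paper's: obtain a full $\hbar$-expansion of ${\bf d}\log\tau$ from the Deift--Zhou analysis, then use the string equation to kill the odd-order terms. Where your argument falls short is precisely at the step you label ``by uniqueness of Poincar\'{e} asymptotics.'' That principle only says a given function has at most one asymptotic expansion; it does \emph{not} tell you that the rigorous expansion of $u,v$ (a priori in powers of $\hbar$) must coincide with the particular formal $\hbar^2$-series constructed in Appendix~\ref{Appendix:perturbative-expansion}. What you actually need is uniqueness of the \emph{formal} power-series solution to the rescaled string equation \eqref{rescaled-string} with the prescribed leading term $(u_0,v_0)=(\varsigma,-2\mu/(5\eta-3\varsigma))$, among all formal series in $\hbar$ (not just $\hbar^2$). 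Appendix~\ref{Appendix:perturbative-expansion} only addresses uniqueness within the $\hbar^2$-ansatz.

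The paper closes this gap by the direct inductive argument you skipped: insert $u\sim\sum a_k\hbar^k$, $v\sim\sum b_k\hbar^k$ into \eqref{rescaled-string}, check by hand that $a_1=a_3=b_1=b_3=0$, and then observe that for odd $N\geq 5$ the order-$\hbar^N$ equations reduce (under the inductive hypothesis) to a homogeneous linear system
\[
\begin{pmatrix}
-\tfrac{1}{2}a_0(5\eta-3a_0) & 3b_0\\
-3b_0 & 5\eta-3a_0
\end{pmatrix}
\begin{pmatrix}a_N\\b_N\end{pmatrix}=0,
\]
whose determinant equals $(5\eta-3\varsigma)\,\partial_\varsigma\mathcal{P}\neq 0$ on $D$. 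This invertibility is exactly what forces $a_N=b_N=0$, and it is the substantive content of the corollary. Your sentence ``iteratively solving for corrections to the leading solution produces only even powers of $\hbar$'' asserts this conclusion without supplying the argument. (Incidentally, your rescalings $U\mapsto\hbar^{4/7}U$, $V\mapsto\hbar^{5/7}V$ do not match \eqref{rescaling}; the correct exponents are $-2/7$ and $-3/7$.)
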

\begin{proof}
    From the proof of Theorem \ref{MainTheorem1}, we see that ${\bf d}\log \tau(\hbar^{-2/7}\eta,\hbar^{-5/7}\mu,\hbar^{-6/7}\nu) \sim \sum_{k=0}^{\infty} \omega_k \hbar^k$, for some differentials $\omega_k$ independent of $\hbar$. Since $u,v$ are expressible as derivatives of $\log \tau$, it follows that
        \begin{equation}\label{initial-ansatz}
             u(\eta,\mu,\nu|\hbar) \sim \sum_{k=0}^{\infty} a_k(\eta,\mu,\nu)\hbar^{k},\qquad\qquad v(\eta,\mu,\nu|\hbar) \sim \sum_{k=0}^{\infty} b_k(\eta,\mu,\nu)\hbar^{k},
        \end{equation}
    for some functions $a_k(\eta,\mu,\nu)$, $b_k(\eta,\mu,\nu)$. Our goal is to show that
        \begin{equation*}
            a_{2\ell+1}(\eta,\mu,\nu) = b_{2\ell+1}(\eta,\mu,\nu) = 0, \qquad \ell\geq 0;
        \end{equation*}
    this would imply that $u,v$ have asymptotic expansions in powers of $\hbar^{2}$, and thus the formal calculations of Appendix \ref{Appendix:perturbative-expansion} are valid.
    Inserting the expansions \eqref{initial-ansatz} into the rescaled string equation \eqref{rescaled-string}, an explicit calculation shows that
        \begin{equation*}
            a_1 = a_3 = 0,\qquad\qquad b_1 = b_3 = 0.
        \end{equation*}
    In general, at order $\hbar^N$ for  $N\geq 4$ we obtain from the string equation that
        \begin{align*}
            0 &=\sum_{\ell = 0}^{N}\left( 18b_{\ell}b_{N-\ell} -  15\eta a_{\ell}a_{N-\ell} + 6\sum_{j=0}^{\ell} a_{N-\ell}a_{\ell-j}a_{j} \right) - \frac{9}{2}\sum_{\ell=0}^{N-2} \left(2a_{\ell}''a_{N-\ell-2} -a'_{\ell}a'_{N-\ell-2}\right) + 5\eta a_{N-2}''+ a^{(4)}_{N-4},\\
            0 &= -3\sum_{k=0}^Nb_ka_{N-k} + 5\eta b_N + b_{N-2}''.
        \end{align*}
    Let $N\geq 4$ be odd, and assume the inductive hypothesis that $a_{2\ell+1} = b_{2\ell+1} = 0$ for all $0\leq \ell<\frac{N-1}{2}$.
    Observe the following facts:
            \begin{itemize}
                \item If $N$ is odd and $\ell\in \ZZ_+$, then either $N-\ell$ is odd or $\ell$ is odd,
                \item If $N$ is odd, $\ell,j\in \ZZ_+$, then one of $N-\ell$, $\ell-j$, $j$ is odd.
            \end{itemize}
   The above facts imply that many terms in the order $\hbar^{N}$ equations vanish, provided the inductive hypothesis holds. 
   The order $\hbar^{N}$ equations then simplify to
        \begin{align*}
            0 &= 3b_0 b_{N} -\frac{1}{2}a_0(5\eta-3a_0)a_N,\\ 
            0 &= -3b_0a_N - 3b_Na_0 + 5\eta b_N.
        \end{align*}
   We can express this system as the matrix equation
    \begin{equation*}
        \begin{pmatrix}
            -\frac{1}{2} a_0(5\eta-3a_0) & 3b_0\\
            -3b_0 & 5\eta-3a_0
        \end{pmatrix}
        \begin{pmatrix}
            a_N\\
            b_N
        \end{pmatrix} = 
        \begin{pmatrix}
            0\\
            0
        \end{pmatrix},
    \end{equation*}
    and so $a_N = b_N = 0$ if the matrix above is invertible. Indeed, since $a_0 =\varsigma(\eta,\mu,\nu), b_0 = -\frac{2\mu}{5\eta-3\varsigma(\eta,\mu,\nu)}$, we see that
        \begin{equation*}
            \det \begin{pmatrix}
            -\frac{1}{2} a_0(5\eta-3a_0) & 3b_0\\
            -3b_0 & 5\eta-3a_0
        \end{pmatrix} = -\frac{1}{2}a_0(5\eta-3a_0)^2 + 9b_0^2 = (5\eta-3\varsigma)\frac{\partial \mathcal{P}}{\partial \varsigma} \neq 0,
        \end{equation*}
    by the definition of the region $D$, and so we find that $a_N = b_N = 0$. This completes the proof.
\end{proof}

\section{Painlev\'{e} I asymptotics.}
As we observed in Section \ref{section:g-function}, as $(\eta,\mu,\nu)$ tend to a point on the critical surface, the spectral curve
degenerates, and we expect to see Painlev\'{e} I-type asymptotics. In this section, we will address the double-scaling limit of the
Riemann-Hilbert problem for $\Psi$ in the vicinity of this Painlev\'{e} I regime. For simplicity, from here on we address criticality
\textit{only in the case when}
    \begin{equation}
        \mu = 0, \qquad\qquad \eta \neq 0.
    \end{equation}
There are thus two interesting critical regimes to consider:
    \begin{equation}
        \eta >0,\quad \nu = \frac{125}{108}\eta^3, \qquad\qquad \text{ and } \qquad\qquad \eta<0, \quad \nu = 0.
    \end{equation}
These cases correspond to the critical curves $\gamma_+$, $\gamma_-$ introduced earlier.
In both cases, as is typical for double-scaling limits of this type (cf. \cite{DK0}, for instance), a modified spectral curve must 
be constructed. From then on the Deift-Zhou analysis is effectively identical to what appears in the previous section, up until the
construction of local parametrices. We calculate the modified spectral curve for the cases $\eta >0$, $\eta < 0$ in subsection \ref{modified-spectral-curve}.

For all intents and purposes, one can think that the essential differences between this section and the previous one are as follows:
    \begin{itemize}
        \item An appearance of the function $g_j(\lambda)$ in a transformation in Section \S2 is replaced by the equivalent transformation here, with $\hat{g}_j(\lambda)$
        substituted for $g_j(\lambda)$,
        \item The local parametrices are no longer of Airy type, and must be constructed by other means.
    \end{itemize}
The modifications necessary are by now well-established in the literature: we must construct a modified spectral curve, and use local 
parametrices which involve the Painlev\'{e} I equation. 

\subsection{Construction of the modified spectral curve(s).}\label{modified-spectral-curve}
Here, we define modified spectral curves for use in the calculation of critical asymptotics of ${\bf Z}$. The modified curves are slightly different in the $\eta > 0$ and $\eta < 0$ regimes, as the local degeneration of the spectral curve to these two regimes is different there,
but both are defined in effectively the same manner as in previous literature \cite{DK0,DG,DHL3}.

Given a point $(\eta_0,\nu_0)$ on one of the critical curves, we define (real) analytic functions $\hat{\eta}(\hbar)$, $\hat{\nu}(\hbar)$ of $\hbar^{4/5}$ such that, for $\hbar$ sufficiently small,
        \begin{equation*}
        \hat{\eta}(\hbar) = \eta_0 + \OO(\hbar^{4/5}),\qquad\qquad \hat{\nu}(\delta) = \nu_0 + \OO(\hbar^{4/5}),
    \end{equation*}

We will later take $\hat{\eta}(\hbar),\hat{\nu}(\hbar)$ to be specific analytic functions in each case. For now however, the above
definition is sufficient.

\begin{defn}
    Let $\hat{\eta}(\hbar)$, $\hat{\nu}(\hbar)$ be as above. We define the \textit{modified spectral curve} $\mathcal{S}$ about the point $(\eta_0,\nu_0)$ to be
        \begin{align}
            \hat{\lambda}(u) &= 
            \begin{cases}
                u^3-\frac{5}{2}\eta_0 u, & \eta_0>0,\\
                u^3, & \eta_0<0,
            \end{cases}\\
            \hat{Y}(u) &= 
            \begin{cases}
                u^4-\frac{5}{3}\left[2\eta_0-\hat{\eta}(\hbar)\right] u^2 - \frac{25}{18}\left[2\hat{\eta}(\hbar)-\eta_0\right] +\frac{1}{3}\frac{\hat{\nu}(\hbar) -\frac{125}{108}\left[3\hat{\eta}(\hbar) - 2\eta_0\right]}{u^2-\frac{5}{6}\eta_0}, & \eta_0 >0,\\
                u^4+\frac{5}{3}\hat{\eta}(\hbar) u^2+\frac{1}{3}\hat{\nu}(\hbar) u^{-2}, & \eta_0 < 0.
            \end{cases}
        \end{align}
\end{defn}

We will consider the modified spectral curve $\mathcal{S}$ as a branched covering of the plane over the $\hat{\lambda}$-coordinate. Set
    \begin{equation}
        \alpha :=\sqrt{5\eta_0/6}.
    \end{equation}
The sheets of the modified curve we label as $\mathcal{S} := \mathcal{S}_1 \sqcup \mathcal{S}_2 \sqcup \mathcal{S}_3$, where
    \begin{align}
        \mathcal{S}_1 &= 
        \begin{cases}
            \CC \setminus (-\infty,-\alpha], & \eta_0 >0,\\
            \CC \setminus (-\infty,0], & \eta_0 <0,
        \end{cases}\\
        \mathcal{S}_2 &= 
        \begin{cases}
            \CC \setminus (-\infty,-\alpha ] \cup [\alpha,\infty), & \eta_0 >0,\\
            \CC \setminus \RR, & \eta_0 <0,
        \end{cases}\\
        \mathcal{S}_3 &= 
        \begin{cases}
            \CC \setminus [\alpha,\infty), & \eta_0 >0,\\
            \CC \setminus [0,\infty), & \eta_0 <0.
        \end{cases}
    \end{align}
On each sheet $j=1,2,3$, we define a uniformization coordinate $u_j(\lambda)$ which resolves the function $\hat{\lambda}(u)$: in other words, $\hat{\lambda}(u_j(z)) = z$ for $z\in \mathcal{S}_j$. These functions are uniquely determined by their asymptotic expansion on each sheet:
    \begin{align}
        u_1(\lambda) &= \lambda^{1/3}[1 + \OO(\lambda^{-1/3})], \qquad \lambda\to \infty,\\
        u_2(\lambda) &= 
        \begin{cases}
            \omega^2 \lambda^{1/3}[1 + \OO(\lambda^{-1/3})], & \lambda\to \infty,\quad \text{Im } \lambda>0,\\
            \omega \lambda^{1/3}[1 + \OO(\lambda^{-1/3})], & \lambda\to \infty,\quad \text{Im } \lambda<0,
        \end{cases}\\
        u_3(\lambda) &=
        \begin{cases}
            \omega \lambda^{1/3}[1 + \OO(\lambda^{-1/3})], & \lambda\to \infty,\quad \text{Im } \lambda>0,\\
            \omega^2 \lambda^{1/3}[1 + \OO(\lambda^{-1/3})], & \lambda\to \infty,\quad \text{Im } \lambda<0.
        \end{cases}
    \end{align}
Note that in the $\eta_0 < 0$ case, the $\OO(\lambda^{-1/3})$ terms in the above are identically zero, and we have an exact expression for the 
uniformizing coordinates.

Finally, we define the modified $g$-function by first setting 
    \begin{equation}
        \hat{g}(u) := \int \hat{Y}(u) \hat{\lambda}'(u) du,
    \end{equation}
and putting
    \begin{equation}
        \hat{g}_j(\lambda;\hbar) \equiv \hat{g}_j(\lambda) := \hat{g}(u_j(\lambda)), \qquad j = 1,2,3.
    \end{equation}

We now have the following proposition, which we state without proof:
    \begin{prop}
        As $\lambda\to \infty$ on each sheet of $\mathcal{S}$, we have the asymptotics
            \begin{equation}
                \hat{g}_j(\lambda) = \hat{\Theta}_{jj}(\lambda;\hat{\eta},0,\hat{\nu}) + \OO(\lambda^{-1/3}).
            \end{equation}
        Furthermore, when $\hbar = 0$,
            $\hat{g}_j(\lambda;0) = g_j(\lambda)$, the corresponding critical $g$-function.
    \end{prop}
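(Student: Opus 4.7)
The proof is by direct asymptotic computation at infinity on each sheet of the modified spectral curve, in close parallel to Proposition \ref{prop:true-g-function}. The plan splits along the two regimes $\eta_0>0$ and $\eta_0<0$, and the second (degeneration) claim reduces to plugging $\hbar=0$ into the defining formulae.

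First I would expand the uniformization coordinates $u_j(\lambda)$ as $\lambda\to\infty$. In the $\eta_0<0$ regime this is trivial: $\hat{\lambda}(u)=u^3$, so $u_j(\lambda)=\omega^{\epsilon_j}\lambda^{1/3}$ exactly, where $\epsilon_j\in\{0,1,2\}$ is chosen by the asymptotic conditions \eqref{u1-asymptotics}--\eqref{u3-asymptotics}. In the $\eta_0>0$ regime one inverts $u^3-\tfrac{5}{2}\eta_0 u=\lambda$ by iteration, obtaining an expansion of the form $u_j(\lambda)=\omega^{j-1}\lambda^{1/3}+\tfrac{5\eta_0}{6}\omega^{1-j}\lambda^{-1/3}+\OO(\lambda^{-1})$ (with the sheet-dependent $\omega^{j-1}\leftrightarrow\omega^{1-j}$ swap between upper/lower half planes for $j=2,3$). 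As in the proof of Proposition \ref{prop:true-g-function}, I would carry this expansion up to $\OO(\lambda^{-7/3})$.

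Next I would antidifferentiate $\hat{Y}(u)\hat{\lambda}'(u)$ to obtain $\hat{g}(u)=\tfrac{3}{7}u^7+c_5(\hat{\eta})u^5+c_3(\hat{\eta})u^3+c_1(\hat{\eta},\hat{\nu})u+(\text{terms of order }u^{-1}\text{ and lower, from the rational part when }\eta_0>0)$, and substitute the expansion of $u_j(\lambda)$. Collecting powers of $\lambda$ and comparing with
\[
\hat{\Theta}_{jj}(\lambda;\hat{\eta},0,\hat{\nu}) = \tfrac{3}{7}\omega^{j-1}\lambda^{7/3}+\omega^{1-j}\hat{\eta}\,\lambda^{5/3}+\omega^{j-1}\hat{\nu}\,\lambda^{1/3},
\]
one verifies term by term that the $\lambda^{7/3},\lambda^{5/3},\lambda^{1/3}$ coefficients coincide, and that the $\lambda^{2/3}$ coefficient vanishes (consistent with $\mu=0$). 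The choices of $-\tfrac{5}{3}[2\eta_0-\hat{\eta}]$, $-\tfrac{25}{18}[2\hat{\eta}-\eta_0]$ and the numerator $\hat{\nu}-\tfrac{125}{108}[3\hat{\eta}-2\eta_0]$ appearing in $\hat{Y}$ are precisely the ones that make this matching hold; the rational correction is needed so that after composition with $u_j(\lambda)$ (which itself produces polluting $\lambda^{1/3}$-type corrections in the $\eta_0>0$ case) only $\omega^{1-j}\hat{\eta}$ and $\omega^{j-1}\hat{\nu}$ survive at the relevant orders. In the $\eta_0<0$ regime there is nothing to clean up, since $u_j(\lambda)$ is monomial.

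For the degeneration claim, I would specialize $\hbar=0$, whence $\hat{\eta}(0)=\eta_0,\hat{\nu}(0)=\nu_0$. For $(\eta_0,0,\nu_0)\in\gamma_-$ (so $\eta_0<0,\nu_0=0$), formulae \eqref{etamunu-abc} give $a=b=c=0$, hence $\lambda(u)=u^3$ and $Y(u)=u^4$, which matches $\hat{\lambda},\hat{Y}$ identically. For $(\eta_0,0,\nu_0)\in\gamma_+$ the equations $\mu=0,\nu=\tfrac{125}{108}\eta^3$ force (via \eqref{etamunu-abc}) the relations $b=2a^2$ and $c$ a prescribed cube of $a$, which collapse $\lambda(u)=u^3-3a^2u+c$ and $Y(u)$ to polynomials that agree with $\hat{\lambda},\hat{Y}$ at $\hbar=0$; in particular the numerator of the rational term in $\hat{Y}$ vanishes. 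Since the uniformizers on each sheet are uniquely determined by the polynomial relation and the asymptotics at infinity, they coincide, and therefore so do $\hat{g}_j(\lambda;0)$ and $g_j(\lambda)$.

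The one mildly delicate point is ensuring that in the $\eta_0>0$ case the $\lambda^{2/3}$ coefficient of $\hat{g}_j(\lambda)$ really vanishes; this is what dictates including the rational piece of $\hat{Y}$ rather than a naive polynomial truncation, and it is the only place where one must be careful about cross terms produced by the $\omega^{1-j}\lambda^{-1/3}$ correction to $u_j(\lambda)$. Everything else is bookkeeping that mirrors Proposition \ref{prop:true-g-function} verbatim, which is why I, like the authors, would relegate the verification to a direct symbolic calculation.
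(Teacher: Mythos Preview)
Your approach is correct and matches the paper, which omits the proof entirely and simply notes that the computation is identical to Proposition~\ref{prop:true-g-function}.

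Two small computational slips to fix in your sketch. First, in the $\eta_0>0$ case, observe that $\hat{\lambda}'(u)=3(u^2-\tfrac{5}{6}\eta_0)$ exactly cancels the denominator of the rational piece of $\hat{Y}$, so $\hat{g}(u)$ is an \emph{odd polynomial} in $u$ with no $u^{-1}$ tail; since $u_j(\lambda)$ is odd in $\lambda^{1/3}$, the expansion of $\hat{g}_j$ contains only odd powers of $\lambda^{1/3}$, and the $\lambda^{2/3}$ coefficient vanishes automatically by parity. The genuinely nontrivial cancellation you should check is at order $\lambda^{1}$. Second, your degeneration values are off: on $\gamma_-$ the relations \eqref{etamunu-abc} give $a=c=0$ but $b=-\tfrac{5}{3}\eta_0>0$ (not $b=0$), and on $\gamma_+$ they give $b=2a^2$ and $c=0$ (not a cube of $a$); with these values $\lambda(u),Y(u)$ collapse to $\hat{\lambda},\hat{Y}|_{\hbar=0}$ as you claim.
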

We indicate that the details of this computation are effectively identical to those found in Proposition \eqref{prop:true-g-function}.

Crucially, since $\hbar$ will be taken to be sufficiently small, the inequalities that we require for lensing hold in the whole 
complex plane, apart from small discs centered at $\pm \alpha$.
\begin{prop} \textit{Analog of Proposition \ref{LensingProposition}.}
    Fix $\epsilon>0$, and let $\hbar$ sufficiently small so that $|\hat{\eta}(\hbar) - \eta_0| < \epsilon$. Then, there exists $R_{\epsilon} = R_{\epsilon}(\eta_0)>0$
    such that, if we define
        \begin{equation}
           D_{\pm} := \{\lambda \mid |\lambda \mp \alpha| <R_{\epsilon}\},
        \end{equation}
    the following inequalities hold:
    \begin{equation}
                    \begin{cases}
                        (a.)\qquad \text{Re}[\hat{g}_3(\lambda) - \hat{g}_2(\lambda)] > 0, & \lambda \in \hat{\Gamma}_5\setminus D_{+},\\
                        (b.)\qquad \text{Re}[\hat{g}_2(\lambda) - \hat{g}_1(\lambda)] > 0, & \lambda \in \hat{\Gamma}_{-3}\setminus D_{-},\\
                        (c.)\qquad \text{Re}[\hat{g}_3(\lambda) - \hat{g}_2(\lambda)] < 0, & \text{in a lens around $[\alpha,\infty)$}\setminus D_{+},\\
                        (d.)\qquad \text{Re}[\hat{g}_2(\lambda) - \hat{g}_1(\lambda)] < 0, & \text{in a lens around $(-\infty,-\alpha]$}\setminus D_{-}.
                    \end{cases}
                \end{equation}
    Furthermore, if $R_{\epsilon}$ is taken to be the minimal such radius so that the above inequalities hold for given $\epsilon$, then $R_{\epsilon}\to 0$ as $\epsilon\to 0$.
\end{prop}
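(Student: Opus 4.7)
The strategy is to treat this proposition as a perturbation of the critical lensing inequalities already extended to $\partial D$ in the remark following Proposition \ref{LensingProposition}. At $\hbar = 0$ we have $\hat{g}_j(\lambda;0) = g_j(\lambda)$, the critical $g$-function at $(\eta_0,0,\nu_0)\in\gamma_\pm$, and by that remark the inequalities (a)--(d) hold strictly for $g_j$ on the whole of the relevant contours and lens regions, with the only degeneration occurring exactly at the critical branch points $\pm\alpha$. The plan is to inflate this into a quantitative statement uniform in small $\hbar$.

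First I would establish uniform convergence $\hat{g}_j(\lambda;\hbar)\to g_j(\lambda)$ as $\hbar\to 0$ on any compact set that is a fixed positive distance from $\pm\alpha$. This follows from the explicit form of the modified spectral curve, since the uniformizing coordinates $u_j(\lambda;\hbar)$ depend continuously (in fact analytically in $\hbar^{4/5}$) on the coefficients of $\hat{\lambda}(u)$ and $\hat{Y}(u)$, and $\hat{g}$ is obtained from these by quadrature and composition. Each inequality (a)--(d) is an open condition, so for any fixed compact subset $K$ of a relevant contour or lens region on which the critical inequality holds strictly, one obtains the analogous inequality for $\hat{g}_j$ provided $\hbar$ is sufficiently small. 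Since the leading exponential behavior of $\hat{\Theta}_{jj}-\hat{\Theta}_{ii}$ at $\lambda = \infty$ controls the tails of $\hat{\Gamma}_5$ and $\hat{\Gamma}_{-3}$ uniformly in $\hbar$ (the leading coefficient in $\vartheta_j-\vartheta_i$ is $\hbar$-independent), this handles everything outside some fixed small discs around $\pm\alpha$.

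The delicate part is the behavior inside these discs together with the rate $R_\epsilon\to 0$. For this I would work out the local analog of Lemma \ref{lemma:Localexpansions} for the modified $g$-function: near $\lambda=\alpha$ the difference $\hat{g}_3-\hat{g}_2$ has an expansion whose leading behavior is a small perturbation of the critical $(\lambda-\alpha)^{5/2}$ term on $\gamma_+$ (resp.\ of $(\lambda-\alpha)^{3/2}$ on $\gamma_-$, after translating to the appropriate branch point), plus a correction of order $\hbar^{4/5}$ coming from the $\hat{\nu}(\hbar),\hat{\eta}(\hbar)$-terms in $\hat{Y}(u)$. Regarding the zero set of $\hat{g}_3-\hat{g}_2$ as a small perturbation of the critical zero set, the new effective branch points lie within a disc of radius $\OO(\hbar^{2/5})$ about $\alpha$ (the $\hbar$-perturbation of a $\tfrac{5}{2}$-order zero generically splits it over this scale). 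Outside such a disc the leading critical Puiseux term dominates the correction, so the sign of $\mathrm{Re}(\hat{g}_3-\hat{g}_2)$ in each relevant sector is inherited from the critical case. An identical argument works near $-\alpha$ (and, on $\gamma_-$, near the triple branch point at $0$ where the natural scaling is $\lambda\sim\hbar^{2/5}$).

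The main obstacle is the precise quantitative local estimate: one must verify that the $\hbar$-dependent correction to the Puiseux expansion of $\hat{g}_i-\hat{g}_j$ is genuinely subdominant to the leading critical term outside a disc whose radius tends to zero with $\epsilon$. Using $|\hat{\eta}(\hbar)-\eta_0|<\epsilon$ together with $\hat{\eta}(\hbar)=\eta_0+\OO(\hbar^{4/5})$ to bound the admissible $\hbar$ by a power of $\epsilon$, one obtains $R_\epsilon = \OO(\epsilon^{1/2})$, which gives the desired $R_\epsilon\to 0$ as $\epsilon\to 0$. This is mostly a computational matter once the modified spectral curve and uniformizing coordinates have been expanded in powers of $\hbar^{4/5}$, but some care is needed on $\gamma_+$, where the positions of the branch points and the coefficients of the Puiseux expansions are perturbed simultaneously and must be tracked to leading order in $\hbar^{4/5}$.
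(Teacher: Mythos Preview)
The paper does not actually supply a proof of this proposition: it is stated immediately after the informal sentence ``Crucially, since $\hbar$ will be taken to be sufficiently small, the inequalities that we require for lensing hold in the whole complex plane, apart from small discs centered at $\pm\alpha$,'' and the next subsection begins right after the statement. Your proposal is therefore not a comparison against an existing proof but an attempt to make that one-line perturbation remark precise, and the approach you outline---uniform convergence $\hat g_j\to g_j$ away from the branch points, combined with the fact that the critical inequalities hold strictly on $\partial D$ by the remark after Proposition~\ref{LensingProposition}, plus control of the tails at infinity via the $\hbar$-independent leading term of $\hat\Theta$---is the right one and would go through.

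One simplification you have overlooked: in the paper's construction of the modified spectral curve, $\hat\lambda(u)$ depends only on $\eta_0$ and \emph{not} on $\hbar$; the entire $\hbar$-dependence sits in $\hat Y(u)$. Hence the uniformizing coordinates $u_j(\lambda)$ and the branch points $\pm\alpha$ are fixed, and only $\hat g$ itself is perturbed. Your stated concern that on $\gamma_+$ ``the positions of the branch points and the coefficients of the Puiseux expansions are perturbed simultaneously'' is therefore unnecessary: the branch points do not move, and the local analysis reduces to comparing the critical $(\lambda\mp\alpha)^{5/2}$ term against an $\OO(\epsilon)\cdot(\lambda\mp\alpha)^{1/2}$ correction coming from the pole of the modified $\hat Y$ at $u^2=\tfrac{5}{6}\eta_0$. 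This directly yields $R_\epsilon=\OO(\epsilon^{1/2})$ as you claim, with less work than you anticipated.
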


\subsection{Critical PI asymptotics: $\eta>0$.} \label{subsection:eta-plus}
In this subsection, we perform the steepest descent analysis for ${\bf Z}(\lambda)$ for $(\eta,\mu,\nu)\in \gamma_+$. Much of this analysis is similar to what is found
Section \S2, and so we will omit most details. 

Put 
    \begin{equation}
        \hat{G}(\lambda) := \text{diag }(\hat{g}_1(\lambda),\hat{g}_2(\lambda),\hat{g}_3(\lambda)),
    \end{equation}
let $\mathfrak{a} \equiv \mathfrak{a}(\hat{\eta},\hat{\nu}) := -\frac{5}{1296\hbar}\eta_0\left[125\eta_0^3-250\eta_0^2\hat{\eta}+216\hat{\nu}\right]$, and set
    \begin{equation}
        \mathfrak{h}^{(0)} := 
            \begin{pmatrix}
                1 & \mathfrak{a} & \frac{1}{2}\mathfrak{a}^2\\
                0 & 1 & \mathfrak{a}\\
                0 & 0 & 1
            \end{pmatrix}.
    \end{equation}
We then set
    \begin{equation}
        {\bf U}(\lambda) := (\mathfrak{h}^{(0)})^{-1}{\bf Z}(\lambda)e^{-\frac{1}{\hbar} G(\lambda)}.
    \end{equation}
Next, we must perform a lens-opening.
Define two lens-shaped domains about $\lambda = \pm \alpha$, which open symmetrically about $\arg (\lambda +\alpha) = \pi$ and $\arg (\lambda -\alpha) = 0$, respectively. As we have done in the previous section, we label these regions $\Delta_{\pm \alpha}$,
and let $\Delta_{-\alpha}^{\pm}$, $\Delta_{\alpha}^{\pm}$ be the connected components of these domains in the upper (resp. lower) half planes, and let
$\Gamma_{-\alpha}^{\pm}$, $\Gamma_{\alpha}^{\pm}$ denote the boundary component of $\Delta_{-\alpha}^{\pm}$, $\Delta_{\alpha}^{\pm}$ which does not include the real line.

Define $2\times 2$ matrices
    \begin{equation*}
        v_{\alpha}(\lambda) := \begin{psmallmatrix}
            1 & 0\\
            -e^{\frac{1}{\hbar}(\hat{g}_3-\hat{g}_2)(\lambda)} & 1
        \end{psmallmatrix}, \qquad\qquad 
        v_{-\alpha}(\lambda) := \begin{psmallmatrix}
            1 & 0\\
            -e^{\frac{1}{\hbar}(\hat{g}_2-\hat{g}_1)(\lambda)} & 1
        \end{psmallmatrix}.
    \end{equation*}
Note that these matrices are analytic and invertible in the domains $\Delta_{\alpha}^{\pm}$, $\Delta_{-\alpha}^{\pm}$, respectively. Now, put
    \begin{equation*}
        {\bf V}(\lambda) :=
        {\bf U}(\lambda) \cdot
            \begin{cases}
                v_{-\alpha}(\lambda) \oplus 1, & \lambda \in \Delta_{-\alpha}^{+},\\
                v_{-\alpha}^{-1}(\lambda) \oplus 1, & \lambda \in \Delta_{-\alpha}^{-},\\
                1 \oplus v_{\alpha}^{-1}(\lambda), & \lambda \in \Delta_{\alpha}^{+},\\
                1 \oplus v_{\alpha}(\lambda), & \lambda \in \Delta_{\alpha}^{-},\\
                \mathbb{I}, & \textit{ otherwise.}
            \end{cases}
    \end{equation*}
The following proposition then follows immediately:
\begin{prop} \textit{(Analog of Proposition \ref{V-prop}).}
    ${\bf V}(\lambda)$ satisfies the following RHP.
        \begin{equation*}
            {\bf V}_+(\lambda) = {\bf V}_-(\lambda) \cdot
                \begin{cases}
                    \mathbb{I} + E_{23}e^{-\frac{1}{\hbar}(\hat{g}_3-\hat{g}_2)(\lambda)}, & \lambda \in \hat{\Gamma}_5,\\
                    \mathbb{I} + E_{12}e^{-\frac{1}{\hbar}(\hat{g}_2-\hat{g}_1)(\lambda)}, & \lambda \in \hat{\Gamma}_{-3},\\
                    \mathbb{I} - E_{32}e^{\frac{1}{\hbar}(\hat{g}_3-\hat{g}_2)(\lambda)}, & \lambda \in \Gamma^{\pm}_{\alpha},\\
                    \mathbb{I} - E_{21}e^{\frac{1}{\hbar}(\hat{g}_2-\hat{g}_1)(\lambda)}, & \lambda \in \Gamma^{\pm}_{-\alpha},\\
                    (-i\sigma_2)\oplus 1, & \lambda \in (-\infty,-\alpha],\\
                    1\oplus(-i\sigma_2), & \lambda \in [\alpha,\infty).
                \end{cases}
        \end{equation*}
    Furthermore, ${\bf V}(\lambda)$ is normalized as
        \begin{equation*}
            {\bf V}(\lambda) = \left[\mathbb{I} + \OO(\lambda^{-1})\right]\hat{f}(\lambda).
        \end{equation*}
\end{prop}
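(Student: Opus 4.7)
The plan is to verify the statement by direct computation, following the same template as Proposition \ref{V-prop}, but with $\hat{g}_j$ and the modified gauge $\mathfrak{h}^{(0)}$ in place of $g_j$ and the previous gauge. The proof is effectively a bookkeeping exercise: start from the jumps of ${\bf Z}(\lambda)$, track how they transform under the two preparatory transformations ${\bf Z}\mapsto {\bf U}\mapsto {\bf V}$, and check the asymptotic normalization at infinity.

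First I would compute the jumps of ${\bf U}(\lambda) = (\mathfrak{h}^{(0)})^{-1}{\bf Z}(\lambda)e^{-\frac{1}{\hbar}\hat{G}(\lambda)}$. Since $\hat{G}(\lambda)$ is diagonal and $\mathfrak{h}^{(0)}$ is constant in $\lambda$, the jumps on $\hat{\Gamma}_5$ and $\hat{\Gamma}_{-3}$ are conjugates by $e^{\frac{1}{\hbar}\hat{G}}$ of the jumps of ${\bf Z}$, producing the triangular factors $\mathbb{I}+E_{23}e^{-\frac{1}{\hbar}(\hat g_3-\hat g_2)}$ and $\mathbb{I}+E_{12}e^{-\frac{1}{\hbar}(\hat g_2-\hat g_1)}$. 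On $(-\infty,-\alpha]$ and $[\alpha,\infty)$, one uses the sheet-gluing properties $\hat g_{1,\pm}=\hat g_{2,\mp}$ and $\hat g_{2,\pm}=\hat g_{3,\mp}$ respectively (which follow from the definition of the modified spectral curve $\mathcal{S}$) to see that the jumps of ${\bf U}$ take the standard two-by-two lens form, with off-diagonal entries $\pm 1$ on the anti-diagonal.

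Next, the lens-opening step. The factorizations
\begin{equation*}
\begin{psmallmatrix} e^{\frac{1}{\hbar}(\hat g_{j,-}-\hat g_{k,-})} & -1 \\ 0 & e^{-\frac{1}{\hbar}(\hat g_{j,-}-\hat g_{k,-})} \end{psmallmatrix}
= v \cdot (-i\sigma_2)\cdot v',
\end{equation*}
where $v,v'$ are the lower-triangular matrices defining $v_{-\alpha}$ and $v_\alpha$ on each side of the cuts, are the standard lens-opening identities. Multiplying ${\bf U}$ by these triangular factors inside the lens regions $\Delta^{\pm}_{\pm\alpha}$ collapses the two-by-two blocks on $(-\infty,-\alpha]$ and $[\alpha,\infty)$ to the constant jumps $(-i\sigma_2)\oplus 1$ and $1\oplus(-i\sigma_2)$, while producing new jumps on the lens boundaries $\Gamma^{\pm}_{\pm\alpha}$ given by $\mathbb{I}-E_{32}e^{\frac{1}{\hbar}(\hat g_3-\hat g_2)}$ and $\mathbb{I}-E_{21}e^{\frac{1}{\hbar}(\hat g_2-\hat g_1)}$, as claimed. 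By the analog of Proposition \ref{LensingProposition} stated just before this one, these new contributions are exponentially small away from small discs about $\pm\alpha$, exactly as in the non-critical regime.

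Finally, the normalization at infinity. The role of the gauge $\mathfrak{h}^{(0)}$ is to absorb the obstruction to $\hat{\Psi}$ being normalized as $[\mathbb{I}+O(\lambda^{-1})]\hat{f}(\lambda)$: using $\hat g_j(\lambda)=\hat\Theta_{jj}(\lambda;\hat\eta,0,\hat\nu)+O(\lambda^{-1/3})$ together with the explicit choice of $\mathfrak{a}$, one has $\hat f(\lambda)e^{\frac{1}{\hbar}[\hat\Theta(\lambda)-\hat G(\lambda)]}\hat f^{-1}(\lambda)=\mathfrak{h}^{(0)}+O(\lambda^{-1})$ as $\lambda\to\infty$, so ${\bf U}(\lambda)=[\mathbb{I}+O(\lambda^{-1})]\hat f(\lambda)$, and since ${\bf V}$ differs from ${\bf U}$ only by triangular factors supported in bounded lens regions, ${\bf V}$ inherits the same asymptotic. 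The only step requiring a genuine check (rather than purely mechanical bookkeeping) is the matching condition that determines $\mathfrak{a}$: one needs the coefficient of $\lambda^{-1/3}$ in $(\hat G-\hat\Theta)(\lambda)$ to combine with the diagonal part of $\hat f$ to yield the upper-triangular matrix $\mathfrak{h}^{(0)}$ modulo $O(\lambda^{-1})$, and this is what pins down the formula for $\mathfrak{a}$ given above. This is the only potential pitfall, and is a routine calculation using the explicit expansion of $u_j(\lambda)$ at infinity derivable from the defining cubic $\hat\lambda(u)=u^3-\frac{5}{2}\eta_0 u$.
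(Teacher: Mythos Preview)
Your proposal is correct and matches the paper's approach exactly: the paper states this proposition ``follows immediately'' without writing out a proof, and your argument is precisely the bookkeeping verification that justifies this claim, mirroring the proof of the original Proposition \ref{V-prop} (and the preceding ${\bf U}$-proposition) with $\hat g_j$ in place of $g_j$ and the modified gauge $\mathfrak{h}^{(0)}$. The only non-mechanical step you flag---that the choice of $\mathfrak{a}$ is dictated by the requirement $\hat f(\lambda)e^{\frac{1}{\hbar}[\hat\Theta-\hat G](\lambda)}\hat f^{-1}(\lambda)=\mathfrak{h}^{(0)}+\OO(\lambda^{-1})$---is exactly the analog of the key identity used in the proof of the non-critical ${\bf U}$-proposition.
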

From here, we must find an parametrix to bring ${\bf V}(\lambda)$ to the form of a small-norm Riemann-Hilbert problem. Outside of small discs centered as 
$\lambda = \pm \alpha$, we can use as a parametrix the matrix-valued analytic function $M(\lambda)$ which we constructed in Proposition \ref{prop:global-parametrix-solution} as the solution to the Riemann-Hilbert problem \ref{prob:global-parametrix}. Furthermore, if we construct a local parametrix $P(\lambda)$ at $\lambda = -\alpha$,
we can make use of the symmetry of $M(\lambda)$ to construct a local parametrix at $\lambda = +\alpha$:
\begin{equation}
        P_{\alpha}(\lambda) = 
        \begin{psmallmatrix}
                1 & 0 & 0\\
                0 & -1 & 0\\
                0 & 0 & 1
            \end{psmallmatrix}P(-\lambda)
            \begin{psmallmatrix}
                0 & 0 & 1\\
                0 & -1 & 0\\
                1 & 0 & 0
            \end{psmallmatrix}.
    \end{equation}
Note that since $\mu = 0$ here, there is no need to interchange $\mu\leftrightarrow-\mu$. Thus, the task at hand is to construct a local parametrix at $\lambda = -\alpha$.
For this, we will need the following model Riemann-Hilbert problem, which characterizes tronqu\'{e}e solutions to the Painlev\'{e} I equation \cite{Kapaev}:
    \begin{problem}\label{prob:PainleveI}
    \textit{Tronqu\'{e}e Painlev\'{e} I Problem}. Given $x,\varkappa\in \CC$, construct a $2\times 2$-matrix valued piecewise analytic function
    $\boldsymbol{\Phi}(\zeta;x)$ in $\CC\setminus ( \cup_{|j|=1}^2 L_j \cup (-\infty,0])$, where $L_j = \{\zeta | \arg\zeta = \frac{2\pi}{5}j\}$,
    and all rays are oriented away from the origin, such that
        \begin{equation}
            \boldsymbol{\Phi}_+(\zeta;x) = \boldsymbol{\Phi}_-(\zeta;x)\cdot
            \begin{cases}
                \begin{pmatrix}
                    0 & -1\\
                    1 & 0
                \end{pmatrix}, &\zeta \in (-\infty,0],\\
                \begin{pmatrix}
                    1 & 0\\
                    -1 & 1
                \end{pmatrix}, &\zeta \in L_{\pm 2},\\
                \begin{pmatrix}
                    1 & \varkappa\\
                    0 & 1
                \end{pmatrix}, & \zeta \in L_{1},\\
                \begin{pmatrix}
                    1 & 1-\varkappa\\
                    0 & 1
                \end{pmatrix}, & \zeta \in L_{-1},
            \end{cases}
        \end{equation}
        and subject to the normalization condition
            \begin{equation}
                {\bf \Phi}(\zeta;x) = \frac{\zeta^{\frac{1}{4}\sigma_3}}{\sqrt{2}} 
                \begin{pmatrix}
                    1 & -i\\
                    -i & 1
                \end{pmatrix}\left[\mathbb{I} + \sum_{k=1}^{\infty}\frac{\Phi_k(x)}{\zeta^{k/2}}\right]e^{\left[\frac{4}{5}\zeta^{5/2}+x\zeta^{1/2}\right]\sigma_3}.
            \end{equation}
    \end{problem}
It is known that the RHP \ref{prob:PainleveI} characterizes the tronqu\'{e}e solutions $q_{\kappa}(x)$ of the Painlev\'{e} I equation
    \begin{equation}
        q''(x) =6q(x)^2 + x,
    \end{equation}
which satisfy $q_{\varkappa}(x) = \sqrt{-x/6}[1 + \OO((-x)^{5/2})]$, $x\to \infty$. The first few matrices $\Phi_k(x)$ are given in terms of
the solution $q_{\varkappa}$:
    \begin{equation}
        \Phi_1(x) = 
        \begin{pmatrix}
            -\mathcal{H}_{\varkappa}(x) & 0\\
            0 & \mathcal{H}_{\varkappa}(x)
        \end{pmatrix},\qquad\qquad 
        \Phi_2(x) = \frac{1}{2}
        \begin{pmatrix}
            \mathcal{H}_{\varkappa}^2(x) & q_{\varkappa}(x)\\
            q_{\varkappa}(x) & \mathcal{H}_{\varkappa}^2(x)
        \end{pmatrix},
    \end{equation}
where $\mathcal{H}_{\varkappa}(x) := \frac{1}{2}[q_{\varkappa}'(x)]^2-2q_{\varkappa}^3(x) - xq_{\varkappa}(x)$ is the Painlev\'{e} I Hamiltonian.
The parameter $\varkappa$ appears in the exponentially small corrections to the asymptotics 
of $q_{\varkappa}(x)$ \cite{Kapaev}. When the parameter $\varkappa = 0$ or $\varkappa = 1$, the corresponding solution is one of the \textit{tritronqu\'{e}e} 
solution of Painlev\'{e} I. This Riemann-Hilbert problem admits a solution, provided that $x$ is not a pole of the corresponding Painlev\'{e} transcendent.
Because of our particular setup, we will be interested in this parametrix when
    \begin{equation}
        \varkappa = 1,
    \end{equation}
and so from here on we assume that this is the case and do not mention the parameter $\varkappa$ further. With the definition of this parametrix in place, we are ready to construct the local parametrix $P(\lambda)$.

\begin{figure}
    \centering
    \begin{overpic}[scale=0.5]{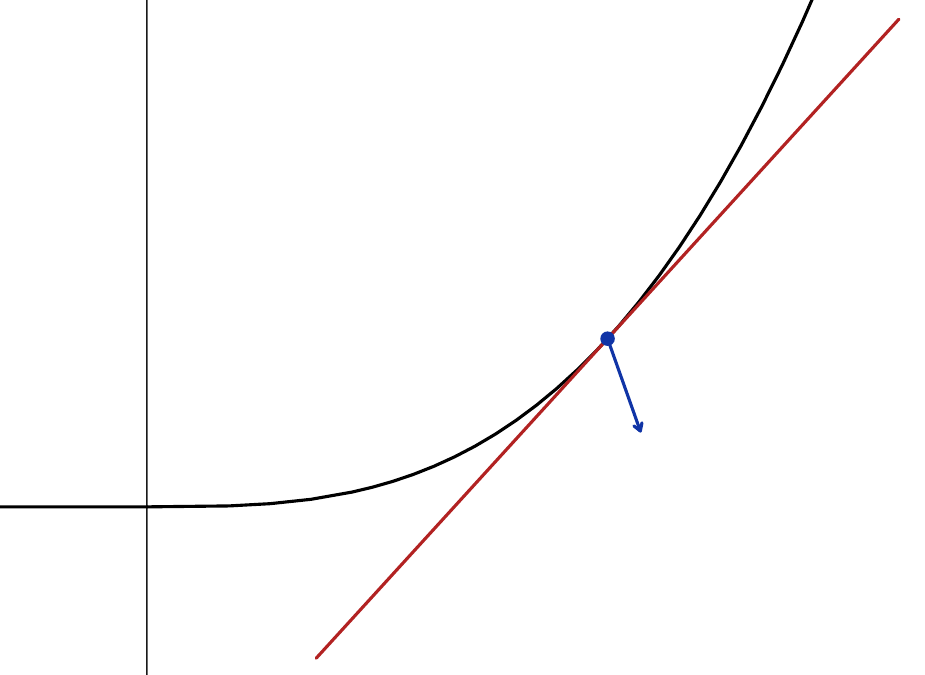}
        \put (65,70) {$\nu = \frac{125}{108}\eta^3$}
        \put (45,38) {\small $(\eta_0,\frac{125}{108}\eta_0^3)$}
        \put (70,25) {\small $\vec{n}$}
        \put (10,70) {\small $\nu$}
    \end{overpic}
    \caption{A point on the curve $\gamma_+$, defined by $\nu = \frac{125}{108}\eta^3$, and its tangent line (shown in red). Any non-tangential
    approach to a point on the critical curve $\gamma_+$, such as the direction $\vec{n}$ shown, leads to Painlev\'{e} I asymptotics.}
    \label{fig:TangentLine}
\end{figure}

Let
    \begin{equation}
        \psi(\lambda;\hbar):= \hat{g}_1(\lambda;\hbar) - \hat{g}_2(\lambda;\hbar),
    \end{equation}
and in the disc $D_-$, we define the conformal map
    \begin{equation}
        \zeta(\lambda) := \left[\frac{5}{8}\psi(\lambda;0)\right]^{2/5}.
    \end{equation}
By construction, $\zeta(\lambda) = \left[\frac{5}{8}\psi(\lambda;0)\right]^{2/5} = \left[\frac{5}{8}\left(g_2(\lambda)-g_1(\lambda)\right)\right]^{2/5}$ (here, $g$ is the \textit{critical} $g$-function),
which by Lemma \ref{lemma:Localexpansions} no. 3, is indeed conformal in a neighborhood of $\lambda = -\alpha$. We also define the
function
    \begin{equation}
        \boldsymbol{x}(\lambda) := \frac{1}{2}\left(\frac{8}{5}\right)^{1/5} \frac{\psi(\lambda;\hbar) - \psi(\lambda;0)}{[\psi(\lambda;0)]^{1/5}},
    \end{equation}
which satisfies the following Proposition:
\begin{prop}
    The function $\boldsymbol{x}(\lambda)$ is analytic in a neighborhood of $\lambda = -\alpha$. Furthermore, if we let $\vec{n} = (n_{\eta},n_{\nu})$ be any any
    unit vector which lies below the tangent line of the curve $\nu = \frac{125}{108}\eta^3$ at $\nu =\frac{125}{108}\eta_0^3$ (see Figure 
    \ref{fig:TangentLine}), and set
        \begin{equation}\label{eta-nu-scalings-plus}
            \hat{\eta}(\hbar) := \eta_0 - C n_{\eta}x\hbar^{4/5}, \qquad\qquad \hat{\nu}(\hbar) := \frac{125}{108}\eta_0^3 - C n_{\nu}x\hbar^{4/5}
        \end{equation}
        for fixed $x\in \CC$, $C>0$. Then, we can choose $C = C(\eta_0,\vec{n})$ so that
        \begin{equation}\label{x-convergence}
            \lim_{\hbar\to 0} \hbar^{-4/5} \boldsymbol{x}(\lambda;\hbar) = f(\lambda;x) = x + \OO(\lambda+\alpha),
        \end{equation}
    where the convergence holds uniformly on compact subsets of $D_-$.
\end{prop}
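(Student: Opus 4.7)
The plan is to reduce the problem to a local analysis near the branch point $\lambda = -\alpha$, worked out in the uniformizing coordinate of the modified spectral curve, and to read off the leading $\hbar^{4/5}$ behaviour from the analytic dependence of $\hat\eta, \hat\nu$ on $\hbar^{4/5}$.

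First I would fix a local parameter $t$ on the spectral curve near the double point over $\lambda = -\alpha$, chosen so that the sheet-swapping involution acts as $t \mapsto -t$ and $\lambda + \alpha$ is an even function of $t$ (with leading term proportional to $t^2$). Because $\hat g(\,\cdot\,;\hbar)$ is single-valued on the curve, $\psi(\lambda;\hbar) = \hat g_1(\lambda;\hbar) - \hat g_2(\lambda;\hbar)$ is \emph{odd} in $t$, and hence admits an expansion
\[
    \psi(\lambda;\hbar) \;=\; \sum_{k\ge 0} c_k(\hbar)\, t^{2k+1},
\]
with coefficients $c_k(\hbar)$ polynomial in $\hat\eta(\hbar), \hat\nu(\hbar)$ and therefore analytic in $\hbar^{4/5}$. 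The criticality of $(\eta_0, 0, \tfrac{125}{108}\eta_0^3) \in \gamma_+$ translates, via Lemma \ref{lemma:Localexpansions} case 3, to $c_0(0) = c_1(0) = 0$ and $c_2(0) > 0$. Consequently
\[
    \psi(\lambda;0) = c_2(0)\, t^5[1 + \mathcal{O}(t^2)], \qquad [\psi(\lambda;0)]^{1/5} = c_2(0)^{1/5}\, t\, [1 + \mathcal{O}(t^2)],
\]
and the first-order Taylor expansion in $\hbar^{4/5}$ yields
\[
    \psi(\lambda;\hbar) - \psi(\lambda;0) = \hbar^{4/5}\, t\,[\gamma_0 + \gamma_1 t^2 + \cdots] + \mathcal{O}(\hbar^{8/5}),
\]
where each $\gamma_k$ depends linearly on $(x, C, n_\eta, n_\nu)$ through $\partial_{\hat\eta}\hat Y, \partial_{\hat\nu}\hat Y$ evaluated at $\hbar = 0$.

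Dividing, the $t$-factors cancel, yielding a power series in $t$ with only even powers; thus $\boldsymbol{x}(\lambda;\hbar)$ defines an analytic function of $\lambda + \alpha$ in a neighbourhood of $-\alpha$, which proves the analyticity assertion. Passing to the limit $\hbar\to 0$ after dividing by $\hbar^{4/5}$ gives
\[
    f(\lambda;x) = \tfrac{1}{2}(8/5)^{1/5}\,\frac{\gamma_0}{c_2(0)^{1/5}} + \mathcal{O}(\lambda + \alpha),
\]
with convergence uniform on the compact disc $D_-$ by uniform convergence of the underlying power series.

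Finally I would pin down the positive constant $C$. A direct calculation of the first variation of $\hat Y$ in $(\hat\eta, \hat\nu)$ at criticality yields $\gamma_0 = -Cx\,(n_\eta K_\eta + n_\nu K_\nu)$ for explicit constants $K_\eta, K_\nu$ depending only on $\eta_0$, whence the normalization $f(-\alpha;x) = x$ forces
\[
    C = -\frac{2(5/8)^{1/5}\,c_2(0)^{1/5}}{n_\eta K_\eta + n_\nu K_\nu}.
\]
The main obstacle is verifying that the geometric "below tangent" hypothesis on $\vec n$ simultaneously delivers the nonvanishing of the denominator and the positivity of $C$. I expect this to follow from identifying $n_\eta K_\eta + n_\nu K_\nu$ with a positive multiple of the directional derivative along $\vec n$ of the defining function $\nu - \tfrac{125}{108}\eta^3$ of $\gamma_+$: this combination vanishes exactly along the tangent to $\gamma_+$ at $(\eta_0, \tfrac{125}{108}\eta_0^3)$, and has a definite sign on the strict "below" side. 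Combined with $c_2(0) > 0$, this produces $C > 0$.
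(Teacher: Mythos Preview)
Your approach is essentially the same as the paper's: both arguments perform a local expansion at the branch point $\lambda=-\alpha$, exploit the square-root (odd-in-$t$) structure of $\psi$ to see that the ratio defining $\boldsymbol{x}(\lambda;\hbar)$ is single-valued, and then match the leading coefficient to fix $C$. The paper simply works with explicit $(\lambda+\alpha)^{1/2}$ expansions rather than an abstract local uniformizer, and computes the constants $k_0(\hbar), k_1$ directly, arriving at $C=\bigl(\tfrac{10\eta_0}{3}\bigr)^{1/5}\bigl(\tfrac{125}{36}\eta_0^2 n_\eta - n_\nu\bigr)^{-1}$; from this formula positivity is immediate once one observes that $\tfrac{125}{36}\eta_0^2$ is the slope of the tangent to $\gamma_+$, so ``below the tangent'' is exactly $\tfrac{125}{36}\eta_0^2 n_\eta - n_\nu>0$. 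Your final paragraph correctly anticipates this identification but stops short of carrying it out; doing the explicit computation (rather than leaving $K_\eta,K_\nu$ abstract) closes the argument with no further ideas needed.
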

\begin{proof}
    To see that $\boldsymbol{x}(\lambda)$ is an analytic function, note that, as $\lambda\to \alpha$,
        \begin{align*}
            \psi(\lambda;\hbar) - \psi(\lambda;0) &= k_0(\hbar) (\lambda+\alpha)^{1/2}[1 + \OO(\lambda+\alpha)],\\
            [\psi(\lambda;\vec{0})]^{1/5} &= k_1(\lambda+\alpha)^{1/2}[1 + \OO(\lambda+\alpha)],
        \end{align*}
    where $k_0(\hbar) = \left(\frac{2}{15}\right)^{1/4}\eta_0^2\left[\nu(\hbar)-\frac{125}{108}\eta_0^2\left(3\hat{\eta}(\hbar) - 2\eta_0\right)\right]$, and $k_1 = 2\frac{3^{4/5} 2^{1/20}(15)^{3/4}}{45\eta_0^{1/20}} > 0$. It follows that we can define $\boldsymbol{x}(\lambda;\vec{\delta})$ in a single-valued way.
    Now, making the substitutions of Equation \eqref{eta-nu-scalings-plus}, note that 
        \begin{align*}
            \hbar^{-4/5}\boldsymbol{x}(\lambda) &= \frac{1}{2}\left(\frac{8}{5}\right)^{1/5}\frac{\hbar^{-4/5}k_0(\hbar)}{k_1}[1 + \OO(\lambda+\alpha)]\\
            &= \left[\left(\frac{3}{10}\right)^{1/5}\eta_0^{-1/5}\left(\frac{125}{36}\eta_0^2 n_{\eta}-n_{\nu}\right)\right]x\left[1 + \OO(\lambda+\alpha)\right],
        \end{align*}
   with the remainder term $\OO(\lambda+\alpha)$ being uniformly bounded in $\hbar$ for $|\lambda+\alpha|$ sufficiently small as $\hbar\to 0$.
   Since we took $\vec{n}$ to be a vector approaching the critical point non-tangentially from inside the region of criticality, we can take
    \begin{equation}\label{C-definition}
        C= \left(\frac{10\eta_0}{3}\right)^{1/5}\left(\frac{125}{36}\eta_0^2 n_{\eta}-n_{\nu}\right)^{-1}>0,
    \end{equation}
   and we see that the statement \eqref{x-convergence} holds.
\end{proof}
In other words, we can take any non-tangential approach to the critical point, and obtain the same type of result, provided we rescale constants
in the correct manner.

We then set 
    \begin{equation}
        P(\lambda) := E(\lambda)\left[\boldsymbol{\Phi}(\hbar^{-2/5}\zeta(\lambda),\hbar^{-4/5}\boldsymbol{x}(\lambda)) e^{\frac{1}{2\hbar}[\hat{g}_2(\lambda) - \hat{g}_1(\lambda)]\sigma_3} \oplus 1\right],
    \end{equation}
where $E(\lambda)$ is the holomorphic function
\begin{align}
        E(\lambda):=  M(\lambda)\left[\frac{1}{\sqrt{2}}
        \begin{psmallmatrix}
            1 &i\\ i & 1
        \end{psmallmatrix} [\hbar^{-2/5}\zeta(\lambda)]^{-\frac{1}{4}\sigma_3} \oplus 1\right].
    \end{align}
It is then easy to check that the following Lemma holds:
    \begin{lemma}\label{local-parametrix-lemma-critical1}
        Under the scaling of Equation \eqref{eta-nu-scalings-plus}, as $\hbar \to 0$,
        \begin{equation}
            P(\lambda) \sim M(\lambda)\left[\mathbb{I} + \sum_{k=1}^{\infty} \frac{\left[\Phi_k(\boldsymbol{x}(\lambda)) \oplus 0\right]}{\zeta(\lambda)^{k/2}} \hbar^{-k/5}\right],
        \end{equation}
    where $\Phi_k(x)$ are the matrices appearing in the $\zeta\to \infty$ expansion of $\boldsymbol{\Phi}(\zeta;x)$ in the RHP \ref{prob:PainleveI}.
    \end{lemma}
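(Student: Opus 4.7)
The plan is to substitute the large-argument asymptotic expansion of $\boldsymbol{\Phi}$ from Problem \ref{prob:PainleveI} directly into the definition of $P(\lambda)$, and then verify that the exponential factors and the algebraic prefactors telescope to leave precisely the claimed series. Concretely, one writes
\begin{equation*}
    \boldsymbol{\Phi}\bigl(\hbar^{-2/5}\zeta(\lambda),\hbar^{-4/5}\boldsymbol{x}(\lambda)\bigr) \sim \frac{[\hbar^{-2/5}\zeta(\lambda)]^{\frac{1}{4}\sigma_3}}{\sqrt{2}}
    \begin{pmatrix} 1 & -i \\ -i & 1 \end{pmatrix}
    \left[\mathbb{I} + \sum_{k=1}^{\infty}\frac{\Phi_k(\hbar^{-4/5}\boldsymbol{x}(\lambda))}{[\hbar^{-2/5}\zeta(\lambda)]^{k/2}}\right] e^{\bigl[\frac{4}{5}(\hbar^{-2/5}\zeta(\lambda))^{5/2} + \hbar^{-4/5}\boldsymbol{x}(\lambda)(\hbar^{-2/5}\zeta(\lambda))^{1/2}\bigr]\sigma_3},
\end{equation*}
and then multiplies on the left by $E(\lambda)$ and on the right by $e^{\frac{1}{2\hbar}[\hat{g}_2(\lambda) - \hat{g}_1(\lambda)]\sigma_3}$.

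The first key step is to check the exponential cancellation. Using the definitions $\zeta(\lambda) = [\tfrac{5}{8}\psi(\lambda;0)]^{2/5}$ and $\boldsymbol{x}(\lambda) = \tfrac{1}{2}(\tfrac{8}{5})^{1/5}[\psi(\lambda;\hbar)-\psi(\lambda;0)]/\psi(\lambda;0)^{1/5}$, one computes
\begin{equation*}
    \tfrac{4}{5}(\hbar^{-2/5}\zeta(\lambda))^{5/2} = \tfrac{1}{2\hbar}\psi(\lambda;0), \qquad
    \hbar^{-4/5}\boldsymbol{x}(\lambda)(\hbar^{-2/5}\zeta(\lambda))^{1/2} = \tfrac{1}{2\hbar}[\psi(\lambda;\hbar)-\psi(\lambda;0)],
\end{equation*}
so the total exponent in the $\sigma_3$ direction is $\tfrac{1}{2\hbar}\psi(\lambda;\hbar)\sigma_3 + \tfrac{1}{2\hbar}(\hat{g}_2-\hat{g}_1)\sigma_3 = 0$, since $\psi = \hat{g}_1 - \hat{g}_2$. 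The exponential factors thus cancel identically, which is the crucial structural feature of the construction.

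The second step is the algebraic prefactor cancellation. By the definition of $E(\lambda)$, the product
\begin{equation*}
    E(\lambda)\cdot \tfrac{[\hbar^{-2/5}\zeta(\lambda)]^{\frac{1}{4}\sigma_3}}{\sqrt{2}}\begin{pmatrix} 1 & -i \\ -i & 1 \end{pmatrix} = M(\lambda)\left[\tfrac{1}{2}\begin{pmatrix} 1 & i \\ i & 1 \end{pmatrix}\begin{pmatrix} 1 & -i \\ -i & 1 \end{pmatrix} \oplus 1\right] = M(\lambda),
\end{equation*}
since the two rotation matrices are inverses up to the factor of $\tfrac{1}{2}$. Combining this with the exponential cancellation and the expansion of $\boldsymbol{\Phi}$ yields the desired series, with each term carrying a factor $[\hbar^{-2/5}\zeta(\lambda)]^{-k/2} = \hbar^{k/5}\zeta(\lambda)^{-k/2}$ (so the stated $\hbar^{-k/5}$ in the lemma should be read as $\hbar^{k/5}$ after accounting for signs, i.e.\ a decaying contribution in $\hbar$).

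The only remaining subtlety \textemdash{} which I expect to be the main point of concern rather than a true obstacle \textemdash{} is the uniformity and validity of the Painlevé I asymptotic expansion at the rescaled argument: we need $\hbar^{-2/5}\zeta(\lambda) \to \infty$ uniformly on $\partial D_-$ (which is immediate since $\zeta$ is a conformal map bounded away from zero on the boundary of the disc), and we need the second argument $\hbar^{-4/5}\boldsymbol{x}(\lambda)$ to converge to a quantity that is not a pole of the tritronquée transcendent. The former is clear by construction of $D_-$; the latter follows from the preceding proposition, which shows $\hbar^{-4/5}\boldsymbol{x}(\lambda) \to x + O(\lambda+\alpha)$ uniformly on compacta, combined with the hypothesis that $x$ avoid the pole set of $q_{\varkappa=1}$. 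Together these grant the requisite uniform validity of the expansion, completing the proof.
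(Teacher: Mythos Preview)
Your proof is correct and is precisely the direct verification the paper has in mind when it writes ``it is then easy to check that the following Lemma holds''; the paper gives no further argument. Your exponential and algebraic cancellations are computed correctly, and you are right to flag that the stated $\hbar^{-k/5}$ is a typo for $\hbar^{k/5}$ (otherwise the series would not be asymptotic); similarly, the argument of $\Phi_k$ should strictly be $\hbar^{-4/5}\boldsymbol{x}(\lambda)$ rather than $\boldsymbol{x}(\lambda)$, and the sum should start at $k=1$.
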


We then define
    \begin{equation}
        {\bf R}(\lambda) :=
        {\bf V}(\lambda) \cdot
            \begin{cases}
                M^{-1}(\lambda), & \lambda \in \CC\setminus (D_{\pm}),\\
                \begin{psmallmatrix}
                    0 & 0 & 1\\
                    0 & -1 & 0\\
                    1 & 0 & 0
                \end{psmallmatrix}P^{-1}(-\lambda)
                \begin{psmallmatrix}
                    1 & 0 & 0\\
                    0 & -1 & 0\\
                    0 & 0 & 1
                \end{psmallmatrix}, & \lambda \in D_{+},\\
                P^{-1}(\lambda), & \lambda \in D_{-}.
            \end{cases}
    \end{equation}
    
The following proposition then holds:
\begin{prop}
    ${\bf R}(\lambda)$ satisfies the following Riemann-Hilbert problem:
        \begin{equation*}
            {\bf R}_+(\lambda) = {\bf R}_-(\lambda) \cdot
                \begin{cases}
                    \mathbb{I} + M(\lambda)E_{23}M^{-1}(\lambda)e^{-\frac{1}{\hbar}(\hat{g}_3-\hat{g}_2)(\lambda)}, & \lambda \in \hat{\Gamma}_5,\setminus D_{-\alpha}\\
                    \mathbb{I} + M(\lambda)E_{12}M^{-1}(\lambda)e^{-\frac{1}{\hbar}(\hat{g}_2-\hat{g}_1)(\lambda)}, & \lambda \in \hat{\Gamma}_{-3}\setminus D_{+\alpha},\\
                    \mathbb{I} - M(\lambda)E_{32}M^{-1}(\lambda)e^{\frac{1}{\hbar}(\hat{g}_3-\hat{g}_2)(\lambda)}, & \lambda \in \Gamma^{\pm}_{\alpha}\setminus D_{+\alpha},\\
                    \mathbb{I} - M(\lambda)E_{21}M^{-1}(\lambda)e^{\frac{1}{\hbar}(\hat{g}_2-\hat{g}_1)(\lambda)}, & \lambda \in \Gamma^{\pm}_{-\alpha}\setminus D_{-\alpha},\\
                    P(\lambda)M^{-1}(\lambda), & \lambda \in \partial D_{-\alpha},\\
                    \begin{psmallmatrix}
                    1 & 0 & 0\\
                    0 & -1 & 0\\
                    0 & 0 & 1
                \end{psmallmatrix}P(-\lambda)M^{-1}(-\lambda)
                \begin{psmallmatrix}
                    1 & 0 & 0\\
                    0 & -1 & 0\\
                    0 & 0 & 1
                \end{psmallmatrix}, & \lambda \in \partial D_{+\alpha},
                \end{cases}
        \end{equation*}
        Subject to the normalization condition
            \begin{equation}
                {\bf R}(\lambda) = \mathbb{I} + \OO(\lambda^{-1}), \qquad \lambda\to \infty.
            \end{equation}
\end{prop}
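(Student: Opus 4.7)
The plan is to verify the jumps of ${\bf R}(\lambda)$ on each piece of its discontinuity set separately, directly from the definitions of $M(\lambda)$ in Proposition \ref{prop:global-parametrix-solution}, of the local parametrix $P(\lambda)$, and of the RHP for ${\bf V}(\lambda)$. The normalization at infinity will then follow from a matching of leading-order asymptotics.

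First I would treat the region outside the discs $D_{\pm}$, where ${\bf R}(\lambda) = {\bf V}(\lambda) M^{-1}(\lambda)$. Since $M(\lambda)$ is analytic off $(-\infty,-\alpha]\cup[\alpha,\infty)$, on the lens boundaries $\hat{\Gamma}_5$, $\hat{\Gamma}_{-3}$, $\Gamma^{\pm}_{\alpha}$, $\Gamma^{\pm}_{-\alpha}$ (intersected with the complement of the discs) the jump of ${\bf R}$ is just the jump of ${\bf V}$ conjugated by $M$, yielding the asserted form $M E_{ij} M^{-1} e^{\pm (\hat{g}_i - \hat{g}_j)/\hbar}$. On the remaining cut segments, the jumps of $M$ were engineered in Problem \ref{prob:global-parametrix} to match those of ${\bf V}$ exactly, so ${\bf R}$ extends analytically across them and no entry appears for them in the jump list.

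The main step is the verification that $P(\lambda)$ shares the jumps of ${\bf V}(\lambda)$ inside the disc $D_{-}$, so that ${\bf R}(\lambda) = {\bf V}(\lambda) P^{-1}(\lambda)$ is analytic there. Under the conformal map $\zeta(\lambda)$ defined via $\zeta(\lambda) = [\tfrac{5}{8}\psi(\lambda;0)]^{2/5}$, which by Lemma \ref{lemma:Localexpansions}(3) is indeed conformal at $\lambda=-\alpha$, the contour pieces $\hat{\Gamma}_{-3}$, $\Gamma^{\pm}_{-\alpha}$, and $(-\infty,-\alpha]$ inside $D_{-}$ are mapped to the rays $L_1$, $L_{\pm 2}$, and $(-\infty,0]$ of Problem \ref{prob:PainleveI}, respectively. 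The jumps of $\boldsymbol{\Phi}(\zeta;x)$ on these rays, after conjugation by $e^{\frac{1}{2\hbar}[\hat{g}_2(\lambda)-\hat{g}_1(\lambda)]\sigma_3}$ and framing by the (holomorphic, hence jump-free) factor $E(\lambda)$, should reproduce the jumps of ${\bf V}$ in $D_{-}$; this is where the specific choice $\varkappa = 1$ enters, matching the upper-triangular unipotent factor $\mathbb{I} + E_{12}$ inherited from the Stokes multiplier $s_{-3} = -1$ of \eqref{STOKES_TRUNCATED}. The local parametrix at $\lambda = +\alpha$ then matches the jumps of ${\bf V}$ in $D_{+}$ by the explicit symmetry used to define $P_{\alpha}(\lambda)$ together with the symmetry of $M(\lambda)$ recorded in the remark following Proposition \ref{prop:global-parametrix-solution}.

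On $\partial D_{\pm}$, where ${\bf V}$ is continuous, the claimed form of $J_{\bf R}$ follows directly from the definitions of ${\bf R}$ on each side of the circle. The normalization ${\bf R}(\lambda) = \mathbb{I} + \OO(\lambda^{-1})$ as $\lambda \to \infty$ is then immediate from combining ${\bf V}(\lambda) = [\mathbb{I} + \OO(\lambda^{-1})]\hat{f}(\lambda)$ with $M(\lambda) = [\mathbb{I} + \OO(\lambda^{-1})]\hat{f}(\lambda)$. The main obstacle, as indicated, is the verification inside $D_{-}$: one must carefully track the orientations of the various rays and the order of the triangular unipotent factors through the conformal change of variable, and confirm that $\varkappa = 1$ is the unique Stokes parameter consistent with \eqref{STOKES_TRUNCATED}; the remaining verifications are matters of direct computation.
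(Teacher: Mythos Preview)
Your proposal is correct and follows the standard verification that the paper leaves implicit (the proposition is simply stated as ``then holds'' after the definition of ${\bf R}$). The decomposition into exterior jumps (conjugation by $M$), cancellation on the real cuts via the global parametrix, interior analyticity from matching of local jumps, and circle jumps from the mismatch $PM^{-1}$ is exactly the intended argument, and your handling of the normalization at infinity is right.

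One small caution on the local identification: $\hat{\Gamma}_{-3}$ emanates from $-\alpha$ into the \emph{lower} half plane, while $L_1$ lies at $\arg\zeta = 2\pi/5$; since $\zeta(\lambda)$ is (to leading order) a positive multiple of $(\lambda+\alpha)$, the naive image of $\hat{\Gamma}_{-3}$ lands near $L_{-1}$ rather than $L_1$. This is harmless because the paper explicitly allows local redefinition of the lens contours near the turning point, and with $\varkappa=1$ one of $L_{\pm 1}$ carries the trivial jump anyway; but you should phrase the matching as ``after a local redefinition of contours, the five rays of the model problem are brought onto the jump contours of ${\bf V}$ inside $D_-$'' rather than asserting a fixed correspondence. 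Also note that the statement as printed has the discs swapped in the first two lines ($\hat{\Gamma}_5$ passes through $D_{+\alpha}$, not $D_{-\alpha}$, and vice versa for $\hat{\Gamma}_{-3}$); your argument already uses the correct pairing.
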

Let $\Gamma_{{\bf R}}$ denote the union of the jump contours of ${\bf R}(\lambda)$.
Finally, we can claim that ${\bf R}(\lambda)$ is close to the identity matrix:
\begin{prop}\label{R-expansion-prop-critical-plus}
       For $\hbar$ sufficiently small, $(\hat{\eta},0,\hat{\nu})$ scaled as in Equation \eqref{eta-nu-scalings-plus},
       the Riemann-Hilbert problem for ${\bf R}(\lambda)$ has a solution, and admits the $\hbar\to 0$ asymptotic expansion
        \begin{equation}
            {\bf R}(\lambda) \sim \mathbb{I} + \sum_{k=1}^{\infty} {\bf R}_k(\lambda;x)\hbar^{k/5},
        \end{equation}
    which holds uniformly for $\lambda \in \CC\setminus \Gamma_{{\bf R}}$.
    \end{prop}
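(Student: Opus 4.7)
The strategy is the standard small-norm Riemann-Hilbert argument from \cite{DKMVZ}, mirroring the proof of Proposition \ref{R-expansion-prop} but with the Airy local parametrix replaced by the Painlev\'{e} I parametrix $\boldsymbol{\Phi}$ at $\lambda = -\alpha$ (and its symmetric counterpart at $\lambda = +\alpha$). The only genuinely new inputs needed are the analog of Proposition \ref{LensingProposition} for $\hat{g}_j$ (which continues to hold on $\Gamma_{\bf R}\setminus (D_+\cup D_-)$ for $\hbar$ small enough, by continuity of the $\hat{g}_j$ in $\hbar$) together with Lemma \ref{local-parametrix-lemma-critical1}.

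I would begin by splitting $\Gamma_{\bf R}$ into the lens/ray pieces outside the discs $D_\pm$ and the disc boundaries $\partial D_\pm$. On the former, the lensing inequalities force $\text{Re}(\hat{g}_3-\hat{g}_2)$ and $\text{Re}(\hat{g}_2-\hat{g}_1)$ to have the correct sign bounded away from zero. Since $M(\lambda)$ and $M^{-1}(\lambda)$ remain uniformly bounded off the discs, the conjugated jumps differ from $\mathbb{I}$ by quantities of size $\OO(e^{-c/\hbar})$ for some $c>0$, uniformly on these contours.

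On $\partial D_-$ the jump equals $P(\lambda)M^{-1}(\lambda)$, which by Lemma \ref{local-parametrix-lemma-critical1} admits the expansion
\begin{equation*}
P(\lambda)M^{-1}(\lambda) \sim \mathbb{I} + \sum_{k=1}^{\infty} J_k(\lambda;x)\,\hbar^{k/5},\qquad J_k(\lambda;x) := M(\lambda)\bigl[\Phi_k(\boldsymbol{x}(\lambda))\oplus 0\bigr]M^{-1}(\lambda)\,\zeta(\lambda)^{-k/2}.
\end{equation*}
Since $\hbar^{-4/5}\boldsymbol{x}(\lambda)\to x + \OO(\lambda+\alpha)$ uniformly on $\partial D_-$, and $x$ is by hypothesis not a pole of the tritronqu\'{e}e transcendent $q_{\varkappa=1}$, the matrices $\Phi_k(\boldsymbol{x}(\lambda))$ are uniformly bounded in $\hbar$ and in $\lambda \in \partial D_-$. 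The analogous expansion on $\partial D_+$ follows from the conjugation symmetry used to construct the parametrix there.

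With the total deviation $\boldsymbol{\Delta} := {\bf R}_-^{-1}{\bf R}_+ - \mathbb{I}$ of size $\OO(\hbar^{1/5})$ in $L^2 \cap L^\infty(\Gamma_{\bf R})$, the Beals--Coifman integral operator $\mathbb{I} - C_{\boldsymbol{\Delta}}$ is invertible on $L^2(\Gamma_{\bf R})$ for $\hbar$ small, giving existence of ${\bf R}(\lambda)$. Expanding the Neumann series and matching powers of $\hbar^{1/5}$ yields the desired asymptotic series, each ${\bf R}_k(\lambda;x)$ being the solution of an additive Riemann-Hilbert problem whose jump data are determined by $J_1,\ldots,J_k$ together with the lower-order ${\bf R}_j$'s; the leading ${\bf R}_1(\lambda;x)$ will be a sum of simple poles at $\pm \alpha$, analogous to \eqref{R1-sol}. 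The principal technical point is maintaining uniform control of $\Phi_k(\boldsymbol{x}(\lambda))$ as $\lambda$ ranges over $\partial D_-$; this is ensured by staying away from poles of $q_{\varkappa=1}$, on which the Painlev\'{e} I data depend holomorphically, and is why the hypothesis that $x$ not be a pole cannot be dropped.
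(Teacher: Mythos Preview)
Your proposal is correct and follows essentially the same approach as the paper's proof: both set $\boldsymbol{\Delta} = {\bf R}_-^{-1}{\bf R}_+ - \mathbb{I}$, observe exponential decay off the disc boundaries, invoke Lemma \ref{local-parametrix-lemma-critical1} to expand the jump on $\partial D_\pm$ in powers of $\hbar^{1/5}$ with coefficients $J_k$ built from $M(\Phi_k\oplus 0)M^{-1}\zeta^{-k/2}$ (and its symmetric conjugate on $\partial D_+$), and then appeal to standard small-norm theory to produce the asymptotic series, with ${\bf R}_1$ a sum of simple poles at $\pm\alpha$. Your additional remarks on the Beals--Coifman operator and on the necessity of avoiding poles of $q_{\varkappa=1}$ are consistent with, and slightly more explicit than, what the paper records.
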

\begin{proof}
    As we did in the proof of Proposition \eqref{R-expansion-prop}, we let
        \begin{equation*}
            \boldsymbol{\Delta}(\lambda) := {\bf R}_-^{-1}{\bf R}_+ - \mathbb{I}
        \end{equation*}
    denote the deviation of the jumps of ${\bf R}$ from the identity $\mathbb{I}$. $\boldsymbol{\Delta}(\lambda)$ is exponentially close to 
    the identity as $\hbar\to 0$ on $\Gamma_{{\bf R}} \setminus \left(\partial D_{\pm}\right)$. By Lemma \ref{local-parametrix-lemma-critical1}, we have that
        \begin{equation*}
            \boldsymbol{\Delta}(\lambda) \sim \sum_{k=1}^{\infty} J_k(\lambda) \hbar^{k/5},
        \end{equation*}
    where
        \begin{align*}
            J_k(\lambda) &= 
            \begin{cases}
                \begin{psmallmatrix}
                    1 & 0 & 0\\
                    0 & -1 & 0\\
                    0 & 0 & 1
                \end{psmallmatrix}
                    M(-\lambda)\left[\Phi_k(\boldsymbol{x}(-\lambda)) \oplus 0\right]M^{-1}(-\lambda)
                \begin{psmallmatrix}
                    1 & 0 & 0\\
                    0 & -1 & 0\\
                    0 & 0 & 1
                \end{psmallmatrix}\zeta(-\lambda)^{-k/2}, & \lambda \in \partial D_+,\\
                M(\lambda)\left[\Phi_k(\boldsymbol{x}(\lambda)) \oplus 0\right]M^{-1}(\lambda)\zeta(\lambda)^{-k/2}, & \lambda \in \partial D_-,
            \end{cases}
        \end{align*}
    and $\Phi_k(x)$ are as defined in the asymptotic expansion of the RHP \ref{prob:PainleveI}. From standard theory \cite{DKMVZ} it follows that ${\bf R}$ admits an
    asymptotic series in $\hbar^{1/5}$:
        \begin{equation*}
            {\bf R}(\lambda) \sim \mathbb{I} + \sum_{k=1}^{\infty} {\bf R}_k(\lambda;x)\hbar^{k/5},
        \end{equation*}
    where the functions ${\bf R}_k(\lambda)$ can be determined iteratively as the solutions to certain additive Riemann-Hilbert problems. We
    will need only the expression for the first term for $|\lambda|$ sufficiently large.
    The first term ${\bf R}_1$ is given by
        \begin{equation}\label{R1-sol}
            {\bf R}_1(\lambda;\eta,\mu,\nu) = \frac{W_1(x)}{\lambda +\alpha} + \frac{\hat{W}_1(x)}{\lambda-\alpha}, \qquad \lambda\in \CC\setminus (D_{+}\cup D_{-}),
        \end{equation}
    where
        \begin{align}
            W_1(x) &= \Res_{\lambda = -\alpha} \frac{\left[M(\lambda)\left(\Phi_1(\lambda;x)\oplus 0\right)M^{-1}(\lambda)\right]}{\zeta(\lambda)^{1/2}},\\
            \hat{W}_1(x) &=\begin{psmallmatrix}
                    1 & 0 & 0\\
                    0 & -1 & 0\\
                    0 & 0 & 1
                \end{psmallmatrix}W_1(x)\begin{psmallmatrix}
                    1 & 0 & 0\\
                    0 & -1 & 0\\
                    0 & 0 & 1
                \end{psmallmatrix}.
        \end{align}

\end{proof}

\subsection{Critical PI asymptotics: $\eta<0$.}\label{subsection-eta-minus}
In this subsection, we perform the steepest descent analysis for ${\bf Z}(\lambda)$ for $(\eta,\mu,\nu)\in \gamma_-$. This analysis is again similar to what is found Section \S2 and \S3, and so we will omit most details. 

Put 
    \begin{equation}
        \hat{G}(\lambda) := \text{diag }(\hat{g}_1(\lambda),\hat{g}_2(\lambda),\hat{g}_3(\lambda)),
    \end{equation}
and set
    \begin{equation}
        {\bf U}(\lambda) := {\bf Z}(\lambda)e^{-\frac{1}{\hbar} G(\lambda)}.
    \end{equation}
(Note that the gauge matrix $\mathfrak{h}^{(0)}$ that appeared in the previous sections is just the 
identity matrix here). Next, we must perform a lens-opening.
We define two lens-shaped regions opening from $\lambda = 0$, which open symmetrically about $\arg \lambda= \pi$ and $\arg \lambda = 0$, respectively. We label these regions $\Delta_{\mp 0}$,
and let $\Delta_{-0}^{\pm}$, $\Delta_{+0}^{\pm}$ be the connected components of these domains in the upper (resp. lower) half planes, and let
$\Gamma_{-0}^{\pm}$, $\Gamma_{+0}^{\pm}$ denote the boundary component of $\Delta_{-0}^{\pm}$, $\Delta_{+0}^{\pm}$ which does not include the real line.

Define $2\times 2$ matrices
    \begin{equation*}
        v_{+0}(\lambda) := \begin{psmallmatrix}
            1 & 0\\
            -e^{\frac{1}{\hbar}(\hat{g}_3-\hat{g}_2)(\lambda)} & 1
        \end{psmallmatrix}, \qquad\qquad 
        v_{-0}(\lambda) := \begin{psmallmatrix}
            1 & 0\\
            -e^{\frac{1}{\hbar}(\hat{g}_2-\hat{g}_1)(\lambda)} & 1
        \end{psmallmatrix}.
    \end{equation*}
These matrices are analytic and invertible in $\Delta_{+0}^{\pm}$, $\Delta_{-0}^{\pm}$, respectively. We then set
    \begin{equation*}
        {\bf V}(\lambda) :=
        {\bf U}(\lambda) \cdot
            \begin{cases}
                v_{-0}(\lambda) \oplus 1, & \lambda \in \Delta_{-0}^{+},\\
                v_{-0}^{-1}(\lambda) \oplus 1, & \lambda \in \Delta_{-0}^{-},\\
                1 \oplus v_{+0}^{-1}(\lambda), & \lambda \in \Delta_{+0}^{+},\\
                1 \oplus v_{+0}(\lambda), & \lambda \in \Delta_{+0}^{-},\\
                \mathbb{I}, & \textit{ otherwise.}
            \end{cases}
    \end{equation*}
The following proposition then follows immediately:
\begin{prop} \textit{(Analog of Proposition \ref{V-prop}).}
    ${\bf V}(\lambda)$ satisfies the following RHP.
        \begin{equation*}
            {\bf V}_+(\lambda) = {\bf V}_-(\lambda) \cdot
                \begin{cases}
                    \mathbb{I} + E_{23}e^{-\frac{1}{\hbar}(\hat{g}_3-\hat{g}_2)(\lambda)}, & \lambda \in \hat{\Gamma}_5,\\
                    \mathbb{I} + E_{12}e^{-\frac{1}{\hbar}(\hat{g}_2-\hat{g}_1)(\lambda)}, & \lambda \in \hat{\Gamma}_{-3},\\
                    \mathbb{I} - E_{32}e^{\frac{1}{\hbar}(\hat{g}_3-\hat{g}_2)(\lambda)}, & \lambda \in \Gamma^{\pm}_{+0},\\
                    \mathbb{I} - E_{21}e^{\frac{1}{\hbar}(\hat{g}_2-\hat{g}_1)(\lambda)}, & \lambda \in \Gamma^{\pm}_{-0},\\
                    (-i\sigma_2)\oplus 1, & \lambda \in \RR_-,\\
                    1\oplus(-i\sigma_2), & \lambda \in \RR_+.
                \end{cases}
        \end{equation*}
    Furthermore, ${\bf V}(\lambda)$ is normalized as
        \begin{equation*}
            {\bf V}(\lambda) = \left[\mathbb{I} + \OO(\lambda^{-1})\right]\hat{f}(\lambda).
        \end{equation*}
\end{prop}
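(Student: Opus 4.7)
The plan is a direct verification mirroring the proof of the analogous Proposition \ref{V-prop} in Section~2, with the two branch points coalesced at the origin ($\alpha=\beta=0$ on $\gamma_-$, cf.\ case~4 of Lemma~\ref{lemma:Localexpansions}) and with $g_j$ replaced by $\hat{g}_j$ throughout. Since $\mathfrak{h}^{(0)} = \mathbb{I}$ in this regime, the transformation ${\bf U}(\lambda) = {\bf Z}(\lambda) e^{-\hat{G}(\lambda)/\hbar}$ preserves the contours of ${\bf Z}$ and replaces each jump $J$ by $e^{\hat{G}_-/\hbar} J e^{-\hat{G}_+/\hbar}$. Using the sheet-gluing relations $\hat{g}_{2,+} = \hat{g}_{3,-}$ on $\RR_+$ and $\hat{g}_{1,+} = \hat{g}_{2,-}$ on $\RR_-$ (built into the definition of the sheets $\mathcal{S}_j$), a short computation turns the jumps $\mathbb{I} - E_{23}$ on $\RR_+$ and $\mathbb{I} - E_{12}$ on $\RR_-$ of ${\bf Z}$ into the triangular blocks $1 \oplus \begin{psmallmatrix} e^{(\hat{g}_2 - \hat{g}_3)_-/\hbar} & -1 \\ 0 & e^{-(\hat{g}_2 - \hat{g}_3)_-/\hbar} \end{psmallmatrix}$ and $\begin{psmallmatrix} e^{(\hat{g}_1 - \hat{g}_2)_-/\hbar} & -1 \\ 0 & e^{-(\hat{g}_1 - \hat{g}_2)_-/\hbar} \end{psmallmatrix} \oplus 1$ respectively, while the jumps $\mathbb{I} + E_{23}$ on $\hat{\Gamma}_5$ and $\mathbb{I} + E_{12}$ on $\hat{\Gamma}_{-3}$ acquire the exponentially decaying factors $e^{-(\hat{g}_3-\hat{g}_2)/\hbar}$ and $e^{-(\hat{g}_2-\hat{g}_1)/\hbar}$.

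The lens-opening step then applies the standard $LDU$ factorization
\begin{equation*}
\begin{psmallmatrix} e^{a_-} & -1 \\ 0 & e^{-a_-} \end{psmallmatrix} = \begin{psmallmatrix} 1 & 0 \\ -e^{-a_-} & 1 \end{psmallmatrix} (-i\sigma_2)\begin{psmallmatrix} 1 & 0 \\ -e^{a_-} & 1 \end{psmallmatrix}
\end{equation*}
in the $23$-block on $\RR_+$ (with $a = (\hat{g}_2 - \hat{g}_3)/\hbar$) and in the $12$-block on $\RR_-$ (with $a = (\hat{g}_1 - \hat{g}_2)/\hbar$). The identity $a_+ = -a_-$ on each cut, which is just the sheet-swap applied to the relevant difference, ensures that the outermost triangular factors match $v_{+0}^{\pm 1}$ and $v_{-0}^{\pm 1}$ exactly; absorbing them into ${\bf V}$ above and below the real axis collapses the real-axis jumps to $(-i\sigma_2)\oplus 1$ on $\RR_-$ and $1\oplus (-i\sigma_2)$ on $\RR_+$, while producing the advertised jumps $\mathbb{I} - E_{32} e^{(\hat{g}_3 - \hat{g}_2)/\hbar}$ on $\Gamma^{\pm}_{+0}$ and $\mathbb{I} - E_{21} e^{(\hat{g}_2 - \hat{g}_1)/\hbar}$ on $\Gamma^{\pm}_{-0}$. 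The jumps on $\hat{\Gamma}_5$ and $\hat{\Gamma}_{-3}$ pass through this step unchanged.

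The normalization at infinity is inherited from ${\bf U}$: by the proposition (stated without proof in Section~4.1) that $\hat{g}_j(\lambda) = \hat{\Theta}_{jj}(\lambda;\hat\eta,0,\hat\nu) + \OO(\lambda^{-1/3})$ as $\lambda\to\infty$, one has ${\bf U}(\lambda) = [\mathbb{I} + \OO(\lambda^{-1})]\hat{f}(\lambda)$; since the lens factors $v_{\pm 0}$ are exponentially close to $\mathbb{I}$ for $\lambda$ bounded away from the origin, ${\bf V}$ enjoys the same normalization. The whole argument is thus routine bookkeeping, with no conceptual obstacle. The only subtlety worth flagging is that $a = (\hat{g}_j - \hat{g}_{j+1})/\hbar$ vanishes at the origin where $\alpha,\beta$ have coalesced, so the factorization degenerates there; but this is irrelevant to the present proposition whose jumps are specified only on the listed contours --- the singular local behavior at $\lambda = 0$ is precisely what the Painlev\'e~I parametrix constructed in the remainder of Section~4.3 is designed to resolve.
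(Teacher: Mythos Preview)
Your proof is correct and follows exactly the route the paper takes: the paper simply declares that ``the following proposition then follows immediately'' from the definitions of ${\bf U}$ and the lens-opening factors $v_{\pm 0}$, and you have spelled out that immediate verification in detail. The $LDU$ factorization you write is the standard one underlying Proposition~\ref{V-prop}, and your use of the sheet-swap identity $a_+=-a_-$ to match the triangular factors with $v_{\pm 0}^{\pm 1}$ is precisely what makes the real-axis jump collapse to $-i\sigma_2$ in the relevant block. One small sharpening: in the $\eta_0<0$ regime the uniformizer is exactly $u_j(\lambda)=\omega^{j-1}\lambda^{1/3}$, so in fact $\hat g_j(\lambda)=\hat\Theta_{jj}(\lambda;\hat\eta,0,\hat\nu)$ with no $\OO(\lambda^{-1/3})$ error, and the normalization of ${\bf U}$ (hence ${\bf V}$) follows without any further cancellation---this is why the paper later observes $M(\lambda)=\hat f(\lambda)$.
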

From here, one typically searches for a parametrix to bring ${\bf V}(\lambda)$ to the form of a ``small-norm'' Riemann-Hilbert problem. However, we shall
need one more preliminary transformation before proceeding to the search for a parametrix. Let $\Delta_{5}$ denote the sector with acute angle opening which
is bounded by the rays $\hat{\Gamma}_5$ and $\Gamma_{-0}^{+}$, and let $\Delta_{-3}$ denote the sector with acute angle opening which is bounded by 
the rays $\hat{\Gamma}_{-3}$ and $\Gamma_{+0}^{-}$. These domains are depicted in Figure \ref{fig:V-jumps-R-jumps} (a). We set
    \begin{equation}
        \hat{{\bf V}}(\lambda) := 
        {\bf V}(\lambda)\cdot\begin{cases}
            \mathbb{I}-E_{21}e^{\frac{1}{\hbar}(\hat{g}_2-\hat{g}_1)(\lambda)} -E_{23}e^{-\frac{1}{\hbar}(\hat{g}_3-\hat{g}_2)(\lambda)}, & \lambda \in \Delta_{5},\\
            \mathbb{I}-E_{12}e^{-\frac{1}{\hbar}(\hat{g}_2-\hat{g}_1)(\lambda)}-E_{32}e^{\frac{1}{\hbar}(\hat{g}_3-\hat{g}_2)(\lambda)}, & \lambda \in \Delta_{-3},\\
            \mathbb{I}, & \textit{ otherwise}.
        \end{cases}
    \end{equation}
Note that the matrices we have multiplied by are exponentially close to the identity matrix in the sectors they are defined in. This follows from Lemma
\ref{Lemma:Lensing}; see also Figure \ref{fig:g-function-signs}. It then follows that $\hat{{\bf V}}(\lambda)$ satisfies the following Riemann-Hilbert
problem:
    \begin{prop}
        $\hat{{\bf V}}(\lambda)$ satisfies the following RHP.
        \begin{equation*}
            \hat{{\bf V}}_+(\lambda) = \hat{{\bf V}}_-(\lambda) \cdot
                \begin{cases}
                    \mathbb{I} - E_{21}e^{\frac{1}{\hbar}(\hat{g}_2-\hat{g}_1)(\lambda)}, & \lambda \in \hat{\Gamma}_5 \cup \Gamma^{-}_{-0},\\
                    \mathbb{I} - E_{32}e^{\frac{1}{\hbar}(\hat{g}_3-\hat{g}_2)(\lambda)}, & \lambda \in \hat{\Gamma}_{-3} \cup \Gamma^{-}_{+0},\\
                    \mathbb{I} + E_{12}e^{-\frac{1}{\hbar}(\hat{g}_2-\hat{g}_1)(\lambda)}, & \lambda \in \Gamma^{+}_{+0},\\
                    \mathbb{I} + E_{23}e^{-\frac{1}{\hbar}(\hat{g}_3-\hat{g}_2)(\lambda)}, & \lambda \in \Gamma^{-}_{-0},\\
                    (-i\sigma_2)\oplus 1, & \lambda \in \RR_-,\\
                    1\oplus(-i\sigma_2), & \lambda \in \RR_+.
                \end{cases}
        \end{equation*}
    Furthermore, $\hat{{\bf V}}(\lambda)$ is normalized as
        \begin{equation*}
            \hat{{\bf V}}(\lambda) = \left[\mathbb{I} + \OO(\lambda^{-1})\right]\hat{f}(\lambda).
        \end{equation*}
    \end{prop}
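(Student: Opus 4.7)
The claim is a direct computational consequence of the definition of $\hat{{\bf V}}$ from ${\bf V}$ by piecewise right-multiplication. The plan is to (i) verify that the multiplier matrices used in $\Delta_5$ and $\Delta_{-3}$ are analytic and invertible in their respective sectors; (ii) compute the jump of $\hat{{\bf V}}$ across each contour; and (iii) check that the normalization at infinity is preserved, despite the transformation occurring in \emph{unbounded} sectors.

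For (i), analyticity is immediate from analyticity of each $\hat{g}_j$ in $\Delta_5$ and $\Delta_{-3}$, which lie away from the branch cuts of the modified spectral curve. Explicit inverses are available because each multiplier has a nilpotent off-diagonal part: writing the multiplier in $\Delta_5$ as $\mathbb{I}+aE_{21}+bE_{23}$, one checks $(aE_{21}+bE_{23})^2=0$ (since $E_{21}E_{23}=E_{23}E_{21}=0$), so the inverse is $\mathbb{I}-aE_{21}-bE_{23}$; analogously in $\Delta_{-3}$.

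For (ii), I would enumerate the jump contours case by case. On $\RR_{\pm}$ and on all rays lying outside $\overline{\Delta_5\cup\Delta_{-3}}$, the multiplier equals $\mathbb{I}$ on both sides, so those jumps are unchanged. On $\hat{\Gamma}_5$ and $\hat{\Gamma}_{-3}$, the multiplier is nontrivial on exactly one side; on the lens boundaries $\Gamma_{\pm 0}^{\pm}$ that form the other walls of $\Delta_5, \Delta_{-3}$, likewise. Using the transformation formula $\hat{{\bf V}}_-^{-1}\hat{{\bf V}}_+ = T_-^{-1}J_{{\bf V}}T_+$ (with $T_{\pm}$ the multiplier on the $\pm$ side) and the nilpotence of the off-diagonal pieces, one obtains the consolidated jumps stated in the proposition; in particular, the identification of $\hat{\Gamma}_5$ with a portion of $\Gamma_{-0}^{\pm}$ (and similarly $\hat{\Gamma}_{-3}$ with a portion of $\Gamma_{+0}^{\pm}$) as carrying a single jump form is the design goal of this transformation---it is precisely the observation that the ``extra'' off-diagonal exponential introduced by $T$ on one contour cancels against the jump preexisting on the neighboring contour.

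The substantive step is (iii). Because $\Delta_5$ and $\Delta_{-3}$ are unbounded, the normalization $\hat{{\bf V}}(\lambda) = [\mathbb{I}+\OO(\lambda^{-1})]\hat{f}(\lambda)$ survives only if the exponentials $e^{\frac{1}{\hbar}(\hat{g}_2-\hat{g}_1)(\lambda)}$ and $e^{-\frac{1}{\hbar}(\hat{g}_3-\hat{g}_2)(\lambda)}$ in $\Delta_5$ (and their analogues in $\Delta_{-3}$) decay as $\lambda\to\infty$ throughout those sectors. I expect this to be the main obstacle. The required signs $\text{Re}(\hat{g}_2-\hat{g}_1)<0$ and $\text{Re}(\hat{g}_3-\hat{g}_2)>0$ are already known on the boundary rays by the lensing inequalities (the modified-curve analog of Proposition \ref{LensingProposition}), so it suffices to verify that no zero level set of either harmonic function crosses the interior of $\Delta_5$ (nor of $\Delta_{-3}$). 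I would establish this by the same maximum-principle argument used in the proof of Proposition \ref{LensingProposition}, substituting Lemma \ref{lemma:Localexpansions} case (4) for the local behavior of the $\hat{g}_j$ at the origin on $\gamma_-$. Once this is done, the multipliers are exponentially close to $\mathbb{I}$ uniformly on closed subsectors of $\Delta_5, \Delta_{-3}$ extending to infinity, and the normalization condition transfers from ${\bf V}$ to $\hat{{\bf V}}$ unchanged.
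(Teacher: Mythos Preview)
Your proposal is correct and follows essentially the same approach as the paper. The paper is much terser: it simply remarks, just before stating the proposition, that the multiplier matrices in $\Delta_5$ and $\Delta_{-3}$ are exponentially close to the identity there (citing Lemma~\ref{Lemma:Lensing} and Figure~\ref{fig:g-function-signs}), and then presents the proposition as an immediate consequence; your step (iii) spells out explicitly the sign analysis that the paper delegates to that lemma and figure.
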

\begin{figure}
    \centering
    \begin{subfigure}[t]{0.49\textwidth}
    \centering
        \begin{overpic}[scale=.25]{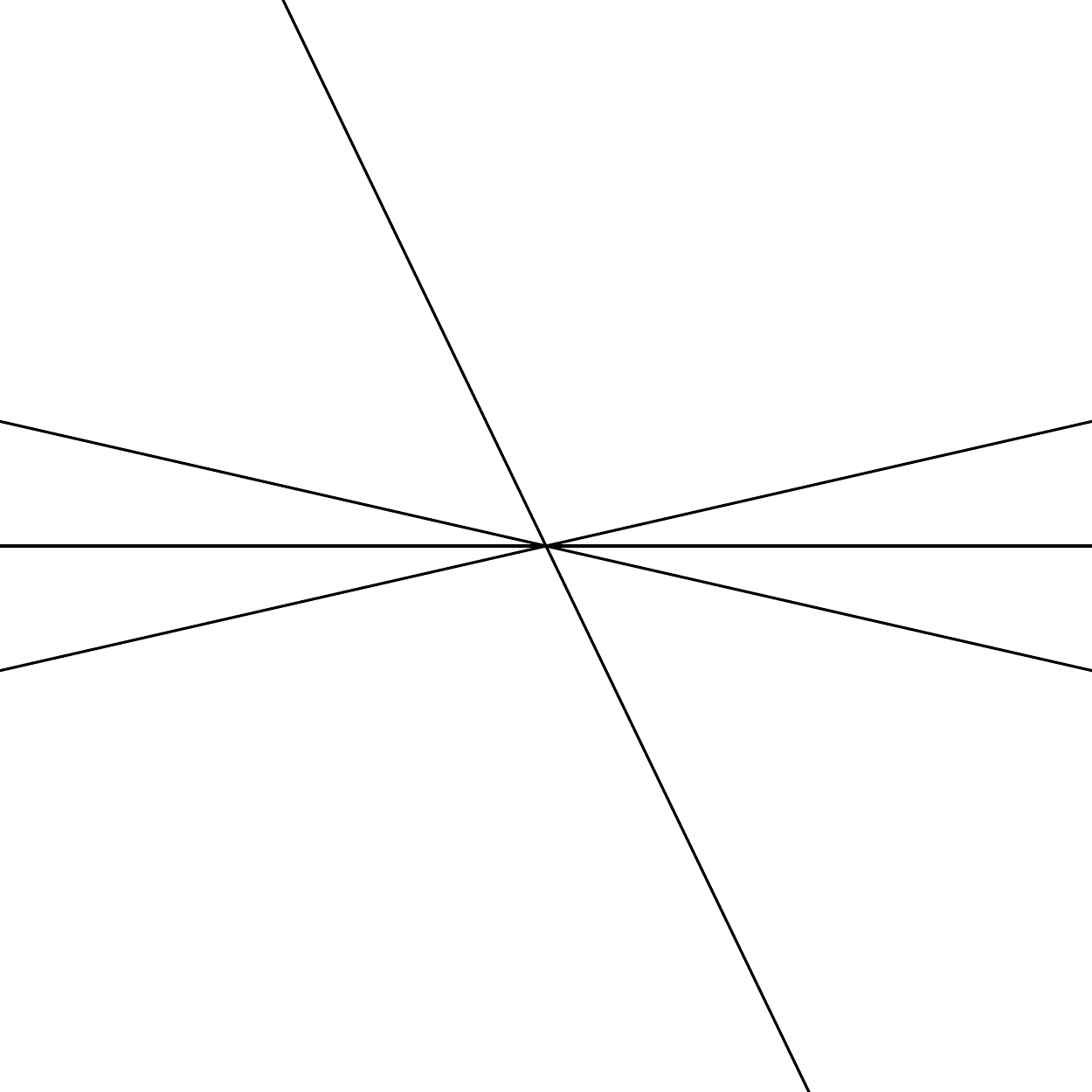}
            \put (20,70) {\small $\textcolor{blue}{\Delta_5}$}
            \put (70,20) {\small $\textcolor{blue}{\Delta_{-3}}$}
            \put (37,85) {\small $\hat{\Gamma}_5$}
            \put (55,15) {\small $\hat{\Gamma}_{-3}$}
            \put (5,65) {\small $\Gamma_{-0}^+$}
            \put (5,35) {\small $\Gamma_{-0}^-$}
            \put (85,65) {\small $\Gamma_{+0}^+$}
            \put (85,35) {\small $\Gamma_{+0}^-$}
            \put (0,52) {\small $\RR_-$}
            \put (96,52) {\small $\RR_+$}
        \end{overpic}
        \caption{Jump contours of $\hat{{\bf V}}(\lambda)$ and the regions $\Delta_{5}$, $\Delta_{-3}$.}
    \end{subfigure}
    \begin{subfigure}[t]{0.49\textwidth}
    \centering
        \begin{overpic}[scale=.25]{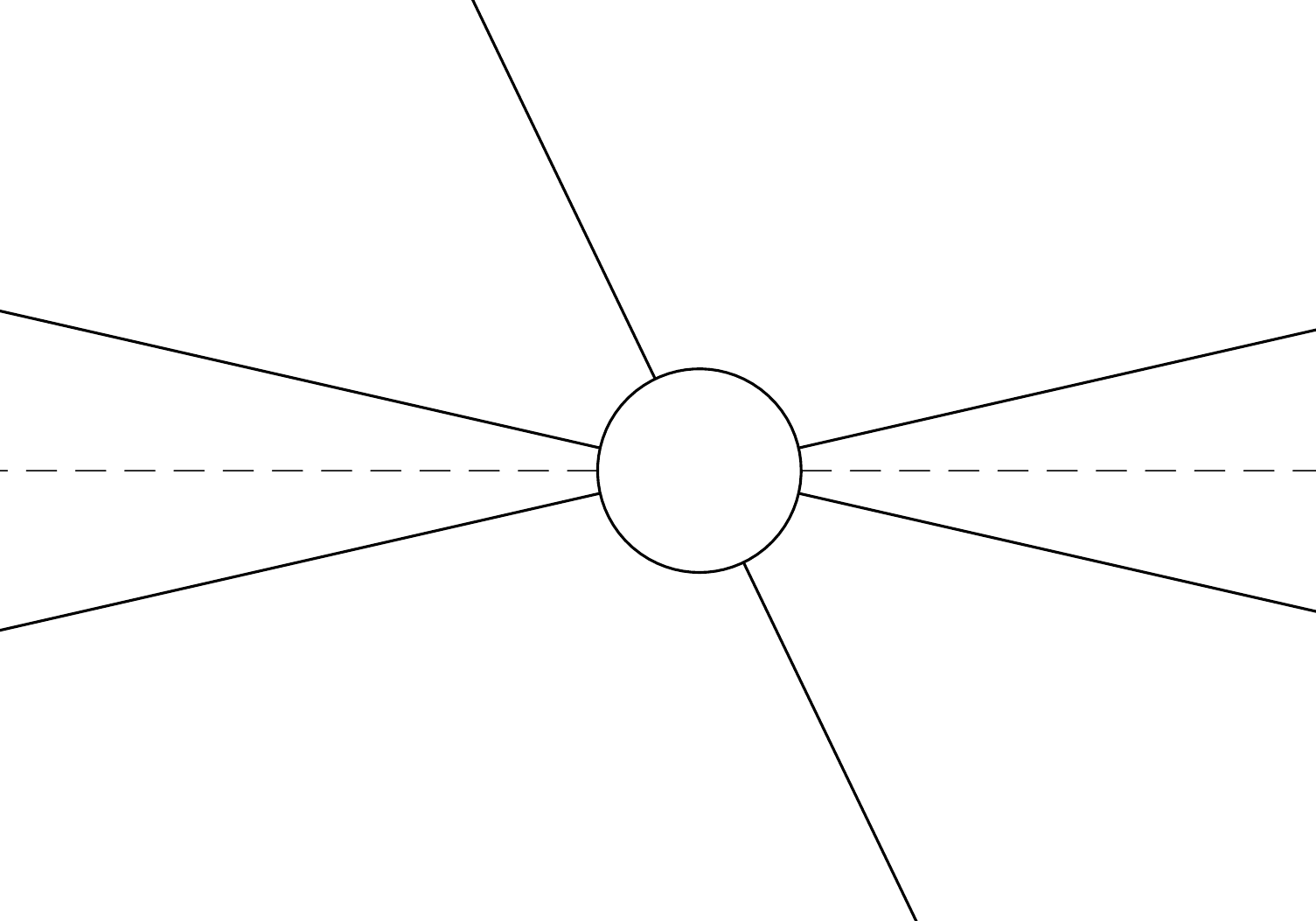}
            \put (27,65) {\small $\hat{\Gamma}_5$}
            \put (55,5) {\small $\hat{\Gamma}_{-3}$}
            \put (5,50) {\small $\Gamma_{-0}^+$}
            \put (5,15) {\small $\Gamma_{-0}^-$}
            \put (85,50) {\small $\Gamma_{+0}^+$}
            \put (85,15) {\small $\Gamma_{+0}^-$}
            \put (55,45) {\small $\partial D_{\epsilon}$}
        \end{overpic}
        \caption{Final set of jump contours for ${\bf R}(\lambda)$.}
    \end{subfigure}
    \caption{(a) Jump contours of $\hat{{\bf V}}(\lambda)$ and (b) final set of jump contours for ${\bf R}(\lambda)$, in the critical case $\eta<0$, and
    $\mu = \nu= 0$.}
    \label{fig:V-jumps-R-jumps}
\end{figure}

We now search for a parametrix to bring $\hat{{\bf V}}(\lambda)$ to the form of a small-norm Riemann-Hilbert problem. Outside of small discs centered as 
$\lambda = \pm \alpha$, we can again use the matrix-valued analytic function $M(\lambda)$ which we constructed in Proposition \ref{prop:global-parametrix-solution} as the solution to the Riemann-Hilbert problem \ref{prob:global-parametrix}. In fact, in this situation 
(where $\varsigma \equiv 0$), we see that the solution to the global parametrix problem actually coincides with $\hat{f}(\lambda)$:
    \begin{equation}
        M(\lambda) = \hat{f}(\lambda).
    \end{equation}
The construction of local parametrices here is different, however. There is only one local problem, at $\lambda = 0$, and this problem 
is not effectively a $2\times 2$ problem, as it was in the previous cases in Sections \S2,\S3. 

Let
    \begin{equation}
        s = -\eta_0 >0.
    \end{equation}

We define the scaling coordinate and function
    \begin{equation}
        \zeta(\lambda) = \left(\frac{5s}{6}\right)^{3/5}\lambda,\qquad\qquad \boldsymbol{x}(\lambda;\hbar) = \left(\frac{5s}{6}\right)^{-1/5}\left[\nu(\hbar) +\frac{3}{7}\lambda^2\right],
    \end{equation}
so that
    \begin{equation}
        g_j(\lambda) =
            \begin{cases}
                -\frac{6}{5}\omega^{1-j}[\zeta(\lambda)]^{5/3} + \omega^{j-1}\boldsymbol{x}(\lambda;\hbar)[\zeta(\lambda)]^{1/3}, & \text{Im }\lambda>0,\\
                -\frac{6}{5}\omega^{j-1}[\zeta(\lambda)]^{5/3} + \omega^{1-j}\boldsymbol{x}(\lambda;\hbar)[\zeta(\lambda)]^{1/3}, & \text{Im }\lambda<0.
            \end{cases}
    \end{equation}
We now set
    \begin{equation}\label{nu-definition}
        \nu(\hbar) := \left(\frac{5s}{6}\right)^{1/5}\hbar^{4/5}x,
    \end{equation}
so that the following Proposition holds:
    \begin{prop}
        As $\hbar\to 0$,
            \begin{equation}
                \lim_{\hbar\to 0}\hbar^{-4/5}\boldsymbol{x}(\lambda;\hbar) = x,
            \end{equation}
        for $|\lambda|<\hbar^{\frac{2}{5}+\delta}$, $0 < \delta < \frac{1}{5}$.
    \end{prop}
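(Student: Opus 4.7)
The plan is to prove this proposition by direct substitution into the explicit algebraic formula for $\boldsymbol{x}(\lambda;\hbar)$. First I would substitute the definition $\nu(\hbar) = (5s/6)^{1/5}\hbar^{4/5}x$ from \eqref{nu-definition} into the defining expression
\[
\boldsymbol{x}(\lambda;\hbar) = \left(\frac{5s}{6}\right)^{-1/5}\left[\nu(\hbar) + \tfrac{3}{7}\lambda^2\right],
\]
which gives the clean decomposition
\[
\boldsymbol{x}(\lambda;\hbar) = \hbar^{4/5} x + \left(\frac{5s}{6}\right)^{-1/5}\frac{3}{7}\lambda^2.
\]
Multiplying through by $\hbar^{-4/5}$ isolates $x$ together with a single error term proportional to $\hbar^{-4/5}\lambda^2$.

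Next I would bound this error using the hypothesis $|\lambda| < \hbar^{2/5+\delta}$: since $\lambda^2 < \hbar^{4/5+2\delta}$, the error term is bounded by a constant multiple of $\hbar^{2\delta}$, which tends to zero as $\hbar \to 0$ whenever $\delta > 0$. This completes the verification of the pointwise limit. There is really no obstacle here — the argument is a one-line calculation and is entirely algebraic, since $\boldsymbol{x}(\lambda;\hbar)$ is by construction a polynomial in $\lambda$ with $\hbar$-dependent coefficients, so no local analysis of $\hat{g}_j$ is required.

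The only point worth flagging is that the upper bound $\delta < 1/5$ plays no role in the proof of this particular proposition. It is presumably imposed with a view to the subsequent matching step, in which the Painlev\'{e} I local parametrix at $\lambda = 0$ must be compared to the outer model $M(\lambda) = \hat{f}(\lambda)$ along the boundary of a shrinking disc $D_\epsilon$. On that boundary the rescaled coordinate $\zeta(\lambda) = (5s/6)^{3/5}\lambda$ must remain large enough that the $\zeta \to \infty$ asymptotics of $\boldsymbol{\Phi}(\zeta;x)$ from Problem \ref{prob:PainleveI} remain applicable, which bounds how rapidly $D_\epsilon$ may shrink and thereby imposes an upper bound on $\delta$. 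This two-sided restriction on $\delta$ is what will eventually ensure that the parametrix construction produces a small-norm RHP for ${\bf R}(\lambda)$ in the $\eta_0 < 0$ regime, analogous to Proposition \ref{R-expansion-prop-critical-plus}.
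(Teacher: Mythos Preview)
Your proof is correct and is essentially identical to the paper's own argument: the paper substitutes \eqref{nu-definition} into the definition of $\boldsymbol{x}(\lambda;\hbar)$, obtains $\hbar^{-4/5}\boldsymbol{x}(\lambda;\hbar) = x + \tfrac{3}{7}\hbar^{-4/5}(5s/6)^{-1/5}\lambda^2 = x + \OO(\hbar^{2\delta})$, and concludes. Your observation that the constraint $\delta < 1/5$ is not used here but is needed later for the parametrix matching is also accurate.
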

    \begin{proof}
        For $|\lambda|<\hbar^{\frac{2}{5}+\delta}$, using the definition \ref{nu-definition} of $\nu(\hbar)$, we have that
        \begin{equation*}
            \hbar^{-4/5}\boldsymbol{x}(\lambda;\hbar) = x + \frac{3}{7}\hbar^{-4/5}\left(\frac{5s}{6}\right)^{-1/5}\lambda^2 = x+\OO(\hbar^{2\delta}).
        \end{equation*}
        Thus, as $\hbar\to 0$, we obtain the result of the proposition.
    \end{proof}
Here, we will make use of a $3\times 3$ version of the Painlev\'{e} I parametrix. The existence of such a parametrix was first suggested in \cite{JKT}; it 
was proposed in \cite{DHL1,DHL2} that this parametrix is relevant to random matrix theory. The exact form of this $3\times 3$ parametrix $\Xi(\zeta;x)$ is 
given in Appendix \ref{Appendix:P1-parametrix}. This parametrix depends on a parameter $\varkappa$, which we set here to unity:
    \begin{equation}
        \varkappa := 1.
    \end{equation}
    By a possible slight redefinition of the jump contours of $\boldsymbol{\Xi}$, we set
    \begin{equation}
        P(\lambda) := -[\sigma_3\oplus 1]\left(\frac{5s}{6\hbar}\right)^{-\frac{1}{5}\hat{\sigma}}\boldsymbol{\Xi}(\hbar^{-3/5}\zeta(\lambda),\hbar^{-4/5}\boldsymbol{x}(\lambda))e^{-\frac{1}{\hbar} \hat{G}(\lambda)},
    \end{equation}
where $\hat{\sigma} = \text{diag }(1,0,-1)$. Note that the front factor here is a diagonal matrix, and is equal to
    \begin{equation*}
        -[\sigma_3\oplus 1]\left(\frac{5s}{6\hbar}\right)^{-\frac{1}{5}\hat{\sigma}}=M(\lambda)\mathfrak{f}^{-1}(\hbar^{-3/5}\zeta(\lambda)) = \hat{f}(\lambda)\hat{\mathfrak{f}}^{-1}(\hbar^{-3/5}\zeta(\lambda)).
    \end{equation*}
We can make then make the following statement:
    \begin{lemma}\label{P-expansion-negative}
        As $\hbar\to 0$,
            \begin{equation}
                P(\lambda) \sim \hat{f}(\lambda)\left[\mathbb{I} + \sum_{k=1}^{\infty} \frac{\Xi_k(\boldsymbol{x}(\lambda))}{\zeta(\lambda)^{k/3}}\hbar^{k/5}\right],
            \end{equation}
        where $\Xi_k(x)$ are the matrices appearing in Equation \eqref{Xi-asymptotic-expansion}.
    \end{lemma}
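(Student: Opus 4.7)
The strategy is to substitute the large-$\zeta$ expansion of the Painlev\'{e} I parametrix $\boldsymbol{\Xi}(\zeta;x)$ from Appendix \ref{Appendix:P1-parametrix} (equation \eqref{Xi-asymptotic-expansion}) directly into the definition of $P(\lambda)$ and observe that each ingredient in the construction of $P$ has been engineered precisely so that all auxiliary factors cancel, leaving the stated series in $\hbar^{1/5}$. First I would rewrite $P$ using the identity
\[
-[\sigma_3\oplus 1]\left(\tfrac{5s}{6\hbar}\right)^{-\frac{1}{5}\hat{\sigma}} \;=\; \hat{f}(\lambda)\,\hat{\mathfrak{f}}^{-1}(\hbar^{-3/5}\zeta(\lambda))
\]
noted just above the statement of the lemma, so that
\[
P(\lambda) \;=\; \hat{f}(\lambda)\,\hat{\mathfrak{f}}^{-1}\!\bigl(\hbar^{-3/5}\zeta(\lambda)\bigr)\,\boldsymbol{\Xi}\bigl(\hbar^{-3/5}\zeta(\lambda);\,\hbar^{-4/5}\boldsymbol{x}(\lambda)\bigr)\,e^{-\frac{1}{\hbar}\hat{G}(\lambda)}.
\]

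The appendix exhibits $\boldsymbol{\Xi}$ as a matrix of the schematic form $\hat{\mathfrak{f}}(\zeta)\bigl[\mathbb{I}+\sum_{k\ge 1}\Xi_k(x)\zeta^{-k/3}\bigr]e^{\hat{\theta}(\zeta;x)}$ as $\zeta\to\infty$, where $\hat{\theta}(\zeta;x)$ is the diagonal phase matrix of the Painlev\'{e} I Lax pair. Two cancellations then occur in sequence. The prefactor $\hat{\mathfrak{f}}(\hbar^{-3/5}\zeta(\lambda))$ supplied by the expansion cancels the $\hat{\mathfrak{f}}^{-1}$ standing immediately to its left. The explicit representation of $g_j(\lambda)$ in terms of $\zeta(\lambda)$ and $\boldsymbol{x}(\lambda)$ displayed just before the definition of $P$ yields the phase matching
\[
\hat{\theta}\bigl(\hbar^{-3/5}\zeta(\lambda);\,\hbar^{-4/5}\boldsymbol{x}(\lambda)\bigr) \;=\; \tfrac{1}{\hbar}\,\hat{G}(\lambda),
\]
so that the two exponentials also cancel. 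What survives is $\mathbb{I}+\sum_{k\ge 1}\Xi_k\!\bigl(\hbar^{-4/5}\boldsymbol{x}(\lambda)\bigr)\zeta(\lambda)^{-k/3}\hbar^{k/5}$ (understood, as in the lemma statement, with the argument of $\Xi_k$ denoting the rescaled variable $\hbar^{-4/5}\boldsymbol{x}(\lambda)$), which is exactly the desired expansion.

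The only genuinely nontrivial point is the phase-matching identity. This is however the content of the construction of the modified spectral curve in Section \ref{modified-spectral-curve} and of the explicit diagonalization of $\hat{G}$ in terms of $\zeta$ and $\boldsymbol{x}$ given in the paragraph introducing these coordinates; the definition of $\nu(\hbar)$ in \eqref{nu-definition} is tailored precisely so that the $\vartheta_j$-phase of $\boldsymbol{\Xi}$ agrees with $\hbar^{-1}\hat{g}_j$ under this rescaling. Once this identification is in hand, the rest is a rescaling bookkeeping exercise: the factors $\hbar^{k/5}$ arise from $(\hbar^{-3/5})^{-k/3}=\hbar^{k/5}$, and the asymptotic expansion \eqref{Xi-asymptotic-expansion} is valid uniformly on the local contour $\partial D_\epsilon$ of radius $\sim\hbar^{2/5+\delta}$ with $0<\delta<1/5$, since there $\hbar^{-3/5}\zeta(\lambda)$ scales like $\hbar^{\delta-1/5}\to\infty$, which is exactly the regime in which the Painlev\'{e} I parametrix expansion applies.
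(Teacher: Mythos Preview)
Your proof is correct and is precisely the direct computation the paper leaves implicit; the paper states Lemma~\ref{P-expansion-negative} without proof, having set up all the ingredients (the front-factor identity, the explicit representation of $\hat{g}_j$ in terms of $\zeta$ and $\boldsymbol{x}$, and the expansion \eqref{Xi-asymptotic-expansion}) in the preceding lines. Your observation that $\Xi_k(\boldsymbol{x}(\lambda))$ in the lemma statement should be read as $\Xi_k(\hbar^{-4/5}\boldsymbol{x}(\lambda))$ is also correct and consistent with how the paper subsequently uses the expansion in Proposition~\ref{shrinking-prop}.
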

Set $D_{\hbar} := \{\lambda\in \CC \mid |\lambda| < \hbar^{\frac{2}{5}+\delta}\}$, and define
    \begin{equation}
        {\bf R}(\lambda) := 
            \hat{{\bf V}}(\lambda)
            \begin{cases}
                \hat{f}^{-1}(\lambda), & \lambda \in \CC \setminus D_{\hbar},\\
                P^{-1}(\lambda), & \lambda \in D_{\hbar}.
            \end{cases}
    \end{equation}
The following Proposition then holds immediately:
    \begin{prop}
        The matrix ${\bf R}(\lambda)$ satisfies the following Riemann-Hilbert problem:
            \begin{equation}
                {\bf R}_{+}(\lambda) = {\bf R}_{-}(\lambda)
                \begin{cases}
                    \mathbb{I} + \hat{f}(\lambda)E_{23}\hat{f}^{-1}(\lambda)e^{-\frac{1}{\hbar}(\hat{g}_3-\hat{g}_2)(\lambda)}, & \lambda \in \hat{\Gamma}_5 \setminus D_{\hbar},\\
                    \mathbb{I} + \hat{f}(\lambda)E_{12}\hat{f}^{-1}(\lambda)e^{-\frac{1}{\hbar}(\hat{g}_2-\hat{g}_1)(\lambda)}, & \lambda \in \hat{\Gamma}_{-3}\setminus D_{\hbar},\\
                    \mathbb{I} - \hat{f}(\lambda)E_{32}\hat{f}^{-1}(\lambda)e^{\frac{1}{\hbar}(\hat{g}_3-\hat{g}_2)(\lambda)}, & \lambda \in \Gamma^{\pm}_{+0}\setminus D_{\hbar},\\
                    \mathbb{I} - \hat{f}(\lambda)E_{21}\hat{f}^{-1}(\lambda)e^{\frac{1}{\hbar}(\hat{g}_2-\hat{g}_1)(\lambda)}, & \lambda \in \Gamma^{\pm}_{-0}\setminus D_{\hbar},\\
                    P(\lambda)\hat{f}^{-1}(\lambda), & \lambda \in \partial D_{\hbar}.
                \end{cases}
            \end{equation}
    \end{prop}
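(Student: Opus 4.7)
The claim is that the piecewise-defined matrix ${\bf R}(\lambda)$ has exactly the jump structure and normalization stated, and the assertion is presented as immediate; accordingly, the plan is a contour-by-contour verification, exploiting the crucial observation made just above the proposition that in the $\eta_0<0$ regime the global parametrix coincides with the bare gauge, $M(\lambda)=\hat{f}(\lambda)$. In particular, $\hat{f}(\lambda)$ is analytic on $\CC\setminus(\RR_{-}\cup \RR_{+})$ and carries precisely the jumps $(-i\sigma_2)\oplus 1$ on $\RR_{-}$ and $1\oplus(-i\sigma_2)$ on $\RR_{+}$ that appear in the RHP for $\hat{{\bf V}}(\lambda)$. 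This is the only algebraic fact that is really doing work in the proof.

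For $\lambda$ on any of $\hat{\Gamma}_{5}$, $\hat{\Gamma}_{-3}$, $\Gamma^{\pm}_{\pm 0}$ lying outside $D_{\hbar}$, both sides of the contour use the outer prescription ${\bf R}(\lambda)=\hat{{\bf V}}(\lambda)\hat{f}^{-1}(\lambda)$; since $\hat{f}$ is analytic across such arcs,
\begin{equation*}
    {\bf R}_{+}(\lambda) = {\bf R}_{-}(\lambda)\cdot \hat{f}(\lambda)\, J_{\hat{{\bf V}}}(\lambda)\, \hat{f}^{-1}(\lambda),
\end{equation*}
and substituting the jumps of $\hat{{\bf V}}$ from the previous proposition gives the asserted conjugated jumps. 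On $\RR\setminus D_{\hbar}$, the jumps of $\hat{f}$ exactly match those of $\hat{{\bf V}}$, so the product $\hat{{\bf V}}\hat{f}^{-1}$ is continuous there, which is why the real axis no longer appears in the ${\bf R}$-problem. On $\partial D_{\hbar}$, oriented so that the interior disc sits on the minus side, we have ${\bf R}_{-}(\lambda)=\hat{{\bf V}}(\lambda)\hat{f}^{-1}(\lambda)$ from outside and ${\bf R}_{+}(\lambda)=\hat{{\bf V}}(\lambda)P^{-1}(\lambda)$ from inside, whence ${\bf R}_{+}={\bf R}_{-}\cdot \hat{f}(\lambda) P^{-1}(\lambda)$; reading this from the opposite side yields the stated $P(\lambda)\hat{f}^{-1}(\lambda)$ after fixing the orientation convention used throughout the paper.

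Finally, the normalization at infinity is immediate: outside $D_{\hbar}$, the defining identity together with $\hat{{\bf V}}(\lambda)=[\mathbb{I}+\OO(\lambda^{-1})]\hat{f}(\lambda)$ gives ${\bf R}(\lambda)=\mathbb{I}+\OO(\lambda^{-1})$ as $\lambda\to\infty$. There is no genuine analytic obstacle here; the only delicacies are bookkeeping the orientations on $\partial D_{\hbar}$ and checking that $\hat{f}$ truly has no jumps across the lens-opening contours, which is built into the construction of $\hat{f}$ in Section 2. The real content of the critical $\eta<0$ analysis is instead deferred to the subsequent small-norm estimate, where one must show, via Lemma \ref{P-expansion-negative}, that the jump on $\partial D_{\hbar}$ is close to $\mathbb{I}$ uniformly for $\lambda\in \partial D_\hbar$ as $\hbar\to 0$.
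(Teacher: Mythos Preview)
Your argument is correct and is exactly the routine verification the paper has in mind when it says the proposition ``holds immediately'': conjugate the $\hat{{\bf V}}$-jumps by $\hat f$ off the real axis, observe that on $\RR\setminus D_\hbar$ the jumps of $\hat{{\bf V}}$ and of $\hat f=M$ coincide and hence cancel, and compute the matching jump on $\partial D_\hbar$. Two small remarks. First, your orientation bookkeeping is internally inconsistent (you declare the interior to be the minus side but then assign ${\bf R}_-$ to the exterior); you patch this at the end, but it would be cleaner to fix the convention once and carry it through. Second, your closing sentence is slightly off target: for \emph{this} ${\bf R}$ the jump $P\hat f^{-1}$ on $\partial D_\hbar$ is \emph{not} close to $\mathbb{I}$ --- that is precisely the obstruction the paper flags immediately after the proposition, and why the partial Schlesinger transform $\mathfrak p$ and the modified parametrix $M=\mathfrak p\hat f$ are introduced to pass to $\hat{{\bf R}}$ before any small-norm argument is made.
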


\begin{figure}
    \begin{subfigure}[t]{0.5\textwidth}
        \begin{overpic}[scale=0.25]{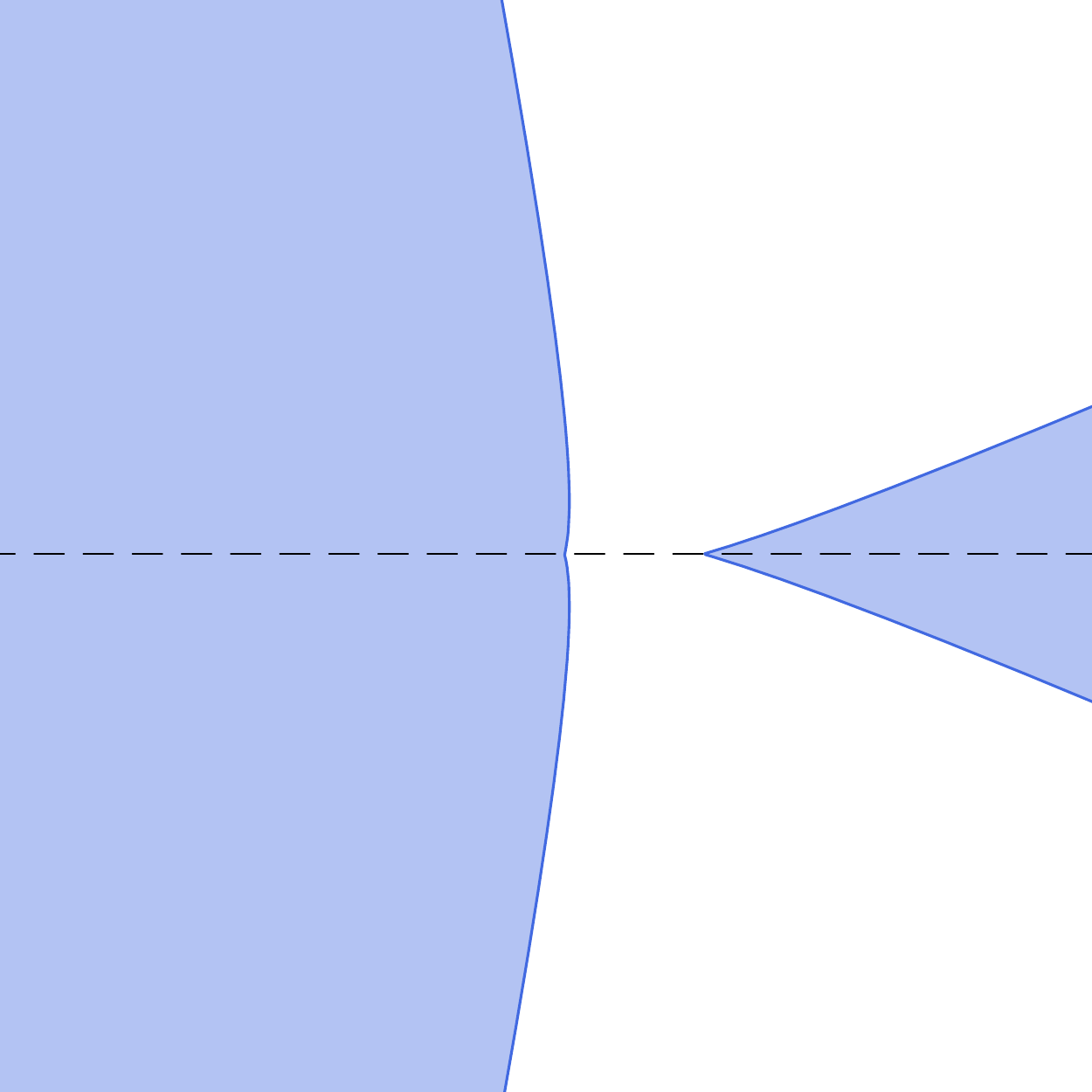}
            \put (47,50) {\tiny $0$}
            \put (0,50) {\tiny $\text{Im }\lambda = 0$}
        \end{overpic}
            \caption{Blue region where $\text{Re} [g_2-g_1](\lambda)<0$.}
    \end{subfigure}
    \begin{subfigure}[t]{0.5\textwidth}
    \centering
    \begin{overpic}[scale=0.25]{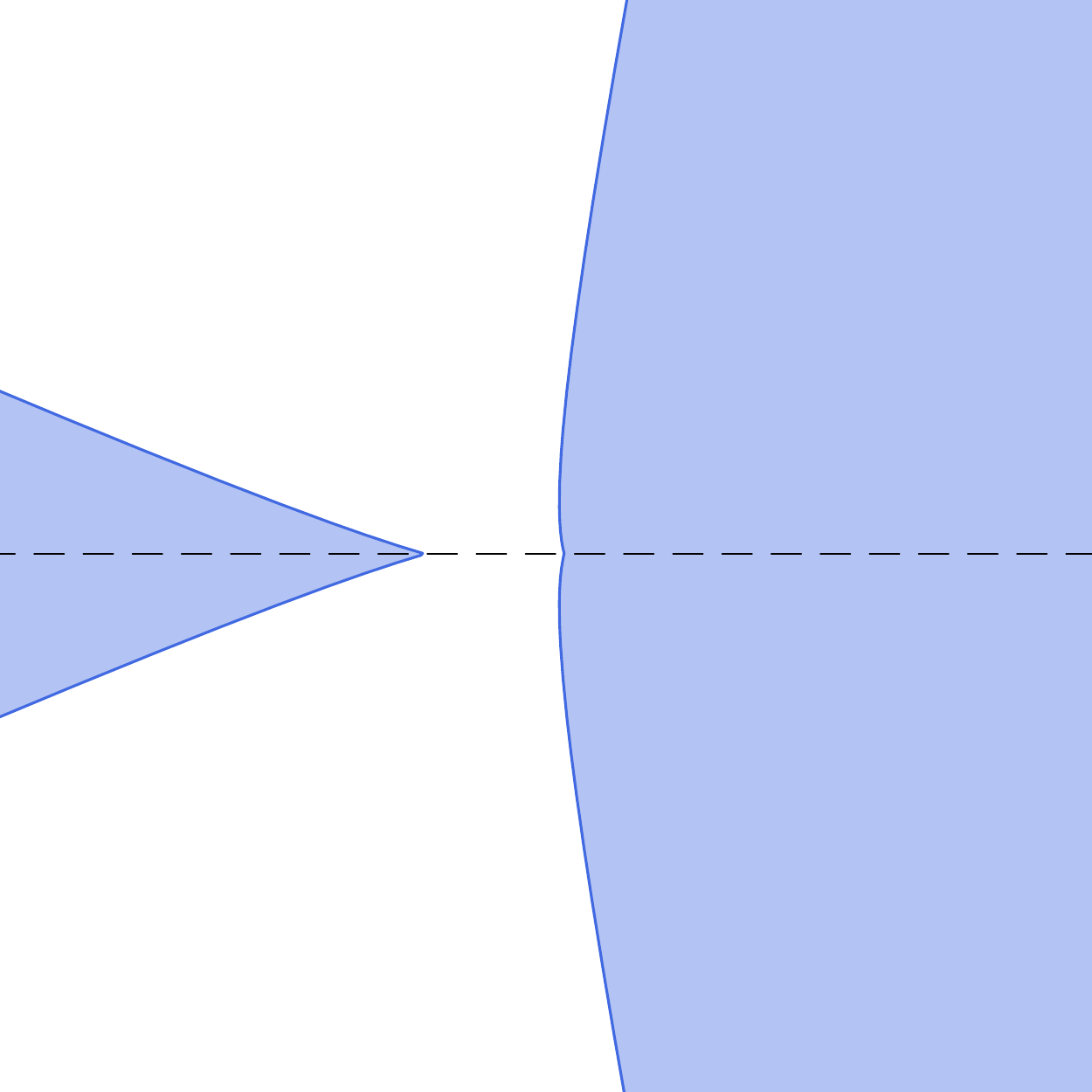}
            \put (53,50) {\tiny $0$}
            \put (75,50) {\tiny $\text{Im }\lambda = 0$}
    \end{overpic}
        \caption{Blue region where $\text{Re} [g_3-g_2](\lambda)<0$.}
    \end{subfigure}
    \caption{Domains where $\text{Re} [g_2-g_1](\lambda)$, $\text{Re} [g_3-g_2](\lambda)$ are negative. In Subfigure (a), the connected component of the negativity domain on the left extends to infinity, and contains the sector $|\arg\lambda| > \frac{4\pi}{7}$. In Subfigure (b), the connected component of the negativity domain on the right extends to infinity, and contains the sector $|\arg \lambda| < \frac{3\pi}{7}$.}
    \label{fig:g-function-signs}
\end{figure}

Let $\Gamma_{\bf R}$ denote the union of the jump contours of ${\bf R}(\lambda)$.    

Unfortunately, the jump of ${\bf R}(\lambda)$ across the circle $\{|\lambda| = \hbar^{2/5+\delta}\}$ is not close to the identity: there are
a finite number of terms in the asymptotic expansion of
    \begin{equation*}
        J_{\bf R}(\lambda) \big|_{|\lambda| = \hbar^{2/5+\delta}} = P(\lambda)\hat{f}^{-1}(\lambda)
    \end{equation*}
which tend to infinity as $\hbar \to 0$. Nowadays this is a more or less common occurrence in Deift-Zhou analysis \cite{Bertola-Lee1,Bertola-Lee2,Molag1,Molag2}, and can be addressed using various methods. We prefer to use the technique of the \textit{partial Schlesinger transformation}. To this end, we define the lower triangular matrix
    \begin{equation}\label{p-def}
        \mathfrak{p}(\lambda):=\mathbb{I} - \hbar^{1/5}\frac{ \mathcal{H}(x) E_{31} }{c\lambda}  + \hbar^{2/5}\frac{(\mathcal{H}^2(x)-q(x)) (E_{32} - E_{21})}{2c^2\lambda} - \hbar^{4/5}\frac{(\mathcal{H}^2(x)-q(x))^2E_{31}}{8c^4\lambda^2},
    \end{equation}
where $c:=\left(5s/6\right)^{1/5}>0$.
Clearly, the above has determinant $1$, and furthermore is analytic in $\CC \setminus D_{\hbar}$. Finally, we define the modified global parametrix
    \begin{equation}\label{M-def}
        M(\lambda) := \mathfrak{p}(\lambda)\hat{f}(\lambda),\qquad  \lambda \in \CC\setminus D_{\hbar}.
    \end{equation}
We then claim that
\begin{prop}\label{shrinking-prop}
    $\boldsymbol{\Delta}(\lambda) := P(\lambda)M^{-1}(\lambda)-\mathbb{I}$ is uniformly bounded on $|\lambda| = \hbar^{2/5+\delta}$, for any sufficiently small $\delta>0$.
\end{prop}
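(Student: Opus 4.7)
The plan is to expand $P M^{-1} = P\hat f^{-1}\mathfrak p^{-1}$ on the circle $|\lambda|=\hbar^{2/5+\delta}$ and verify that the partial Schlesinger factor $\mathfrak p$ is constructed precisely to absorb the finite collection of divergent terms in the $\hbar$-expansion of $P\hat f^{-1}$, leaving an $O(1)$ remainder.

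First I would invoke Lemma~\ref{P-expansion-negative} to write
\begin{equation*}
    P(\lambda)\hat f^{-1}(\lambda) \sim \mathbb I + \sum_{k\geq 1}\hbar^{k/5}\,\frac{\hat f(\lambda)\,\Xi_k(\boldsymbol x(\lambda))\,\hat f^{-1}(\lambda)}{\zeta(\lambda)^{k/3}},
\end{equation*}
and use the structure $\hat f \propto \zeta^{\Delta}\mathcal U$, with $\Delta = \text{diag}(1/3,0,-1/3)$ and $\mathcal U$ constant, to track the $\lambda$-scaling of each summand. The $(i,k)$ entry of $\hat f\,\Xi_k\hat f^{-1}$ behaves like $\zeta^{\Delta_{ii}-\Delta_{kk}}$, so only the strictly lower-triangular positions $(2,1), (3,2), (3,1)$ contribute negative powers of $\zeta$, namely $\zeta^{-1/3}, \zeta^{-1/3}, \zeta^{-2/3}$. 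Combining these with the prefactor $\hbar^{k/5}/\zeta^{k/3}$ on $|\zeta|\sim \hbar^{2/5+\delta}$, one finds that divergences arise for only finitely many values of $k$, the worst of size $\hbar^{-1/5-\delta}$, coming from the $(3,1)$ slot at $k=1$; all upper-triangular and diagonal contributions decay as $\hbar\to 0$.

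Second I would extract, using the explicit form of $\boldsymbol\Xi$ in Appendix~\ref{Appendix:P1-parametrix}, the polar-at-$\lambda=0$ parts of these finitely many divergent pieces. The low-index coefficients $\Xi_k(x)$ are polynomial in $q(x)$ and $\mathcal H(x)$, and the residue extraction should yield exactly the monomials appearing in \eqref{p-def}: an $E_{31}/\lambda$ residue of order $\hbar^{1/5}$ proportional to $\mathcal H(x)$, an $(E_{32}-E_{21})/\lambda$ residue of order $\hbar^{2/5}$ proportional to $\mathcal H^2(x) - q(x)$, and an $E_{31}/\lambda^2$ residue of order $\hbar^{4/5}$ proportional to $(\mathcal H^2(x) - q(x))^2$. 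Since $\mathfrak p - \mathbb I$ is nilpotent, $\mathfrak p^{-1}$ has the same monomial structure with opposite signs (up to matrix-product cross-terms that are themselves bounded on the contour), and a direct comparison shows that all divergent terms are exactly cancelled in $P\hat f^{-1}\mathfrak p^{-1}$. The tail $k\geq k_*$ of the asymptotic series is of order $\hbar^{\varepsilon(\delta)}$ with $\varepsilon(\delta)>0$ for $\delta$ sufficiently small, and the cross-terms from the matrix product are $O(1)$, yielding the claim.

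The main obstacle is the explicit determination of $\Xi_k(x)$ for $k=1,\dots,4$ in the three-by-three setting, together with the verification that the other potentially divergent matrix entries — for instance at positions $(2,1)$ and $(3,2)$ for $k=1$, or at $(3,1)$ for $k=2,3$ — have vanishing polar part at $\lambda=0$. The two-by-two Painlev\'e~I parametrix already displays this kind of structural cancellation (with $\Phi_1$ diagonal), so one expects an analogous pattern to hold in the three-by-three case, but the argument requires a careful reading of the large-$\zeta$ asymptotic expansion of $\boldsymbol\Xi$ from Appendix~\ref{Appendix:P1-parametrix}, paired with tracking of the rotation by $\mathcal U$ inside $\hat f$. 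Once this residue matching is in place, estimating the tail and the higher-order product corrections is standard.
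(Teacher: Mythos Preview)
Your overall strategy is correct and matches the paper's: expand $P\hat f^{-1}$ using Lemma~\ref{P-expansion-negative}, separate out the finitely many terms that can grow on the shrinking circle, and verify that $\mathfrak p^{-1}$ absorbs them. However, the obstacle you flag at the end is exactly the place where the paper has a clean tool that you are not using, and without it your verification would be much more laborious than necessary.

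The missing ingredient is the symmetry \eqref{P1-3-symm}, $\omega^{-k}\mathcal S^{T}\Xi_k^{(0)}\mathcal S=\Xi_k^{(0)}$. This is the $3\times 3$ analogue of the symmetry \eqref{psi-symmetry} that makes $f[\mathbb I+\OO(\zeta^{-1/3})]f^{-1}$ have a regular expansion in the main RHP. Here it forces $\hat f(\lambda)\Xi_k\hat f^{-1}(\lambda)/\zeta(\lambda)^{k/3}$ to be supported on a fixed set of matrix entries depending only on $k\bmod 3$: for $k\equiv 1$ only $(1,2),(2,3),(3,1)$; for $k\equiv 2$ only $(1,3),(2,1),(3,2)$; for $k\equiv 0$ only the diagonal. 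Consequently the entries you worry about, namely $(2,1),(3,2)$ at $k=1$ and $(3,1)$ at $k=2,3$, are \emph{identically zero}, not merely of vanishing polar part. Once this is in place, the only genuinely divergent contributions on $|\lambda|=\hbar^{2/5+\delta}$ are the $(3,1)$ entry at $k=1$ (order $\hbar^{-1/5-\delta}$), the $(2,1),(3,2)$ entries at $k=2$ (order $\hbar^{-\delta}$), and the $(3,1)$ entry at $k=4$ (order $\hbar^{-2\delta}$), and these are precisely the three monomials appearing in $\mathfrak p$. The tail $k\geq 5$ is then $\OO(\hbar^{1/5-3\delta})$ after multiplication by $\mathfrak p^{-1}$, and the paper simply states that the cancellation in the finite part $(A)$ is a direct (if tedious) check.

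So your proposal is not wrong, but you should replace the plan to compute $\Xi_3,\Xi_4$ explicitly and hunt for cancellations with an appeal to \eqref{P1-3-symm}; this turns your ``main obstacle'' into a one-line structural observation.
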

\begin{proof}
    By Lemma \ref{P-expansion-negative}, we have that
        \begin{equation*}
            P(\lambda)M^{-1}(\lambda) = \hat{f}(\lambda)\left[\mathbb{I} + \sum_{k=1}^{\infty} \frac{\Xi_k(\boldsymbol{x}(\lambda))}{\zeta(\lambda)^{k/3}}\hbar^{k/5}\right]\hat{f}^{-1}(\lambda)\mathfrak{p}^{-1}(\lambda), \qquad\hbar\to 0.
        \end{equation*}
    Rewrite this expression as
        \begin{equation*}
            \hat{P}(\lambda)\hat{f}^{-1}(\lambda) = \underbrace{\hat{f}(\lambda)\left[\mathbb{I} + \sum_{k=1}^{4} \frac{\Xi_k(\boldsymbol{x}(\lambda))}{\zeta(\lambda)^{k/3}}\hbar^{k/5}\right]\hat{f}^{-1}(\lambda)\mathfrak{p}^{-1}(\lambda)}_{(A)} + \underbrace{\hat{f}(\lambda)\left[ \sum_{k=5}^{\infty} \frac{\Xi_k(\boldsymbol{x}(\lambda))}{\zeta(\lambda)^{k/3}}\hbar^{k/5}\right]\hat{f}^{-1}(\lambda)\mathfrak{p}^{-1}(\lambda)}_{(B)}.
        \end{equation*}
    We begin by showing that $(B) = \OO(\hbar^{\frac{1}{5}-3\delta})$, so that taking $\delta>0$ sufficiently small, the first term is bounded in $\hbar$.
    By the symmetry properties of the coefficients $\Xi_k$ \eqref{P1-3-symm}, we have that
    \begin{align*}
            \hbar^{(3\ell+1)/5}\frac{\hat{f}(\lambda)\Xi_{3\ell+1}(\boldsymbol{x}(\lambda))\hat{f}^{-1}(\lambda)}{\zeta(\lambda)^{(3\ell+1)/3}} &= 
            \begin{pmatrix}
                0 & \OO(\hbar^{\frac{1}{5}+\ell(\frac{1}{5}-\delta)}) & 0\\
                0 & 0 & \OO(\hbar^{\frac{1}{5}+\ell(\frac{1}{5}-\delta)})\\
                \OO(\hbar^{-\frac{2}{5}+(\ell+1)(\frac{1}{5}-\delta)}) & 0 & 0
            \end{pmatrix},\\
            \hbar^{(3\ell+2)/5}\frac{\hat{f}(\lambda)\Xi_{3\ell+2}(\boldsymbol{x}(\lambda))\hat{f}^{-1}(\lambda)}{\zeta(\lambda)^{(3\ell+2)/3}} &= 
            \begin{pmatrix}
                0 & 0 & \OO(\hbar^{\frac{2}{5}+\ell(\frac{1}{5}-\delta)})\\
                \OO(\hbar^{-\frac{1}{5}+(\ell+1)(\frac{1}{5}-\delta)}) & 0 & 0\\
                0 & \OO(\hbar^{-\frac{1}{5}+(\ell+1)(\frac{1}{5}-\delta)}) & 0
            \end{pmatrix},\\
            \hbar^{3\ell/5}\frac{\hat{f}(\lambda)\Xi_{3\ell}(\boldsymbol{x}(\lambda))\hat{f}^{-1}(\lambda)}{\zeta(\lambda)^{\ell}} &=
            \begin{pmatrix}
                \OO(\hbar^{\ell(\frac{1}{5}-\delta)}) & 0 & 0\\
                0 & \OO(\hbar^{\ell(\frac{1}{5}-\delta)}) & 0\\
                0 & 0 &\OO(\hbar^{\ell(\frac{1}{5}-\delta)})
            \end{pmatrix}.
        \end{align*}
    On the other hand,
        \begin{equation*}
            \mathfrak{p}(\lambda) = \mathbb{I} + \OO(\hbar^{-\frac{1}{5}-\delta})\cdot E_{31} + \OO(\hbar^{-\delta})\cdot(E_{21}+E_{32}) ,\qquad \hbar\to 0.
        \end{equation*}
    It thus follows that 
        \begin{equation*}
            (B) = \hat{f}(\lambda)\left[ \sum_{k=5}^{\infty} \frac{\Xi_k(\boldsymbol{x}(\lambda))}{\zeta(\lambda)^{k/3}}\hbar^{k/5}\right]\hat{f}^{-1}(\lambda)\mathfrak{p}^{-1}(\lambda) = \OO(\hbar^{\frac{1}{5}-3\delta}),
        \end{equation*}
    so for $\delta>0$ sufficiently small, the above quantity is bounded as $\hbar\to 0$. Let us now turn our attention to $(A)$. By construction, we have 
    chosen the entries of $\mathfrak{p}(\lambda)$ to `kill off' the growing terms in this expression. A tedious but straightforward calculation then shows
    that
        \begin{equation*}
            (A) = \hat{f}(\lambda)\left[ \sum_{k=1}^{4} \frac{\Xi_k(\boldsymbol{x}(\lambda))}{\zeta(\lambda)^{k/3}}\hbar^{k/5}\right]\hat{f}^{-1}(\lambda)\mathfrak{p}^{-1}(\lambda) = \mathbb{I} + \OO(\hbar^{\frac{1}{5}-3\delta}),
        \end{equation*}
    and so 
        \begin{equation*}
            \boldsymbol{\Delta}(\lambda) = P(\lambda)M^{-1}(\lambda)-\mathbb{I} = (A) + (B) -\mathbb{I} = \OO(\hbar^{\frac{1}{5}-3\delta}).
        \end{equation*}
\end{proof}

Finally, defining
    \begin{equation}
        \hat{{\bf R}}(\lambda) := 
            \hat{{\bf V}}(\lambda)\begin{cases}
                M^{-1}(\lambda), & \lambda \in \CC \setminus D_{\hbar},\\
                P^{-1}(\lambda), & \lambda \in D_{\hbar},
            \end{cases}
    \end{equation}
where $M(\lambda)$ is as defined in Equation \eqref{M-def}, we can now claim that $\hat{ {\bf R}}(\lambda)$ is indeed close to the identity matrix:
    \begin{prop}\label{eta-negative-R-prop}
        Given $s := -\eta_0>0$, and for $\hbar$ sufficiently small, with $\hat{\nu}(\hbar)$ as defined in Equation \eqref{nu-definition}, 
       the Riemann-Hilbert problem for $\hat{{\bf R}}(\lambda)$ has a solution, and admits the $\hbar\to 0$ asymptotic expansion
        \begin{equation}
            \hat{{\bf R}}(\lambda) \sim \mathbb{I} + \sum_{k=3}^{\infty} {\bf R}_k(\lambda;x)\hbar^{k/5},
        \end{equation}
        which holds uniformly in $\CC\setminus\Gamma_{{\bf R}}$.
    \end{prop}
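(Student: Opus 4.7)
The plan is to treat the Riemann--Hilbert problem for $\hat{{\bf R}}(\lambda)$ as a small-norm perturbation of the trivial problem, exactly as in the proofs of Propositions \ref{R-expansion-prop} and \ref{R-expansion-prop-critical-plus}, and then invoke a separate argument to identify the starting index of the asymptotic expansion. First, I would verify that the jump matrix $J_{\hat{{\bf R}}}(\lambda)$ is uniformly close to the identity in $L^{\infty}(\Gamma_{\hat{{\bf R}}}) \cap L^{2}(\Gamma_{\hat{{\bf R}}})$ as $\hbar \to 0$. On the exterior contours $\hat{\Gamma}_5 \setminus D_{\hbar}$, $\hat{\Gamma}_{-3} \setminus D_{\hbar}$, and $\Gamma^{\pm}_{\pm 0} \setminus D_{\hbar}$, this is immediate from the modified lensing inequalities in the critical regime combined with the lower bound $|\lambda| \geq \hbar^{2/5+\delta}$, which forces $\tfrac{1}{\hbar}\mathrm{Re}\,(\hat{g}_i-\hat{g}_j)(\lambda) \to -\infty$ exponentially fast. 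The contribution on the shrinking circle $\partial D_{\hbar}$ is already under control by Proposition \ref{shrinking-prop}, which gives $\boldsymbol{\Delta}(\lambda) = \OO(\hbar^{1/5-3\delta})$; choosing $\delta < 1/15$ makes this uniformly small.

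Once uniform smallness is established, the standard iterative construction (Neumann series for the singular integral operator $\mathbb{I} - \mathcal{C}_{\boldsymbol{\Delta}}$ on $L^2(\Gamma_{\hat{{\bf R}}})$, cf.\ \cite{DKMVZ}) yields existence and uniqueness of $\hat{{\bf R}}(\lambda)$ for $\hbar$ sufficiently small, together with an a priori asymptotic expansion
\begin{equation*}
    \hat{{\bf R}}(\lambda) \sim \mathbb{I} + \sum_{k=1}^{\infty} {\bf R}_k(\lambda;x)\, \hbar^{k/5},
\end{equation*}
uniform in $\CC \setminus \Gamma_{\hat{{\bf R}}}$. Each coefficient ${\bf R}_k(\lambda;x)$ is determined iteratively as the solution of an additive Riemann--Hilbert problem whose data are read off from the $\hbar^{k/5}$-coefficient of $\boldsymbol{\Delta}(\lambda)|_{\partial D_{\hbar}}$ (for $\partial D_\hbar$ is the only non-exponentially-small piece) together with nonlinear convolutions of lower-order ${\bf R}_j$ coming from the Neumann series. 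This part is entirely parallel to the previous sections.

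The remaining task, and the conceptual heart of the proof, is to show that ${\bf R}_1(\lambda;x) = {\bf R}_2(\lambda;x) \equiv 0$, so that the expansion actually begins at $k=3$. This is the precise reason the partial Schlesinger matrix $\mathfrak{p}(\lambda)$ was engineered in \eqref{p-def}: its three non-trivial entries at orders $\hbar^{1/5}$, $\hbar^{2/5}$, $\hbar^{4/5}$ were chosen so that, after forming $P(\lambda)M^{-1}(\lambda) = \hat{P}(\lambda)\hat{f}^{-1}(\lambda)\mathfrak{p}^{-1}(\lambda)$ and expanding via Lemma \ref{P-expansion-negative}, the contributions from $\Xi_1/\zeta^{1/3}$ and $\Xi_2/\zeta^{2/3}$ (and the quadratic cross term of type $\Xi_1^2$ appearing at order $\hbar^{4/5}$) are killed exactly. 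The first surviving piece of $\boldsymbol{\Delta}|_{\partial D_{\hbar}}$ is thus of order $\hbar^{3/5}$, coming from $\Xi_3/\zeta$, which produces a non-zero ${\bf R}_3$ via a Cauchy integral around the shrinking disc, while ${\bf R}_1 = {\bf R}_2 = 0$ by unique solvability of the trivial additive problems they would satisfy.

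The main obstacle is the bookkeeping in this final step. Because conjugation by $\hat{f}(\lambda) = f(\lambda) \cdot (\sigma_3 \oplus 1 \text{ or } 1\oplus\sigma_1)$ introduces fractional powers of $\lambda$ in each matrix entry, and $\lambda$ itself is of order $\hbar^{2/5+\delta}$ on $\partial D_\hbar$, the naive power count in $\hbar$ shifts differently for each matrix position, as already seen in the entry-wise estimates of Proposition \ref{shrinking-prop}. One must verify entry-by-entry --- using the symmetry relations \eqref{P1-3-symm} for the coefficients $\Xi_k(x)$ --- that the explicit entries of $\mathfrak{p}$ exactly cancel the three offending orders. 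This is routine but error-prone, in the same spirit as the partial Schlesinger constructions in \cite{Bertola-Lee1, Bertola-Lee2, Molag1, Molag2}; modulo this verification, the result follows directly from the small-norm theory.
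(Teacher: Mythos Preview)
Your overall strategy matches the paper's: reduce to a small-norm problem via Proposition~\ref{shrinking-prop}, invoke the standard Neumann-series machinery to get existence and an expansion $\hat{{\bf R}}\sim\mathbb{I}+\sum_{k\geq 1}{\bf R}_k\hbar^{k/5}$, and then argue separately that ${\bf R}_1={\bf R}_2=0$. The gap is in that last step.

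You claim that $\mathfrak{p}$ was designed so that the contributions from $\Xi_1/\zeta^{1/3}$ and $\Xi_2/\zeta^{2/3}$ are ``killed exactly'' and hence the first surviving piece of $\boldsymbol{\Delta}$ is of order $\hbar^{3/5}$. This is not correct. The matrix $\mathfrak{p}$ in \eqref{p-def} is strictly lower triangular, so at order $\hbar^{1/5}$ it can only cancel the $E_{31}$ entry of $\hat{f}\,\Xi_1\,\hat{f}^{-1}/\zeta^{1/3}$ (the one carrying the $1/\lambda$ pole). The $E_{12}$ and $E_{23}$ entries---which are \emph{constant} in $\lambda$, equal to $\pm\mathcal{H}(x)$ up to sign conventions---survive untouched. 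Thus $J_1(\lambda)\neq 0$, and likewise $J_2(\lambda)\neq 0$; indeed $\boldsymbol{\Delta}=\OO(\hbar^{1/5-3\delta})$ on $\partial D_{\hbar}$ is sharp in the sense that one cannot improve the exponent to $3/5$. (Were your claim true, Proposition~\ref{shrinking-prop} would have been stated with the stronger bound.)

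The paper's mechanism is different and more subtle: after the pole cancellation effected by $\mathfrak{p}$, the residual coefficients $J_1(\lambda)$ and $J_2(\lambda)$ are \emph{analytic functions in $D_{\hbar}$} (in fact polynomial in $\lambda$). The additive Riemann--Hilbert problem for ${\bf R}_k$ is solved by the Cauchy integral $\frac{1}{2\pi i}\oint_{\partial D_{\hbar}}\frac{J_k(s)}{s-\lambda}\,ds$, and for $\lambda\in\CC\setminus D_{\hbar}$ this vanishes by Cauchy's theorem whenever $J_k$ extends holomorphically inside the disc. That is why ${\bf R}_1={\bf R}_2=0$ in the exterior. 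Your entry-by-entry bookkeeping would eventually reveal this structure, but the conclusion you anticipated from it (vanishing of $J_1,J_2$) is the wrong one; what you would actually find is analyticity, and you need to recognize that analyticity---not vanishing---is what forces ${\bf R}_1={\bf R}_2=0$.
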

\begin{proof}
    Let
    \begin{equation*}
            \boldsymbol{\Delta}(\lambda) := \hat{{\bf R}}_-^{-1}\hat{{\bf R}}_+ - \mathbb{I}
        \end{equation*}
    denote the deviation of the jumps of ${\bf R}$ from the identity $\mathbb{I}$. $\boldsymbol{\Delta}(\lambda)$ is exponentially close to the identity matrix
    away from the circle $\partial D_{\hbar}$. By Proposition \ref{shrinking-prop}, we can see that
        \begin{equation*}
            \boldsymbol{\Delta}(\lambda) \sim \sum_{k=1}^{\infty} J_k(\lambda) \hbar^{k/5},
        \end{equation*}
    where the $J_k(\lambda)$ are explicit matrices defined in terms of the function $\mathfrak{p}(\lambda)$
    and coefficients $\Xi_k(x)$ appearing in the asymptotic expansion of $\boldsymbol{\Xi}(\zeta;x)$ (see Equation \eqref{Xi-asymptotic-expansion}).
    From standard theory \cite{DKMVZ} it follows that ${\bf R}$ admits an asymptotic series in $\hbar^{1/5}$:
        \begin{equation*}
            {\bf R}(\lambda) \sim \mathbb{I} + \sum_{k=1}^{\infty} {\bf R}_k(\lambda;x)\hbar^{k/5}.
        \end{equation*}
    The functions ${\bf R}_k(\lambda;x)$ can be determined iteratively as the solutions to certain additive Riemann-Hilbert problems. Since both $J_1(\lambda)$,
    $J_2(\lambda)$ are analytic functions in $D_{\hbar}$, it follows that
        \begin{equation}\label{R1-eta-minus}
            {\bf R}_1(\lambda;x) = {\bf R}_2(\lambda;x) = 0,\qquad \lambda\in \CC\setminus D_{\hbar}.
        \end{equation}
    Thus, ${\bf R}(\lambda) = \mathbb{I} + \OO(\hbar^{3/5})$, for $\lambda\in \CC\setminus D_{\hbar}$.
\end{proof}


\subsection{Proof of Theorems \ref{MainTheorem2} \& \ref{MainTheorem3}.}

\begin{proof} \textit{(Of Theorem \ref{MainTheorem2}.)}
    Using the results of Subsection \ref{subsection:eta-plus}, and by choosing an appropriate sector in which $\lambda\to \infty$, we can write 
        \begin{equation*}
            \mathfrak{G}(\lambda) = {\bf Z}(\lambda;{\bf t})e^{-\frac{1}{\hbar}\hat{\Theta}(\lambda;{\bf t})} = \mathfrak{h}^{(0)}{\bf R}(\lambda)\cdot M(\lambda)e^{\frac{1}{\hbar}(\hat{G}-\hat{\Theta})(\lambda)},
        \end{equation*}
    so that
        \begin{align*}
            \mathfrak{G}^{-1}(\lambda)\mathfrak{G}'(\lambda) &= \underbrace{e^{-\frac{1}{\hbar}(G-\hat{\Theta})(\lambda)}\frac{d}{d\lambda}\left[e^{\frac{1}{\hbar}(G-\hat{\Theta})(\lambda)}\right]}_{K_1(\lambda)} + \underbrace{e^{-\frac{1}{\hbar}(G-\hat{\Theta})(\lambda)}M^{-1}(\lambda)M'(\lambda)e^{\frac{1}{\hbar}(G-\hat{\Theta})(\lambda)} }_{K_2(\lambda)}\\
            &+\underbrace{e^{-\frac{1}{\hbar}(G-\hat{\Theta})(\lambda)}M^{-1}(\lambda){\bf R}^{-1}(\lambda){\bf R}'(\lambda)M(\lambda)e^{\frac{1}{\hbar}(G-\hat{\Theta})(\lambda)}}_{K_3(\lambda)}.
        \end{align*}
    We have that
        \begin{equation*}
            \frac{1}{\hbar}\tr \left[K_2(\lambda)\frac{\partial \hat{\Theta}}{\partial \eta}\right] = \frac{1}{\hbar}\tr \left[K_2(\lambda)\frac{\partial \hat{\Theta}}{\partial \nu}\right] = \OO(\lambda^{-3}),
        \end{equation*}
    so the terms involving $K_2(\lambda)$ do not contribute to the $\tau$-function. Now, calculating the terms involving $K_1(\lambda)$,
        \begin{align*}
            \frac{1}{\hbar}\Res_{\lambda=\infty}\tr \left[K_1(\lambda)\frac{\partial \hat{\Theta}}{\partial \nu}\right] &= \frac{5\eta_0}{6\hbar^2}\left[\hat{\nu}(\hbar) -\frac{125}{108}\eta_0^2\hat{\eta}(\hbar) + \frac{125}{216}\eta_0^3\right],\\
            \frac{1}{\hbar}\Res_{\lambda=\infty}\tr \left[K_1(\lambda)\frac{\partial \hat{\Theta}}{\partial \eta}\right] &= \frac{625}{648\hbar^2}\eta_0^3\left[\hat{\nu}(\hbar)-\frac{25}{12}\eta_0^2\hat{\eta}(\hbar)+\frac{125}{108}\eta_0^3\right].
        \end{align*}
    Comparing with the definition of $\hat{\tau}_0(\eta,0,\nu)$ \eqref{tauhat0-def}, we see that the differential of $\hat{\tau}_0(\eta,0,\nu)$ cancels
    exactly the differential formed by $K_1(\lambda)$:
        \begin{equation*}
            \frac{1}{\hbar}\Res_{\lambda=\infty}\tr \left[K_1(\lambda)\frac{\partial \hat{\Theta}}{\partial \eta}\right]d\hat{\eta}(\hbar) + \frac{1}{\hbar}\Res_{\lambda=\infty}\tr \left[K_1(\lambda)\frac{\partial \hat{\Theta}}{\partial \nu}\right]d\hat{\nu}(\hbar) + {\bf d}\hat{\tau}_0(\hat{\eta},0,\hat{\nu}|\hbar)= 0.
        \end{equation*}
    Terms of subleading order in $\hbar$ then arise from $K_3(\lambda)$. By the results of Proposition \ref{R-expansion-prop-critical-plus},
    we have that
        \begin{align*}
            {\bf R}^{-1}(\lambda){\bf R}'(\lambda) &= \hbar^{1/5} {\bf R}_1'(\lambda) + \OO(\hbar^{2/5}), \qquad \hbar\to 0, \qquad \text{and}\\
            \hbar^{1/5} {\bf R}_1'(\lambda) &= -\frac{\hbar^{1/5} (W_1+\hat{W}_1)}{\lambda^2} - \frac{2\hbar^{1/5}\alpha(\hat{W_1} - W_1)}{\lambda^3}+ \OO(\lambda^{-4}),\qquad \lambda\to \infty.
        \end{align*}
    On the other hand, as $\lambda\to \infty$,
    \begin{align*}
            M(\lambda)e^{\frac{1}{\hbar}(\hat{G}-\hat{\Theta})(\lambda)}\frac{\partial \hat{\Theta}}{\partial \nu}e^{-\frac{1}{\hbar}(\hat{G}-\hat{\Theta})(\lambda)}M^{-1}(\lambda) &=M(\lambda)\frac{\partial \hat{\Theta}}{\partial \nu}M^{-1}(\lambda)  = E_{13}\lambda + \OO(1),\\
            M(\lambda)e^{\frac{1}{\hbar}(\hat{G}-\hat{\Theta})(\lambda)}\frac{\partial \hat{\Theta}}{\partial \eta}e^{-\frac{1}{\hbar}(\hat{G}-\hat{\Theta})(\lambda)}M^{-1}(\lambda) &=M(\lambda)\frac{\partial \hat{\Theta}}{\partial \eta}M^{-1}(\lambda)\\
            &= (E_{12}+E_{23})\lambda^2 + 
            \begin{psmallmatrix}
                -\frac{5}{12}\eta_0 & 0 & \frac{125}{144}\eta_0^2\\
                0 & \frac{5}{6}\eta_0 & 0\\
                1 & 0 & -\frac{5}{12}\eta_0
            \end{psmallmatrix}\lambda
            +\OO(1),\\
        \end{align*}
    and so we compute that
        \begin{align*}
            -\frac{1}{\hbar}\tr \left[K_3(\lambda)\frac{\partial \hat{\Theta}}{\partial \nu}\right]d\hat{\nu}(\hbar) &=
            -Cn_{\nu}\hbar^{-1/5}\tr \left[K_3(\lambda)\frac{\partial \hat{\Theta}}{\partial \nu}\right]dx \\
            &= C n_{\nu}\left(\frac{3}{10\eta_0}\right)^{1/5}\mathcal{H}(x) dx + \OO(\hbar^{1/5}),\\
            -\frac{1}{\hbar}\tr \left[K_3(\lambda)\frac{\partial \hat{\Theta}}{\partial \eta}\right]d\hat{\eta}(\hbar) &=
            -Cn_{\eta}\hbar^{-1/5}\tr \left[K_3(\lambda)\frac{\partial \hat{\Theta}}{\partial \nu}\right]dx\\
            &=-\frac{125}{36}\eta_0^2 C n_{\eta}\left(\frac{3}{10\eta_0}\right)^{1/5}  \mathcal{H}(x)dx + \OO(\hbar^{1/5}).
        \end{align*}
    combining these results, and recalling the value of $C$ from Equation \eqref{C-definition}, we obtain that
        \begin{equation*}
            -\frac{1}{\hbar}\Res_{\lambda=\infty}\tr \left[K_3(\lambda)\frac{\partial \hat{\Theta}}{\partial \eta}\right]d\hat{\eta}(\hbar) - \frac{1}{\hbar}\Res_{\lambda=\infty}\tr \left[K_3(\lambda)\frac{\partial \hat{\Theta}}{\partial \nu}\right]d\hat{\nu}(\hbar) = -\mathcal{H}(x)dx + \OO(\hbar^{1/5}),
        \end{equation*}
    thus proving the theorem.
\end{proof}

We now proceed to the proof of Theorem \ref{MainTheorem3}.

\begin{proof}\textit{(Of Theorem \ref{MainTheorem3}.)} 
    By the results of Subsection \ref{subsection-eta-minus}, in an appropriately chosen sector as $\lambda\to \infty$, we have that
    \begin{equation*}
            \mathfrak{G}(\lambda) = {\bf Z}(\lambda;{\bf t})e^{-\frac{1}{\hbar}\hat{\Theta}(\lambda;{\bf t})} = {\bf R}(\lambda)\cdot M(\lambda)e^{\frac{1}{\hbar}(\hat{G}-\hat{\Theta})(\lambda)} = {\bf R}(\lambda) M(\lambda),
        \end{equation*}
    Therefore,
        \begin{equation*}
            \mathfrak{G}^{-1}(\lambda)\mathfrak{G}'(\lambda) = M^{-1}(\lambda)M'(\lambda) + M^{-1}(\lambda){\bf R}^{-1}(\lambda){\bf R}'(\lambda)M(\lambda).
        \end{equation*}
    By our observations in Proposition \ref{eta-negative-R-prop}, 
    \begin{equation*}
        {\bf R}(\lambda) = \mathbb{I} + \OO(\hbar^{3/5}),\qquad \text{   and so  }\qquad {\bf R}^{-1}(\lambda){\bf R}'(\lambda) = \OO(\hbar^{3/5}),
    \end{equation*}
    and since by \eqref{p-def}, \eqref{M-def} $M(\lambda) = \mathbb{I} + \OO(\hbar^{1/5})$, we see that 
        \begin{equation*}
            M^{-1}(\lambda){\bf R}^{-1}(\lambda){\bf R}'(\lambda)M(\lambda) = \OO(\hbar^{3/5}),
        \end{equation*}
    and so this term does not contribute at leading order to the $\tau$-function.
    On the other hand,
        \begin{equation}
            M^{-1}(\lambda)M'(\lambda) = -\hbar^{1/5}\left(\frac{5s}{6}\right)^{-1/5}\frac{\mathcal{H}(x)E_{31}}{\lambda^2} +\OO(\hbar^{2/5}).
        \end{equation}
    Thus,
        \begin{equation*}
            -\Res_{\lambda=\infty}\tr\left[\mathfrak{G}^{-1}(\lambda)\mathfrak{G}'(\lambda)\frac{\partial \hat{\Theta}}{\partial\nu }\right]d\hat{\nu}(\hbar) = -\mathcal{H}(x)dx + \OO(\hbar^{1/5}),
        \end{equation*}
   and we obtain the theorem in the limit $\hbar\to 0$.
\end{proof}

\section{Concluding Remarks.}
In summary, we have studied some asymptotic properties of a special solution to the $(3,4)$ string equation \eqref{string-equation}, which appears in the study
of the multicritical quartic $2$-matrix model. In particular, we were able to show that this solution admits a ``topological'' expansion, and that the
$\tau$-function for this solution has limits to the Painlev\'{e} I $\tau$-function, confirming a conjecture from \cite{CGM}. 

There are several questions we left unaddressed in this work, but are nevertheless still of interest:
    \begin{enumerate}
        \item The Stokes manifold for the symmetric solutions of the RHP \ref{prob:MAIN-RHP} 
        (i.e., solutions with corresponding Stokes parameters satisfying $s_k = -s_{k+8}$, $k=-7,...,-1$) contains the following $7$ planes, which we parameterize in terms of the remaining $7$ Stokes parameters $(s_1,....,s_7)$:
        \begin{align*}
            \Pi_0 &:= \{(x+1,-1,0,0,1,x,y)\},\\
            \Pi_1 &:= \{(0,-1,x,y,1-x,-1,0)\},\\
            \Pi_2 &:= \{(x,y-1,1,0,0,-1,y)\},\\
            \Pi_3 &:= \{(0,0,1,x,y,-x-1,1)\},\\
            \Pi_4 &:= \{(x,y,1-x,-1,0,0,1)\},\\
            \Pi_5 &:= \{(1,0,0,-1,1-y,x,y)\},\\
            \Pi_6 &:= \{(1,-1-y,x,y,1,0,0)\}.
        \end{align*}
        These planes intersect pairwise: $\Pi_k \cap \Pi_{k\pm 1} \neq \emptyset$ for $k\in \ZZ_7$, and otherwise are completely disjoint. As these planes are embedded in $\CC^7$, their intersection is a point in each case. Note that the Stokes parameters corresponding to the model studied in the present 
        work \eqref{STOKES_TRUNCATED} lies at the intersection of $\Pi_0$ and $\Pi_1$.  This is analogous to the way the tritronqu\'{e}e solutions 
        to Painlev\'{e} I (which lie at the intersection points of pairs of the $5$ lines defining the tronqu\'{e}e solutions in the PI Stokes manifold, cf. \cite{Kapaev-Kitaev}) appear in the $1$-matrix model.
        In particular, the set of solutions to the string equation lying on the plane $\Pi_1$ with parameter $y=0$ can be studied using the same techniques in this work, with little modification: one only has to choose the relevant corresponding Stokes data in the local model problems accordingly. The result is that our main theorems hold for each of these solutions, with the word \textit{tritronqu\'{e}e solution} replaced with 
        \textit{tronqu\'{e}e solution} of Painlev\'{e} I. The role of the Stokes parameter $y$ is unclear. Finding an asymptotic
        formula for solutions to the string equation with generic Stokes data on the plane $\Pi_1$ is still in general an open problem.
        \item A much wider range of limits for the $\tau$-function studied here tend to the Painlev\'{e} I $\tau$-function. Note that we can parameterize the (real) critical surface (see Appendix \ref{Appendix:ImplicitCurves}, Equation \eqref{CRITICAL-SURFACE}) of the aforementioned solutions by
            \begin{equation}
                \left\{(t_1(\varsigma,\eta),t_2^{\pm}(\varsigma,\eta),t_5(\varsigma,\eta)\mid \eta\in \RR, \varsigma > \max\left\{\frac{5}{3}\eta,0\right\}\right\},
            \end{equation}
        where
            \begin{equation}
                t_1(\varsigma,\eta) = -\frac{5}{12}\varsigma\left(5\eta-9\eta\varsigma+3\varsigma^2\right),\qquad t_2^{\pm}(\varsigma,\eta) = \pm\frac{\sqrt{2\varsigma}}{12}(5\eta-3\varsigma)^2,\qquad t_5(\varsigma,\eta) = \eta.
            \end{equation}
        given a point $P_0 = \langle t_1^{(0)},t_2^{\pm (0)},t_5^{(0)}\rangle$ on this surface, consider a unit vector ${\bf u} = \langle u_1,u_2,u_5\rangle$ based at $P_0$ which lies in the region $D$ below the tangent plane of the critical surface. Define rescaled coordinates
            \begin{align}
                X_1(x|T) &:= T^6 t_1^{(0)} + C T^{2/5}u_1 x,\\
                X_2(x|T) &:= T^5 t_2^{\pm (0)} + C T^{-3/5}u_2 x,\\
                X_5(x|T) &:= T^2 t_5^{(0)} + C T^{-18/5}u_5 x.
            \end{align}
        We then claim that
            \begin{conjecture}
                There exists a choice of constant $C = C(P_0,{\bf u })>0$ and a polynomial $Q(t_1,t_2,t_5)$ such that, if we define $\hat{\tau}_0 := e^{Q}$, considered as a differential
                in the variable $x$,
                    \begin{equation}
                        \lim_{T\to \infty} {\bf d} \log \frac{\tau(X_5(x|T),X_2(x|T),X_1(x|T))}{\hat{\tau}_0(X_5(x|T),X_2(x|T),X_1(x|T))} = -\mathcal{H}(x)dx,
                    \end{equation}
                where $\mathcal{H}(x)$ is the Painlev\'{e} I Hamiltonian.
            \end{conjecture}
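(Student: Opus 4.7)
The plan is to adapt the Deift--Zhou analysis carried out in Sections \ref{subsection:eta-plus} and \ref{subsection-eta-minus}, together with the Airy parametrix construction of Section \ref{genus-zero-existence}, to a generic point $P_0$ on the critical surface where $t_2^{(0)} \neq 0$ (equivalently $\mu \neq 0$). By Lemma \ref{lemma:Localexpansions} case 2, at such a point exactly one of the branch points $\alpha,\beta$ survives as a generic square-root singularity while the other becomes a fifth-root-type singularity of Painlev\'{e} I type. This dictates that the local analysis must combine an Airy parametrix at the generic turning point with a Painlev\'{e} I parametrix at the critical one, rather than employing the $\mu \to -\mu$ symmetry trick used in Sections \ref{subsection:eta-plus}--\ref{subsection-eta-minus}.

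The first step is to construct a modified spectral curve at $P_0$. Mimicking the definition in Section \ref{modified-spectral-curve}, but now with both the $u^2$, $u$, and $1/(u^2-\frac{1}{2}\varsigma_0)$ coefficients of $Y(u)$ allowed to depend analytically on $\hbar^{4/5}$ via $\hat{t}_1(\hbar), \hat{t}_2(\hbar), \hat{t}_5(\hbar)$ following the direction $\vec{u}$, one defines $\hat{g}_j(\lambda)$ and verifies that the scaling $\vec{u}$ rescaled by $T^{2/5}, T^{-3/5}, T^{-18/5}$ (as in the statement) is precisely the natural fluctuation scale that keeps the Painlev\'{e} I variable $\boldsymbol{x}(\lambda;\hbar)$ of order unity at the critical branch point; an analog of the scaling proposition in Section \ref{subsection:eta-plus} then gives $\lim_{\hbar\to 0}\hbar^{-4/5}\boldsymbol{x}(\lambda;\hbar) = x + \OO(\lambda - \lambda_{crit})$, and this is exactly where the constant $C = C(P_0,\vec{u})$ is extracted, in parallel with Equation \eqref{C-definition}. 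The requirement that $\vec{u}$ lie below the tangent plane ensures this rescaled $x$-coordinate is real and has the correct sign for the Painlev\'{e} I inequalities to hold.

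The second step is the steepest descent: apply the $g$-function transformation ${\bf U} = (\mathfrak{h}^{(0)})^{-1}{\bf Z} e^{-\hat{G}/\hbar}$ with $\mathfrak{h}^{(0)}$ designed to absorb the leading Hamiltonian contributions exactly as in Section \ref{subsection:eta-plus}, open lenses around $[\alpha_{\hbar},\infty)$ and $(-\infty,\beta_{\hbar}]$, use $M(\lambda)$ from Proposition \ref{prop:global-parametrix-solution} as the global parametrix, and construct local parametrices: an Airy parametrix at the generic turning point (as in Lemma \ref{local-parametrix-lemma}) and a Painlev\'{e} I parametrix at the critical one (as in Lemma \ref{local-parametrix-lemma-critical1}). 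A small-norm argument analogous to Proposition \ref{R-expansion-prop-critical-plus} then yields an asymptotic expansion ${\bf R} \sim \mathbb{I} + \sum_{k\geq 1}{\bf R}_k(\lambda;x)\hbar^{k/5}$. Finally, decomposing $\mathfrak{G}^{-1}\mathfrak{G}' = K_1 + K_2 + K_3$ as in the proofs of Theorems \ref{MainTheorem2}--\ref{MainTheorem3}, the $K_1$ term produces a polynomial growth in $\hbar^{-2}$ in the three $t$-derivatives, which we cancel by defining $Q(t_1,t_2,t_5)$ to be the unique polynomial whose differential along the rescaled trajectory matches this growth (the existence of $Q$ as a polynomial, rather than a genuine integral of the $g$-derivative expansion, is guaranteed by the fact that $h_1^{(0)}, h_2^{(0)}, h_5^{(0)}$ evaluated on $\hat{t}(\hbar)$ are polynomial in $\hbar^{4/5}$, hence finite in the $x$ variable). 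The $K_3$ term then contributes the Painlev\'{e} I Hamiltonian $-\mathcal{H}(x)dx$ at leading order, via the same residue computation as in Theorem \ref{MainTheorem2}.

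The main obstacle will be the cancellation step producing $Q$. Unlike $\hat{\tau}_0$ in Theorem \ref{MainTheorem2}, which was given explicitly because $\mu\equiv 0$ and only two variables $(\eta,\nu)$ fluctuated, here all three parameters fluctuate simultaneously, so one must verify that the one-form built from the three leading residues of $K_1$, when pulled back along the line $P_0 + \vec{u}\cdot CT^{-\ast}x$, is exact in $x$ and integrates to a polynomial with $\vec{u}$-independent coefficients (which, heuristically, should just be the genus-$0$ prepotential $\varpi_0$ of Theorem \ref{MainTheorem1} evaluated along the critical trajectory up to finite order). Establishing this structural statement uniformly in the basepoint $P_0$ on the critical surface, and checking that no additional subleading divergent terms leak in at order $\hbar^{-2+k/5}$ for $k=1,2$, is the technically delicate part; once this is in place, the remainder of the argument is a direct template of Sections \ref{subsection:eta-plus} and \ref{subsection-eta-minus}.
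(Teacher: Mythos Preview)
The statement you are attempting to prove is explicitly labeled a \emph{Conjecture} in the paper and is not proven there. The paper offers only the remark that ``the proof of this fact should follow from the results of this work, after some rather tedious calculations,'' together with the formal expansions of $U$ and $V$ that one expects to recover. There is therefore no proof in the paper to compare against.

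Your outline is precisely the approach the paper implicitly suggests: extend the steepest-descent analysis of Sections~\ref{subsection:eta-plus}--\ref{subsection-eta-minus} to a generic critical point $P_0$ with $\mu\neq 0$, noting (via Lemma~\ref{lemma:Localexpansions}, case~2) that the two branch points now have different local behaviors, so that one needs an Airy parametrix at $\alpha$ and a Painlev\'e~I parametrix at $\beta$ rather than the $\mu\mapsto-\mu$ symmetry used in the special cases. Your identification of the scaling exponents is consistent with the substitution $T=\hbar^{-1/7}$, and your decomposition $K_1+K_2+K_3$ mirrors the proofs of Theorems~\ref{MainTheorem2} and~\ref{MainTheorem3}.

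You also correctly isolate the genuine difficulty: the construction of the polynomial $Q$. Your justification that $Q$ exists because ``$h_1^{(0)},h_2^{(0)},h_5^{(0)}$ evaluated on $\hat t(\hbar)$ are polynomial in $\hbar^{4/5}$'' is not quite right as stated, since the $h_k^{(0)}$ depend on $\varsigma$, which is a root of the degree-five equation~\eqref{sigma-eq}. What makes the argument work in Theorem~\ref{MainTheorem2} is that on the \emph{modified} spectral curve the parameter playing the role of $\varsigma$ is frozen at its critical value $\varsigma_0$, and only the coefficients of $\hat Y(u)$ carry the $\hbar$-dependence; the $K_1$ residues then become polynomials in $\hat\eta(\hbar),\hat\mu(\hbar),\hat\nu(\hbar)$ with coefficients depending only on $P_0$. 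You should make this point explicit, and also check---as you yourself flag---that no divergent contributions of order $\hbar^{-2+k/5}$ for $k=1,2,3,4$ survive after subtracting $\hat\tau_0$; in the $\mu=0$ cases these vanish by the symmetry, but for $\mu\neq 0$ this requires an honest computation. Beyond these two points, your sketch is a faithful roadmap to what the paper leaves open.
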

        The proof of this fact should follow from the results of this work, after some rather tedious calculations. From more involved formal calculations,
        it follows that the functions $U,V$ should behave like
            \begin{align}
                U(X_5(x|T),X_2(x|T),X_1(x|T)) &= \varsigma T^2 + \frac{2}{u_1^2}T^{-4/5}q(x) + o(T^{-4/5}),\\
                V(X_5(x|T),X_2(x|T),X_1(x|T)) &= -\frac{\sqrt{2\varsigma}}{6}(5\eta-3\varsigma) T^3 + \frac{\sqrt{2\varsigma}}{u_1^2}T^{1/5}q(x) + o(T^{1/5}),
            \end{align}
        as $T\to \infty$. This conjecture is consistent with the theorems stated in the introduction. As observed in \cite{DHL2}, for any fixed $t_5$, the function $U$ satisfies the Boussinesq equation. The appearance of the Painlev\'{e} I transcendent on the critical surface here suggests that the above is a specific instance of the Type II Dubrovin Universality conjecture \cite{Dubrovin2,Dubrovin3} for the Boussinesq equation. To the knowledge of the author, this conjecture of the Boussinesq equation is largely unexplored. Although the class of solutions to the Boussinesq equation generated by the $(3,4)$ string equation is rather restricted (it contains only a finite-dimensional manifold of solutions), a proof of this conjecture in this case would shed light on the nature of the Dubrovin conjecture for higher-rank Hamiltonian PDEs.
        \item We showed that a $3\times 3$ Painlev\'{e} I parametrix became relevant in the critical $\eta<0$ limit. This parametrix, to our knowledge, has
        not been implemented in practice in any steepest descent analysis. There should be a 1-1 correspondence between solutions to this $3\times 3$ parametrix
        problem and the standard $2\times 2$ Painlev\'{e} I parametrix that appears more frequently in the literature. In principle, there should exist
        a monodromy map $\mathcal{M}$ which takes a solution to the $2\times 2$ problem with data $\{s_k\}$ to a solution to the $3\times 3$ problem
        with data $\{\nu_k\}$. A promising approach to this problem is suggested in \cite{JKT}, involving a generalized Laplace transform. This approach is 
        applied to a closely related problem for Painlev\'{e} II by K. Liechty and D. Wang in \cite{LW}. In Appendix \ref{Appendix:P1-parametrix}, we
        have already conjectured part of this correspondence. We plan to work out this map in general in a later work.
    \end{enumerate}

\appendix

\section{Hamiltonian structure of the $(3,4)$ string equation.}\label{Appendix:Hamiltonian}
In this appendix, we list the set of Darboux coordinates and corresponding Hamiltonians for the $(3,4)$ string equation \eqref{string-equation}. This is taken 
directly from \cite{DHL2}.

The associated set of Darboux coordinates for the string equation may be taken to be
        \begin{align}
            Q_U :&= U - \frac{4}{3}t_5, \qquad\qquad\qquad\qquad\qquad\qquad\, Q_V := V, \qquad\qquad  Q_W := U', \label{Darboux-Q}\\
            P_U :&= \frac{1}{4}\left( 3UU' - \frac{1}{3}U''' - \frac{7}{3}t_5 U'\right), \qquad\quad\,\, P_V := V', \qquad\quad\,\,\,\,\, P_W := \frac{1}{12}U''-\frac{1}{6}t_5 U + \frac{7}{18}t_5^2.\label{Darboux-P}
        \end{align}
Then, the Hamiltonians are
    \begin{align}
        H_1&= P_{U} Q_{W}+6 P_{W}^{2}-\frac{3}{8} Q_{U} Q_{W}^{2}+\frac{1}{2} P_{V}^{2}-\frac{1}{8} Q_{U}^{4}-\frac{3}{2} Q_{U} Q_{V}^{2}-t_1Q_U + 2t_2Q_V\nonumber\\
    &+ \frac{1}{8}t_5 (16 Q_{U} P_{W}-2 Q_{U}^{3}+4 Q_{V}^{2}-Q_{W}^{2}) - \frac{1}{2}t_5^2(4P_W-Q_U^2)+\frac{19}{27}t_5^3Q_U+\frac{41}{54}t_5^4-\frac{4}{3}t_5t_1\label{Hamiltonian-nu}\\
    H_2&=\frac{1}{2} P_{V} Q_{U} Q_{W}+\frac{1}{4} Q_{V} Q_{W}^{2}-2P_{U} P_{V}-6 P_{W} Q_{U} Q_{V}+Q_{V}^{3}+Q_{U}^{3} Q_{V} +2t_1Q_V\nonumber\\
    &+t_2(4P_W-Q_U^2) + \frac{1}{2}t_5(Q_{V} Q_{U}^{2}-P_{V} Q_{W}+4 Q_{V} P_{W})-2t_5t_2Q_U-\frac{65}{27}t_5^3Q_V-\frac{22}{9}t_5^2t_2\label{Hamiltonian-mu}\\
    H_5&=\frac{1}{2} Q_{W} P_{V} Q_{U} Q_{V}-\frac{3}{4} P_{U} Q_{W} Q_{U}^{2}-P_{U} P_{V} Q_{V}+P_{U} P_{W} Q_{W}+\frac{3}{8} Q_{V}^{4}-\frac{1}{128} Q_{W}^{4}+4 P_{W}^{3}\nonumber\\
    &-\frac{1}{16} Q_{U}^{6}-P_{W} P_{V}^{2}+P_{W} Q_{U}^{4}+P_{U}^{2} Q_{U}-\frac{9}{2} P_{W}^{2} Q_{U}^{2}-\frac{1}{8} Q_{U}^{3} Q_{V}^{2}+\frac{1}{8} Q_{U}^{2} P_{V}^{2}+\frac{3}{32} Q_{W}^{2} Q_{U}^{3}-\frac{1}{16} Q_{W}^{2} Q_{V}^{2}\nonumber\\
    &+t_1\left(2 Q_{U} P_{W}-\frac{1}{8} Q_{W}^{2}-\frac{1}{4} Q_{U}^{3}+\frac{1}{2} Q_{V}^{2}\right) + \frac{1}{2}t_2\left(Q_{V} Q_{U}^{2}-P_{V} Q_{W}+4 Q_{V} P_{W}\right)\nonumber\\
    &+t_5\bigg(\frac{3}{16} Q_{U}^{5}-2 P_{U}^{2}-\frac{1}{16} Q_{U}^{2} Q_{W}^{2}-\frac{1}{4} P_{W} Q_{W}^{2}+5 P_{W}^{2} Q_{U}-2 P_{W} Q_{U}^{3}-5 P_{W} Q_{V}^{2}+\frac{3}{4} Q_{U}^{2} Q_{V}^{2}\nonumber\\
    &-\frac{1}{4} P_{V}^{2} Q_{U}+\frac{1}{2} P_{V} Q_{V} Q_{W}+P_{U} Q_{U} Q_{W}\bigg) -t_2^2Q_U-t_1t_2(4P_W-Q_U^2)\nonumber\\
    &+ t_5^2\bigg(\frac{47}{12} Q_{U} Q_{V}^{2}-\frac{29}{18} P_{U} Q_{W}-\frac{3}{2} P_{W} Q_{U}^{2}+\frac{29}{48} Q_{U} Q_{W}^{2}+\frac{7}{18} Q_{U}^{4}-\frac{14}{9} P_{V}^{2}-\frac{20}{3} P_{W}^{2}\bigg)\nonumber\\
    &-\frac{1}{108}t_5^3(284 Q_{U} P_{W}-49 Q_{U}^{3}+152 Q_{V}^{2}-11 Q_{W}^{2})+\frac{19}{9}t_5^2t_1Q_U-\frac{65}{9}t_5^2t_2Q_V\nonumber\\
    &+\frac{1}{216}t_5^4(1304P_W-299Q_U^2)-\frac{2173}{972}t_5^5Q_U-\frac{2}{3}t_1^2-\frac{22}{9}t_5t_2^2+\frac{82}{27}t_5^3t_1-\frac{556}{243}t_5^6\label{Hamiltonian-eta}
\end{align}
    
One can then show that Hamilton's equations for the above Hamiltonians are equivalent to the $(3,4)$ string equation \eqref{string-equation}, combined with the 
following compatibility conditions (which determine the dependence of $U,V$ on $t_2, t_5$):
     \begin{align}
            \frac{\partial U}{\partial t_2} &= -2V', \label{U_mu}\\
            \frac{\partial V}{\partial t_2} &= \frac{1}{6}U''' - UU', \label{V_mu}\\
            \frac{\partial U}{\partial t_5} &=\frac{\partial}{\partial t_1}\left[-\frac{1}{6}UU'' + \frac{1}{8}(U')^2 + \frac{1}{4}U^3 - 
            \frac{1}{2}V^2 - \frac{5}{9}t_5\left(3U^2-U''\right) + \frac{4}{3}t_1\right],\label{U_eta}\\
            \frac{\partial V}{\partial t_5} &= \frac{\partial}{\partial t_1}\left[ \frac{1}{12}U''V - \frac{1}{4}U'V' + \frac{5}{16}U^2V - \left(\frac{5}{3}t_5 + \frac{1}{4}U\right)^2V - t_2 U\right]\label{V_eta}.
        \end{align}
Evaluated on a solution of the string equation, the Hamiltonians read
    \begin{align*}
        H_1 &:= -\frac{1}{12}U'U''' +\frac{1}{24}(U'')^2 + \frac{3}{8}U(U')^2 +\frac{1}{2}(V')^2 - \frac{1}{8}U^4 - \frac{3}{2}UV^2 -\frac{5}{6}t_5\left(\frac{1}{4}(U')^2 - \frac{1}{2}U^3 - 3 V^2\right) + \frac{t_1}{2}U^2,\\
        H_2 &:= \frac{1}{6}U'''V' -\frac{1}{2}VUU''+\frac{1}{4}V(U')^2 - UU'V'+U^3V+V^3+\frac{5}{6}t_5\left(U''-3U^2\right)V + \frac{1}{3}t_2(U''-3U^2) + 2t_1 V,\\
        H_5 &:= \frac{1}{144}U'U''U'''+\frac{1}{12}V'U'''-\frac{1}{16}U^2U'U'''+\frac{1}{144}U(U''')^2+\frac{1}{16}U(U')^2U''-\frac{1}{4}UVU'V' + \frac{3}{8}V^4\\
        &-\frac{1}{128}(U')^4-\frac{1}{16}U^6+\frac{1}{432}(U'')^3+\frac{1}{12}U^4U''+\frac{3}{32}U^3(U')^2-\frac{1}{8}U^3V^2-\frac{1}{32}U^2(U'')^2+\frac{1}{8}U^2(V')^2\\
        &-\frac{1}{12}U''(V')^2-\frac{1}{16}V^2(U')^2 + 
        \frac{5}{6}t_5\bigg(\frac{3}{2}U^2V^2+\frac{1}{8}UU''-\frac{1}{2}U(V')^2-\frac{1}{12}(U')^2U''-\frac{1}{2}V^2U''-\frac{7}{12}U^3U''\\
        &-\frac{3}{4}(U')^2U^2+\frac{1}{3}UU'U'''+\frac{1}{2}VU'V'+\frac{5}{8}U^5-\frac{1}{36}(U''')^2\bigg) + \frac{1}{2}t_2\left(U^2V+\frac{1}{3}U''V-U'V'\right)\\
        &+ \frac{t_1}{2}\left(V^2-\frac{1}{4}U'^2-\frac{1}{2}U^3+\frac{1}{3}UU''\right) - \frac{5}{3}t_5t_2 UV + \frac{5}{3}t_5t_1\left(U^2-\frac{1}{3}U''\right) + \frac{25}{36}t_5^2\bigg(U^2U''+3UV^2-\frac{3}{2}U^4\\
        &-\frac{1}{6}(U'')^2-2(V')^2-8t_2 V\bigg) - \frac{125}{18}t_5^3V -\frac{10}{9}t_5t_2^2-\frac{2}{3}t_1^2.
    \end{align*}
These Hamiltonians pairwise commute with respect to the canonical Poisson bracket induced by the Darboux coordinates, and furthermore have equal mixed partials:
    \begin{equation}
        \frac{\partial H_k}{\partial t_j} = \frac{\partial H_j}{\partial t_k}, \qquad\qquad k,j \in \{1,2,5\}.
    \end{equation}
Thus, one can define a closed differential $\boldsymbol{\omega}_{Okamoto}$:
    \begin{equation}
        \boldsymbol{\omega}_{Okamoto} := H_1dt_1 + H_2dt_2 + H_5dt_5,
    \end{equation}
which we can use to define the Okamoto $\tau$-function. As was shown in \cite{DHL2}, the above Hamiltonian system can also be realized as the isomonodromic deformations 
of a linear equation with rational coefficients. As such, it admits another $\tau$-function arising from a closed differential $\boldsymbol{\omega}_{JMU}$, in the sense of Jimbo, Miwa, and Ueno \cite{JMU1,JMU2}\footnote{Actually, since this system has a nondiagonalizable leading coefficient, the JMU differential is ill-defined, and the original definition in \cite{JMU1} must be modified \cite{DHL2}. This modified differential is the one we refer to as $\boldsymbol{\omega}_{JMU}$ here.}. As it turns out, these two differentials are proportional:
    \begin{equation}
        \boldsymbol{\omega}_{JMU} = \frac{1}{2}\boldsymbol{\omega}_{Okamoto}.
    \end{equation}
The differential of the JMU $\tau$-function can then be seen to be equivalent to
    \begin{equation}\label{tau-JMU}
        {\bf d}\log \tau({\bf t}) = \frac{1}{2}\boldsymbol{\omega}_{Okamoto}.
    \end{equation}

\section{Formal `topological' expansion of solutions to the string equation.}\label{Appendix:perturbative-expansion}
In this Appendix, we provide a formal method to calculate the $\tau$-function for the $(3,4)$ string equation, perturbatively in a small parameter
$\hbar \to 0_+$. In Section \ref{genus-zero-existence}, we will prove that the formal solution obtained here indeed describes the 
$\tau$-function we are looking for, provided that $t_5,t_2,t_1$ go to infinity in an appropriately chosen sector.

Let us now describe the formal method to construct the $\tau$-function. In the string equation \eqref{string-equation}, we make a 
rescaling of variables
    \begin{equation}\label{rescaling}
        t_{5} = \hbar^{-2/7}\eta, \qquad t_2 = \hbar^{-5/7}\mu, \qquad t_1 = \hbar^{-6/7}\nu, \qquad U = \hbar^{-2/7}u, \qquad V = \hbar^{-3/7}v
    \end{equation}
Note that this is just the prescribed rescaling of the RHP \eqref{prob:MAIN-RHP}, introduced in \eqref{parameter-rescaling}, and that we have also rescaled the functions $U$, $V$. In the new rescaled variables the string equation reads (below, we make the slight abuse of notation that $' = \frac{\partial}{\partial \nu}$)
    \begin{equation}\label{rescaled-string}
        \begin{cases}
            0 &= \frac{5}{2}\eta v -\frac{3}{2} u v + \mu + \hbar^2 v'',\\
            0 &= \frac{1}{2}u^3 + \frac{3}{2} v^2-\frac{5}{4}\eta u^2 + \nu - \hbar^2\left(\frac{3}{8}(u')^2 + \frac{3}{4}uu''- \frac{5}{12}\eta u''\right) + \hbar^{4} \frac{1}{12} u^{(4)}.
        \end{cases}
    \end{equation}
Since the above equation has an expansion in powers of $\hbar^2$, it makes sense to search for solutions to the string equation which admit an expansion in $\hbar^2$ as well:
    \begin{equation}\label{perturbative-uv}
        u(\eta,\mu,\nu|\hbar) = \sum_{k=0}^{\infty} u_k(\eta,\mu,\nu)\hbar^{2k},\qquad\qquad v(\eta,\mu,\nu|\hbar) = \sum_{k=0}^{\infty} v_k(\eta,\mu,\nu)\hbar^{2k}.
    \end{equation}
We prove the following Proposition:
    \begin{prop}
        In the variables \eqref{rescaling}, ${\bf d}\log\tau$ as defined in Equation \eqref{tau-JMU} admits a formal expansion in $\hbar^2$:
            \begin{equation}
                {\bf d}\log\tau(\eta,\mu,\nu|\hbar) = {\bf d}\left(\sum_{k=0}^{\infty}\log \tau_{k}(\eta,\mu,\nu)\hbar^{2k}\right).
            \end{equation}
        Consequentially, we obtain the following formal expansion for the $\tau$-function:
            \begin{equation}\label{formal-tau}
                \tau(\eta,\mu,\nu|\hbar) = \frac{C(\hbar)}{\chi(\eta,\mu,\nu)^{1/24}}e^{\hbar^{-2} \varpi_0(\eta,\mu,\nu)}[1 + \OO(\hbar^{2})].
            \end{equation}
        where $C(\hbar)$ is a constant independent of $\eta,\mu,\nu$,
        \begin{align}
            \varpi_0(\eta,\mu,\nu) &:= -\frac{\varsigma^5}{1344}(54\varsigma^2-245\eta\varsigma + 280\eta^2) - \frac{\mu^2 \varsigma^2(50\eta^2-80\eta\varsigma+27\varsigma^2)}{8(5\eta-3\varsigma)^2}+ \frac{\mu^4(25\eta-24\varsigma)}{(5\eta-3\varsigma)^4},\\
            \chi(\eta,\mu,\nu) &:= \varsigma(5\eta-3\varsigma)^2-\frac{72\mu^2}{(5\eta-3\varsigma)^2} = -2(5\eta-3\varsigma)\frac{\partial \mathcal{P}}{\partial \varsigma},
        \end{align}
        and $\varsigma$ is the solution to the $5^{th}$ order equation \eqref{sigma-eq}.
    \end{prop}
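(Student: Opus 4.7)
The plan is to substitute the formal power-series ansatz directly into the rescaled string equation \eqref{rescaled-string}, exploit its manifest $\hbar^{2}$-symmetry to justify an even-power expansion, and then integrate the resulting expressions for $u,v$ using the rescaled identities
\begin{equation*}
u \;=\; -\hbar^{2}\,\partial_{\nu}^{2}\log\tau, \qquad v \;=\; \tfrac{1}{2}\hbar^{2}\,\partial_{\mu}\partial_{\nu}\log\tau,
\end{equation*}
which follow from the formulas below \eqref{tau-function-definition} under the rescaling \eqref{rescaling}. This will produce the coefficients $\varpi_{k}$ of $\log\tau \sim \hbar^{-2}\varpi_{0}+\varpi_{1}+\hbar^{2}\varpi_{2}+\cdots$ iteratively.

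First I would justify the even-power ansatz. Starting from $u=\sum_{k\geq 0}a_{k}\hbar^{k}$, $v=\sum_{k\geq 0}b_{k}\hbar^{k}$ and matching powers in \eqref{rescaled-string}, the $\hbar^{N}$ equations for $N\geq 1$ yield a $2\times 2$ linear system for $(a_{N},b_{N})$ whose coefficient matrix has determinant $(5\eta-3\varsigma)\,\partial_{\varsigma}\mathcal{P}$, nonzero on $D$ by definition. The leading-order equations force $a_{0}=\varsigma$ (equivalently $\mathcal{P}(\varsigma;\eta,\mu,\nu)=0$) and $b_{0}=-2\mu/(5\eta-3\varsigma)$. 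A parity induction on $N$, essentially identical to the one in the Corollary at the end of Section \ref{genus-zero-existence}, then yields $a_{2k+1}=b_{2k+1}=0$ for every $k\geq 0$. Combined with the rescaled $\tau$-identities above, this gives the even-power expansion of $\log\tau$ (and hence of ${\bf d}\log\tau$), proving the first assertion.

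Second, I would extract $\varpi_{0}$ explicitly. The equation $\partial_{\nu}^{2}\varpi_{0}=-\varsigma$, combined with the implicit relation $\partial_{\nu}\varsigma=-1/\partial_{\varsigma}\mathcal{P}$, lets me change the integration variable from $\nu$ to $\varsigma$ and reduce the double primitive to an integral of a rational function. The cleanest route is to pass to the uniformizing parameters $(a,b,c)$ of the spectral curve from Section \ref{section:g-function}, which rationalize all integrands; the resulting antiderivatives are then rewritten in $(\varsigma,\eta,\mu,\nu)$ to match the stated formula for $\varpi_{0}$. The $(\eta,\mu)$-dependent constants of integration are pinned down by the analogous conditions on $\partial_{\mu}\varpi_{0}$, $\partial_{\eta}\varpi_{0}$, or equivalently by matching the leading Hamiltonians $h_{2}^{(0)}$, $h_{5}^{(0)}$ evaluated on the leading-order solution (cf.\ the parallel calculation in the proof of Theorem \ref{MainTheorem1}). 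For the subleading $\varpi_{1}$, one solves the $\hbar^{2}$-order $2\times 2$ system for $(u_{1},v_{1})$ and, after differentiating $\mathcal{P}(\varsigma)=0$ twice in $\nu$, identifies $u_{1}=\tfrac{1}{24}\partial_{\nu}^{2}\log\chi$; integrating yields $\varpi_{1}=-\tfrac{1}{24}\log\chi$ modulo an $(\eta,\mu)$-function again fixed by the $t_{2}$- and $t_{5}$-flows. Closedness of the $\tau$-differential, established in \cite{DHL2}, guarantees that the $\varpi_{k}$ are globally well-defined on $D$.

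\textbf{Main obstacle.} The hard part will be the integration producing the explicit closed forms. Since $\mathcal{P}(\varsigma)=0$ is quintic, $\varsigma$ is not elementary in $(\eta,\mu,\nu)$; passing to $(a,b,c)$ rationalizes the integrals, but then one must rewrite the answer back in $(\varsigma,\eta,\mu,\nu)$ and verify consistency across the three flows. The most efficient verification is probably to use closedness of ${\bf d}\log\tau$ to integrate along a convenient one-parameter family (e.g.\ the $\nu$-axis at $\mu=0$) and then match $\partial_{\mu}$ and $\partial_{\eta}$ of the resulting formula against the Hamiltonian data at a single reference point.
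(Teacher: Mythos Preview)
Your approach is correct and will produce the stated formulas, but it differs from the paper's route in one substantive way. You propose to recover $\varpi_0$ from the second-derivative relation $\partial_\nu^2\varpi_0=-\varsigma$, which forces a double primitive. The paper instead works directly with the Okamoto/JMU differential \eqref{tau-JMU}: under the rescaling \eqref{rescaling} one has $H_k\,dt_k=\hbar^{-2}\bigl(\sum_{j\ge0}h_k^{(j)}\hbar^{2j}\bigr)\,d(\cdot)$, and the leading Hamiltonians $h_1^{(0)},h_2^{(0)},h_5^{(0)}$ are explicit rational functions of $(\varsigma,\eta,\mu)$ obtained by evaluating the polynomial expressions for $H_1,H_2,H_5$ at $u_0=\varsigma$, $v_0=-2\mu/(5\eta-3\varsigma)$. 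Since $\partial_\nu\varpi_0=\tfrac12 h_1^{(0)}$, only a \emph{single} $\nu$-integration is required, and the $(\eta,\mu)$-constants are fixed by comparing $\partial_\mu\varpi_0$ and $\partial_\eta\varpi_0$ against $\tfrac12 h_2^{(0)}$ and $\tfrac12 h_5^{(0)}$. This is shorter and avoids the bookkeeping of two nested integrations.

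A second, smaller point: the detour through the $(a,b,c)$ coordinates is unnecessary. After the change of variable $\nu\mapsto\varsigma$ via $d\nu=-(\partial_\varsigma\mathcal P)\,d\varsigma$, the integrand $h_1^{(0)}\cdot(-\partial_\varsigma\mathcal P)$ is already a rational function of $\varsigma$ with poles only at $\varsigma=\tfrac{5}{3}\eta$, so the primitive is elementary in $(\varsigma,\eta,\mu)$ directly. The paper carries this out explicitly without ever passing to the spectral-curve parameters. Your anticipated ``main obstacle'' therefore largely evaporates once you use the Hamiltonian first-derivative data rather than the $u_0$-based second-derivative data.
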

\begin{proof}
Inserting the expansions \eqref{perturbative-uv} into \eqref{rescaled-string}, we obtain at leading order in $\hbar$ the equations
    \begin{equation*}
        \begin{cases}
            0 &= \frac{1}{2}u_0^3-\frac{5}{4}\eta u_0^2 + \frac{3}{2}v_0^2 + \nu,\\
            0 &= \frac{5}{2}\eta v_0 - \frac{3}{2}u_0v_0 + \mu.
        \end{cases}
    \end{equation*}
We obtain as a solution
    \begin{equation*}
        u_0 = \varsigma(\eta,\mu,\nu),\qquad\qquad v_0 = -\frac{2\mu}{5\eta-3\varsigma(\eta,\mu,\nu)},
    \end{equation*}
where $\varsigma$ is the solution to the $5^{th}$ order algebraic equation \eqref{sigma-eq}. Note that the equations at order $\hbar^{2k}$ are
linear in $u_{k},v_{k}$, and depend polynomially on $\{u_{j},v_{j}\}_{j=0}^{k-1}$ and their derivatives, and so one can solve these equations uniquely
for $u_{k}$, $v_{k}$, and in particular the full `topological' expansion is completely determined in terms of the solution to the algebraic equation for 
$\varsigma(\eta,\mu,\nu)$. To obtain an expression for $d\log \tau$, we substitute the rescaled variables \eqref{rescaling} into the expressions for the
Hamiltonians. For instance, for the differential $H_1dt_1$, one finds that:
    \begin{equation*}
        H_1dt_1 = \hbar^2\left(\sum_{k=0}^{\infty}h_1^{(k)}\hbar^{2k}\right)d\nu,
    \end{equation*}
where $h_1^{(k)}$ are differential polynomials in the variables $\{u_k,v_k\}$, and $\eta,\mu, \nu$ (equivalently, they are rational functions of $u_0,\eta,\mu,\nu$). One can derive similar expansions for the differentials $H_2dt_2 = \hbar^2\left(\sum_{k=0}^{\infty}h_2^{(k)}\hbar^{2k}\right)d\mu$ and $H_5dt_5=\hbar^2\left(\sum_{k=0}^{\infty}h_5^{(k)}\hbar^{2k}\right)d\eta$. Note that this differential is closed, and thus all of the coefficients of each power of $\hbar^{2k}$ define closed differentials.
Taking $d\log \tau_k(\eta,\mu,\nu) = \frac{3}{2}\left(h_1^{(k)}d\nu + h_2^{(k)}d\mu + h_5^{(k)}d\eta\right)$ then proves the first claim.

To see that Formula \eqref{formal-tau} holds, we simply apply explicitly the procedure outlined above. To leading order, one finds that the Hamiltonians $h_k^{(0)}$ are
    \begin{align}
        h_1^{(0)} &= -\frac{1}{24}\varsigma^3(20\eta-9\varsigma) + \frac{2\mu^2(6\varsigma-5\eta)}{(5\eta-3\varsigma)^2}, \label{h1_0}\\
        h_2^{(0)} &= -\mu\varsigma^2 + \frac{16\mu^3}{(5\eta-3\varsigma)^3},\label{h2_0}\\
        h_5^{(0)} &= \frac{5}{48}\varsigma^5(2\eta-\varsigma) + \frac{5}{2}\frac{\mu^2\varsigma^2(5\eta-4\varsigma)}{(5\eta-3\varsigma)^2}-\frac{30\mu^4}{(5\eta-3\varsigma)^4},\label{h5_0}
    \end{align}
where in the above we have replace $\nu = \nu(\sigma)$ using Equation \eqref{sigma-eq}.
We have that
    \begin{align*}
        \log \tau_0(\eta,\mu,\nu) = \int h_1^{(0)}(\eta,\mu,\nu) d\nu + c_0(\eta,\mu)
        =-\int \left[\frac{1}{24}\varsigma^3(20\eta-9\varsigma) - \frac{2\mu^2(6\varsigma-5\eta)}{(5\eta-3\varsigma)^2}\right]d\nu + c_0(\eta,\mu)
    \end{align*}
Using equation \eqref{sigma-eq} to make the change of variables $\nu = \nu(\varsigma)$, 
    \begin{equation*}
        d\nu = \frac{d\nu}{d\varsigma}d\varsigma = -\frac{\partial \mathcal{P}}{\partial \varsigma}\bigg/\frac{\partial \mathcal{P}}{\partial \nu} d\varsigma = \left[\frac{1}{2}\varsigma(5\eta-3\varsigma)-\frac{36\mu^2}{(5\eta-3\varsigma)^3}\right] d\varsigma,
    \end{equation*}
and we obtain the following expression for $\tau_0$ (we have replaced here $\nu = \nu(\varsigma)$ using Equation \eqref{sigma-eq}):
\begin{align*}
    \log \tau_0(\eta,\mu,\nu) &= -\int \left[\frac{1}{24}\varsigma^3(20\eta-9\varsigma) - \frac{2\mu^2(6\varsigma-5\eta)}{(5\eta-3\varsigma)^2}\right]\left[\frac{1}{2}\varsigma(5\eta-3\varsigma)-\frac{36\mu^2}{(5\eta-3\varsigma)^3}\right]d\varsigma + c_0(\eta,\mu).
\end{align*}
 The integrand is then a rational function of $\varsigma$, and thus can be evaluated directly:
    \begin{align*}
        \log \tau_0(\eta,\mu,\nu) &= -\frac{\varsigma^5}{672}(54\varsigma^2-245\eta\varsigma + 280\eta^2) - \frac{\mu^2(729\varsigma^4-2160\eta\varsigma^3+1125\eta^2\varsigma^2+750\eta^3\varsigma -625\eta^5)}{108(5\eta-3\varsigma)^2}\\
        &+ \frac{2\mu^4(25\eta-24\varsigma)}{(5\eta-3\varsigma)^4} + c_0(\eta,\mu).
    \end{align*}
Differentiating this result with respect to $\eta,\mu$ and comparing to the expressions
we obtained for $h_2^{(0)}, h_5^{(0)}$ allows one to determine the constant of integration: $c_0(\eta,\nu) = -\frac{25}{108}\eta^2 \mu^2 + C$, where $C$ is independent of
$\eta,\mu,\nu$. Multiplying through by the factor of $1/2$ that converts the Okamoto differential to the Jimbo-Miwa-Ueno one gives the leading term (the exponential part) of the formal asymptotics \eqref{formal-tau}. Continuing in a similar fashion allows one to obtain 
an expression for $\log \tau_1(\eta,\mu,\nu)$; as the procedure is identical, we omit it. This concludes the proof.

\end{proof}

\begin{remark}\label{topological-expansion-remark}
    \textit{Interpretation of the above formal asymptotics.} Within the context of the $2$-matrix model, the above expansion has the following formal
    interpretation. Recall that the free energy $\mathcal{F}(\tau,t,H)$ of the quartic $2$-matrix model:
        \begin{align}
            Z_N(\tau,t,H;N) &:=\iint \exp\left[N\tr\left(\tau XY-V(X,te^{H})-V(Y,te^{H})\right)\right] dXdY,\nonumber\\
            \mathcal{F}(\tau,t,H) &:=\frac{1}{N^2}\log\frac{Z_N(\tau,t,H;N)}{Z_N(\tau,0,0;N)},
        \end{align}
    where $X,Y$ are $N\times N$ Hermitian matrices, $V(X,t) :=\frac{1}{2}X^2 + \frac{t}{4}X^4$, 
    and expression is taken as the analytic continuation of this expression if $t<0$, so that convergence is ensured.  It follows from the results of 
    \cite{DHL1} that this expression\footnote{We take here a regularized version of the free energy; since this regularization is a polynomial in $N^{-2}$,
    this modified expression also admits a topological expansion.} admits a topological expansion
        \begin{equation}\label{regularized-free-energy}
            \mathcal{F}(\tau,t,H) -\mathcal{F}_{reg}(\tau,t,H) \sim \sum_{g=0}^{\infty} \frac{\check{F}_g(\tau,t,H)}{N^{2g}},
        \end{equation}
    where $\mathcal{F}_{reg}(\tau,t,H)$ is an appropriately chosen polynomial in $\tau,t,H$, and $N^{-2}$. Again from \cite{DHL1}, the coefficients in this
    expansion are expressible as rational functions and logarithms in the variables $t,\tau,H$, and a special solution $\sigma = \sigma(\tau,t,H)$ 
    to the algebraic equation (cf. Equation 2.1 in \cite{DHL1})
        \begin{equation}\label{Ideal}
            0 = \mathfrak{I}(\sigma;t,\tau,H) := -t - \frac{1}{9}\tau^2\sigma(\sigma^2-3) -\frac{1}{3}\frac{\sigma}{(1+\sigma)^2} + \frac{2}{3}\left(\frac{\sigma}{1-\sigma^2}\right)^2[\cosh H - 1].
        \end{equation}
    We are interested in the multicritical point 
        \begin{equation}
            \tau = \tau_c := \frac{1}{4}, \qquad t = t_c := -\frac{5}{72},\qquad H = H_c := 0.
        \end{equation}
    We are thus led to define the following multiscaling limit:
        \begin{equation}
            \tau = \tau_c - \frac{C_5\eta}{N^{2/7}} + \frac{C_1\nu }{9N^{6/7}}, \qquad t = t_c - \frac{C_5\eta}{9N^{2/7}} - \frac{C_1\nu}{N^{6/7}} + \frac{2C_5^2\eta^2}{9N^{4/7}} - \frac{8C_5^3\eta^3}{9N^{6/7}},\qquad H = \frac{C_2\mu}{N^{6/7}},
        \end{equation}
    where $C_1,C_2,C_5>0$ are
        \begin{equation}
            C_1 = \frac{9}{164},\quad C_2 = \frac{2}{3},\quad C_5 = \frac{5}{12}.
        \end{equation}
    If we insert these expressions into \eqref{Ideal}, we see that the solution $\sigma(\tau,t,H)$ we are interested in admits an expansion in $N^{-2/7}$:
    \begin{equation}
            \sigma(\tau,t,H) \sim \sum_{k=0}^{\infty} \frac{\sigma_{k}(\eta,\mu,\nu)}{N^{2k/7}},
        \end{equation}
    where the first two terms in this expansion are given by
            \begin{equation}
                \sigma_0(\eta,\mu,\nu) \equiv 1,\qquad\qquad \sigma_1(\eta,\mu,\nu) = \frac{5}{3}\eta-\varsigma(\eta,\mu,\nu),
            \end{equation}
        with $\varsigma(\eta,\mu,\nu)$ the solution to \eqref{sigma-eq}, and the remaining terms can be expressed rationally in terms of $\eta,\mu,\nu$, and $\varsigma$.
    Inserting these formulae into the regularized free energy \eqref{regularized-free-energy}, we obtain that, order by order,
        \begin{equation*}
            \check{F}_g(\tau,t,H)\longrightarrow \log \tau_g(\eta,\mu,\nu), \qquad N\to \infty.
        \end{equation*}
    On the other hand, the asymptotic series
        \begin{equation*}
            \frac{1}{N^2}\log \tau(\eta,\mu,\nu) \sim \sum_{g=0}^{\infty} \frac{\log\tau_g(\eta,\mu,\nu)}{N^{2g}}
        \end{equation*}
    is a formal asymptotic expansion for the $\tau$-function of the rescaled string equation, under the parameter identification $\hbar = N^{-1}$. Thus, we
    can see that the topological expansion of the free energy becomes the topological expansion of the $\tau$-function for the string equation under the 
    multiscaling limit.
\end{remark}

\section{Implicit representation of the spectral curve and critical surface.}\label{Appendix:ImplicitCurves}

The spectral curve is given by
    \begin{equation}
        F(Y,\lambda) := Y^3 - F_2(\lambda;\eta,\mu,\nu)Y - F_4(\lambda;\eta,\mu,\nu) = 0,
    \end{equation}
where
    \begin{align}
        F_2(\lambda;\eta,\mu,\nu) &= 5\eta\lambda^2 + 2\mu \lambda + \frac{1}{2}h_1^{(0)} + \frac{5}{3}\eta \nu,\\
        F_4(\lambda;\eta,\mu,\nu) &= \lambda^4 + \left(\frac{125}{27}\eta^3 + \nu\right) \lambda^2 + \left(\frac{1}{2}h_2^{(0)} + \frac{50}{9}\eta^2\mu\right) + \frac{1}{2}h_5^{(0)} + \frac{25}{18}\eta^2h_1^{(0)} + \frac{20}{9}\eta\mu^2 + \frac{1}{3}\nu^2,
    \end{align}
where $\{h_k^{(0)}$, $k=1,2,5\}$ are the leading order terms in the $\hbar^2$-expansion of the Hamiltonians $H_k$, see Equations 
\eqref{h1_0}--\eqref{h5_0} of the previous appendix. This is consistent with the formula for the spectral curve derived in \cite{DHL2}, Equations 4.3 and 4.3.

The critical surface $\mathfrak{W}$ is a subset of the zero locus of the discriminant of Equation \eqref{sigma-eq}. This discriminant can written in terms of
$\eta,\mu,\nu$ explicitly:
    \begin{align}\label{CRITICAL-SURFACE}
        D(\eta,\mu,\nu) &:= \frac{78125}{93312} \eta^{12} \nu +\frac{3125}{15552} \eta^{10} \mu^{2}-\frac{625}{216} \eta^{9} \nu^{2}-\frac{75}{16} \eta^{7} \mu^{2} \nu -\frac{17}{18} \eta^{5} \mu^{4}+\frac{15}{4} \eta^{6} \nu^{3}+\frac{153}{20} \eta^{4} \mu^{2} \nu^{2}+6 \eta^{2} \mu^{4} \nu \nonumber\\
        &-\frac{54}{25} \eta^{3} \nu^{4}+\mu^{6}-\frac{81}{25} \eta  \,\mu^{2} \nu^{3}+\frac{1458}{3125} \nu^{5}= 0.
    \end{align}
A direct calculation reveals that the curves $\gamma_+$, $\gamma_-$ defined in Equation \eqref{gamma-pm} belong to the surface $\mathfrak{W}$. The part of this surface we are
interested in can be parametrized by
    \begin{equation}
        \nu(\varsigma,\eta) = -\frac{5\varsigma}{12}(5\eta^2-9\eta\varsigma+3\varsigma^2), \qquad \mu(\varsigma,\eta) = \pm \frac{\sqrt{2\varsigma}}{12}(5\eta-3\varsigma)^2,\qquad \eta(\varsigma,\eta) = \eta.
    \end{equation}
where $\eta\in \RR$, $\varsigma>\max\{\frac{5}{3}\eta,0\}$, and the `$\pm$' in the definition of $\mu(\varsigma,\eta)$ accounts for both signs of $\mu$. The curves $\gamma_{-},\gamma_{+}$
then correspond to the specializations $\varsigma = 0$, $\varsigma = \frac{5}{3}\eta$, respectively: note that $\mu(\varsigma,\eta) = 0$ in either case, and so the surface we have 
defined is continuous across these curves. The Gauss map ${\bf N}:\mathfrak{W}\to S^2$ which assigns to each point of $\mathfrak{W}$ its unit normal is a continuous function on $\mathfrak{W}\setminus \gamma_+$. On $\gamma_+$, ${\bf N}$ can take on two values, depending on whether we have approached $\gamma_+$ from the $\mu>0$ or $\mu <0$ side. For a point $(\eta,0,\frac{125}{108}\eta^3)\in \gamma_+$, we denote these two vectors by ${\bf N}^{(\pm)}$. We have that
    \begin{equation}
        {\bf N}^{(+)}\cdot {\bf N}^{(-)} = \frac{15625\eta^4-4320\eta+1296}{15625\eta^4+4320\eta+1296}.
    \end{equation}
It follows that the angle between these two vectors tends to $0$ as $\eta\to 0,\infty$. This angle has a unique maximum on $\gamma_+$ at $\eta = \frac{2}{5\sqrt{5}}3^{3/4}$, where the
angle between ${\bf N}^{(\pm)}$ is $1.580416... = \frac{\pi}{2} + 0.00962...$. Furthermore, this angle vanishes as $\sqrt{\frac{40\eta}{3}}[1+\OO(\eta)]$ as $\eta\to 0$.

\section{$3\times 3$ Painlev\'{e} I parametrix.}\label{Appendix:P1-parametrix}
For simplicity in the below, \textbf{\textit{all rays are oriented outwards towards infinity unless otherwise stated.}}
Define contours $\gamma_k$ by
    \begin{equation}
        \gamma_{\pm k} := \left\{\zeta\in \CC \big| \arg \zeta = \pm \frac{\pi}{10} \pm \frac{\pi}{5}(k-1)\right\}, \qquad k=1,...,5,
    \end{equation}
and define the $3\times 3$ matrix-valued function $J_{\boldsymbol{\Xi}^{(0)}}: \cup_{|k|=1}^5\gamma_k\cup \RR_- \to \CC$
\begin{equation}
        J_{\boldsymbol{\Xi}^{(0)}}(\zeta)= 
        \begin{cases}
            S_{1} := \mathbb{I} + \mathfrak{s}_1 E_{13},& \zeta\in \gamma_1,\\
            S_{2} := \mathbb{I} + \mathfrak{s}_2 E_{23},& \zeta\in \gamma_2,\\
            S_{3} := \mathbb{I} + \mathfrak{s}_3 E_{21},& \zeta\in \gamma_3,\\
            S_{4} := \mathbb{I} + \mathfrak{s}_4 E_{31},& \zeta\in \gamma_4,\\
            S_{5} := \mathbb{I} + \mathfrak{s}_5 E_{32},& \zeta\in \gamma_5,\\
            \mathcal{S} = \begin{psmallmatrix}
                0 & 1 & 0\\
                0 & 0 & 1\\
                1 & 0 & 0
            \end{psmallmatrix},& \zeta\in \RR_-,\\
            S_{-5} := \mathbb{I} - \mathfrak{s}_{1}E_{23},& \zeta\in \gamma_{-5},\\
            S_{-4} := \mathbb{I} - \mathfrak{s}_{2}E_{21},& \zeta\in \gamma_{-4},\\
            S_{-3} := \mathbb{I} - \mathfrak{s}_{3}E_{31},& \zeta\in \gamma_{-3},\\
            S_{-2} := \mathbb{I} - \mathfrak{s}_{4}E_{32},& \zeta\in \gamma_{-2},\\
            S_{-1} := \mathbb{I} - \mathfrak{s}_{5}E_{12},& \zeta\in \gamma_{-1},
        \end{cases}
    \end{equation}
where the $\mathfrak{s}_k\in \CC$ are complex numbers (independent of $\zeta,x$) satisfying the Stokes relation
    \begin{equation}\label{STOKES-p1-3}
        S_{1}\cdots S_{5}\mathcal{S}^T = \mathcal{S}(S_{1}\cdots S_{5})^T.
    \end{equation}
Consider the Riemann-Hilbert problem
    \begin{equation}
                \begin{cases}
                    \boldsymbol{\Xi}^{(0)}_+(\zeta;x) = \boldsymbol{\Xi}^{(0)}_-(\zeta;x)J_{\boldsymbol{\Xi}^{(0)}}(\zeta), & \zeta \in \Gamma_{\boldsymbol{\Xi}},\\
                    \boldsymbol{\Xi}^{(0)}(\zeta;x) = \mathfrak{f}^{(0)}(\zeta)\left[\mathbb{I} + 
                    \frac{\Xi^{(0)}_1(x)}{\zeta^{1/3}} + \frac{\Xi^{(0)}_2(x)}{\zeta^{2/3}} + \OO(\zeta^{-1})\right]e^{\Upsilon^{(0)}(\zeta;x)}, & \zeta\to \infty,
                \end{cases}
            \end{equation}
        Here, 
            \begin{equation}
                \mathfrak{f}^{(0)}(\zeta) := -(\sigma_3\oplus 1)f(\zeta),
            \end{equation}
        where $f(\zeta)$ is as defined in Equation \eqref{gauge-matrix},
        \begin{equation}
            \Upsilon^{(0)}(\zeta;x) = 
        \text{diag}\left(
            -\frac{6}{5}\zeta^{5/3} +x\zeta^{1/3}, -\frac{6}{5}\omega^2\zeta^{5/3} +\omega x\zeta^{1/3}, -\frac{6}{5}\omega\zeta^{5/3} +\omega^2 x\zeta^{1/3}\right),
        \end{equation}
        and
        \begin{equation}
            \Xi^{(0)}_1 := \begin{psmallmatrix}
            -\mathcal{H}(x) & 0 & 0\\
            0 & -\omega^2 \mathcal{H}(x) & 0\\
            0 & 0 & -\omega \mathcal{H}(x) 
            \end{psmallmatrix}, \qquad\qquad 
            \Xi^{(0)}_2 := 
            \begin{psmallmatrix}
            \frac{1}{2}\mathcal{H}(x) & \frac{\omega-1}{6}q(x) & \frac{\omega^2-1}{6}q(x)\\
            \frac{1-\omega}{6}q(x) & \frac{\omega}{2} \mathcal{H}(x) & \frac{\omega^2-\omega}{6}q(x)\\
            \frac{1-\omega^2}{6}q(x) & \frac{\omega-\omega^2}{6}q(x) & \frac{\omega^2}{2} \mathcal{H}(x) 
            \end{psmallmatrix},
        \end{equation}
        where $\mathcal{H}(x) = \frac{1}{2}[q'(x)]^2-2q(x)^3-xq(x)$ is the PI Hamiltonian, and $q(x)$ solves Painlev\'{e} I. Furthermore, the coefficients
        $\Xi^{(0)}_k(x)$ carry the symmetry
            \begin{equation}\label{P1-3-symm}
                \omega^{-k}\mathcal{S}^T\Xi^{(0)}_k(x)\mathcal{S} = \Xi^{(0)}_k(x).
            \end{equation}
        We do not prove the existence of solutions to this model problem, but indicate that one can construct its solution directly from the $2\times 2$
        problem via a generalized Laplace transform procedure, cf. \cite{JKT} for the sketch of this procedure (and the explicit form of the $3\times 3$ Lax pair generating PI), and \cite{LW} for the application of this 
        procedure in the case of Painlev\'{e} II.

        The Stokes manifold for this Riemann-Hilbert problem contains $5$ lines, which can be found by solving Equation \eqref{STOKES-p1-3} under the
        constraint that one of the parameters $\mathfrak{s}_k$ is zero. These lines should correspond to the tronqu\'{e}e solutions of Painlev\'{e} I. 
        The Stokes data relevant to us is
            \begin{equation}
                \mathfrak{s}_1 = 1-\varkappa, \quad \mathfrak{s}_2 = -1, \quad \mathfrak{s}_3 = 0, \quad \mathfrak{s}_4 = -1, \quad \mathfrak{s}_5 = \varkappa.
            \end{equation}
        We will from here on consider the function $\boldsymbol{\Xi}^{(0)}(\zeta;x)$ evaluated on this choice of Stokes data.
We then define
    \begin{equation}
        \boldsymbol{\Xi}(\zeta;x):=
        \boldsymbol{\Xi}^{(0)}(\zeta;x)\cdot
            \begin{cases}
                1\oplus\sigma_1, & \text{Im }\zeta>0,\\
                \sigma_3\oplus 1, & \text{Im }\zeta<0.
            \end{cases}
    \end{equation}
Then, the matrix $\boldsymbol{\Xi}(\zeta;x)$ satisfies the Riemann-Hilbert problem
    \begin{align}
            \boldsymbol{\Xi}_+(\zeta;x) &= \boldsymbol{\Xi}_-(\zeta;x)J_{\boldsymbol{\Xi}}(\zeta), \qquad\qquad \zeta \in \Gamma_{\boldsymbol{\Xi}},\\
                    \boldsymbol{\Xi}(\zeta;x) &= \mathfrak{f}(\zeta)\left[\mathbb{I} + \sum_{k=1}^{\infty} \frac{\Xi_k(x)}{\zeta^{k/3}}\right]e^{\Upsilon(\zeta;x)}, \qquad\qquad \zeta\to \infty,\label{Xi-asymptotic-expansion}
    \end{align}
where
    \begin{equation}
        J_{\boldsymbol{\Xi}}(\zeta) =
        \begin{cases}
            S_{1} := \mathbb{I} + (1-\varkappa) E_{12},& \zeta\in \gamma_1,\\
            S_{2} := \mathbb{I} - E_{32},& \zeta\in \gamma_2,\\
            S_{4} := \mathbb{I} - E_{21},& \zeta\in \gamma_4,\\
            S_{5} := \mathbb{I} +\varkappa E_{23},& \zeta\in \gamma_5,\\
            (-i\sigma_2) \oplus 1,& \zeta\in \RR_-,\\
            1\oplus (-i\sigma_2), & \zeta\in \RR_+,\\
            S_{-5} := \mathbb{I} + (1-\varkappa)E_{23},& \zeta\in \gamma_{-5},\\
            S_{-4} := \mathbb{I} - E_{21},& \zeta\in \gamma_{-4},\\
            S_{-2} := \mathbb{I} - E_{32},& \zeta\in \gamma_{-2},\\
            S_{-1} := \mathbb{I} +\varkappa E_{12},& \zeta\in \gamma_{-1}.
        \end{cases}
    \end{equation}

\printbibliography

\end{document}